\newcommand{\ue}[0]{\ensuremath{\mathbf{u}}}
\newcommand{\ve}[0]{\ensuremath{\mathbf{v}}}
\newcommand{\ix}[0]{\ensuremath{\mathbf{x}}}
\newcommand{\y}[0]{\ensuremath{\mathbf{y}}}
\newcommand{\ka}[0]{\ensuremath{\mathbf{k}}}
\newcommand{\en}[0]{\ensuremath{\mathbf{n}}}
\newcommand{\er}[0]{\ensuremath{\mathbf{r}}}
\newcommand{\zero}[0]{\ensuremath{\mathbf{0}}}
\newcommand{\e}[0]{\ensuremath{\mathbf{e}}}
\newcommand{\be}[0]{\ensuremath{\mathbf{b}}}
\newcommand{\E}[0]{\ensuremath{\mathbf{E}}}
\newcommand{\F}[0]{\ensuremath{\mathcal{F}}}
\newcommand{\Pe}[0]{\ensuremath{\mathbf{P}}}
\newcommand{\R}[0]{\ensuremath{\mathbb{R}}}
\newcommand{\T}[0]{\ensuremath{\mathbb{T}}}
\newcommand{\Z}[0]{\ensuremath{\mathbb{Z}}}
\newtheorem{theo}{Theorem}[section]{\bf}{\it}
\newtheorem{lemm}[theo]{Lemma}{\bf}{\it}
\newtheorem{defi}[theo]{Definition}{\bf}{\it}
\newtheorem{cor}[theo]{Corollary}{\bf}{\it}
\newtheorem{rmq}[theo]{Remark}{\bf}{\it}
\author{Alexandre Boritchev}
\affil{
University of Lyon
\\
CNRS UMR 5208
\\
University Claude Bernard Lyon 1
\\
Institut Camille Jordan
\\
43 Blvd. du 11 novembre 1918
\\
69622 VILLEURBANNE CEDEX
\\
FRANCE
\\
E-mail: alexandre.boritchev@gmail.com
}
\title{Multidimensional potential Burgers turbulence}
\date{\today}
\begin{document}


\maketitle

\bigskip
\textbf{Abstract.}\ We consider the multidimensional generalised stochastic Burgers equation in the space-periodic setting:
\begin{equation} \nonumber
\frac{\partial \ue}{\partial t}+(\nabla f(\ue) \cdot \nabla) \ue -\nu \Delta \ue= \nabla \eta,\quad t \geq 0,\ \ix \in \T^d=(\R/\Z)^d,
\end{equation}
under the assumption that $\ue$ is a gradient. Here $f$ is strongly convex and satisfies a growth condition, $\nu$ is small and positive, while $\eta$ is a random forcing term, smooth in space and white in time.
\\ \indent
For solutions $\ue$ of this equation, we study Sobolev norms of $\ue$ averaged in time and in ensemble: each of these norms behaves as a given negative power of $\nu$. These results yield sharp upper and lower bounds for natural analogues of quantities characterising the hydrodynamical turbulence, namely the averages of the increments and of the energy spectrum. These quantities behave as a power of the norm of the relevant parameter, which is respectively the separation $\ell$ in the physical space and the wavenumber $\ka$ in the Fourier space. Our bounds do not depend on the initial condition and hold uniformly in $\nu$. 
\\ \indent
We generalise the results obtained for the one-dimensional case in \cite{BorW}, confirming the physical predictions in \cite{BK07,GMN10}. Note that the form of the estimates does not depend on the dimension: the powers of $\nu, |\ka|, \ell$ are the same in the one- and the multi-dimensional setting.

\tableofcontents

\listoffigures

\section*{Abbreviations}

\begin{itemize}
\item 1d, 2d, multi-d: 1, 2, multi-dimensional
\item a.e.: almost every
\item a.s.: almost surely
\item (GN): the Gagliardo--Nirenberg inequality (Lemma~\ref{GN})
\item i.i.d.: independent identically distributed
\item r.v.: random variable
\end{itemize}

\section{Introduction}

\subsection{Burgers turbulence}

\indent
The multi-dimensional generalised Burgers equation
\begin{equation} \label{Burbegin}
\frac{\partial \ue}{\partial t} +  (\nabla f(\ue) \cdot \nabla) \ue  - \nu \Delta \ue = \zero,\ \ix \in \R^d,\ \ue(t,\ix) \in \R^d,
\end{equation}
where $\nu>0$ is a constant (the classical Burgers equation \cite{Bur74} corresponds to $f(\ue)=|\ue|^2/2$) is historically a popular model for the Navier-Stokes equations, since both of them have similar nonlinearities and dissipative terms. 
\\ \indent
Taking the curl of (\ref{Burbegin}), we see that for a gradient initial condition $\ue^0=\nabla \psi^0$, the solution $\ue$ remains a gradient for all times. Namely, this solution is the gradient of the solution $\psi(t,\cdot)$ to the viscous generalised Hamilton--Jacobi equation
\begin{equation} \label{HJbegin}
\frac{\partial \psi}{\partial t} +  f(\nabla \psi)  - \nu \Delta \psi = 0
\end{equation}
with the initial condition $\psi^0$. For shortness, in this case we will write the Burgers equation as
\begin{equation} \label{Burbeginfirst}
\frac{\partial \ue}{\partial t} +  (\nabla f(\ue) \cdot \nabla) \ue  - \nu \Delta \ue = \zero;\ \ue=\nabla \psi,\ \nu>0,
\end{equation}
where it is implicitly assumed that the potential $\psi$ satisfies (\ref{HJbegin}). We will do likewise for the equation (\ref{Burbeginfirst}) with a gradient right part instead of $\zero$, and we will say that we are in the \textit{potential case}. From now on, unless otherwise stated, we will only consider this case. Moreover, we will only consider the space-periodic setting, i.e.
$$
\ix \in \T^d=(\R/\Z)^d.
$$
The mathematical advantage of the potential case is that the equation (\ref{HJbegin}) can be treated by variational methods (see for instance \cite{GIKP05}). Moreover, for $f(\ue)=|\ue|^2/2$ the equation (\ref{Burbeginfirst}) has become popular as a model in astrophysics: in the limit $\nu \rightarrow 0$, it corresponds to the adhesion approximation introduced by Gurbatov and Saichev and developed later by Shandarin and Zeldovich \cite{GurSai84,GMN10,ShaZel89}. The equation (\ref{Burbegin}) is also relevant for fields as different as statistical physics, geology and traffic modelling (see the surveys \cite{BF01,BK07} and references therein; see also \cite{Flo48}). 
\\ \indent
For $f(\ue)=|\ue|^2/2$, the equation (\ref{Burbeginfirst}) can be transformed into the heat equation by the Cole-Hopf method \cite{Col51, Hop50}. In some settings (for instance when considering the Burgers equation with very singular additive noise) this method is extremely helpful (see \cite{BK07} and references therein). However, it is harder to make use of this transformation in the setting of our paper, where we are concerned with the quantitative behaviour of solutions in the singular limit $\nu \rightarrow 0^+$. Moreover, the Cole-Hopf method does not allow us to study the Burgers equation for a nonlinearity other than $f(\ue)=|\ue|^2/2$.
\\ \indent
When studying the \textit{local} (in space) fine structure of a function, natural objects of interest are the \textit{small-scale quantities}, which play an important role in the study of turbulence \cite{Fri95}. In the physical space, this denomination includes the structure functions (i.e. the moments of increments in space) for small separations. In the Fourier space, an important quantity of interest is the energy spectrum on small scales (i.e. the amount of energy carried by high Fourier modes). It is important to understand the critical thresholds for the relevant parameters (respectively, in the physical space the separation distance and in the Fourier space the wavenumber) between regions where the small-scale quantities exhibit different types of behaviour. These values are referred to as \textit{length scales}.
\\ \indent
The systematic study of small-scale quantities for the solutions of nonlinear PDEs with a small parameter with or without random forcing was initiated by Kuksin. He obtained lower and upper estimates of these quantities by negative powers of the parameter for a large class of equations (see \cite{Kuk97GAFA,Kuk99GAFA} and the references in \cite{Kuk99GAFA}). A natural way to study these quantities is through upper and lower bounds for Sobolev norms: for a discussion of the relationship between Sobolev norms and spatial scales, see \cite{Kuk99GAFA}.  For more recent results obtained for the 2D Navier-Stokes equations, see the monograph \cite{KuSh12} and the references therein. 
\\ \indent
Before treating the multi-d case, we recall some facts about the behaviour of the solutions to (\ref{Burbegin}) in the 1d setting. We only consider the case where $f$ is strongly convex, i.e. there exists $\sigma>0$ such that
\begin{equation} \label{1dconvex}
f''(x) \geq \sigma,\ x \in \R.
\end{equation}
In this setting, the requirement that we are in the potential case implies the vanishing of the space average of the solution.
\\ \indent
We consider the regime $\nu \ll 1$. Since all other parameters are fixed, in the hydrodynamical language this corresponds to the case of a large Reynolds number. Under these assumptions, the solutions display turbulent-like behaviour, called Burgers turbulence or \enquote{Burgulence} \cite{Bur74,Cho75,Kid79}, which we describe now.
\\ \indent
 In the limit $\nu \rightarrow 0$ and for large enough times, we observe $N$-waves, i.e. the graphs of the solutions $u(t,\cdot)$ are composed of waves similar to the Cyrillic capital letter \textit{\foreignlanguage{russian}{I}} (the mirror image of $N$). In other words, at a time $t_0$ the solution stops being smooth, and for times $t>t_0$ the solution $u(t,\cdot)$ alternates between negative jump discontinuities and smooth regions where the derivative is positive and of the order $1/t$ (see for instance \cite{Eva08}). Thus, it exhibits \textit{small-scale spatial intermittency} \cite{Fri95}, i.e. for a fixed time the excited behaviour only takes place in a small region of space. For $0<\nu \ll 1$ the solutions are still highly intermittent: shocks become zones where the derivative is small and positive, called {\it ramps}, which alternate with zones where the derivative is large in absolute value and negative, called {\it cliffs} (cf. Figure~\ref{N}).
\\ \indent
\begin{figure} 
\centering
\includegraphics[width=9cm, height=5cm]{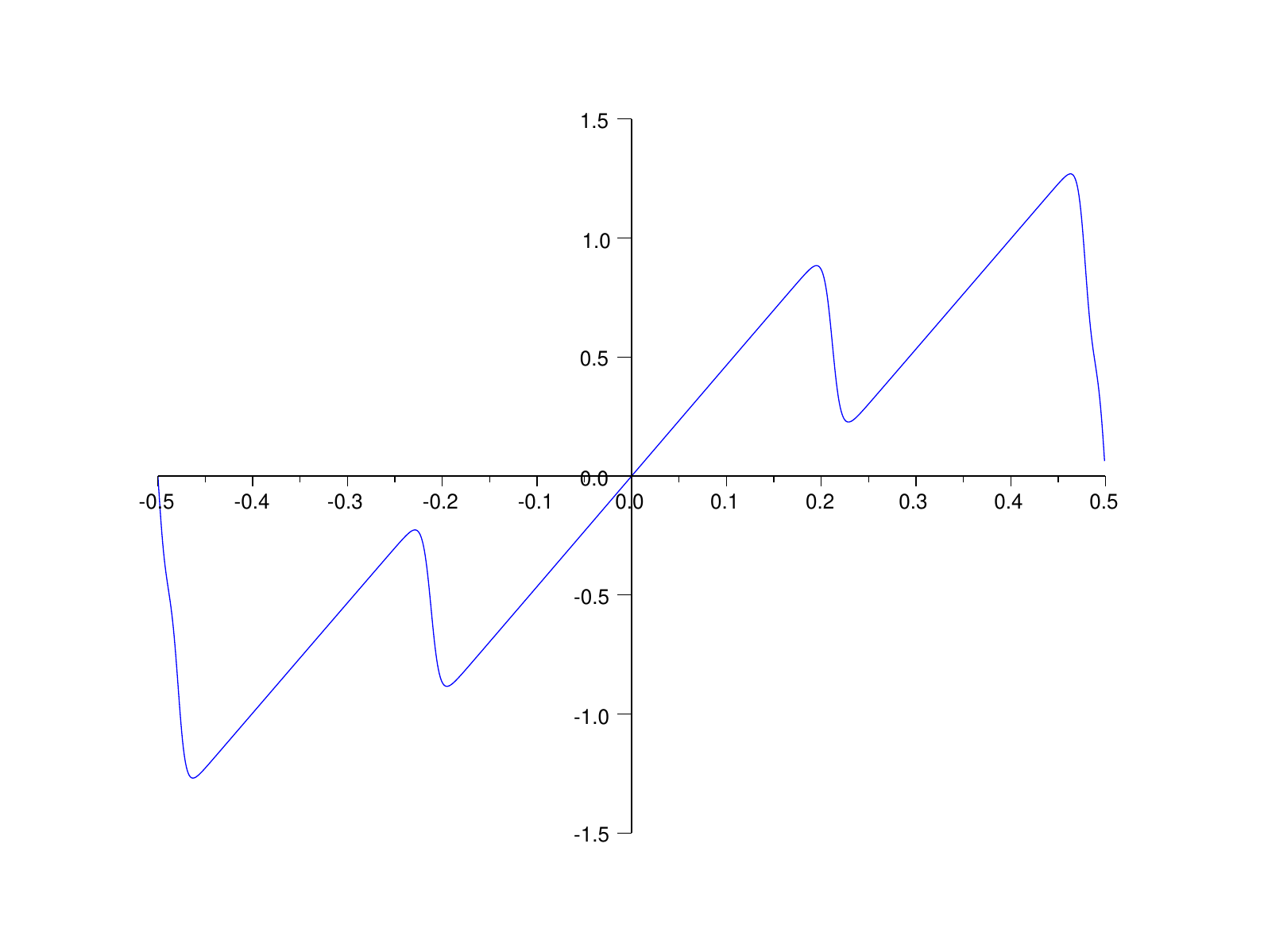}
\caption{\protect\label{N}  \enquote{Typical} solution of the 1d Burgers equation}
\end{figure}
For the prototypical $N$-wave, i.e. for the $1$-periodic function equal to $x$ on $(-1/2,\ 1/2]$, the Fourier coefficients satisfy $|\hat{u}(k)| \sim k^{-1}$. On the other hand, for $0<\nu \ll 1$ the dissipation gives exponential decay of the spectrum for large values of $k$. This justifies the conjecture that for $\nu$ small and for "moderately large" values of $k$, the energy-type quantities $\frac{1}{2} |\hat{u}(k)|^2$ behave, in average, as $k^{-2}$ \cite{Cho75,FouFri83,Kid79,Kra68}.
\\ \indent
In the physical space, the natural analogues of the energy $\frac{1}{2} |\hat{u}(k)|^2$ at the wavelength $k^{-1}$ are the structure functions
\begin{equation} \label{Sp1d}
S_p(\ell)=\int_{S^1}{|u(x+\ell)-u(x)|^p\ dx}.
\end{equation}
Heuristically, the behaviour of the solutions which is described above implies that for $\nu \ll \ell \ll 1$, these quantities behave as $\ell^{\min(1,p)}$ for $p \geq 0$:  see \cite{AFLV92} and the introduction to \cite{BorD}. 
\\ \indent
\begin{figure}
\centering
\includegraphics[width=16cm, height=12cm]{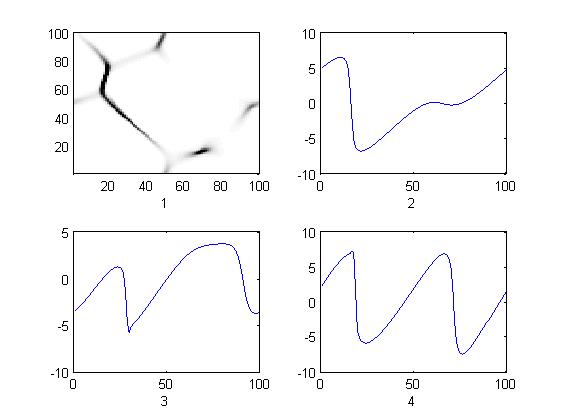}
\caption[\enquote{Typical} solution of the 2d potential Burgers equation]{\protect\label{N2d}  \enquote{Typical} solution of the 2d potential Burgers equation.
\\
1: The value of $\partial^2 \psi/\partial x_1^2 + \partial^2 \psi/\partial x_2^2$. The shaded regions correspond to zones where this value is large, which in the limit $\nu \rightarrow 0$ correspond to the shock manifold.
\\
2, 3, 4: 1d projections of the multi-d solution, respectively along the horizontal axis $e_1$, the vertical axis $e_2$ and the diagonal axis $e_1-e_2$.}
\end{figure}
Now we consider the potential multi-d case. In the case $f(\ue)=|\ue|^2/2$, in the inviscid limit $\nu \rightarrow 0$ it is numerically observed that the behaviour of the solution is analogous to what is happening in 1d \cite{BK07}. Namely, for large enough times one observes a tesselation where cells inside which solutions are smooth are separated by 1-codimension shock manifolds. In particular, in average, 1d projections of the multi-d solution look like the 1d solution (cf. Figure~\ref{N2d}).
\\ \indent
Thus, it is reasonable to expect a behaviour of the longitudinal structure functions
\begin{align} \label{Spintro}
S^{\|}_{p}(\er)=&  \int_{\ix \in \T^d }{\Bigg| \frac{(\ue(\ix+\er)-\ue(\ix)) \cdot \er}{|\er|} \Bigg|^p d \ix}
\end{align}
for a certain range of values of $|\er|$ which is analogous to the behaviour of the structure functions in 1d (at least after averaging with respect to $\er$ for a fixed value of $|\er|$). Similarly, we could expect spectral asymptotics of the type 
$$
 \frac{\sum_{| \en | \sim k}{|\hat{\ue}(\en)|^2}}{\sum_{| \en | \sim k}{1}} \sim k^{-2},
$$
for a certain range of values of $k$.
\\ \indent
By analogy with the 1d case, one can conjecture that we have the same behaviour for $f$ strongly convex, which in multi-d means that
\begin{equation} \label{strconvex}
\frac{^{t} \ve D^2f(x) \ve}{| \ve |^2} \geq \sigma > 0,\quad \ve=(v_1,\dots,v_d) \neq \zero,\quad x \in \R^d,
\end{equation}
where $D^2$ is the Hessian matrix and $|\ve|$ is the norm
$$
\sqrt{v_1^2+\dots+v_d^2}.
$$
\\ \indent
Now let us say a few words about the similarities and the differences between the multi-dimensional potential Burgulence and the real incompressible turbulence. It is clear that the geometric pictures on small scales are quite different for these two models: the multi-dimensional analogues of $N$-waves created by infinitely strong compressibility do not have the same nature as the complex multi-scale structures modeled by incompressibility constraints such as the vortex tubes. However, the similarity in the form of the potential Burgers equation and the incompressible Navier-Stokes equations implies that some physical arguments justifying different theories of turbulence can be applied to the Burgulence. Indeed, both models exhibit an inertial nonlinearity of the form $u \cdot \nabla u$, and a viscous term which in the limit $\nu \rightarrow 0$ gives a dissipative anomaly \cite{Fri95}. Hence, the Burgers equation is often used as a benchmark for turbulence theories, as well as for numerical methods for the Navier-Stokes equations. For more information on both subjects, see \cite{BK07}.

\subsection{State of the art and setting} \label{state}

For the \textit{unforced} Burgers equation, some \textit{upper} estimates for Sobolev norms of solutions and for small-scale quantities are well-known. For references on classical aspects of the theory of scalar (viscous or inviscid) conservation laws, see \cite{Daf10,Lax06,Ser99}. For some upper estimates for small-scale quantities, see \cite{Kre88,Tad93}. To our best knowledge, rigorous lower estimates were not known before Biryuk's and our work.
\\ \indent
In \cite{Bir01}, Biryuk considered the unforced generalised Burgers equation (\ref{Burbegin}) in the 1d space-periodic case, with $f$ satisfying (\ref{1dconvex}). He obtained estimates for $L_2$ Sobolev norms of the $m$-th spatial derivatives of the solutions:
\begin{equation} \nonumber
\frac{1}{T} \int_{0}^{T}{\Vert u(t)\Vert^2_m} \sim \nu^{-(2m-1)},\quad m \geq 1,\ \nu \leq \nu_0.
\end{equation}
The constants $\nu_0$ and $T$ and the multiplicative constants implicitly contained in the symbol $\sim$ depend on the deterministic initial condition $u^0$ as well as on $m$. Biryuk also obtained almost sharp spectral estimates which allowed him to give the correct value of the dissipation scale, which equals $\nu$ (see Section~\ref{agreeturb} for its definition). We can explain Biryuk's method by a dimensional analysis argument, considering the quantity
$$
A_m= \frac{\Vert u(t)\Vert_m}{\Vert u(t)\Vert_{m+1}}
$$
(see \cite{Kuk97GAFA,Kuk99GAFA}). Indeed, after averaging in time one gets
$$
A_m \sim \nu,\quad m \geq 1,
$$
as $\nu \rightarrow 0$.
\\ \indent
In \cite{BorD}, we generalised Biryuk's estimates to the $L_p$ Lebesgue norms of the $m$-th spatial derivatives for $1 <p \leq \infty$. Moreover, we  improved Biryuk's estimates for small-scale quantities, obtaining sharp $\nu$-independent estimates. In particular, for $\ell \in [C \nu,C]$, we proved that
$$
\frac{1}{T_2-T_1} \int_{T_1}^{T_2}{S_{p}(\ell) dt} \sim \left\lbrace \begin{aligned} & \ell^{p},\ 0 \leq p \leq 1. \\ & \ell,\ p \geq 1, \end{aligned} \right.
$$
with $S_p(\ell)$ defined by (\ref{Sp1d}), and for $k$ such that $k^{-1} \in [C \nu,C]$, we obtained that
$$
\frac{1}{T_2-T_1} \int_{T_1}^{T_2}{ \frac{\sum_{n \in [M^{-1}k,Mk]}{|\hat{u}^n|^2}}{\sum_{|n| \in [M^{-1}k,Mk]}{1}} dt} \sim k^{-2}.
$$
The constants $\nu_0$, $M$, $T_1$ and $T_2$, as well as the different strictly positive constants denoted by $C$ and the multiplicative constants implicitly contained in the symbol $\sim$ depend on the deterministic initial condition $u^0$ as well as on $p$. Note that here again, the upper and the lower estimates only differ by a multiplicative constant. Moreover, we rigorously prove that $k^{-1} \sim \nu$ is the threshold parameter which corresponds to the transition between algebraic (in $k^{-2}$) and super-algebraic behaviour of the energy spectrum.
\\ \indent
To get results independent of the initial data, a natural idea is to introduce random forcing and to average with respect to the corresponding probability measure. In the articles \cite{BorK,BorW}, we have considered the 1d case with $\zero$
in the right-hand side of (\ref{Burbegin}) replaced by a random spatially smooth force, \enquote{kicked} and white in time, respectively. In the \enquote{kicked} model, we consider the unforced equation and at integer times, we add i.i.d. smooth in space impulsions. The white force corresponds, heuristically, to a scaled limit of \enquote{kicked} forces with more and more frequent kicks. On a formal level, this can be explained by Donsker's theorem, since by definition a white force is the weak derivative in time of a Wiener process.
\\ \indent
In the random case, the estimates for the Sobolev norms and for the small-scale quantities seem at first sight to be almost word-to-word the same as in the unforced case. However, there are two major differences. The first one is that along with the averaging in time we also need to take the expected value. The second one is that we have estimates which hold uniformly with respect to the starting time $T_1$ for intervals $[T_1,T_1+T]$ of fixed length on which we consider the averaged quantities; moreover, the constants in the bounds do not any more depend on the initial condition.
\\ \indent
To explain the second difference, we observe that in the unforced case, no energy source is available to counterbalance the viscous dissipation, whereas in the forced case the stochastic term provides such a source. Thus, the existence of a stationary measure which is nontrivial (i.e., not proportional to the Dirac measure $\delta_{0}$) is possible in the randomly forced case, as opposed to the unforced case where we have a decay to $0$ of the solutions at the speed $Ct^{-1}$. In the language of statistical physics, this corresponds to the existence of a non-trivial non-equilibrium steady state \cite{Gal02}. Indeed, in \cite{BorW} we prove the existence and the uniqueness of the stationary measure for the generalised white-forced Burgers equation; our arguments also apply to the kick-forced case. For more details on Biryuk's and our work on 1d Burgulence, see the survey \cite{BorS}.
\\ \indent
In this paper,  we study the white-forced equation
\begin{equation} \label{whiteintro}
\frac{\partial \ue}{\partial t} +  (\nabla f(\ue) \cdot \nabla) \ue  - \nu \Delta \ue = \nabla \eta;\ u=\nabla \psi,\ \nu>0,\ \ix \in \T^d,
\end{equation}
under the additional convexity and growth assumptions  (\ref{strconvex}, \ref{poly}) on $f$. We obtain estimates for the Sobolev norms and the small-scale quantities which are (up to some changes in definitions due to the multi-dimensional setting) word-to-word the same as those proved in \cite{Bir01,BorK,BorW,BorD}, with the same exponents for $\nu$. The only small difference between the results in 1d and in this article is that we do not obtain upper estimates for the $W^{m,\infty}$ norms. Moreover, we obtain results on the existence and the uniqueness of the stationary measure $\mu$ for the equation (\ref{whiteintro}) as well as the rate of convergence to $\mu$. Thus, we generalise the 1d results in \cite{BorW}.
\\ \indent
The assumption that $\ue$ is a gradient plays a crucial role, since it allows us to generalise the 1d arguments from the papers \cite{Bir01,BorW}, in particular for the upper estimates; see Theorem~\ref{uxpos}. On the other hand there is a major difficulty specific to the multi-d case. Namely, the energy balance is much more delicate to deal with than in 1d; see Section~\ref{lower}. This is the reason why here, unlike in 1d, we assume that the noise $\eta$ is \enquote{diagonal}: in other words, there is no correlation between the different Fourier modes. This allows us to use a more involved version of the "small-noise zones" argument (see for instance \cite{IK03}). Roughly speaking, this argument tells that if the noise is small during a long time interval, then the solution of the generalised Burgers equation goes to $0$, roughly at the same rate as if there was no noise at all, i.e. at least as $Ct^{-1}$. Note that by classical properties of Wiener processes, such an interval will eventually occur with probability $1$: see \cite[Formula (10)]{BorW} for a quantitative version of this statement.
\\ \indent
In \cite{Bir04}, Biryuk studied solutions of the space-periodic multi-d Burgers equation without the assumption that $\ue$ is a gradient. He obtained upper and lower estimates which are non-sharp, in the sense that there is a gap between the powers of $\nu$ for the upper and the lower estimates. In a setting very similar to ours, Brzezniak, Goldys and Neklyudov \cite{BGN14,GN09} have considered the multi-d Burgers equation both in the deterministic and  in the stochastic case, obtaining results on the well-posedness both in the whole-space and in the periodic setting. Moreover, in the potential space-periodic case those authors have obtained estimates which are uniform with respect to the viscosity coefficient $\nu$; however, those estimates are not uniform in time, unlike the ones proved in our paper.
\\ \indent
We are concerned with solutions for small but positive $\nu$. For a study of the limiting dynamics with $\nu=0$, see \cite{EKMS97,EKMS00} for the 1d case, \cite{GIKP05,IK03} for the multi-d case, and \cite{DV,DS05} for the case of multi-d \textit{scalar} conservation laws with nonconvex flux.
\\ \indent
In \cite{Bir01,BorK,BorW,BorD} as well as in our paper, estimates on Sobolev norms and on small-scale quantities are asymptotically sharp in the sense that $\nu,\ell,k$ enter lower and upper bounds at the same power. Such estimates are not available for the more complicated equations considered in \cite{Kuk97GAFA,Kuk99GAFA,KuSh12}. Another remarkable feature of our estimates is that the powers of the quantities $\nu,\ell,k$ are always the same as in 1d. Thus, those estimates are in agreement with the physical predictions for space increments \cite[Section 7]{BK07} and for spectral asymptotics \cite{GMN10} of the solutions $\ue(t,x)$.
\\ \indent
The results of our paper extend to the case of a \enquote{kicked} force, under some restrictions. Namely, while the upper estimates hold in a very general setting, to prove the lower estimates we seem to need some non-trivial assumptions on the support of the kick, since the dissipation relation for the energy $1/2 \int_{\T^d}{|\ue|^2}$ has an additional trilinear term compared to the 1d case. For the same reason, the results in the unforced case are expected to be less general than in 1d.
\\ \indent
To prove our results on the existence and the uniqueness of the stationary measure and the rate of convergence to it, we use a quantitative version of the "small-noise zones" argument \cite{IK03}, a coupling argument due to Kuksin and Shirikyan \cite{KuSh12} and $L_{\infty}$-contractivity for the flow of the Hamilton-Jacobi equation satisfied by the potential $\psi$.

\subsection{Plan of the paper} \label{plan}

After introducing the notation and the setup in Section~\ref{nota}, we formulate the main results in Section~\ref{results}. In Section~\ref{upper}, for $t \geq 1$ and for a vector $\ka \in \R^d$ with integer coefficients, we begin by estimating from above the moments of the quantities
$$
\max_{s \in [t,t+1],\ \ix \in \T^d}{(\ka \cdot \nabla)^2 \psi(s,\ix)}
$$
for the potential $\psi$ corresponding to the solution $\ue(t,\ix)$ of (\ref{whiteintro}). 
\\ \indent
In Sections~\ref{upper}-\ref{main} we get estimates for the Sobolev norms of the same type as those obtained in \cite{Bir01,BorK,BorW,BorD} with the same exponents for $\nu$, valid for time $t \geq T_0$; the only small difference with the 1d case is that here we do not obtain sharp upper bounds for the $W^{m,\infty}$ norms. Here, $T_0$ is a constant, independent of the initial condition and of $\nu$. Actually, for $t \geq T_0$, we are in a \textit{quasi-stationary regime}: all the estimates hold uniformly in $t$ and in the initial condition $\ue^0$.
\\ \indent
In Section~\ref{turb} we study the implications of our results in terms of the theory of Burgulence. Namely, we give sharp upper and lower bounds for the dissipation length scale, the increments and the spectral asymptotics for the flow $\ue(t,x)$. These bounds hold uniformly for $\nu \leq \nu_0$, where $\nu_0$ is a constant which is independent of the initial condition. One proof in this section uses (indirectly) a 1d argument from \cite{AFLV92}.
\\ \indent
In Section~\ref{stat}, we prove the existence and the uniqueness of the stationary measure for the equation (\ref{whiteintro}), and we give an estimate for the speed of convergence to this stationary measure.

\section{Notation and setup} \label{nota}

\subsection{Functions, indices, derivatives}
All functions that we consider are real-valued or, if written in bold script, vector-valued. When giving formulas which hold for functions which can be scalar or vector-valued, we use the usual script. We denote by $(\e_1,\dots,\e_d)$ the canonical vector basis of $\R^d$. We assume that $d \geq 2$. Note that all of our estimates still hold for $d=1$: see \cite{BorW}.
\\ \indent
The subscript $t$ denotes partial differentiation with respect to the variable $t$. When we consider a scalar-valued function $v$, the subscripts $i,\ 1 \leq i \leq N$, which can be repeated, denote differentiation with respect to the variables $x_i,\ 1 \leq i \leq N$, respectively. Since the only scalar-valued functions $v$ for which the notation $v_{i_1,\dots,i_k}$ will be used are infinitely differentiable, by Schwarz's lemma  we will always have
$$
v_{i_1,\dots,i_k}=v_{\pi(i_1),\dots,\pi(i_k)}
$$
for any permutation $\pi$ of the subscripts.
\\ \indent
For a $d$-dimensional vector $\ix$ and a (vector or scalar)-valued function $v$, the notation $v(\tilde{\ix}_i)$ means that we fix all coordinates except one, i.e. we consider
$$
v(x_1,\dots, x_{i-1}, \cdot, x_{i+1}, \dots, x_d).
$$
Accordingly, the notation $\int {\cdot\ d\tilde{\ix}_i}$ means that we integrate over the variables
$$
x_1,\dots, x_{i-1}, x_{i+1}, \dots, x_d,
$$
for a fixed value of $x_i$. For shortness, a function $v(t,\cdot)$ is denoted by $v(t)$. The norm of an $N$-dimensional vector $\ve$ is defined by
$$
|\ve|=\sqrt{v_1^2+\dots+v_{N}^2}.
$$
It should not be confused with the $L_2$ norm of a \textit{function}, which will be introduced in the next subsection and is also denoted by $| \cdot |$: the meaning of the notation will always be clear from the context. We use the notation $g^{-}=\max(-g,0)$ and $g^{+}=\max(g,0)$.

\subsection{Sobolev spaces} \label{sob}

For $N,d' \geq 1$, consider an integrable $\R^N$-valued function $v$ on $\T^{d'}$.We only study \textit{spatial} Sobolev norms for functions considered \textit{at a fixed moment of time}.
We do not always assume that $d'=d$: for instance, we will study functions of the type $v(\tilde{\ix}_i)$ which are defined on $\T^1$. The dimensions $N,d'$ are always clear from the context, and thus are not specified in the notation for Sobolev norms.
\\ \indent 
For $p \in [1,\infty]$, we denote the Lebesgue $L_p$ norm of a scalar-valued function $v$ by $\left|v\right|_p$. For a vector-valued function $\ve$, we define this norm as the norm in $L_p$ of the function $|\ve|$, and denote it by $\left|\ve\right|_p$ . We denote the $L_2$ norm by $|\cdot|$, and the corresponding scalar product by $\left\langle \cdot,\cdot\right\rangle$. From now on $L_p,\ p \in [1,\infty]$ denotes the space of functions in $L_p(\T^{d'})$. Similarly, $C^{\infty}$ is the space of $C^{\infty}$-smooth functions on $\T^{d'}$.
\\ \indent
Except in Appendix 1, we only study Sobolev norms for zero mean functions. Thus, in the following, we always assume that $\int_{\T^{d'}}{v}=0$. In particular, we never study directly the Sobolev norms of the potential $\psi$: either we consider the mean value function $\psi-\int{\psi}$ or the partial derivatives of $\psi$.
\\ \indent
For a nonnegative integer $m$ and $p \in [1,\infty]$, $W^{m,p}$ stands for the Sobolev space of zero mean functions $v$ on $\T^{d'}$ with finite homogeneous norm
\begin{equation} \label{Wmp}
\left|v\right|_{m,p}=\sum_{|\boldsymbol\alpha|=m}{\frac{m!}{\alpha_1! \dots \alpha_k!} \left|\Big( \frac{d^{m} v_1}{d \ix^{\boldsymbol\alpha}}, \dots, \frac{d^{m} v_N}{d \ix^{\boldsymbol\alpha}} \Big) \right|_p}.
\end{equation}
Here and from now on, $|\boldsymbol\alpha|$ denotes the norm of the multi-index
$$
\boldsymbol\alpha=(\alpha_1,\dots,\alpha_{d'}),
$$
defined by
$$
\alpha_1+\dots+\alpha_{d'}.
$$
In particular, $W^{0,p}=L_p$ for $p \in [1,\infty]$. For $p=2$, we denote $W^{m,2}$ by $H^m$, and abbreviate the corresponding norm as $\left\|v\right\|_m$. 
\\ \indent
We recall a version of the classical Gagliardo--Nirenberg inequality (see \cite[Appendix]{DG95}). We will refer to this inequality as (GN).
\begin{lemm} \label{GN}
For a smooth zero mean function $v$ on $\T^{d'}$, we have 
$$
\left|v\right|_{\beta,r} \leq C \left|v\right|^{\theta}_{m,p} \left|v\right|^{1-\theta}_{q},
$$
where $m>\beta\geq 0$, and $r$ is defined by
$$
\frac{d'}{r}=\beta-\theta \Big( m-\frac{d'}{p} \Big)+(1-\theta)\frac{d'}{q},
$$
under the assumption $\theta=\beta/m$ if $m-|\beta|-d'/p$ is a nonnegative integer, and $\beta/m \leq \theta < 1$ otherwise. The constant $C$ depends on $m,p,q,\beta,\theta,d'$.
\end{lemm}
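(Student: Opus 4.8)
The plan is to deduce this from the classical Gagliardo--Nirenberg inequality on $\R^{d'}$, following Nirenberg's original argument. The two features specific to the present formulation are that the domain is the torus $\T^{d'}$ and that the norms in \eqref{Wmp} are \emph{homogeneous}, i.e.\ they involve only the derivatives of top order. The zero mean hypothesis is exactly what is needed here: by the Poincar\'e inequality on $\T^{d'}$, a zero mean function has all its lower-order derivatives, and the function itself, controlled by the top-order seminorm, so that $|\cdot|_{m,p}$ is a genuine norm equivalent to the full inhomogeneous Sobolev norm. In particular there is no loss in passing freely between homogeneous and inhomogeneous norms and absorbing the lower-order remainders at the end.

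Before the main estimate I would reduce to a scalar-valued $v$. Since the norm in \eqref{Wmp} is built from the Euclidean norm of the vector of component derivatives, the finite-dimensional equivalence of norms lets me bound $|v|_{\beta,r}$ by $\sum_i |v_i|_{\beta,r}$, apply the scalar inequality to each component $v_i$ (which is again zero mean), and recombine using H\"older's inequality in the index $i$. Thus it suffices to prove the scalar statement.

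For the scalar case on the torus the most transparent route is Fourier-analytic. I would expand $v$ over the frequencies $\ka \in \Z^{d'}\setminus\{\zero\}$ --- the mode $\ka=\zero$ being absent by the zero mean assumption --- and split $v$ into dyadic frequency shells. On each shell Bernstein's inequalities control the change of Lebesgue exponent, the $L_q$ norm governing the low shells and the derivative of order $m$ in $L_p$ governing the high shells; summing the two bounds with the weights dictated by the balance relation
\[
\frac{d'}{r}=\beta-\theta \Big( m-\frac{d'}{p} \Big)+(1-\theta)\frac{d'}{q}
\]
reproduces the interpolation exponent $\theta$ and yields the claimed bound. The scaling relation above is precisely the condition that makes this summation dimensionally balanced. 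Alternatively, one may extend $v$ periodically, localise with a smooth partition of unity, and invoke the whole-space inequality directly.

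I expect the delicate point to be the borderline case in which $m-\beta-d'/p$ is a nonnegative integer. This is the critical regime where the underlying Sobolev embedding just fails to gain the expected control (for instance the critical embedding reaches $\mathrm{BMO}$ rather than $L^\infty$), so that the interpolation can no longer be performed at an arbitrary intermediate exponent; this is exactly why the statement restricts $\theta$ to the single admissible value $\beta/m$ in that case. Verifying precisely when this degeneracy occurs, and checking that each Bernstein step is legitimate across the full range $p,q,r \in [1,\infty]$ including the endpoints $p=\infty$ and $r=\infty$, is where the real work concentrates; the remaining algebra is the routine check that the displayed relation for $d'/r$ makes all the exponents in the shell summation match.
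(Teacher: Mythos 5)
The first thing to say is that the paper contains no proof of this statement: Lemma~\ref{GN} is quoted from the literature (see the Appendix of \cite{DG95}), so there is no in-paper argument to compare yours against, and the only question is whether your sketch would stand on its own.

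Your overall strategy is sound, and two of your preliminary observations are exactly right: the zero-mean hypothesis plus Poincar\'e is what makes the homogeneous norms of (\ref{Wmp}) genuine norms on $\T^{d'}$ (equivalently, what kills the $\ka=\zero$ Fourier mode), and the reduction to scalar $v$ via H\"older in the component index is harmless. The dyadic Bernstein argument also does what you claim, but only in the range where both geometric series converge: writing $\Sigma=m-d'/p+d'/q$, the low-frequency sum carries the exponent $\theta\Sigma$ and the high-frequency sum the exponent $-(1-\theta)\Sigma$, so the summation closes precisely when $\Sigma>0$ and $0<\theta<1$. Two consequences. (a) Your worry about the case where $m-\beta-d'/p$ is a nonnegative integer is misplaced: in the classical theorem that hypothesis only excludes $\theta=1$, which the lemma excludes anyway, and for $\theta\in(0,1)$ with $\Sigma>0$ the dyadic proof goes through with no exceptional behaviour; the paper's restriction to $\theta=\beta/m$ in that case is simply a conservative, weaker statement. (b) The genuine gap is elsewhere, at the endpoint $p=1$ in the regime $\Sigma\le 0$, which the lemma's hypotheses do not exclude: for example $d'=2$, $m=1$, $\beta=0$, $p=1$, $q=\infty$, $\theta\in(0,1)$, $r=2/\theta$ requires $|v|_{2/\theta}\lesssim |v|_{1,1}^{\theta}\,|v|_{\infty}^{1-\theta}$, which rests on the endpoint Sobolev embedding $W^{1,1}\subset L^{2}$; there every dyadic block satisfies the right bound, but the sum over blocks has no geometric decay, and Bernstein summation cannot close it --- this is the well-known failure of Littlewood--Paley methods at the $L^1$ endpoint. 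Your fallback route --- periodic extension, a cutoff equal to $1$ on a fundamental domain, the classical whole-space Nirenberg inequality, and absorption of the lower-order terms by Poincar\'e using the zero-mean hypothesis --- covers all exponents at once and should be promoted from an aside to the main argument.
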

Let us stress that we only use this inequality in cases when it gives the same value of $\theta$ as in 1d. Actually, the only place where we use it in a multi-d setting is when we mention that the proof of Lemma~\ref{lmubuinfty} is word-to-word the same as in 1d.
\\ \indent
We will use a norm denoted by $|\cdot|_{m,p}^{\sim}$, which is defined for $m \geq 0,\ p \in [1,+\infty)$ and is equivalent to the norm $|\cdot|_{m,p}$ defined above. For its definition, see Corollary~\ref{linalgcor}. By analogy with the notation $\Vert \cdot \Vert_m$, we will abbreviate as $\Vert \cdot \Vert^{\sim}_m$ the norm $|\cdot|_{m,2}^{\sim}$.
\\ \indent
For any $s \geq 0$, we define $H^{s}$ as the Sobolev space of zero mean functions $v$ on $\T^{d'}$ with finite norm
\begin{equation} \label{Sobolevspectr}
\left\| v \right\|'_{s}= \Big( \langle v,\ (-\Delta)^s v \rangle \Big)^{1/2}=(2 \pi)^{s} \Big( \sum_{\en \in \Z^{d'}}{|\en|^{2s} |\hat{v}(\en)|^2} \Big)^{1/2},
\end{equation}
where $\hat{v}(\en)$ are the complex Fourier coefficients of $v(x)$. For integer values of $s=m$, this norm is equivalent to the previously defined $H^m$ norm $\left\| \cdot \right\|$. For $s \in (0,1)$, $\left\|v\right\|'_{s}$ is equivalent to the norm
\begin{equation} \label{Sobolevfrac}
\left\|v\right\|^{''}_{s}=\Bigg( \int_{\ix \in \T^{d'},\ |\er| \leq 1}{\frac{|v(\ix+\er)-v(\ix)|^2}{|\er|^{2s+d'}}}\ d \ix\ d \er \Bigg)^{1/2}.
\end{equation}
Moreover, for all integers $m \geq 0$ we have the embedding
\begin{equation} \label{Sobinj}
|v|_{m,\infty} \leq C(s) \left\|v\right\|^{'}_{m+s},\ s>d/2
\end{equation}
(see \cite{Ada75,Tay96}).
\\ \indent
Finally, it should be noted that the integer $s_0(d)$, defined by:
\begin{equation} \label{s0}
s_0=(d+1)/2\ if\ d\ even;\ d/2+1\ if\ d\ odd
\end{equation}
plays a crucial role in the study of the well-posedness for (\ref{HJint}) (see Section~\ref{prel} and Appendix 1).

\subsection{Random setting} \label{rand}

We provide each space
$$
W^{m,p}(\T^d),\ m \geq 0,\ p \in [1,\infty]
$$
of scalar-valued functions with the Borel $\sigma$-algebra. Then we consider a random process $w(t)=w^{\omega}(t),\ \omega \in \Omega,\ t \geq 0$, valued on the space of zero mean value functions in $L_2(\T^d)$ and defined on a complete probability space $(\Omega, \F, \Pe)$. We suppose that $w(t)$ defines a smooth in space Wiener process with respect to a filtration $\F_t,\ t \geq 0$, in each space $W^{m,p}(\T^d),\ m \geq 0,\ p \in [1,\infty]$. Moreover, we assume that the process $w(t)$ is \textit{diagonal} in the sense that its projections on the Fourier modes are independent weighted Wiener processes. In other words, we assume the following:
\medskip
\\ \indent
\textbf{i)}\ The process $w(t)$ can be written as
\begin{equation} \label{Itodiag}
w(t,\ix)=\sum_{\en \in \widetilde{\Z^d}}({a_{\en} w_{\en}(t) \cos(2 \pi \en \cdot \ix) + b_{\en} \tilde{w}_{\en}(t) \sin(2 \pi \en \cdot \ix))},
\end{equation}
where 
\begin{align} \nonumber
\widetilde{\Z^d}=&\lbrace \en \in \Z^d\ |\ n_1>0 \rbrace \cup \lbrace \en \in \Z^d\ |\ n_1=0, n_2>0 \rbrace
\dots 
\\ \nonumber
&\cup \lbrace \en \in \Z^d\ |\ n_1=0, \dots, n_{d-1}=0,n_d>0 \rbrace,
\end{align}
$w_{\en}$, $\tilde{w}_{\en}$ are independent Wiener processes and for any $k>0$ we have $a_{\en},b_{\en}=o(|\en|^{-k})$. Without loss of generality, we can assume that for all $\en$, we have $a_{\en},b_{\en} \geq 0$.
\\ \indent
\textbf{ii)}\ The process $w(t)$ is non-trivial: in other words, at least one of the coefficients $a_{\en},\ b_{\en}$ is not equal to $0$.
\medskip
\\ \indent
Thus, for $\zeta,\chi \in L_2,$
$$
\E(\left\langle w(s),\zeta\right\rangle \left\langle w(t),\chi \right\rangle)=\frac{1}{2} \min(s,t) \left\langle Q\zeta,\chi\right\rangle,
$$
where $Q$ is the correlation operator defined by
$$
Q(\cos(2 \pi \en \cdot \ix))=a_n^2 \cos(2 \pi \en \cdot \ix);\ Q(\sin(2 \pi \en \cdot \ix))=b_n^2 \sin(2 \pi \en \cdot \ix),
$$
which defines a continuous mapping from $L_2(\T^d)$ into $H^m(\T^d)$ for each $m \geq 0$.
\\ \indent
Note that since we have $w(t) \in C^{\infty}$ for every $t$, a.s., we can redefine the Wiener process $w$ so that this property holds for all $\omega \in \Omega$.
We will denote $w(t)(x)$ by $w(t,x)$. For more details on the construction of infinite-dimensional Wiener processes, see \cite[Chapter 4]{DZ92}.
\\ \indent
For $m \geq 0$, we denote by $I_m$ the quantity
$$
I_m=Tr_{H^m}(Q)=\E \left\|w(1)\right\|_m^2.
$$
From now on, the term $dw(s)$ denotes the stochastic differential corresponding to the Wiener process $w(s)$ in the space $L_2$.
\\ \indent
Now fix $m \geq 0$. By Fernique's Theorem \cite[Theorem 3.3.1]{Kuo75}, there exist $\iota_{m},C_m>0$ such that for $T \geq 0$,
\begin{equation} \label{Fern}
\E \exp \Big(\iota_m \left\|w(T)\right\|_m^2/T \Big) \leq C_m.
\end{equation}
Therefore by Doob's maximal inequality for infinite-dimensional submartingales \cite[Theorem 3.8. (ii)]{DZ92} we have the following inequality, which holds uniformly in $\tau \geq 0$:
\begin{align} \label{moments}
\E \sup_{t \in [\tau,T+\tau]} {\left\|w(t)-w(\tau)\right\|^k_m} &\leq \Big( \frac{k}{k-1} \Big)^k \E \left\|w(T+\tau)-w(\tau)\right\|_m^k
\\ \nonumber
&=C(m,k) T^{k/2} < +\infty,
\end{align}
for any $T>0$ and $1<k<\infty$.
\\ \indent
Note that the estimates in this subsection still hold for the successive spatial derivatives of $w$, which are also smooth in space infinite-dimensional Wiener processes.

\subsection{Preliminaries} \label{prel}

\indent
We begin by considering the viscous Hamilton-Jacobi equation (\ref{HJbegin}). Here, $t \geq 0$, $\ix \in \T^d=(\R/\Z)^d$ and the viscosity coefficient satisfies $\nu \in (0,1]$. The function $f$ is strongly convex, i.e. it satisfies (\ref{strconvex}), and $C^{\infty}$-smooth. We also assume that for any $m \geq 0$ the $m$-th partial derivatives of $f$ satisfy
\begin{equation} \label{poly}
\exists h \geq 0,\ C_m>0:\ \Big|\sum_{|\boldsymbol\alpha|=m}{\frac{\partial^m f(\ix)}{\partial \ix^{\boldsymbol\alpha}}} \Big| \leq C_m (1+|\ix|)^h,\quad \ix \in \R^d,
\end{equation}
where $h=h(m)$ is a function such that $1 \leq h(1) < 2$ (the lower bound on $h(1)$ follows from (\ref{strconvex})). The usual Burgers equation corresponds to $f(\ix)=|\ix|^2/2$.
\\ \indent
The white-forced generalised Hamilton-Jacobi equation is (\ref{HJbegin}) with the random forcing term
$$
\eta^{\omega}=\partial w^{\omega}/\partial t,
$$
added on the right-hand side. Here, $w^{\omega}(t),\ t \geq 0$ is the Wiener process with respect to $\F_t$ defined above.

\begin{defi} \label{weak}
We say that an $H^{s_0(d)}$-valued process $\ue(t,x)=\ue^{\omega}(t,x)$ (for the definition of $s_0$ see (\ref{s0})) is a solution of the equation
\begin{equation} \label{whiteBurgers}
\begin{cases}
\frac{\partial \ue}{\partial t} +  (\nabla f(\ue) \cdot \nabla) \ue  - \nu \Delta \ue = \nabla \eta^{\omega}
\\
\ue=\nabla \psi
\end{cases}
\end{equation}
if for every $t \geq 0$ and for every $\omega \in \Omega$, $\ue=\nabla \psi$, where $\psi$ satisfies the following properties: 
\\ \indent
i) For $t \geq 0$, $\omega \mapsto \psi^{\omega}(t)$ is $\F_t$-measurable.
\\ \indent
ii) The function $t \mapsto \psi^{\omega}(t)$ is continuous in $H^{s_0+1}$ (its gradient $t \mapsto \ue^{\omega}(t)$ is therefore continuous in $H^{s_0}$) and $\psi^{\omega}$ satisfies
\begin{align} \label{HJint}
\psi^{\omega}(t)=\psi^{\omega}(0)-\int_{0}^{t}{\Big( \nu L \psi^{\omega}(s)+f(\nabla \psi^{\omega})(s) \Big) ds}+w^{\omega}(t),
\end{align}
where $L=-\Delta$.
\end{defi}
\indent
As a corollary of this definition, we obtain that $\ue$ satisfies
\begin{align} \label{Burgersint}
\ue^{\omega}(t)=\ue^{\omega}(0)-\int_{0}^{t}{\Big( \nu L \ue^{\omega}(s)+\frac{1}{2} B(\ue^{\omega})(s) \Big) ds}+\nabla w^{\omega}(t),
\end{align}
where $B(\ue)=2 (\nabla f(\ue) \cdot \nabla) \ue$. When studying solutions of (\ref{whiteBurgers}), we always assume that the initial potential $\psi^0=\psi(0,\cdot)$ is $C^{\infty}$-smooth.
\\ \indent
For a given initial condition, (\ref{HJint}), and therefore (\ref{whiteBurgers}), has a unique solution, i.e. any two solutions coincide for all $\omega \in \Omega$. For shortness, this solution (resp., the corresponding potential) will be denoted by $\psi$ (resp., $\ue$). To prove this, we can use the same arguments as in 1d (cf. \cite{BorPhD}). Namely, to prove local well-posedness we use the \enquote{mild solution} technique (cf. \cite[Chapter 14]{DZ96}) and a bootstrap argument. Finally, global well-posedness follows from uniform bounds of the same type as in Section~\ref{upper}. For more details, see Appendix 1.
\\ \indent
Once $\ue^0=\nabla \psi^0$ is fixed, $\psi^0$ is fixed up to an additive constant. Moreover, if we consider two different initial conditions $\psi^0$ and $\psi^0+C$, then the difference between the corresponding solutions to (\ref{HJint}) will always be equal to $C$. In other words, fixing $\ue^0$ is equivalent to fixing an equivalence class of initial conditions $\psi^0+C,\ C \in \R$.
\\ \indent
Since the forcing and the initial condition are smooth in space, we can also show that $t \mapsto \ue(t)$ is time-continuous in $H^m$ for every $m \geq s_0$ and the spatial derivatives of $t \mapsto \psi(t)- w(t)$ are in $C^{\infty}$ for all $t$. Consequently, $\psi$ is also a strong solution of the equation
\begin{equation} \label{HJ}
\frac{\partial (\psi-w)}{\partial t} + f(\nabla \psi)  - \nu \Delta \psi =0,
\end{equation}
and $\ue^0$ is a strong solution of the equation
\begin{equation} \label{Bur}
\frac{\partial (\ue-\nabla w)}{\partial t}+(\nabla f(\ue) \cdot \nabla) \ue -\nu \Delta \ue= 0.
\end{equation}
Solutions of (\ref{whiteBurgers}) make a time-continuous Markov process in $H^{s_0}$. For details, we refer to \cite{KuSh12}, where the stochastic 2D Navier-Stokes equations are studied in a similar setting.
\\ \indent
Now consider, for a solution $\ue(t,x)$ of (\ref{whiteBurgers}), the functional
$$
G_m(\ue(t))=\left\|\ue(t)\right\|_m^2
$$
and apply It{\^o}'s formula \cite[Theorem 4.17]{DZ92} to (\ref{Burgersint}). We get
\begin{align} \nonumber
\left\|\ue(t)\right\|_m^2=& \left\|\ue^0\right\|_m^2 - \int_{0}^{t}{\left( 2\nu \left\|\ue(s)\right\|_{m+1}^2 +\langle L^m \ue(s),\ B(\ue)(s) \rangle\right) ds}
\\ \label{Itoexpint}
 &+ 2 \int_{0}^{t}{\langle L^m \ue(s),\ d \nabla  w(s)\rangle}+ I_{m+1} t.
\end{align}
We recall that for $m \geq 0$, $I_m=Tr_{H^{m}}(Q)$. Consequently,
\begin{align} \label{Itoexpdiff}
&\frac{d}{dt} \E \left\|\ue(t)\right\|_m^2 = -2 \nu \E \left\|\ue(t)\right\|_{m+1}^2 - \E\ \langle L^m \ue(t),\ B(\ue)(t) \rangle+I_{m+1}.
\end{align}

\subsection{Agreements} \label{agree}

From now on, all constants denoted by $C$ with an eventual subscript are positive and nonrandom. Unless otherwise stated, they depend only on $f$ and on the distribution of the Wiener process $w$. By $C(a_1,\dots,a_k)$ we denote constants which also depend on the parameters $a_1,\dots,a_k$. By $X \overset{a_1,\dots,a_k}{\lesssim} Y$ we mean that $X \leq C(a_1,\dots,a_k) Y$. The notation $X \overset{a_1,\dots,a_k}{\sim} Y$ stands for
$$
Y \overset{a_1,\dots,a_k}{\lesssim} X \overset{a_1,\dots,a_k}{\lesssim} Y.
$$
In particular, $X \lesssim Y$ and $X \sim Y$ mean that $X \leq C Y$ and $C^{-1} Y \leq X \leq C Y$, respectively. All constants are independent of the viscosity $\nu$ and of the initial value $\ue^0$.
\\ \indent
We denote by $\ue=\ue(t,x)$ the solution to (\ref{whiteBurgers}) with an initial condition $\ue^0=\nabla \psi^0$ and by $\psi$ the corresponding solution to (\ref{HJint}), which is, for a given value of $\ue^0$, uniquely defined up to an additive constant.
\\ \indent
For simplicity, in Sections~\ref{upper}-\ref{turb}, we assume that $\ue^0$ is deterministic. However, we can easily generalise all results in these sections to the case of a random initial condition $\ue^0(\omega)$  independent of $w(t),\ t \geq 0$. Indeed, in this case for any measurable functional $\Phi(\ue(\cdot))$ we have
$$
\E \Phi(\ue(\cdot))=\int{\E \Big(\Phi(\ue(\cdot))\ |\ \ue^0=\ue^0(\omega) \Big) d \mu (\ue^0(\omega))},
$$
where $\mu(\ue^0(\omega))$ is the law of the r.v. $\ue^0(\omega)$.
\\ \indent
For $\tau \geq 0$ and $\ue(\tau)$ independent of $w(t)-w(\tau),\ t \geq \tau$, the Markov property yields that
$$
\E \Phi(\ue(\tau+\cdot))=\int{\E \Big(\Phi(\ue(\cdot))\ |\ue^0=\ue^{\omega}(\tau) \Big) d \mu (\ue^{\omega}(\tau))}.
$$
Consequently, all $\ue^0$-independent estimates which hold for time $t$ or a time interval $[t,t+T]$ actually hold for time $t+\tau$ or a time interval $[t+\tau,t+\tau+T]$, uniformly in $\tau \geq 0$. Thus, for $T \geq 0$, to prove a $\ue^0$-independent  estimate which holds uniformly for $t \geq T$, it suffices to consider the case $t=T$.
%
%

\subsection{Setting and notation in Section~\ref{turb}} \label{agreeturb}

Consider an observable $A$, i.e. a real-valued functional on a Sobolev space $H^m$, which we evaluate on the solutions $u^{\omega}(s)$. We denote by $\lbrace A \rbrace$ the average of $A(u^{\omega}(s))$ in ensemble and in time over $[t,t+T_0]$:
$$
\lbrace A \rbrace=\frac{1}{T_0}\ \int_{t}^{t+T_0}{\E\ A(u^{\omega}(s))\ ds},\ t \geq T_1=T_0+2.
$$
The constant $T_0$ is the same as in Theorem~\ref{avoir}. In this section, we assume that $\nu \leq \nu_0$, where $\nu_0$ is a positive constant.  Next, we define the intervals
\begin{equation} \label{ranges}
J_1=(0,\ C_1 \nu];\ J_2=(C_1 \nu,\ C_2];\ J_3=(C_2,\ 1].
\end{equation}
For the value of $\nu_0$, $C_1$ and $C_2$, see (\ref{nu0eq}). In other words, $J_1 = \lbrace \ell:\ 0 < \ell \lesssim \nu \rbrace$, $J_2 = \lbrace \ell:\ \nu \lesssim \ell \lesssim 1 \rbrace$, $J_3=\lbrace \ell:\ \ell \sim 1\rbrace$. 
\\ \indent 
In terms of the Kolmogorov 1941 theory \cite{Fri95}, the interval $J_1$ corresponds to the \textit{dissipation range}. In other words, for the Fourier modes $\en$ such that $|\en|^{-1} \preceq C_1 \nu$, the expected values of the Fourier coefficients $\lbrace|\hat{\ue}(\en)|^2\rbrace$ decrease super-algebraically in $|\en|$. The interval $J_2$ corresponds to the \textit{inertial range}, where quantities such as the (layer-averaged) \textit{energy spectrum} $E(k)$ defined by
\begin{equation} \label{spectrum}
E(k)= k^{-1} \sum_{|\en| \in [M^{-1}k,Mk]}{ \{ |\hat{\ue}(\en)|^2 \} }
\end{equation}
behave as a negative degree of $k$. Here $M \geq 2$ is a large enough constant (cf. the proof of Theorem~\ref{spectrinert}). The boundary $C_1 \nu$ between these two ranges is the \textit{dissipation length scale}. Finally, the interval $J_3$ corresponds to the \textit{energy range}, i.e. the sum $\sum \lbrace| \hat{\ue}(\en)|^2\rbrace$ is mostly supported by the Fourier modes corresponding to $|\en|^{-1} \in J_3$.
\\ \indent
The positive constants $C_1$ and $C_2$ can take any value provided that
\begin{equation} \label{C1C2}
C_1 \leq \frac{1}{4} K^{-2};\quad 5 K^2 \leq \frac{C_1}{C_2}<\frac{1}{\nu_0}.
\end{equation}
Here, $K$ is a positive constant (see (\ref{K})). Note that the intervals defined by (\ref{ranges}) are non-empty and do not intersect each other for all values of $\nu \in (0,\nu_0]$, under the assumption (\ref{C1C2}). 
\\ \indent
The constants $C_1$ and $C_2$ can be made as small as desired. On the other hand, by (\ref{spectrinertupper2}) the ratio
\begin{equation} \nonumber
\frac{\sum_{|\en|^{-1} \in J_3} |\hat{\ue}(\en)|^2}{\sum_{\en \in \Z^d} |\hat{\ue}(\en)|^2}
\end{equation}
tends to $1$ as $C_2$ tends to $0$, uniformly in $\nu$. This allows us to choose $C_2$ so that
$$
\sum_{|\en| < C_2^{-1}}{\lbrace|\hat{\ue}(\en)|^2\rbrace} \geq \frac{99}{100} \sum_{\en \in \Z^d}{\lbrace|\hat{\ue}(\en)|^2\rbrace}.
$$
Now we suppose that $\er \in \R^d$, $p,\alpha \geq 0$. We consider the averaged \textit{directional} increments
\begin{align} \nonumber
S_{p,\alpha,i}(\er)=& \Big\{ \Big( \int_{\ix \in \T^d }{|\psi_i(\ix+\er)-\psi_i(\ix)|^p d \ix} \Big)^{\alpha} \Big\},\ 1 \leq i \leq d,
\end{align}
the averaged \textit{longitudinal} increments
\begin{align} \nonumber
S^{\|}_{p,\alpha}(\er)=& \Bigg\{ \Bigg( \int_{\ix \in \T^d }{\Bigg| \frac{(\ue(\ix+\er)-\ue(\ix)) \cdot \er}{|\er|} \Bigg|^p d \ix} \Bigg)^{\alpha} \Bigg\}
\end{align}
and the averaged increments
\begin{align} \nonumber
S_{p,\alpha}(\er)=& \Big\{ \Big( \int_{\ix \in \T^d }{|\ue(\ix+\er)-\ue(\ix)|^p d \ix} \Big)^{\alpha} \Big\}.
\end{align}
\indent
Now, for $0< \ell \leq 1$, we define the averaged moments of the space increments on the scale $\ell$ for the flow $\ue(t,x)$:
\begin{align} \nonumber
&S_{p,\alpha}(\ell)= c_d^{-1} \ell^{-(d-1)} \int_{\er \in \ell \mathcal{S}^{d-1} }{S_{p,\alpha}(\er) d \sigma(\er)},
\end{align}
where $d\sigma$ stands for the surface measure on $\ell \mathcal{S}^{d-1}$ and $c_d$ is the surface of $\mathcal{S}^{d-1}$. The quantity $S_{p,1}(\ell)$ is denoted by $S_p(\ell)$; it corresponds to the \textit{structure function} of order $p$, while the flatness $F(\ell)$, given by
\begin{equation} \label{flatness}
F(\ell)=S_4(\ell)/S_2^2(\ell),
\end{equation}
measures spatial intermittency at the scale $\ell$ \cite{Fri95}.

\section{Main results} \label{results}

In Sections~\ref{upper}-\ref{main}, we prove sharp upper and lower estimates for a large class of Sobolev norms of $\ue$. A key result is proved in Theorem~\ref{uxpos}. Namely, there we obtain that for $k \geq 1$,
\begin{equation} \label{uxposresults}
\E\ \big( \max_{s \in [t,t+1],\ \ix \in \T^d} \max_{1 \leq i \leq d} \psi_{ii}(s,\ix) \big)^{k}\overset{k}{\lesssim} 1, \quad t \geq 1.
\end{equation}
This result is generalised to all second derivatives  $(\ka \cdot \nabla)^2 \psi$, where $\ka$ is a vector with integer coefficients, in Lemma~\ref{uxposbis}.
\\ \indent
The main estimates for Sobolev norms are contained in the first part of Theorem~\ref{avoir}, where we prove that for $m=0$ and $p \in [1,\infty]$, $m=1$ and $p \in [1,\infty)$, or $m \geq 2$ and $p \in (1,\infty)$, we have
\begin{equation} \label{avoirresults}
\Big\{ \left|\ue(s)\right|_{m,p}^{\alpha} \Big\}^{1/\alpha} \overset{m,p,\alpha}{\sim} \nu^{-\gamma},\quad \alpha>0,\ t,T \geq T_0,
\end{equation}
where $\gamma=\max (0,m-1/p)$ and $T_0$ is a constant. In particular, this result implies that
$$
\frac{\{ \left\|\ue(s)\right\|_{m} \}}{\{  \left\|\ue(s)\right\|_{m+1}  \}} \sim \nu,\ m \geq 1.
$$
In the language of the turbulence theory, the characteristic dissipaton scale of the flow is therefore of the order $\nu$.
\\ \indent
For $m \geq 1$ and $p=\infty$, we do not have a similar bound, as opposed to the 1d case. However, there exists $1 \leq i \leq d$ such that we have the following result, which will play a key role when we estimate the small-scale quantities:
\begin{equation} \label{avoirlinftyresults}
\Big\{ \int_{\tilde{\ix}_i}  | \psi_i(s,\tilde{\ix}_i)|_{m,\infty}^{\alpha} \Big\}^{1/\alpha} \overset{m,\alpha}{\sim} \nu^{-m},\ \alpha>0,\ t,T \geq T_0.
\end{equation}
This result tells us that in average, the 1d restrictions of $\psi_i$ (which is, as we recall, the $i$-th coordinate of $\ue$) for fixed values of $\tilde{\ix}_i$ have the same behaviour as the 1d solutions (see \cite{BorW}).
\\ \indent
In Section~\ref{turb} we obtain sharp estimates for analogues of the quantities characterising the hydrodynamical turbulence. Although we only prove results for quantities averaged over a time period of length $T_0$, those results can be immediately generalised to quantities averaged over time periods of length $T \geq T_0$. 
\\ \indent
We assume that $\nu \in (0,\nu_0]$, where $\nu_0 \in (0,1]$ is a constant. As the first application of the estimates (\ref{uxposresults}-\ref{avoirlinftyresults}), in Section~\ref{turb} we obtain sharp estimates for the quantities $S_{p,\alpha},\ \alpha \geq 0$. Namely, by Theorem~\ref{avoir2}, for $\ell \in J_1$:
$$
\quad \ \ \ S_{p,\alpha}(\ell) \overset{p,\alpha}{\sim} \left\lbrace \begin{aligned} & \ell^{\alpha p},\ 0 \leq p \leq 1. \\ & \ell^{\alpha p} \nu^{-\alpha(p-1)},\ p \geq 1, \end{aligned} \right.$$
and for $\ell \in J_2$:
$$
S_{p,\alpha}(\ell) \overset{p,\alpha}{\sim} \left\lbrace \begin{aligned} & \ell^{\alpha p},\ 0 \leq p \leq 1. \\ & \ell^{\alpha},\ p \geq 1. \end{aligned} \right.
$$
Consequently, for $\ell \in J_2$ the flatness function $F(\ell)=S_4(\ell)/S_2^2(\ell)$ satisfies $F(\ell) \sim \ell^{-1}.$ Thus, solutions $\ue$ are highly intermittent in the inertial range.
\\ \indent
On the other hand, we obtain estimates for the spectral asymptotics of Burgulence. Namely, for all $m \geq 1$ and $\en \in \Z^d-\lbrace \zero \rbrace$, (\ref{superalg}) tells us that
$$
\lbrace |\hat{\ue}(\en)|^2 \rbrace \overset{m}{\lesssim} (\nu |\en|)^{-2m} \nu,
$$
and for $k$ such that $k^{-1} \in J_2$, by Remark~\ref{spectrinertrmq} we get
$$
\Bigg\{ \Bigg( k^{-1} \sum_{| \en | \in [M^{-1}k,Mk]}{|\hat{\ue}(\en)|^2} \Bigg)^{\alpha} \Bigg\} \overset{\alpha}{\sim} k^{-2\alpha},\quad \alpha>0.
$$
In particular, in the inertial range the energy spectrum satisfies $E(k) \sim k^{-2}$.
\\ \indent
Finally, in Section~\ref{stat} we prove that (\ref{whiteBurgers}) has a unique stationary measure $\mu$, and we give an estimate of the speed of convergence to it. Then we deduce that all estimates listed above still hold if we replace the brackets $\lbrace \cdot \rbrace$ with averaging with respect to $\mu$.

\section{Upper estimates for Sobolev norms} \label{upper}


\begin{rmq} \label{upperrmq}
In all results in Sections~\ref{upper}-\ref{turb}, quantities estimated for fixed $\omega$, such as maxima in time of Sobolev norms or
$$
\max_{s \in [t,t+1],\ \ix \in \T^d}\ \max_{1 \leq i \leq d} {\psi_{ii}(s,\ix)}
$$
can be replaced by their suprema over all smooth initial conditions. For instance, the quantity
$$
\max_{s \in [t,t+1]} |\ue(s)|_{m,p}
$$
can be replaced by
$$
\sup_{\ue^0 \in C^{\infty}} \max_{s \in [t,t+1]} |\ue(s)|_{m,p}.
$$
For the lower estimates, this fact is obvious. For the upper ones, the reason is that these quantities admit upper bounds of the form
$$
(1+\max_{s \in [t-\tau,t+\tau]}{\Vert w(s) \Vert_m})^{\alpha(m)} \nu^{-\beta(m)}.
$$
For more details, see \cite[Section 3.3]{BorW}, where this fact is proved in the 1d setting: the proof of the general case is word-to-word the same.
\end{rmq}

The following theorem is proved using a stochastic version of the Kruzhkov maximum principle \cite{Kru64}. The main idea is that if there was no forcing, then the partial derivatives $\psi_{ii}$ would be bounded from above by $C/t$. Since the Wiener process $w$ is smooth in space, we have good (i.e. uniform with respect to the initial condition) upper bounds for those derivatives. In this theorem and in the following ones, the potential assumption is crucial. Without it, the maximum principle still holds for the components $u_i$. However, we do not have any estimates which are uniform in time in the forced nonpotential case.

\begin{theo} \label{uxpos}
Denote by $X_{i,\tau}$ the r.v.'s
$$
X_{i,\tau}=\max_{s \in [\tau,\tau+1],\ \ix \in \T^d} \psi_{ii}(s,\ix),\ 1 \leq i \leq d.
$$
For $k \geq 1$, we have
$$
\E \ (\max_{1 \leq i \leq d} X_{i,\tau})^{k} \overset{k}{\lesssim} 1,\quad \tau \geq 1.
$$
\end{theo}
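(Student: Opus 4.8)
The plan is to run a stochastic version of the Oleinik one-sided (semiconcavity) estimate, i.e. a Kruzhkov-type maximum principle \cite{Kru64} applied to the pure second derivative $\psi_{ii}$, the engine being the quadratic damping produced by the strong convexity (\ref{strconvex}). Fix $1\le i\le d$. Since $\psi-w$ is, for a.e. $\omega$, a classical strong solution of the pathwise equation (\ref{HJ}), one may differentiate it twice in $x_i$. Writing the $i$-th row of the Hessian as $\nabla\psi_i=(\psi_{i1},\dots,\psi_{id})$, one obtains the pointwise identity
\[
\partial_t\psi_{ii}=\nu\Delta\psi_{ii}-(\nabla f(\nabla\psi)\cdot\nabla)\psi_{ii}-\big(D^2f(\nabla\psi)\nabla\psi_i\big)\cdot\nabla\psi_i+\partial_t w_{ii}.
\]
By (\ref{strconvex}) the curvature term dominates: $\big(D^2f(\nabla\psi)\nabla\psi_i\big)\cdot\nabla\psi_i\ge\sigma|\nabla\psi_i|^2\ge\sigma\,\psi_{ii}^2$. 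This is the crucial gain coming from the potential assumption, since it is precisely for a gradient $\ue$ that this pure-second-derivative computation closes.

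\textbf{Maximum principle.} To remove the white-in-time forcing I would pass to $z=\psi_{ii}-w_{ii}$, which solves a genuine (pathwise, $C^1$ in $t$) PDE with no stochastic time derivative. Let $Z(t)=\max_{\ix\in\T^d}z(t,\ix)$ and let $\ix^\ast(t)$ be a maximizer. At $\ix^\ast$ one has $\nabla z=\zero$ and $\Delta z\le0$, so the diffusion of $z$ and the transport term acting on $z$ drop out, and a standard computation gives, for a.e. $t$,
\[
\frac{d^+Z}{dt}\le -\sigma\big(Z+w_{ii}\big)^2+\nu\Delta w_{ii}-(\nabla f(\nabla\psi)\cdot\nabla)w_{ii}\Big|_{\ix^\ast}\le -\tfrac{\sigma}{2}Z^2+K(t),
\]
where $K(t)$ collects the forcing contributions $\sigma w_{ii}^2$, $\nu\Delta w_{ii}$ and the transport-of-noise term, all evaluated at $\ix^\ast(t)$.

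\textbf{Riccati integration and conclusion.} The inequality $Z'\le-\tfrac\sigma2 Z^2+K$ is of Riccati type, so comparison with the explicit solution of $Y'=-\tfrac\sigma2 Y^2+\bar K$ started from $Y=+\infty$ shows that, \emph{independently of the value of $Z$ at the initial time}, for every $s\ge t_0+1$ one has $Z(s)\le \tfrac{2}{\sigma}+\sqrt{2\bar K/\sigma}$ with $\bar K=\sup_{[t_0,s]}K$. Taking $t_0=\tau-1\ge0$ gives the initial-data-independent bound $\max_{s\in[\tau,\tau+1]}Z(s)\lesssim 1+\sqrt{\bar K}$ for $\tau\ge1$, which is exactly the quasi-stationary \enquote{$C/t$} phenomenon. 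Since $\psi_{ii}=z+w_{ii}$, it remains to take the maximum over the finitely many indices $i$ and over $s\in[\tau,\tau+1]$, and to bound the $k$-th moment of $1+\sqrt{\bar K}+|w_{ii}|_\infty$. The forcing norms $|w_{ii}|_\infty$, $|\Delta w_{ii}|_\infty$, $|\nabla w_{ii}|_\infty$ have finite moments of all orders, by the Sobolev embedding (\ref{Sobinj}) together with Fernique's bound (\ref{Fern}) and the Doob-type estimate (\ref{moments}); this yields $\E(\max_{1\le i\le d} X_{i,\tau})^{k}\overset{k}{\lesssim}1$.

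\textbf{Main obstacle.} The only term in $K$ that is not immediately controlled is the transport-of-noise term $(\nabla f(\nabla\psi)\cdot\nabla)w_{ii}$, whose coefficient $\nabla f(\nabla\psi)$ grows polynomially in $|\ue|=|\nabla\psi|$ by (\ref{poly}) and could a priori be as large as $\nu^{-h(1)}$ near a shock. This would ruin the $\nu$-uniformity, so the estimate hinges on an a priori bound $|\ue(s)|_\infty\lesssim1$ (with finite moments, uniform in $\nu$, in $\tau\ge1$ and in $\ue^0$) for $s$ in the relevant interval. Such a bound should follow from the one-sided control of the second derivatives together with the zero-mean condition and space-periodicity: large gradients occur only at cliffs, where $\psi_{ii}$ is very negative and hence far from its maximum. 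Establishing this $L_\infty$ bound without circularity is the genuinely delicate step; I would obtain it by a companion maximum-principle argument for the components $u_i$ combined with a short-time bootstrap, exactly as in the one-dimensional treatment \cite{BorW}.
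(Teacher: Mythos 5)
Your skeleton matches the paper's: differentiate (\ref{HJ}) twice in a coordinate direction, use (\ref{strconvex}) to produce the quadratic damping $\sigma|\nabla\psi_1|^2\ge\sigma\psi_{11}^2$, remove the white-in-time forcing by working with $\psi_{ii}-w_{ii}$, and obtain independence of the initial condition (you via a Riccati comparison started from $Y=+\infty$, the paper via the Kruzhkov weight $\tilde v=t\,v_{11}$, which vanishes at $t=0$; these are equivalent devices). But the step you yourself flag as \enquote{genuinely delicate} is a real gap, and the way out you propose would not work. You want to control the transport-of-noise term $(\nabla f(\nabla\psi)\cdot\nabla)w_{ii}$ by first proving $|\ue|_\infty\lesssim1$ through a companion maximum principle for the components $u_i$ plus a short-time bootstrap, \enquote{as in 1d}. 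This is circular: both in \cite{BorW} and here, the uniform $L_\infty$ bound on $\ue$ (Corollary~\ref{Lpupper}) is a \emph{consequence} of the present theorem, obtained from the one-sided bound on the second derivatives via the zero-mean primitive structure; it is not available beforehand. Moreover, the componentwise maximum principle gives no damping at all (the equation for $\psi_i-w_i$ is a pure transport--diffusion equation with forcing), so it can only produce bounds that grow in time and depend on $\ue^0$ --- exactly what the paper warns about (\enquote{we do not have any estimates which are uniform in time in the forced nonpotential case}) --- and its forcing term contains the same coefficient $\nabla f(\nabla\psi)$ you are trying to control.

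The paper's resolution is self-referential rather than a priori, and this is the idea missing from your proposal. One takes the maximum $N$ of the time-weighted quantities $t(\psi_{ii}-w_{ii})$ \emph{jointly over all} $1\le i\le d$ (without loss of generality attained for $i=1$). At the maximum point, every component of the gradient is controlled by $N$ itself: for each $j$, the function $t(\psi_j-w_j)(\tilde{\ix}_j)$ is the zero-mean primitive in $x_j$ of $t(\psi_{jj}-w_{jj})(\tilde{\ix}_j)$ on a circle of length $1$, hence $t|\psi_j-w_j|_\infty\le\int\big(t(\psi_{jj}-w_{jj})\big)^{+}\le N$. This is where the potential assumption and the joint maximization over $i$ are both essential; your version, which fixes a single $i$ throughout, could not even run this step, since the gradient involves all components. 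Combined with the growth hypothesis (\ref{poly}) with $h(1)=2-\delta<2$, the noise term is then bounded by $CK(1+N+K)^{2-\delta}$, where $K=\max_{t}|w(t)|_{4,\infty}$, and the quadratic damping makes the resulting inequality $\sigma(N-2K)^2\le 4K+N+CK(1+N+K)^{2-\delta}$ self-improving: $N\le C(1+K)^{1/\delta}$. Finite moments of $K$, via (\ref{moments}), then finish the proof. Note that the strict subquadraticity $h(1)<2$ is precisely what closes the loop; without it neither the paper's inequality nor your Riccati variant would terminate.
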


\begin{proof}
It suffices to consider the case $\tau=1$. For simplicity, we will denote $X_{i,\tau}$ by $X_i$.
\\ \indent
By (\ref{moments}), it suffices to prove the lemma's statement with $\psi_{ii}$ replaced by $\psi_{ii}-w_{ii}$. Consequently, it suffices to prove the lemma's statement with $[1,2]$ replaced by $[0,2]$ and $X_i$ replaced by $Y_i$, where $Y_i$ denotes
$$
Y_{i}=\max_{s \in [0,2],\ \ix \in \T^d}{s(\psi_{ii}(s,\ix)-w_{ii}(s,\ix))},\ 1 \leq i \leq d.
$$
Without loss of generality, we can assume that the maximum of $\psi_i$ is reached for $i=1$. We will denote $Y_1$ by $N$.
\\ \indent
Consider the equation (\ref{HJ}). Differentiating twice in $x_1$, we obtain that
\begin{align} \nonumber
&(\psi_{11}-w_{11})_t+\sum_{1 \leq i,j \leq d}{\psi_{1i} \psi_{1j} f_{ij}(\nabla \psi)}+\sum_{1 \leq i \leq d}{(\psi_{11})_i f_i(\nabla \psi)} =\nu \Delta \psi_{11}.
\end{align}
Putting $v=\psi-w$ and using (\ref{strconvex}), we get
\begin{align} \nonumber
&(v_{11})_t + \sigma |\nabla \psi_1|^2 + \sum_{1 \leq i \leq d}{(v_{11})_i f_i(\nabla \psi)} + \sum_{1 \leq i \leq d}{w_{11i} f_i(\nabla \psi)}
\\ \label{v11}
&\leq \nu \Delta v_{11}+\nu \Delta w_{11}.
\end{align}
Consider $\tilde{v}(t,x)=tv_{11}(t,x)$ and multiply (\ref{v11}) by $t^2$. For $t>0$, $\tilde{v}$ satisfies
\begin{align} \nonumber
&t\tilde{v}_t -\tilde{v} +\sigma (\tilde{v}+tw_{11})^2 + t \sum_{1 \leq i \leq d}{\tilde{v}_i f_i(\nabla \psi)}  + t^2 \sum_{1 \leq i \leq d}{w_{11i} f_i(\nabla \psi)}
\\ \label{tildev}
& \leq \nu t \Delta \tilde{v}+\nu t^2 \Delta w_{11}.
\end{align}
Now observe that if the zero mean function $\tilde{v}$ does not vanish identically on the domain $S=\left[0,2\right] \times \T^d$, then it attains its positive maximum $N$ on $S$ at a point $(t^0,\ix^0)$ such that $t^0>0$. At such a point, we have $\tilde{v}_t \geq 0$, $\tilde{v}_i=0$ for all $i$ and $\Delta \tilde{v} \leq 0$. By (\ref{tildev}), at $(t^0,\ix^0)$ we have the inequality
\begin{equation} \label{maxpoint}
\sigma (\tilde{v}+tw_{11})^2 \leq \nu t^2 \Delta w_{11}+\tilde{v} - t^2 \sum_{1 \leq i \leq d}{w_{11i} f_i(\nabla \psi)}.
\end{equation}
Now denote by $K$ the r.v.
$$
K=\max_{t \in [0,2]} |w(t)|_{4,\infty}
$$
and by $\delta$ the quantity
$$
\delta=2-h(1).
$$
(see (\ref{poly})). Since $\delta>0$, we get
\begin{align} \nonumber 
\Big| t^2 \sum_{1 \leq i \leq d}{w_{11i} f_i(\nabla \psi)} \Big| & \leq C K t^{\delta} (t+t|\nabla \psi|)^{2-\delta}
 \\ \nonumber
& \leq C K t^{\delta} (t+t |\nabla (\psi-w)|+t |\nabla w|)^{2-\delta}.
\end{align}
Since $N=\min_{1 \leq i \leq d} Y_i$ is reached for $i=1$ and for every $i$ and $\tilde{\ix}_i$, $t (\psi_i-w_i)(\tilde{\ix}_i)$ is the zero mean primitive of $t (\psi_{ii}-w_{ii})(\tilde{\ix}_i)$, at $(t^0,\ix^0)$ we have the inequality
\begin{align} \nonumber 
& \Big| t^2 \sum_{1 \leq i \leq d}{w_{11i} f_i(\nabla \psi)} \Big| \leq C K (1+N+K)^{2-\delta}.
\end{align}
From now on, we assume that $N \geq 2K$. Since $\nu \in (0,1]$, the relation (\ref{maxpoint}) yields
\begin{equation} \label{KN}
\sigma (N-2K)^2 \leq 4K+N+C K (1+N+K)^{2-\delta}.
\end{equation}
Consequently, if $N \geq 2K$, then $N \leq C(K+1)^{1/\delta}$. Since by (\ref{moments}) all moments of $K$ are finite, all moments of $N$ are also finite. This proves the lemma's assertion.
\end{proof}


\begin{cor} \label{Lpupper}
For $k \geq 1$,
$$
\E \max_{s \in [t,t+1]} \left|\ue(s)\right|^k_{p} \overset{k}{\lesssim} 1,\quad p \in [1,\infty],\ t \geq 1.
$$
\end{cor}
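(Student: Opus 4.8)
The plan is to reduce everything to the case $p=\infty$ and then to a one-dimensional argument on the slices of the components of $\ue$, exploiting the potential structure. Since the torus $\T^d$ has unit volume, we have $\left|\ue(s)\right|_p \leq \left|\ue(s)\right|_{\infty}$ for every $p \in [1,\infty]$, and since $|\ue| \leq \sqrt{d}\, \max_{1 \leq i \leq d} |u_i|$ with $u_i=\psi_i$, it suffices to bound $\E \max_{s \in [t,t+1]} |\psi_i(s)|_{\infty}^k$ for each fixed $i$ by a constant depending only on $k$.

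Next, fix $i$ and a value of $\tilde{\ix}_i$, and consider the one-dimensional restriction $x_i \mapsto \psi_i(\tilde{\ix}_i)$. I would first observe that this restriction is a zero-mean $1$-periodic function: indeed $\int_0^1 \psi_i(\tilde{\ix}_i)\, dx_i = \int_0^1 \partial_{x_i}\psi\, dx_i = 0$ by periodicity of $\psi$. This is precisely where the potential assumption $\ue=\nabla\psi$ enters. Its derivative in $x_i$ is $\psi_{ii}(\tilde{\ix}_i)$, which by Theorem~\ref{uxpos} is bounded from above, uniformly in $\tilde{\ix}_i$ and in $s \in [t,t+1]$, by the random variable $X := \max_{1 \leq j \leq d} X_{j,t} \geq 0$ (the sign follows since $\psi_{ii}$ has zero mean on each slice).

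The core of the argument is then the elementary one-dimensional fact that a zero-mean $1$-periodic $C^1$ function $g$ whose derivative is bounded above, $g' \leq M$ with $M \geq 0$, satisfies $\left|g\right|_{\infty} \leq M$. I would prove this in two steps. Integrating the inequality $g(x) \geq g(x^*) - M(x^*-x)$ backward from a maximizer $x^*$ over one period and using $\int g =0$ yields $\max g \leq M/2$; integrating the inequality $g(x) \leq g(x_*) + M(x-x_*)$ forward from a minimizer $x_*$ and again using $\int g = 0$ rules out deep negative excursions and yields $-\min g \leq M$. Applying this with $M = X$ to every slice gives $|\psi_i(s)|_{\infty} \leq X$ uniformly in $s \in [t,t+1]$, whence $\max_{s \in [t,t+1]}|\ue(s)|_{\infty} \lesssim X$. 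Raising to the power $k$ and taking expectations, Theorem~\ref{uxpos} gives $\E(\max_{s} |\ue(s)|_{\infty})^k \overset{k}{\lesssim} 1$, which completes the proof.

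The only genuinely delicate point --- and the reason the potential assumption is indispensable --- is that Theorem~\ref{uxpos} controls the second derivatives $\psi_{ii}$ only from above, yet we need to bound the full sup-norm (both signs) of $u_i$. The one-dimensional lemma is exactly what converts a one-sided bound on the derivative of a zero-mean periodic function into a two-sided bound on the function itself: a deep negative excursion of $u_i$ is ruled out not by any direct bound on $\psi_{ii}$, but because such an excursion, combined with $\psi_{ii} \leq M$ and the zero-mean constraint on the slice, is quantitatively impossible. Everything else is routine.
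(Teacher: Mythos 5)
Your proof is correct and follows essentially the same route as the paper: reduce to $p=\infty$, exploit the zero spatial mean of each slice $\psi_i(\tilde{\ix}_i)$ (the potential structure), and convert the one-sided bound on $\psi_{ii}$ supplied by Theorem~\ref{uxpos} into a two-sided sup-norm bound on $\psi_i$. The only cosmetic difference lies in the elementary 1d step: the paper uses the inequality $|g|_{\infty} \leq \int_{\T^1}{(g')^{+}}$ for a zero-mean periodic function $g$ (oscillation bounded by the positive variation of $g$), whereas you integrate the pointwise linear bounds coming from $g' \leq M$ against the zero-mean constraint; both quantities are then controlled by the same random variable $\max_{1 \leq i \leq d} X_{i,t}$, so the two arguments coincide in substance.
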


\begin{proof}
It suffices to consider the case $p=\infty$. Since the space average of $\psi_i(\tilde{\ix}_i)$ vanishes for every $\tilde{\ix}_i$, Theorem~\ref{uxpos} yields that for $k \geq 1$,
$$
\E\ \max_{\ix \in \T^d} |\psi_i(\ix)|^k \leq \E \Big( \max_{\tilde{\ix}_i \in \T^{d-1}} \int_{\T^1}{(\psi_{ii}(\tilde{\ix}_i))^{+} dx_i} \Big)^k \overset{k}{\lesssim} 1,\ 1 \leq i \leq d.
$$
\end{proof}

\begin{cor} \label{preW11cor}
For $k \geq 1$,
$$
\E \max_{s \in [t,t+1],\ 1 \leq i \leq d}\ \max_{\tilde{\ix}_i \in \T^{d-1}} \left|\psi_{ii}(s,\tilde{\ix}_i)\right|_1^k \overset{k}{\lesssim} 1,\quad t \geq 1.
$$
\end{cor}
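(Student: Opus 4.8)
The plan is to reduce the $L_1$ norm of the pure second derivative $\psi_{ii}$ in the $i$-th direction to twice the integral of its positive part, and then to control that positive part by the pointwise upper bound already supplied by Theorem~\ref{uxpos}. This is precisely the one-dimensional mechanism, now applied slice by slice to the transverse variable $\tilde{\ix}_i$.

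First I would fix $s \in [t,t+1]$, an index $1 \le i \le d$ and a transverse point $\tilde{\ix}_i \in \T^{d-1}$, and observe---exactly as in the proof of Corollary~\ref{Lpupper}---that the one-dimensional slice $x_i \mapsto \psi_i(s,\tilde{\ix}_i)(x_i)$ has zero mean on $\T^1$, since $\psi$ is periodic in $x_i$ and $\psi_i=\partial \psi/\partial x_i$. Consequently its derivative $\psi_{ii}(s,\tilde{\ix}_i)$ integrates to zero over $\T^1$, so that
$$
\int_{\T^1} (\psi_{ii}(s,\tilde{\ix}_i))^{+}\, dx_i = \int_{\T^1} (\psi_{ii}(s,\tilde{\ix}_i))^{-}\, dx_i,
$$
and hence
$$
\left|\psi_{ii}(s,\tilde{\ix}_i)\right|_1 = 2 \int_{\T^1} (\psi_{ii}(s,\tilde{\ix}_i))^{+}\, dx_i .
$$

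Next I would bound the integrand uniformly. Recall $X_{i,t}=\max_{s \in [t,t+1],\ \ix}\psi_{ii}(s,\ix)$; because each slice of $\psi_i$ has zero mean, $\psi_{ii}$ must change sign along it (unless it vanishes), so $\max_{\ix}\psi_{ii}(s,\ix)\ge 0$ and therefore $X_{i,t}\ge 0$. For every $s \in [t,t+1]$ and every $\ix$ we have $\psi_{ii}(s,\ix)\le X_{i,t}$, whence $(\psi_{ii}(s,\tilde{\ix}_i))^{+}\le X_{i,t}$; integrating over the unit-measure circle $\T^1$ gives $\left|\psi_{ii}(s,\tilde{\ix}_i)\right|_1 \le 2 X_{i,t} \le 2 \max_{1 \le i \le d} X_{i,t}$. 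Since this bound is independent of $s$ and $\tilde{\ix}_i$, taking the maximum over $s,i,\tilde{\ix}_i$, then the $k$-th power and the expectation, Theorem~\ref{uxpos} yields
$$
\E \max_{s \in [t,t+1],\ 1 \le i \le d}\ \max_{\tilde{\ix}_i \in \T^{d-1}} \left|\psi_{ii}(s,\tilde{\ix}_i)\right|_1^{k} \le 2^{k}\, \E \big( \max_{1 \le i \le d} X_{i,t} \big)^{k} \overset{k}{\lesssim} 1 .
$$

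I do not expect a genuine obstacle here: the only point needing care is the sign bookkeeping, namely noticing that the slicewise zero-mean property simultaneously turns the $L_1$ norm into twice the positive-part integral \emph{and} guarantees $X_{i,t}\ge 0$, so that the raw $k$-th moment furnished by Theorem~\ref{uxpos} (rather than an absolute moment) is exactly what is needed. Everything else is a direct transcription of the corresponding one-dimensional argument.
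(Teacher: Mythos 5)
Your proof is correct and follows essentially the same route as the paper: the slicewise zero-mean property of $\psi_{ii}(s,\tilde{\ix}_i)$ turns the $L_1$ norm into twice the integral of the positive part, which is bounded pointwise by $\max_{\ix \in \T^d}\psi_{ii}(s,\ix)$ on the unit-measure circle, and Theorem~\ref{uxpos} finishes the argument. Your extra remark that this maximum is automatically nonnegative (so the raw $k$-th moment from Theorem~\ref{uxpos} suffices) is a point the paper leaves implicit, but it is the same proof.
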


\begin{proof}
For every $s$, $i$ and $\tilde{\ix}_i$, the space average of $\psi_{ii}(s,\tilde{\ix}_i)$ vanishes identically. Thus, we get
$$
\int_{\T^1}{\left| \psi_{ii}(s,\tilde{\ix}_i)\right| dx_i}=2 \int_{\T^1}{(\psi_{ii}(s,\tilde{\ix}_i))^{+} dx_i} \leq 2 \max_{\ix \in \T^d} \psi_{ii}(s,\ix),
$$
and then we apply Theorem~\ref{uxpos}.
\end{proof}
\indent 
For any vector $\ka \in \Z^d$, the Gram--Schmidt method allows us to build an orthogonal basis of $\R^d$ of the form
$$
(\ka^1=\ka,\dots,\ka^d)
$$
such that all the vectors $\ka^i$ belong to $\Z^d$. Then we can consider functions on $\T^d$ as (a subset of the set of) functions on the quotient
$$
\R^d/\Z(\ka^1,\dots,\ka^d).
$$
The corresponding coordinates will be denoted by $(y_1,\dots,y_d)$. By analogy with the notation $\tilde{\ix}_i$, we define the notation $\tilde{\y}_i$.
Consequently, when considering functions which can be written as $f(\tilde{\y}_i)$ in the new coordinates, we can apply (GN) with $d'=1$.

\begin{lemm} \label{uxposbis}
Fix a vector $\ka \in \R^d$ with integer coordinates. Denote by $X_t$ the r.v.
$$
X_{t}(\ka)=\max_{s \in [t,t+1]} \max_{\ix \in \T^d}{(\ka \cdot \nabla)^2 \psi (s,\ix)}.
$$
For $k \geq 1$, we have
$$
\E X_{t}^{k} \overset{k}{\lesssim} 1,\quad t \geq 1.
$$
\end{lemm}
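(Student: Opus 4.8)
The plan is to deduce the statement from Theorem~\ref{uxpos} by the linear change of variables set up just before the lemma. Using the orthogonal integer basis $(\ka^1=\ka,\dots,\ka^d)$ produced by Gram--Schmidt, I would pass to the coordinates $(y_1,\dots,y_d)$ on $\R^d/\Z(\ka^1,\dots,\ka^d)$, on which every $\Z^d$-periodic function is well defined and has the same maximum as on $\T^d$. Writing $\ix=\sum_j y_j\ka^j$, one has $\partial_{y_j}=\ka^j\cdot\nabla$; in particular $\partial_{y_1}=\ka\cdot\nabla$, so that $(\ka\cdot\nabla)^2\psi=\psi_{y_1y_1}$ and $X_t(\ka)$ becomes $\max_{s\in[t,t+1]}\max_{\y}\psi_{y_1y_1}(s,\y)$, i.e. a pure coordinate second derivative in the new frame.

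Next I would verify that equation (\ref{HJ}) keeps exactly the structure exploited in the proof of Theorem~\ref{uxpos}. Since the $\ka^j$ are orthogonal, the dissipation $-\nu\Delta$ transforms into $-\nu\sum_{j=1}^d|\ka^j|^{-2}\partial_{y_j}^2$, a diagonal weighted Laplacian that is still uniformly elliptic and, decisively, nonpositive at an interior maximum of the maximised function (there each $\partial_{y_j}^2\leq 0$ and the weights are positive). The nonlinearity becomes $g(\nabla_\y\psi)$ with $g(\mathbf q)=f(K^{-1}\mathbf q)$, $K$ being the matrix with rows $\ka^j$; by (\ref{strconvex}) the function $g$ is strongly convex with constant $\sigma\Vert K\Vert^{-2}$, and its derivatives still obey the growth bound (\ref{poly}) up to $\ka$-dependent constants. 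The forcing and all its $\y$-derivatives remain smooth-in-space Wiener processes, so (\ref{moments}) applies unchanged. With these ingredients the proof of Theorem~\ref{uxpos} should transcribe verbatim, with $x_i,f,\Delta$ replaced by $y_j,g$ and the weighted Laplacian: one reduces to $[0,2]$ via (\ref{moments}) and the weight $\tilde v=t\,\partial_{y_1}^2(\psi-w)$, differentiates (\ref{HJ}) twice in $y_1$, uses the strong convexity of $g$ to create the term $\sigma\Vert K\Vert^{-2}(\partial_{y_1}^2\psi)^2$, and at the maximum point $N=\max\tilde v$ obtains the analogue of (\ref{maxpoint}), a quadratic inequality in $N$.

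The step I expect to require the most attention is the bound on the forcing contribution $\nabla g(\nabla_\y\psi)\cdot\nabla_\y\partial_{y_1}^2 w$, whose size is controlled by the growth of $g$ and hence by $|\nabla_\y\psi|$. I see two ways to close it. The first is to mimic Theorem~\ref{uxpos} literally by tracking $\max_j\psi_{y_jy_j}$ over all $j$ at once: at the maximum point every $\psi_{y_jy_j}$ is then $\leq N/t+O(K)$, and the zero-mean primitive argument applied in each variable $y_j$ bounds $|\nabla_\y\psi|$, hence $|\nabla_\ix\psi|=|K^{-1}\nabla_\y\psi|$, by $C(\ka)(N+K)$. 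The second, shorter, is to invoke Corollary~\ref{Lpupper}, which already gives $\E\max_{s\in[t,t+1],\,\ix}|\nabla\psi|^k\lesssim 1$, and to feed this directly into the growth estimate. Either way $N$ is dominated by a random variable with finite moments of every order, so taking expectations gives $\E X_t^k\lesssim 1$; all constants depend on the fixed vector $\ka$, which is why the conclusion carries only the superscript $k$. The genuine obstacle is thus not the maximum principle itself but checking that strong convexity, ellipticity and the sign of the transformed dissipation at an interior maximum are all preserved under the change of coordinates, together with the gradient control just described.
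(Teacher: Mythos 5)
Your proof is correct and is essentially the paper's own argument: the paper dispatches this lemma in one line, asserting that the proof of Theorem~\ref{uxpos} carries over verbatim in the orthogonal coordinates $(y_1,\dots,y_d)$, and your checks (the dissipation becoming a positively weighted diagonal Laplacian that is still nonpositive at an interior maximum, strong convexity and polynomial growth of $g(\mathbf{q})=f(K^{-1}\mathbf{q})$, smoothness of the transformed noise so that (\ref{moments}) applies, and the zero-mean primitive argument in each $y_j$) are precisely the details that one-line remark presupposes. One caveat: only your first way of closing the gradient-control step (running the Kruzhkov argument for $\max_j \partial^2_{y_j}\psi$ over all $j$ simultaneously, which dominates the quantity for the fixed direction $\ka=\ka^1$) is watertight as stated; the shortcut via Corollary~\ref{Lpupper} does not directly work, because that corollary controls $|\ue|_{\infty}$ only on intervals $[t,t+1]$ with $t\geq 1$, whereas the weighted maximum in the argument on $[0,2]\times\T^d$ may occur at a time $t^0<1$, where no bound uniform in the initial condition is yet available (one would have to rerun the argument on $[1,2]$ with weight $t-1$, in the spirit of Theorem~\ref{uxpos2}, to make that shortcut legitimate).
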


\begin{proof}
The proof is exactly the same as for Theorem~\ref{uxpos}, up to a change of coordinates: it suffices to work in the orthogonal basis $(\ka^1=\ka,\dots,\ka^d)$ with the corresponding coordinates $(y_1,\dots,y_d)$.
\end{proof}

\begin{cor} \label{W11}
Fix a vector $\ka \in \R^d$ with integer coordinates. For $k \geq 1$, we have
$$
\E \max_{s \in [t,t+1],\ \tilde{\y}_1 \in \T^{d-1}} \left|\frac{\partial^2 \psi(s,\tilde{\y}_1)}{\partial y_1^2}\right|^k_{1} \overset{k}{\lesssim} 1,\quad t \geq 1.
$$
\end{cor}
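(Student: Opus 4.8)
The plan is to reproduce the proof of Corollary~\ref{preW11cor} essentially verbatim, but to run it in the rotated coordinates $(y_1,\dots,y_d)$ adapted to $\ka$ introduced just above, and to invoke Lemma~\ref{uxposbis} in place of Theorem~\ref{uxpos}. The key observation making this possible is that, with $\ix=y_1\ka^1+\dots+y_d\ka^d$ and $\ka^1=\ka$, the chain rule gives $\partial/\partial y_1=\ka\cdot\nabla$, so that $\partial^2\psi/\partial y_1^2=(\ka\cdot\nabla)^2\psi$ is exactly the quantity whose positive maximum is controlled by Lemma~\ref{uxposbis}.

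First I would record the zero-mean property that replaces the vanishing of the $x_i$-average of $\psi_{ii}(\tilde{\ix}_i)$ used in Corollary~\ref{preW11cor}. Since $\psi(s,\cdot)$ is $\Z^d$-periodic and each $\ka^i\in\Z^d$, the shift $y_1\mapsto y_1+1$ corresponds to the $\Z^d$-translation $\ix\mapsto\ix+\ka$, so $\psi(s,\tilde{\y}_1)$ is $1$-periodic in $y_1$. Hence $\partial^2\psi/\partial y_1^2(s,\tilde{\y}_1)$, being the $y_1$-derivative of a $1$-periodic function, has vanishing mean over $\T^1$ for every fixed $s$ and $\tilde{\y}_1$.

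Given this, the $L_1$ norm collapses to twice its positive part, and the latter is bounded pointwise by the global maximum:
$$
\left|\frac{\partial^2\psi}{\partial y_1^2}(s,\tilde{\y}_1)\right|_1=2\int_{\T^1}\Big(\frac{\partial^2\psi}{\partial y_1^2}(s,\tilde{\y}_1)\Big)^{+}dy_1\leq 2\max_{\ix\in\T^d}(\ka\cdot\nabla)^2\psi(s,\ix).
$$
Taking the maximum over $\tilde{\y}_1$ and then over $s\in[t,t+1]$ bounds the left-hand side by $2X_t(\ka)$, with $X_t(\ka)$ the random variable of Lemma~\ref{uxposbis}. Raising to the power $k$, taking expectations, and applying Lemma~\ref{uxposbis} then yields the claim.

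The only point demanding a little care — the \enquote{hard part}, though it is genuinely routine — is the bookkeeping of the change of coordinates: one must confirm that $\Z(\ka^1,\dots,\ka^d)\subseteq\Z^d$ makes $\psi$ truly $1$-periodic in $y_1$ (so the mean really vanishes) and that the chain rule produces $\partial^2/\partial y_1^2=(\ka\cdot\nabla)^2$ with no stray Jacobian factor, so that the quantity estimated by Lemma~\ref{uxposbis} is exactly the one appearing here. Once this is checked, every remaining step is identical to Corollary~\ref{preW11cor}.
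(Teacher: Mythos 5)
Your proposal is correct and is exactly the paper's argument: the paper proves Corollary~\ref{W11} by stating that it follows from Lemma~\ref{uxposbis} precisely as Corollary~\ref{preW11cor} follows from Theorem~\ref{uxpos}, i.e. by running the zero-mean/positive-part trick in the rotated coordinates $(y_1,\dots,y_d)$. Your verification of the two bookkeeping points (1-periodicity of $\psi$ in $y_1$ because $\ka^1=\ka\in\Z^d$, and $\partial^2/\partial y_1^2=(\ka\cdot\nabla)^2$ by the chain rule) is exactly the content the paper leaves implicit.
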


\begin{proof}
This result follows from Lemma~\ref{uxposbis} in the same way as Corollary~\ref{preW11cor} follows from Theorem~\ref{uxpos}.

\begin{cor} \label{W11norm}
For $k \geq 1$,
$$
\E \max_{s \in [t,t+1]} |\ue(s)|^k_{1,1} \overset{k}{\lesssim} 1,\quad t \geq 1.
$$
\end{cor}

\begin{proof}
We have
$$
|\ue|_{1,1} \sim \sum_{1 \leq i \leq d}{|\psi_{ii}|_1}+\sum_{1 \leq i < j \leq d}{|\psi_{ij}|_1}.
$$
For every $i$, the estimate for $|\psi_{ii}|_1$ follows from Corollary~\ref{preW11cor}. Now it remains to estimate the terms corresponding to $\psi_{ij},\ i <j$. We observe that by the triangle inequality,
\begin{align} \nonumber
\int_{\T^d}{\left| \psi_{ij}(s)\right|} \leq & \frac{1}{2} \int_{\T^d}{\left| \psi_{ii}(s)\right|}+\frac{1}{2} \int_{\T^d}{\left| \psi_{jj}(s)\right|}
\\ \nonumber
&+ \frac{1}{2} \int_{\T^d}{\left| \psi_{ii}(s)+\psi_{jj}(s)-2\psi_{ij}(s)\right|},
\end{align}
and then we use Corollary~\ref{W11} for $\ka=\e_i-\e_j$.
\end{proof}
\indent
Now we recall a standard estimate of the nonlinearity $\left\langle L^m v, B(v)\right\rangle$ (see Section~\ref{prel} for the definitions of $L$ and $B$). The proof is word-to-word the same as the 1d proof in \cite{BorW}.

\begin{lemm} \label{lmubuinfty}
For $\ve \in C^{\infty}$ such that $\left|\ve\right|_{\infty} \leq N$, we have
$$
N_m(\ve)=\left| \left\langle L^m \ve, B(\ve) \right\rangle\right| \leq C' \left\|\ve\right\|_m \left\|\ve\right\|_{m+1},\quad m \geq 1,
$$
with
\begin{equation} \label{polyestimate}
C' = C_m (1+N)^{n'},
\end{equation}
where $C_m$, as well as the natural number $n'$, depend only on $m$.
\end{lemm}

\indent
Now we define a norm which is equivalent to the norm given by (\ref{Wmp}) and is adapted to the use of (GN) in the 1d setting. The idea is to replace derivatives along the multi-indices in (\ref{Wmp}) (where we differentiate repeatedly over \textit{different} directions) with a sum of derivatives along a well-chosen set of directions (i.e. in each term of the sum we differentiate repeatedly along \textit{the same} direction).  For this, we need the following result:

\begin{lemm} \label{linalg}
For every $m,d \geq 1$, there exists a finite set $\Pi_m^d$ of homogeneous polynomials of degree 1 in $d$ variables $X_1,\dots,X_d$ with integer coefficients such that their $m$-th powers form a basis for the real vector space of homogeneous polynomials of degree $m$ in $d$ variables.
\end{lemm}
For a proof of this result, see Appendix 2. We will always use a fixed set $\Pi_m^d$ for fixed values of $m,d$. Moreover, every time that we consider an element $\ka \in \Pi_m^d$, we will always take a fixed \enquote{canonical} orthogonal basis $(\ka^1=\ka,\dots,\ka^d)$ as above.
\\ \indent
Now we define a norm equivalent to $|\cdot|_{m,p}$.

\begin{cor} \label{linalgcor}
For $m \geq 0,\ p \in [1,+\infty)$, the following quantity is equivalent to the norm $|\cdot|_{m,p}$:
\begin{align} \label{preSobalt}
|\ve |^{\sim}_{m,p} &=\sum_{\ka \in \Pi_m^d} \Bigg( \int_{\y \in \T^{d}}{\Bigg|\frac{\partial^m \ve}{\partial y_1^m} \Bigg|^p}\Bigg)^{1/p}
\\ \label{Sobalt}
&=\sum_{\ka \in \Pi_m^d} \Bigg( \int_{\tilde{\y}_1 \in \T^{d-1}} \Bigg( \int_{y_1 \in \T^1}{\Bigg|\frac{\partial^m \ve(\tilde{\y}_1)}{\partial y_1^m} \Bigg|^p dy_1} \Bigg) d \tilde{\y}_1 \Bigg)^{1/p},
\end{align}
where we canonically identify $\Z^d$ with the set of homogeneous polynomials 
of degree 1 in $d$ variables with integer coefficients by the correspondence
$$
(n_1,\dots,n_d) \mapsto \sum_{1 \leq i \leq d}{n_i X_i}.
$$
\end{cor}

By analogy with the notation introduced above, we denote $| \ve |^{\sim}_{m,2}$ by $\Vert \ve \Vert^{\sim}_{m}$.
\\ \indent
Note that no analogous characterisation of the $W^{m,\infty}$ norms as an average of 1d norms exists: we would have to consider the essential upper value of these norms instead.
\indent \medskip \\
\textbf{Proof of Corollary~\ref{linalgcor}.}  We note that by H{\"o}lder's inequality each of the quantities raised to the power $p$ in (\ref{preSobalt}) can be bounded from above by a linear combination of the summands in (\ref{Wmp}) raised to the power $p$. Using H{\"o}lder's inequality again, we obtain that $| \ve |^{\sim}_{m,p} \lesssim | \ve |_{m,p}$.
\\ \indent
To prove the converse inequality, we identify the set of linear constant coefficient  differential operators with the set of polynomials in $d$ variables:
$$
\sum_{\boldsymbol\alpha \in A}{c_{\boldsymbol\alpha}\ \frac{\partial^{|\boldsymbol\alpha|}}{\partial X_1^{\alpha_1} \dots \partial X_d^{\alpha_d}} } \mapsto \sum_{\boldsymbol\alpha \in A}{c_{\boldsymbol\alpha} X_1^{\alpha_1} \dots  X_d^{\alpha_d}}.
$$
This allows us to see that by  Lemma~\ref{linalg}, a differential operator $\partial^{|\boldsymbol\alpha|}/\partial x^{\boldsymbol\alpha}$ can be written as a linear combination of the operators
$$
(\partial/\partial \tilde{\y}_1)^{|\boldsymbol\alpha|}.
$$
corresponding to the vectors $\ka \in \Pi_m^d$. Now the converse inequality follows by H{\"o}lder's inequality in the same way as the direct one.
\end{proof}  
\indent
The following upper estimate of $\E \left\|\ue(t)\right\|^{2}_m$ holds uniformly for $t \geq 2$.
The proof is very similar to the proof in 1d. The only delicate point is that to get the right power of $\nu$, we have to use (GN) in a 1d setting, which requires the use of the $| \cdot |^{ \sim }$ norms.

\begin{lemm} \label{uppermaux}
For $m \geq 1$,
$$
\E \left\|\ue(t)\right\|^{2}_m \overset{m}{\lesssim} \nu^{-(2m-1)},\quad t \geq 2.
$$
\end{lemm}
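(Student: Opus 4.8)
The plan is to track the single scalar quantity $g_m(t)=\E\left\|\ue(t)\right\|_m^2$ and to derive for it a \emph{closed} differential inequality that is super-linear in $g_m$; this super-linearity is exactly what will yield a bound valid pointwise in $t$ and independent of the initial datum, rather than a merely time-averaged one. The starting point is the It\^o identity (\ref{Itoexpdiff}),
$$ g_m'(t) = -2\nu\, g_{m+1}(t) - \E\,\langle L^m \ue(t),\ B(\ue)(t)\rangle + I_{m+1}. $$

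First I would control the nonlinear term. By Lemma~\ref{lmubuinfty}, $\left|\langle L^m\ue,B(\ue)\rangle\right|\le C'\left\|\ue\right\|_m\left\|\ue\right\|_{m+1}$ with $C'=C_m(1+|\ue|_\infty)^{n'}$. The crucial point is to interpolate $\left\|\ue\right\|_m$ \emph{with the one-dimensional exponent}: using the equivalent norm $\left\|\cdot\right\|_m^{\sim}$ of Corollary~\ref{linalgcor} and applying (GN) slicewise in the variable $y_1$ (so that $d'=1$), one obtains
$$ \left\|\ue\right\|_m \overset{m}{\lesssim} \left\|\ue\right\|_{m+1}^{\theta}\,|\ue|_\infty^{1-\theta},\qquad \theta=\frac{2m-1}{2m+1}\in(0,1). $$
The nonlinearity is then $\lesssim C'|\ue|_\infty^{1-\theta}\left\|\ue\right\|_{m+1}^{1+\theta}$ with $1+\theta=4m/(2m+1)<2$, so Young's inequality (conjugate exponents $\tfrac{2m+1}{2m}$ and $2m+1$) gives
$$ C'|\ue|_\infty^{1-\theta}\left\|\ue\right\|_{m+1}^{1+\theta}\le \nu\left\|\ue\right\|_{m+1}^2 + C\bigl(C'|\ue|_\infty^{1-\theta}\bigr)^{2m+1}\nu^{-2m}. $$
Taking expectations and using Corollary~\ref{Lpupper} to bound all moments of $1+|\ue|_\infty$ (for $s\ge 1$), the last term is $\overset{m}{\lesssim}\nu^{-2m}$; since $I_{m+1}=O(1)\le\nu^{-2m}$, this yields $g_m'(t)\le -\nu\, g_{m+1}(t)+C\nu^{-2m}$ for $t\ge 1$.

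Next I would convert the linear dissipation $-\nu\, g_{m+1}$ into a super-linear term in $g_m$. Squaring the same interpolation, taking expectations and applying H\"older with exponents $1/\theta,\ 1/(1-\theta)$ together with $\E|\ue|_\infty^2\lesssim 1$ gives $g_m\lesssim g_{m+1}^{\theta}(\E|\ue|_\infty^2)^{1-\theta}\lesssim g_{m+1}^{\theta}$, that is, $g_{m+1}\ge c\,g_m^{r}$ with $r=1/\theta=\tfrac{2m+1}{2m-1}>1$. Substituting closes the inequality: $g_m'(t)\le -c\nu\, g_m(t)^{r}+C\nu^{-2m}$ for $t\ge 1$, with $r>1$. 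Reading off the bound is then a scalar ODE comparison: the right-hand side vanishes at $y_*\sim(\nu^{-2m}/\nu)^{1/r}=\nu^{-(2m+1)/r}=\nu^{-(2m-1)}$, and because $r>1$ the flow is strongly attracting. Indeed, while $g_m>2^{1/r}y_*$ one has $g_m'\le -\tfrac{c}{2}\nu\, g_m^{r}$, whence $g_m(t)^{1-r}\ge \tfrac{(r-1)c}{2}\nu(t-1)$ and so $g_m(t)\lesssim(\nu(t-1))^{-1/(r-1)}=(\nu(t-1))^{-(2m-1)/2}$, uniformly in $g_m(1)$; combining this with the floor $y_*$ and evaluating at $t\ge 2$ (so $t-1\ge 1$ and $\nu\le 1$) gives $g_m(t)\overset{m}{\lesssim}\nu^{-(2m-1)}$, as claimed.

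I expect the main obstacle to be the interpolation step: one must obtain \emph{exactly} the one-dimensional exponent $\theta=(2m-1)/(2m+1)$ against the $L_\infty$ norm, which is why the $\left\|\cdot\right\|_m^{\sim}$ norms and the slicewise use of (GN) are indispensable — the genuine multidimensional (GN), or interpolation against $L_2$ in place of $L_\infty$, degrades the power of $\nu$. The second, more conceptual point is to notice that the very same interpolation also furnishes the super-linear coercivity $g_{m+1}\gtrsim g_m^{r}$ with $r>1$, since this is precisely what upgrades a time-averaged estimate to the pointwise, initial-data-independent bound for $t\ge 2$.
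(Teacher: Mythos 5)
Your overall architecture --- It\^o's formula (\ref{Itoexpdiff}), the nonlinearity bound of Lemma~\ref{lmubuinfty}, a slicewise 1d interpolation used to close a super-linear ODE inequality for $g_m(t)=\E\|\ue(t)\|_m^2$, and a comparison argument that forgets the initial datum --- is exactly the paper's. But there is a genuine gap at the step you yourself flag as crucial: the interpolation
$$
\|\ue\|_m \lesssim \|\ue\|_{m+1}^{\theta}\,|\ue|_\infty^{1-\theta}, \qquad \theta = \frac{2m-1}{2m+1},
$$
is false, slicewise or not. With $q=\infty$, the scaling relation in (GN) forces precisely this $\theta$, but the admissibility constraint of Lemma~\ref{GN} requires $\theta \geq m/(m+1)$ (target order $m$, source order $m+1$), and $(2m-1)/(2m+1) < m/(m+1)$. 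This is not an artifact of the particular statement of (GN): taking $u(\ix)=\epsilon\sin(2\pi N x_1)$ (a zero-mean gradient field), one has $\|u\|_m \sim \epsilon N^m$, while
$$
\|u\|_{m+1}^{\theta}\,|u|_\infty^{1-\theta} \sim (\epsilon N^{m+1})^{\theta}\epsilon^{1-\theta} = \epsilon N^{m-1/(2m+1)},
$$
so the ratio of the two sides blows up like $N^{1/(2m+1)}$ as $N \to \infty$. In short: against the $L_\infty$ norm of $\ue$ itself, the dimensionally forced exponent lies outside the valid (GN) range, and high-frequency, low-amplitude oscillations defeat the inequality.

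The paper avoids this by interpolating one derivative lower against $L_1$: slicewise (via the $\|\cdot\|^{\sim}$ norms of Corollary~\ref{linalgcor}), (GN) is applied to $\partial^2\varphi/\partial y_1^2$, which yields
$$
\|\ue(s)\|_m^2 \lesssim \|\ue(s)\|_{m+1}^{(4m-2)/(2m+1)}\,N_{max}^{4/(2m+1)},
$$
where $N_{max}$ is the supremum over $\ka \in \Pi_{m+1}^d$ and over slices $\tilde{\y}_1$ of $|\ue(s,\tilde{\y}_1)|_{1,1}$. Here the same exponent $\theta=(2m-1)/(2m+1)$ is admissible, since the threshold becomes $(m-1)/m \leq (2m-1)/(2m+1)$, and all moments of $N_{max}$ are finite by Corollary~\ref{W11} (a consequence of the maximum principle, not of Corollary~\ref{Lpupper}). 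If you substitute $N_{max}^{2/(2m+1)}$ for $|\ue|_\infty^{1-\theta}$ throughout, every subsequent step of your proof --- the weighted Young inequality producing $-\nu g_{m+1}+C\nu^{-2m}$, the H\"older step giving $g_{m+1} \gtrsim g_m^{(2m+1)/(2m-1)}$, and the ODE comparison around the fixed point $\nu^{-(2m-1)}$ --- goes through and then coincides, up to repackaging, with the paper's argument.
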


\begin{proof} Fix $m \geq 1$. We will use the notation
\begin{equation} \nonumber
x(s)=\E \left\|\ue(s)\right\|^2_m;\quad y(s)=\E \left\|\ue(s)\right\|^2_{m+1}.
\end{equation}
It suffices to consider the case $t=2$. We claim that for $s \in [1,2]$ we have the implication
\begin{align} \nonumber
x(s) &\geq C' \nu^{-(2m-1)} \Longrightarrow
\\ \label{decrm}
\frac{d}{ds} x(s) &\leq -(2m-1) (x(s))^{2m/(2m-1)},
\end{align}
where $C' \geq 1$ is a fixed number, chosen later. Below, all constants denoted by $C$ are positive and do not depend on $C'$ and we denote by $Z$ the quantity
$$
Z=C' \nu^{-(2m-1)}.
$$
Indeed, assume that $x(s) \geq Z.$ By (\ref{Itoexpdiff}) and Lemma~\ref{lmubuinfty}, we have
\begin{align} \nonumber
\frac{d}{ds} x(s) \leq & - 2 \nu \E \left\|\ue(s)\right\|_{m+1}^2 + C \E \Big( (1+\left|\ue(s)\right|_{\infty})^{n'} \left\|\ue(s)\right\|_m 
\\ \label{decrmbis}
&\times \left\|\ue(s)\right\|_{m+1} \Big)+I_{m+1},
\end{align}
with $n'=n'(m)$. 
\\ \indent
Now we denote by $\varphi$ the zero mean value function
$$
\varphi=\psi-\int_{\T^d}{\psi}
$$
For each $\ka \in \Pi_{m+1}^d$, we apply (GN) to the corresponding 1d restrictions
$$
\frac{\partial^2 \varphi}{\partial y_1^2}(s,\tilde{\y}_1)=\frac{\partial^2 \psi}{\partial y_1^2}(s,\tilde{\y}_1),\ \tilde{\y}_1 \in \T^{d-1}.
$$
We get
\begin{align} \nonumber
\Vert  \varphi(s,\tilde{\y}_1) \Vert^2_{m+1} \leq C &
\Vert \varphi(s,\tilde{\y}_1) \Vert_{m+2}^{(4m-2)/(2m+1)} | \varphi(s,\tilde{\y}_1) |_{2,1}^{4/(2m+1)}.
\end{align}
Integrating in $\tilde{\y}_1$ and then summing over all $\ka \in \Pi_{m+1}^d$, we get
\begin{align} \nonumber
&\sum_{\ka \in \Pi_{m+1}^d} \int_{\tilde{\y}_1 \in \T^{d-1}}{ \Vert \varphi(s,\tilde{\y}_1) \Vert^2_{m+1}  d \tilde{\y}_1 }
\\ \nonumber
&\lesssim \sum_{\ka \in \Pi_{m+1}^d} \int_{\tilde{\y}_1 \in \T^{d-1}}{ \Vert \varphi(s,\tilde{\y}_1) \Vert_{m+2}^{(4m-2)/(2m+1)} | \varphi(s,\tilde{\y}_1) |_{2,1}^{4/(2m+1)} d \tilde{\y}_1 }.
\end{align}
By Corollary~\ref{linalgcor} and H{\"o}lder's inequality, we obtain that
\begin{align} \nonumber
\left\| \ue(s) \right\|^2_{m} \sim (\left\| \varphi(s) \right\|^{\sim}_{m+1} )^2
& \leq C (\left\| \varphi(s) \right\|^{\sim}_{m+2})^{(4m-2)/(2m+1)} 
N_{max}^{4/(2m+1)}
\\ \label{GN11mprel}
& \leq C \left\| \ue(s) \right\|_{m+1}^{(4m-2)/(2m+1)} 
N_{max}^{4/(2m+1)},
\end{align}
where $N_{max}$ is the maximum over all $\ka \in \Pi_{m+1}^d$ and all $\tilde{\y}_1 \in \T^{d-1}$ of the quantity $|\varphi(s,\tilde{\y}_1)|_{2,1} \sim |\ue(s,\tilde{\y}_1)|_{1,1}$.
\\ \indent
Consequently, (\ref{decrmbis}) yields that
\begin{align} \nonumber
\frac{d}{ds}  x(s)  \leq & - 2 \nu \E \left\|\ue(s)\right\|_{m+1}^2 + C\E \Big( (1+|\ue(s)|_{\infty})^{n'} N_{max}^{2/(2m+1)}
\\ \nonumber
& \times \left\|\ue(s)\right\|_{m+1}^{4m/(2m+1)} \Big)+I_{m+1}.
\end{align}
Corollary~\ref{W11} tells us that all moments of $N_{max}$ are finite. Thus by H{\"o}lder's inequality and Corollary~\ref{Lpupper} we get
\begin{align} \nonumber
\frac{d}{ds} x(s) \leq & \Big( - 2 \nu (y(s))^{1/(2m+1)} + C \Big) (y(s))^{2m/(2m+1)}+I_{m+1}.
\end{align}
On the other hand, (\ref{GN11mprel}) and H{\"o}lder's inequality yield that
\begin{align} \nonumber
x(s) \leq & C ( y(s) )^{(2m-1)/(2m+1)} (\E N_{max}^2)^{2/(2m+1)}
\\ \nonumber
\leq & C (y(s))^{(2m-1)/(2m+1)},
\end{align}
and thus
\begin{align} \nonumber
( y(s) )^{1/(2m+1)}& \geq C ( x(s) )^{1/(2m-1)}.
\end{align}
Consequently, since by assumption $x(s) \geq C' \nu^{-(2m-1)}$, for $C'$ large enough we have
\begin{align} \nonumber
\frac{d}{ds} x(s) \leq &\left(-C C'^{1/(2m-1)}+C \right) ( x(s) )^{2m/(2m-1)}+I_{m+1}.
\end{align}
Thus we can choose $C'$ in such a way that (\ref{decrm}) holds.
\\ \indent
Now we claim that
\begin{equation} \label{decrmcor}
x(2) \leq Z.
\end{equation}
Indeed, if $x(s) \leq Z$ for some $s \in \left[1,2\right]$, then the assertion (\ref{decrm}) ensures that $x(s)$ remains below this threshold up to $s=2$.
\\ \indent
Now, assume that $x(s) > Z$ for all $s \in \left[1,2\right]$. Denote
$$
J(s)=(x(s))^{-1/(2m-1)},\ s \in \left[1,2\right].
$$
Using the implication (\ref{decrm}) we get $dJ(s)/ds \geq 1$. Therefore $J(2) \geq 1$. As $\nu \leq 1$ and $C' \geq 1$, we get $x(2) \leq Z$. Thus in both cases inequality (\ref{decrmcor}) holds. This proves the lemma's assertion.
\end{proof}
\indent
The following statement is proved using the 1d (GN) trick as above and then proceeding in the same way as in \cite{BorW}.

\begin{cor} \label{uppermauxcor}
For $m \geq 1$,
$$
\E ||\ue(t) ||^{k}_m \overset{m,k}{\lesssim} \nu^{-k(2m-1)/2},\quad k \geq 1,\ t \geq 2.
$$
\end{cor}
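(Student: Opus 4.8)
The plan is to reduce everything to the case of an even exponent and then run a differential-inequality argument modelled on the proof of Lemma~\ref{uppermaux}. For an arbitrary $k \geq 1$, I choose an even integer $2n \geq k$; by Jensen's inequality $\E \left\|\ue(t)\right\|_m^k \leq \big( \E \left\|\ue(t)\right\|_m^{2n} \big)^{k/(2n)}$, and since $\big( \nu^{-n(2m-1)} \big)^{k/(2n)}=\nu^{-k(2m-1)/2}$, it suffices to prove $\E \left\|\ue(t)\right\|_m^{2n} \lesssim \nu^{-n(2m-1)}$ for every integer $n \geq 1$ and $t \geq 2$. Working with even exponents is convenient because $\left\|\ue\right\|_m^{2n}=G_m^n$ is a \emph{polynomial} in $G_m=\left\|\ue\right\|_m^2$, so It\^o's formula applies to it without any regularisation of the power function. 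As in Lemma~\ref{uppermaux} it is enough to treat $t=2$, and I set $x(s)=\E \left\|\ue(s)\right\|_m^{2n}$ for $s \in [1,2]$.

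Applying It\^o's formula to $G_m^n$, using the expansion (\ref{Itoexpint}) for $dG_m$ and taking expectations, gives
\begin{align} \nonumber
\frac{d}{ds} x(s) =& n\,\E\Big[ \left\|\ue(s)\right\|_m^{2n-2}\big( -2\nu \left\|\ue(s)\right\|_{m+1}^2 - \langle L^m \ue(s), B(\ue)(s)\rangle + I_{m+1} \big) \Big]
\\ \nonumber
& + \frac{n(n-1)}{2}\,\E\big[ \left\|\ue(s)\right\|_m^{2n-4}\, \rho_m(s) \big],
\end{align}
where $\rho_m(s)$ denotes the instantaneous quadratic variation of the martingale part of $G_m$. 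Because the coefficients $a_{\en},b_{\en}$ in (\ref{Itodiag}) decay super-algebraically, one has $\rho_m(s) \lesssim \left\|\ue(s)\right\|_m^2$, so the last term together with the $I_{m+1}$ term are both bounded by $C\,\E \left\|\ue(s)\right\|_m^{2n-2} \lesssim x(s)^{(n-1)/n}$. For the nonlinearity I would invoke Lemma~\ref{lmubuinfty} to bound $|\langle L^m \ue, B(\ue)\rangle|$ by $C(1+\left|\ue\right|_\infty)^{n'} \left\|\ue\right\|_m \left\|\ue\right\|_{m+1}$, and then convert the $\left\|\ue\right\|_m$ factors into $\left\|\ue\right\|_{m+1}$ factors by the one-dimensional (GN) inequality (\ref{GN11mprel}), exactly as in the proof of Lemma~\ref{uppermaux}. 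As there, this sets up a competition in which, after H\"older's inequality and the finiteness of all moments of $\left|\ue\right|_\infty$ (Corollary~\ref{Lpupper}) and of $N_{max}$ (Corollary~\ref{W11}) are used to peel off the bounded factors, the nonlinearity carries a strictly smaller power of the dissipative quantity than the term $-2\nu\,\E[ \left\|\ue\right\|_m^{2n-2}\left\|\ue\right\|_{m+1}^2 ]$.

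The decisive input is again (GN) in the form (\ref{GN11mprel}): it yields $\left\|\ue\right\|_{m+1}^2 \gtrsim \left\|\ue\right\|_m^{2(2m+1)/(2m-1)} N_{max}^{-4/(2m-1)}$, hence $\left\|\ue\right\|_m^{2n-2}\left\|\ue\right\|_{m+1}^2 \gtrsim \left\|\ue\right\|_m^{2n+4/(2m-1)} N_{max}^{-4/(2m-1)}$, and a reverse H\"older step (absorbing the negative power of $N_{max}$ against its finite moments) gives $\E[ \left\|\ue\right\|_m^{2n-2}\left\|\ue\right\|_{m+1}^2 ] \gtrsim x(s)^{1+\beta}$ with $\beta = 2/(n(2m-1))>0$. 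Combining this lower bound with the absorption of the nonlinearity and of the lower-order $x^{(n-1)/n}$ terms, I expect to reach a differential inequality of exactly the shape used in Lemma~\ref{uppermaux}: there is $C'\geq 1$ and $c>0$ such that
$$
x(s) \geq C' \nu^{-n(2m-1)} \ \Longrightarrow\ \frac{d}{ds} x(s) \leq -c\, x(s)^{1+\beta}.
$$
The threshold $\nu^{-n(2m-1)}=\nu^{-2n(2m-1)/2}$ is chosen precisely so that, past it, the $\nu$-weighted dissipative term dominates the lower-order contributions, the factor $\nu$ being absorbed through the threshold exactly as in Lemma~\ref{uppermaux}. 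The bound $x(2)\leq C'\nu^{-n(2m-1)}$ then follows by the same comparison argument: setting $J(s)=x(s)^{-\beta}$ gives $J'(s)\geq c\beta$ wherever $x(s)$ exceeds the threshold, so $x(s)$ cannot remain above it up to $s=2$. Since the threshold and all constants are independent of $\ue^0$ and of $\nu$, the estimate holds uniformly, and the general $k$ follows from the Jensen step above.

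The main obstacle will be the H\"older/Young bookkeeping in the nonlinear term: one must arrange, after the (GN) conversion, that the total exponent of $\left\|\ue\right\|_{m+1}$ appearing in $\E[(1+\left|\ue\right|_\infty)^{n'} \left\|\ue\right\|_m^{2n-1}\left\|\ue\right\|_{m+1}]$ stays strictly below the one in $\E[\left\|\ue\right\|_m^{2n-2}\left\|\ue\right\|_{m+1}^2]$, while all factors carrying $\left|\ue\right|_\infty$ and $N_{max}$ are separated off via their finite moments, and that this extracts exactly the exponent $\beta$ (equivalently the power $\nu^{-n(2m-1)}$) uniformly in $\nu$ and in the initial datum. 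This is carried out precisely as in the one-dimensional argument of \cite{BorW}. A minor additional point is the justification of It\^o's formula for the power $G_m^n$, which is why I restrict to even exponents and recover general $k$ by Jensen's inequality at the very end.
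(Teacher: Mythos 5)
Your strategy is sound and, once the bookkeeping is carried out, it does prove the corollary; but it is genuinely different from the paper's proof, which never applies It\^o's formula to higher powers. The paper's argument is a pure interpolation trick: for odd $k \geq 3$ it sets $M=(k(2m-1)+1)/2$, applies (GN) to the 1d restrictions exactly as in Lemma~\ref{uppermaux} to get the pointwise bound $\left\| \ue(t) \right\|^{k}_{m} \lesssim \left\| \ue(t) \right\|_{M} N_{max}^{k-1}$, and then concludes by Cauchy--Schwarz, the finiteness of all moments of $N_{max}$ (Corollary~\ref{W11}) and Lemma~\ref{uppermaux} applied at the \emph{higher} Sobolev level $M$: $\E \left\| \ue(t) \right\|^{k}_{m} \lesssim (\E \left\| \ue(t) \right\|_M^2)^{1/2} \lesssim \nu^{-(2M-1)/2}=\nu^{-k(2m-1)/2}$; the remaining $k$ follow by H\"older. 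In other words, the paper trades moment order against Sobolev regularity, exploiting that the second-moment bound already holds for every $m$ with exactly the right power of $\nu$. You instead fix $m$ and raise the moment, which forces you to redo the differential inequality with the weight $G_m^{n-1}$. I checked that your scheme closes: the quadratic-variation density is indeed $\lesssim \left\| \ue \right\|_m^2$ (only $I_{m+1}<\infty$ is needed for this, not the super-algebraic decay of $a_{\en}, b_{\en}$); your reverse-H\"older bound $\E [\left\| \ue \right\|_m^{2n-2} \left\| \ue \right\|_{m+1}^{2}] \gtrsim x^{1+\beta}$ with $\beta=2/(n(2m-1))$ is valid, by H\"older against the finite moments of $N_{max}$; and the nonlinearity can be absorbed by two applications of Young's inequality, leading to $x' \leq -c\nu x^{1+\beta}+C\nu^{-2m}x^{(n-1)/n}+C$. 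What the paper's route buys is brevity and the avoidance of any stochastic calculus beyond (\ref{Itoexpint}) (no justification of It\^o's formula for powers, no integrability of the martingale part); what yours buys is a self-contained argument at fixed $m$, of the standard SPDE moment-estimate type, which would adapt to situations where the constant in the $\nu$-power is not uniform across Sobolev levels.

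One quantitative slip should be fixed: your displayed implication cannot hold with the exponent $1+\beta$. The dissipation only yields $-c\nu x^{1+\beta}$, and the factor $\nu$ has to be paid for by the threshold: from $x \geq C' \nu^{-n(2m-1)}$ one gets $\nu x^{\beta/2} \geq (C')^{\beta/2}$, hence $x' \leq -c(C')^{\beta/2} x^{1+\beta/2}$, i.e. the derivable exponent is $1+\beta/2$, not $1+\beta$. This parallels Lemma~\ref{uppermaux} exactly, whose exponent $2m/(2m-1)$ equals $1+\beta/2$ for $n=1$. Your comparison argument is insensitive to this change (run it with $J=x^{-\beta/2}$); alternatively, keep the factor $\nu$ and conclude $x(2) \leq \max \big( C'\nu^{-n(2m-1)},\ (c\beta\nu)^{-1/\beta} \big) \lesssim \nu^{-n(2m-1)}$, since $\nu^{-1/\beta}=\nu^{-n(2m-1)/2}$. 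With that correction, and the Young/H\"older bookkeeping written out, your proof is complete.
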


\begin{proof}
The cases $k=1,2$ follow immediately from Lemma~\ref{uppermaux}. For $k \geq 3$, we consider only the case when $k$ is odd, since the general case follows by H{\"o}lder's inequality. We set $M=(k(2m-1)+1)/2$ and as previously, for $\ka \in \Pi_{m+1}^d$ we apply (GN) to the corresponding restrictions
$$
\frac{\partial^2 \psi}{\partial y_1^2}(t,\tilde{\y}_1),\ \tilde{\y}_1 \in \T^{d-1}.
$$
We get
\begin{align} \nonumber
\Vert \ue(t,\tilde{\y}_1) \Vert_{m}^{2} \overset{m,k}{\lesssim}&
\Vert \ue(t,\tilde{\y}_1) \Vert_{M}^{2/k} | \ue(t,\tilde{\y}_1) |_{1,1}^{(2k-2)/k}.
\end{align}
Integrating in $\tilde{\y}_1$ and then summing over all $\ka \in \Pi_{m+1}^d$ and using H{\"o}lder's inequality, we obtain that
\begin{align} \nonumber
\left\| \ue(t) \right\|^{k}_{m} &\overset{k,m}{\lesssim} \left\| \ue(t) \right\|_{M} N_{max}^{k-1},
\end{align}
where $N_{max}$ is the same as in the proof of Lemma~\ref{uppermaux}. Since all moments of $N_{max}$ are finite, by H{\"o}lder's inequality and Lemma~\ref{uppermaux} we get
\begin{align} \nonumber
\E \left\| \ue(t) \right\|^{k}_{m} & \overset{k,m}{\lesssim} ( \E \left| \ue(t) \right\|_{M}^{2})^{1/2} \overset{k,m}{\lesssim} \nu^{-(2M-1)/2}=\nu^{-k(2m-1)/2}.
\end{align}
\end{proof}
\indent
The following lemma follows from the corollary in exactly the same way as in 1d (see \cite[Lemma~3.8.]{BorW})

\begin{lemm} \label{uppermlemm}
For $m \geq 1$,
$$
\E  \max_{s \in [t,t+1]} || \ue(s) ||^{2}_m \overset{m}{\lesssim} \nu^{-(2m-1)},\quad t \geq 2.
$$
\end{lemm}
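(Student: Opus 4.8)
The plan is to run the argument pathwise on the interval $[t,t+1]$ (with $t\ge 2$), starting from the It{\^o} expansion (\ref{Itoexpint}) of $\|\ue(s)\|_m^2$ written with base point $t$: for $s\in[t,t+1]$,
\begin{equation} \nonumber
\|\ue(s)\|_m^2=\|\ue(t)\|_m^2+\int_t^s D(r)\,dr+(M_s-M_t)+I_{m+1}(s-t),
\end{equation}
where the drift is $D(r)=-2\nu\|\ue(r)\|_{m+1}^2-\langle L^m\ue(r),B(\ue)(r)\rangle$ and $M_s=2\int_0^s\langle L^m\ue,d\nabla w\rangle$ is the martingale part. First I would dispose of the two harmless terms: $\E\|\ue(t)\|_m^2\overset{m}{\lesssim}\nu^{-(2m-1)}$ by Lemma~\ref{uppermaux}, and $I_{m+1}(s-t)\le I_{m+1}\lesssim 1$.

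Next I would control the martingale. Integrating by parts and using $\nabla\cdot\ue=\Delta\psi=-L\psi$ gives $\langle L^m\ue,\nabla w\rangle=\langle L^{m+1}\psi,w\rangle$, so the quadratic variation of $M$ over $[t,t+1]$ equals $2\int_t^{t+1}\langle L^{m+1}\psi,QL^{m+1}\psi\rangle\,dr$. Since $Q$ is smoothing (Section~\ref{rand}), the operator $Q^{1/2}L^{m+1}$ is bounded on $L_2$, whence the integrand is $\lesssim|\ue(r)|^2$; by the Burkholder--Davis--Gundy (or Doob's maximal) inequality together with Corollary~\ref{Lpupper} one obtains $\E\max_{s\in[t,t+1]}|M_s-M_t|\lesssim(\E\max_{[t,t+1]}|\ue|^2)^{1/2}\lesssim 1$.

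The heart of the matter is $\max_s\int_t^sD\,dr$, and here a term-by-term bound is fatal: over a unit interval each of $\int 2\nu\|\ue\|_{m+1}^2$ and $\int|\langle L^m\ue,B\rangle|$ is only of order $\nu^{-2m}$, so nothing short of their cancellation can yield the sharp power $\nu^{-(2m-1)}$. To capture this I would argue exactly as in the proof of Lemma~\ref{uppermaux}. Combining Lemma~\ref{lmubuinfty} with the one-dimensional (GN) interpolation (\ref{GN11mprel}) (valid through Corollary~\ref{linalgcor}) gives $-\langle L^m\ue,B\rangle\le C'\|\ue\|_m\|\ue\|_{m+1}$ with $C'=C_m(1+|\ue|_\infty)^{n'}$, together with $\|\ue\|_{m+1}\gtrsim N_{max}^{-2/(2m-1)}(\|\ue\|_m^2)^{(2m+1)/(2(2m-1))}$. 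These two facts show that, for a suitable constant,
\begin{equation} \nonumber
D(r)<0\quad\text{whenever}\quad\|\ue(r)\|_m^2>R(\omega):=C\,\big(\max_{[t,t+1]}(1+|\ue|_\infty)\big)^{(2m-1)n'}\big(\max_{[t,t+1]}N_{max}\big)^2\,\nu^{-(2m-1)}.
\end{equation}
By Corollaries~\ref{Lpupper},~\ref{W11} and~\ref{uppermauxcor} the suprema of $|\ue|_\infty$ and of $N_{max}$ over $[t,t+1]$ have finite, $\nu$-independent moments, so $\E R(\omega)\lesssim\nu^{-(2m-1)}$.

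Finally I would close with an excursion argument. Let $s^\ast\in[t,t+1]$ realise $\max_s\|\ue(s)\|_m^2$; if this maximum exceeds $R$, set $\tau=\sup\{s\le s^\ast:\ \|\ue(s)\|_m^2\le R\}$ (and $\tau=t$ otherwise). On $(\tau,s^\ast]$ the drift is nonpositive, so $\int_\tau^{s^\ast}D\le 0$, while continuity gives $\|\ue(\tau)\|_m^2\le\max(R,\|\ue(t)\|_m^2)$; hence
\begin{equation} \nonumber
\max_{s\in[t,t+1]}\|\ue(s)\|_m^2\le\max\big(R,\|\ue(t)\|_m^2\big)+2\max_{s\in[t,t+1]}|M_s-M_t|+I_{m+1}.
\end{equation}
Taking expectations and inserting the three bounds above yields $\E\max_{s\in[t,t+1]}\|\ue(s)\|_m^2\overset{m}{\lesssim}\nu^{-(2m-1)}$. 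The only genuinely delicate point — and the main obstacle — is the cancellation in the drift: one must not estimate the dissipation and the nonlinear flux separately, which loses a factor $\nu^{-1}$, but instead exploit the negative sign of the net drift above the random threshold $R$, which is precisely what the pathwise comparison step delivers.
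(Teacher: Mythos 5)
Your proof is correct and takes essentially the same route as the paper, whose own proof of Lemma~\ref{uppermlemm} simply defers to the 1d argument of \cite{BorW}: It{\^o}'s formula, Doob's (or Burkholder--Davis--Gundy) inequality for the martingale part, and the key observation that, by Lemma~\ref{lmubuinfty} combined with the 1d (GN) interpolation (\ref{GN11mprel}), the drift is strictly negative above a random threshold of order $\nu^{-(2m-1)}$ whose expectation is controlled via Corollaries~\ref{Lpupper} and~\ref{W11}, the conclusion then following from a pathwise last-crossing comparison. In particular, your diagnosis that term-by-term estimation of the drift necessarily loses a factor $\nu^{-1}$, so that the sign structure above the threshold must be exploited, is precisely the point of that argument.
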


Now denote $\gamma=\max (0,m-1/p)$. The following result is proved for $m \geq 1$ and $p \in (1,\infty)$ in the same way as Corollary~\ref{uppermauxcor}. For $m=0$ and $p \in [1,+\infty]$ or  $m=1$ and $p=1$, this result is just a reformulation of Corollary~\ref{Lpupper} and Corollary~\ref{W11norm}, respectively. 

\begin{theo} \label{upperwmp}
For $m=0$ and $p \in [1,+\infty]$, for $m=1$ and $p \in [1,+
\infty)$, or for $m \geq 2$ and $p \in (1,\infty)$,
$$
\Big( \E  \max_{s \in [t,t+1]} | \ue(s) |^{\alpha}_{m,p} \Big)^{1/\alpha} \overset{m,p,\alpha}{\lesssim} \nu^{-\gamma},\quad \alpha>0,\ t \geq 2.
$$
\end{theo}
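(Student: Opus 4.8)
The plan is to split the statement into its three parameter ranges and to treat the genuinely new range, $m \geq 1$ and $p \in (1,\infty)$, by repeating the argument of Corollary~\ref{uppermauxcor} with the $L_2$-based interpolation replaced by an $L_p$-based one. First I would dispose of the two degenerate ranges, where $\gamma = 0$. For $m=0$ and $p \in [1,\infty]$ one has $|\ue(s)|_{0,p} = |\ue(s)|_p$, so Corollary~\ref{Lpupper} gives $\E \max_{s \in [t,t+1]} |\ue(s)|_p^k \lesssim 1$ for every integer $k \geq 1$; for $m=1$ and $p=1$, Corollary~\ref{W11norm} gives the same bound for $|\ue(s)|_{1,1}$. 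In both cases the estimate with a general exponent $\alpha > 0$ follows by Jensen's inequality when $\alpha \leq 1$ and by bounding the $\alpha$-th moment by the next integer moment via H\"older when $\alpha > 1$; raising to the power $1/\alpha$ then yields $\lesssim 1 = \nu^{-\gamma}$.

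For the main range, $m \geq 1$ and $p \in (1,\infty)$, we have $\gamma = m - 1/p > 0$. As in Corollary~\ref{uppermauxcor} I would work in the rotated coordinates attached to each $\ka \in \Pi_m^d$ and apply the one-dimensional (GN) inequality to the restriction $\partial^2 \psi/\partial y_1^2(s,\tilde{\y}_1)$, viewed as a function of the single variable $y_1$. Fixing a large integer $M$, I interpolate the relevant one-dimensional $L_p$-type norm between the Sobolev norm $\Vert \ue(s,\tilde{\y}_1) \Vert_M$ and the quantity $|\ue(s,\tilde{\y}_1)|_{1,1}$. The (GN) relation in dimension $d'=1$ forces the interpolation exponent to be $\theta = \frac{2(m-1/p)}{2M-1}$. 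Integrating in $\tilde{\y}_1$, summing over $\ka \in \Pi_m^d$, and using Corollary~\ref{linalgcor} together with H\"older's inequality then gives the pointwise-in-$(s,\omega)$ bound $|\ue(s)|_{m,p} \lesssim \Vert \ue(s) \Vert_M^{\theta} N_{max}(s)^{1-\theta}$, where $N_{max}(s)$ is, exactly as in the proof of Lemma~\ref{uppermaux}, the maximum over $\ka$ and $\tilde{\y}_1$ of $|\ue(s,\tilde{\y}_1)|_{1,1}$.

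It then remains to take the maximum over time and the expectation. I would first record the max-in-time high-moment Sobolev estimate $\E \max_{s \in [t,t+1]} \Vert \ue(s) \Vert_M^{\alpha'} \lesssim \nu^{-\alpha'(2M-1)/2}$ for $t \geq 2$, which is the max-in-time analogue of Corollary~\ref{uppermauxcor} and is obtained by the same (GN)-bootstrap, starting from Lemma~\ref{uppermlemm} in place of Lemma~\ref{uppermaux} and using that $\max_s N_{max}(s)$ has finite moments by Corollary~\ref{W11}. Distributing the maximum over $s$ in the pointwise bound gives $\max_s |\ue(s)|_{m,p} \lesssim (\max_s \Vert \ue(s) \Vert_M)^{\theta}(\max_s N_{max}(s))^{1-\theta}$; raising to the power $\alpha$, taking expectations, and applying H\"older's inequality (absorbing the finite moments of $\max_s N_{max}$) reduces matters to the high-moment Sobolev estimate, which contributes exactly $\nu^{-\theta(2M-1)/2} = \nu^{-(m-1/p)} = \nu^{-\gamma}$, the $M$-dependence cancelling as it must.

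The main obstacle is the exponent bookkeeping: one must choose $M$ large and verify that the (GN) admissibility interval, which here reads $\frac{m-1}{M-1} \leq \theta < 1$, contains the value $\theta = \frac{2(m-1/p)}{2M-1}$. This is precisely where the hypothesis $p>1$ enters, since the gap $2M(1-1/p) - m - 1 + 2/p$ between the two sides of the admissibility inequality is positive for large $M$ exactly when $1-1/p>0$, whereas for $p=1$ and $m \geq 2$ no admissible $\theta$ produces the target power. Once this is checked, the identity $\theta \cdot \frac{2M-1}{2} = m-\frac1p$ guarantees that the power of $\nu$ coming from $\Vert \ue \Vert_M^{\theta}$ is independent of $M$ and equal to $\nu^{-\gamma}$; everything else is routine H\"older manipulation together with the already established moment bounds.
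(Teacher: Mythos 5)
Your proposal is correct and takes essentially the same route as the paper, which handles $m=0$, $p\in[1,\infty]$ and $m=1$, $p=1$ as reformulations of Corollaries~\ref{Lpupper} and~\ref{W11norm}, and proves the range $m\geq 1$, $p\in(1,\infty)$ \enquote{in the same way as Corollary~\ref{uppermauxcor}}, i.e.\ precisely by your 1d (GN) interpolation between a high norm $\Vert \ue\Vert_M$ and the $W^{1,1}$-type quantity $N_{max}$, with the max-in-time inputs Lemma~\ref{uppermlemm} and Corollary~\ref{W11}; your exponent $\theta=2(m-1/p)/(2M-1)$, the cancellation $\theta(2M-1)/2=\gamma$, and the admissibility gap showing where $p>1$ enters are exactly the bookkeeping hidden behind the paper's one-line citation. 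The only trivial slip is that the directional basis should be $\Pi_{m+1}^d$ applied to the potential (as in Lemma~\ref{uppermaux}), not $\Pi_m^d$, since $m$-th derivatives of $\ue$ are $(m+1)$-th derivatives of $\psi$ and the $L_1$ endpoint of (GN) must be a derivative-free norm of $\partial^2\psi/\partial y_1^2$.
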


\indent
The problem with the 1d (GN) trick is that it cannot be used for $p=\infty$. However, it allows us to prove a slightly weaker statement, which is sufficient to obtain sharp estimates for small-scale quantities in Section~\ref{turb}.

\begin{theo} \label{avoirlinftyupper}
For $m \geq 1$ and $1 \leq i \leq d$, we have
\begin{equation}
\Big( \E  \max_{s \in [t,t+1]} \int_{\tilde{\ix}_i} {| \psi_i(s,\tilde{\ix}_i)|_{m,\infty}^{\alpha}} \Big)^{1/\alpha} \overset{m,\alpha}{\lesssim} \nu^{-m},\ \alpha>0,\ t \geq 2.
\end{equation}
\end{theo}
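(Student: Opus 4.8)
The plan is to exploit that the supremum defining $|\psi_i(s,\tilde{\ix}_i)|_{m,\infty}$ is taken only in the single direction $x_i$, so that although the full $W^{m,\infty}$ norm is out of reach, the one-dimensional (GN) still applies to the restrictions, exactly as in the proofs of Lemma~\ref{uppermaux} and Corollary~\ref{uppermauxcor}. Fix $m\geq 1$, $1\leq i\leq d$ and $\alpha>0$, and set $h(s,\tilde{\ix}_i)=\psi_{ii}(s,\tilde{\ix}_i)$, viewed as a function of $x_i\in\T^1$; its space average vanishes since $\psi_i$ is periodic. As $m\geq 1$ we have $|\psi_i(s,\tilde{\ix}_i)|_{m,\infty}=|h(s,\tilde{\ix}_i)|_{m-1,\infty}$. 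Choose an integer $M'\geq \max\big(m,(m\alpha-1)/2\big)$. Applying (GN) with $d'=1$, $\beta=m-1$, $r=\infty$, interpolating between the $H^{M'}$ norm and the $L_1$ norm, produces the exponent $\theta=2m/(2M'+1)\in(0,1)$ and
$$
|\psi_i(s,\tilde{\ix}_i)|_{m,\infty}\leq C\,\|h(s,\tilde{\ix}_i)\|_{M'}^{\theta}\,|h(s,\tilde{\ix}_i)|_1^{1-\theta}.
$$

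Next I would raise this to the power $\alpha$ and integrate over $\tilde{\ix}_i\in\T^{d-1}$. Writing $N(s)=\max_{\tilde{\ix}_i}|h(s,\tilde{\ix}_i)|_1$ and $\beta_1=\alpha\theta=2m\alpha/(2M'+1)\leq 2$ (by the choice of $M'$), the factor $|h|_1^{(1-\theta)\alpha}$ is bounded by $N(s)^{(1-\theta)\alpha}$ and pulled out, while Hölder's inequality in $\tilde{\ix}_i$ gives
$$
\int_{\tilde{\ix}_i}\|h(s,\tilde{\ix}_i)\|_{M'}^{\beta_1}\,d\tilde{\ix}_i\leq\Big(\int_{\tilde{\ix}_i}\|h(s,\tilde{\ix}_i)\|_{M'}^2\,d\tilde{\ix}_i\Big)^{\beta_1/2}.
$$
Since $\int_{\tilde{\ix}_i}\|h(s,\tilde{\ix}_i)\|_{M'}^2\,d\tilde{\ix}_i=\int_{\T^d}|\partial_{x_i}^{M'+1}\psi_i|^2\lesssim\|\ue(s)\|_{M'+1}^2$, this yields the pointwise (in $s$ and $\omega$) bound
$$
\int_{\tilde{\ix}_i}|\psi_i(s,\tilde{\ix}_i)|_{m,\infty}^{\alpha}\,d\tilde{\ix}_i\lesssim N(s)^{(1-\theta)\alpha}\,\|\ue(s)\|_{M'+1}^{\beta_1}.
$$

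Finally I would take the maximum over $s\in[t,t+1]$, then the expectation, and split by Hölder's inequality in $\omega$. The factor involving $\max_s N(s)=\max_{s,\tilde{\ix}_i}|\psi_{ii}(s,\tilde{\ix}_i)|_1$ has all moments finite by Corollary~\ref{preW11cor}, while $\E\big(\max_s\|\ue(s)\|_{M'+1}\big)^{q}\lesssim\nu^{-q(2M'+1)/2}$ follows from the maximum-in-time, all-moments version of Corollary~\ref{uppermauxcor} (obtained exactly as Lemma~\ref{uppermlemm} and as in \cite{BorW}, using the uniform bounds of Remark~\ref{upperrmq}). The crucial cancellation is that $\beta_1(2M'+1)/2=m\alpha$ independently of $M'$, so the second factor contributes precisely $\nu^{-m\alpha}$ and we obtain $\E\max_s\int_{\tilde{\ix}_i}|\psi_i(s,\tilde{\ix}_i)|_{m,\infty}^{\alpha}\lesssim\nu^{-m\alpha}$; raising to the power $1/\alpha$ gives the claim. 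The main point to get right is precisely this interpolation exponent: a direct Sobolev embedding $H^{M'+1}\hookrightarrow W^{m,\infty}$ in dimension $d$ would cost $\nu^{-(2M'+1)/2}$, far too much, whereas because the $L^\infty$ is one-dimensional the sharp exponent $\theta=2m/(2M'+1)$ balances exactly against the scaling $\|\ue\|_{M'+1}\sim\nu^{-(2M'+1)/2}$ to give the optimal power $\nu^{-m}$, uniformly in the auxiliary integer $M'$.
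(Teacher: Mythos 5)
Your proof is correct and implements exactly the route the paper intends (and leaves implicit) for this theorem: the 1d (GN) trick applied to the restrictions $\psi_{ii}(s,\tilde{\ix}_i)$, interpolating between a high 1d Sobolev norm $H^{M'}$ and $L_1$ so that the moment bound $\E\max_s\Vert\ue(s)\Vert_{M'+1}^2\lesssim\nu^{-(2M'+1)}$ (Lemma~\ref{uppermlemm}) contributes precisely $\nu^{-m\alpha}$ while the $L_1$ factor is controlled by Corollary~\ref{preW11cor}, in direct parallel with the proofs of Lemma~\ref{uppermaux} and Corollary~\ref{uppermauxcor}. The only cosmetic refinement: take $M'$ with $2M'+1>m\alpha$ strictly, so that $\beta_1<2$ and the Hölder split in $\omega$ (with conjugate exponent $2/\beta_1$) needs only the second-moment, maximum-in-time bound of Lemma~\ref{uppermlemm}, rather than the all-moments maximum-in-time version you invoke, which the paper does not state explicitly.
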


\section{Lower estimates for Sobolev norms} \label{lower}

In 1d, the lower estimates all follow by (GN) from the lower estimate for the quantity
$$
\frac{1}{T} \int_{t}^{t+T}{\E \Vert u(s) \Vert_1^2 },
$$ 
which is obtained by considering the energy dissipation identity for $|u|^2$. Indeed, using It{\^o}'s formula and integrating by parts (see (\ref{Itoexpdiff})) we get
$$
\E |u(t+T)|^2-\E |u(t)|^2= -2 \nu \int_{t}^{t+T}{\E \Vert u(s) \Vert_1^2 }+2I_1 T.
$$
Since there exists $T_0$ such that for $\tau \geq T_0$ we have $\E |u(\tau)|^2 \leq C_1$, for $T \geq \max(T_0,\ C_1 I_1^{-1})$ we get
$$
\frac{1}{T} \int_{t}^{t+T}{\E \Vert u(s) \Vert_1^2 } \geq  \Big( I_1-\frac{C_1}{2T} \Big) \nu^{-1} \geq \frac{I_1}{2} \nu^{-1},
$$
and we obtain a lower estimate which is uniform in $T$ for large $T$.
\\ \indent
In the multi-d case, there is an additional trilinear term
$$
\E \int_{\T^d}{(\nabla f(\ue) \cdot \nabla) \ue \cdot \ue}
$$
in the derivative in time of $\E | \ue |^2$. To estimate from below the expected value of $\frac{1}{T} \int_{1}^{T+1}{\left\|\ue(s)\right\|_1^2}$, our strategy is to prove that with a positive probability, the integral in time of the trilinear term in the right-hand side of (\ref{Itoexpdiff}) for $m=0$ is small compared to the energy supplied by the stochastic forcing.
\\ \indent
The idea is to use a \enquote{partial It{\^o}'s formula}, i.e. It{\^o}'s formula applied only to one component of the noise. This is where we use the diagonal assumption, which tells us that the noises along the different Fourier modes are independent. It is unclear whether we can drop this assumption in the general case. 
\\ \indent
Without loss of generality, in the results below until the proof of Theorem~\ref{finitetime} included, we assume that the coefficient $a_{\e_1}$ of the noise is non-trivial, i.e. $a_{\e_1}>0$, where $\e_1=(1,0,\dots,0)$. Indeed, the proof of all the results in this section in the case $b_{\e_1}>0$ is word-to word the same. On the other hand, we can reduce the general case $a_{\ka}>0$ (resp., $b_{\ka}>0$) to these cases by working in the orthogonal basis $(\ka^1=\ka,\dots,\ka^d)$: the result of Lemma~\ref{finitetime} still holds when we pass back to the canonical basis $(\e_1,\dots,\e_d)$, since the corresponding $H^1$ norms are equivalent.
\\ \indent
Consider the disintegration of $(\Omega,\F,\Pe)$ into the probability spaces
\\
$(\Omega_1,\F_1,\Pe_1)$ and $(\Omega_2,\F_2,\Pe_2)$, corresponding respectively to the Wiener process $w_{\e_1}$ and to all the other Wiener processes $w_{\ka},\ \tilde{w}_{\ka}$. An element $\omega \in \Omega$ will be decomposed as $(\omega_1,\omega_2)$. For $\omega_1 \in \Omega_1$ (resp. $\omega_2 \in \Omega_2$), $\Pe_{\omega_1}$ (resp. $\Pe_{\omega_2}$) denotes the disintegration $\Pe(\cdot| \omega_1)$ (resp. $\Pe(\cdot| \omega_2)$). The notation $\E_1$, $\E_2$, $\E_{\omega_1}$, $\E_{\omega_2}$ is defined accordingly. 
\\ \indent
From now on and until the end of the proof of Lemma~\ref{finitetime} we fix $\tau \geq 1$ and we put
$$
w(s):=w(s)-w(\tau),
$$
and similarly for $w_{\e_1}$. The exact value of $\tau$ is unimportant, since all the estimates in this section will hold uniformly in $\tau$. Our modification for the definition of the Wiener process does not change the white noise and thus has no effect on the solutions $\ue$. It will considerably simplify the notation below.
\\ \indent
Now consider the $\omega_1$-independent difference
$$
\tilde{w}(s,\ix)=w(s,\ix)-a_{\e_1} w_{\e_1}(s,\ix) \cos(2 \pi x_1),
$$
and for $\kappa > 1$, define the event $Z=Z(\kappa)$ by
$$
Z=\left\{ \omega_2:\ \max_{t \in [\tau,\tau+2\kappa^{-1}]}{|\tilde{w^{\omega}}(t)|_{4,\infty}} \leq \kappa^{\Lambda} \right\}.
$$
The constant $\Lambda>2$ will be chosen in the proof of Theorem~\ref{uxpos2}. By the assumption $\mathbf{i)}$ in Section~\ref{rand}, for any $\kappa>0$ we have $\Pe_2(Z)>0$.
\\ \indent
The proof of the following result is similar to the proof of Theorem~\ref{uxpos}.

\begin{theo} \label{uxpos2}
For $\kappa \in (0,1)$, denote by $X_i$ the r.v.
$$
X_i=\max_{t \in [\tau+\kappa^{-1},\ \tau+2 \kappa^{-1}],\ \ix \in \T^d} \psi_{ii}(t,\ix),\ 2 \leq i \leq d.
$$
Then for every $\omega_2 \in Z(\kappa)$ and $k \geq 1$ we have
$$
\E_{\omega_2} \ \Big(\max_{2 \leq i \leq d } X_{i} \Big)^{k} \overset{k}{\lesssim} \kappa^k.
$$
\end{theo}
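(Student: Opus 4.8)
The plan is to run the stochastic Kruzhkov maximum principle of Theorem~\ref{uxpos}, but only in the directions $i\ge 2$ and pathwise on the event $Z$. The gain comes from the fact that the surviving mode $a_{\e_1}w_{\e_1}(s)\cos(2\pi x_1)$ depends on $x_1$ alone, so for $i\ge 2$ we have $w_{ii}=\tilde w_{ii}$, $w_{iij}=\tilde w_{iij}$ and $\Delta w_{ii}=\Delta\tilde w_{ii}$; hence on $Z$ every noise derivative produced by differentiating (\ref{HJ}) twice in $x_i$ is bounded by $\kappa^{\Lambda}$. As in Theorem~\ref{uxpos} it suffices to bound $N=\max_{2\le i\le d}Y_i$ with
\[
Y_i=\max_{s\in[\tau,\tau+2\kappa^{-1}],\ \ix\in\T^d}(s-\tau)\big(\psi_{ii}(s,\ix)-w_{ii}(s,\ix)\big),
\]
since on $[\tau+\kappa^{-1},\tau+2\kappa^{-1}]$ we have $(s-\tau)\ge\kappa^{-1}$ and $w_{ii}\le\kappa^{\Lambda}$ on $Z$, giving $X_i\le\kappa N+\kappa^{\Lambda}$. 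Putting $v=\psi-w$, $\tilde v=(s-\tau)v_{ii}$ and arguing exactly as for (\ref{maxpoint}), at a positive maximum $(s^0,\ix^0)$ of $\tilde v$ on $[\tau,\tau+2\kappa^{-1}]\times\T^d$ (where $\tilde v_t\ge 0$, $\nabla_{\ix}\tilde v=0$, $\Delta\tilde v\le 0$) I obtain
\[
\sigma\big(\tilde v+(s-\tau)w_{ii}\big)^2\le \tilde v+\nu(s-\tau)^2\Delta w_{ii}-(s-\tau)^2\sum_{1\le j\le d}w_{iij}f_j(\nabla\psi).
\]

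On $Z$ the first two terms on the right are $O(\kappa^{\Lambda-2})$, so everything hinges on the trilinear term. Writing $\delta=2-h(1)>0$ and using (\ref{poly}) together with $|w_{iij}|\le\kappa^{\Lambda}$, that term is bounded by $C\kappa^{\Lambda}(s-\tau)^{\delta}\big[(s-\tau)(1+|\nabla\psi|)\big]^{h(1)}$. The main obstacle, and the genuine difference with Theorem~\ref{uxpos}, is that $|\nabla\psi|$ at $(s^0,\ix^0)$ involves the $x_1$–derivative $\psi_1$, which is truly excited by the surviving mode and cannot be controlled by $N$ alone.

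I would resolve this by a \emph{separate}, \emph{unconditional} application of the Theorem~\ref{uxpos} maximum principle run over \emph{all} directions with the same rescaling on $[\tau,\tau+2\kappa^{-1}]$. This is purely pathwise and, setting $N_\ast=\max_{1\le i\le d}Y_i$ and $\mathcal K=\max_{s\in[\tau,\tau+2\kappa^{-1}]}|w(s)|_{4,\infty}$, yields $N_\ast\lesssim \kappa^{-1}(1+\mathcal K)^{1/\delta}$: for every direction $j$, $(s-\tau)|v_j|$ is bounded by the oscillation in $x_j$ of its zero-mean primitive, hence by $N_\ast$, so that $(s-\tau)(1+|\nabla\psi|)\lesssim N_\ast+\kappa^{-1}\mathcal K\lesssim\kappa^{-1}(1+\mathcal K)^{1/\delta}$ uniformly on the interval. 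Crucially, this bound requires no conditioning and controls exactly the gradient factor appearing above.

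It remains to pass to conditional moments, where the diagonal (independent-mode) assumption is used. On $Z$ one has $\mathcal K\le C(W+\kappa^{\Lambda})$ with $W=\max_{s\in[\tau,\tau+2\kappa^{-1}]}|w_{\e_1}(s)|$, and since $W$ is $\F_1$-measurable it is independent of $\omega_2$, so by (\ref{moments}), $\E_{\omega_2}W^m=\E W^m\overset{m}{\lesssim}\kappa^{-m/2}$ for all $m$. Substituting the gradient bound into the $i\ge 2$ inequality and using $\delta+h(1)=2$, the trilinear term is $\lesssim\kappa^{\Lambda-2}(1+\mathcal K)^{h(1)/\delta}$, whence pathwise on $Z$ one gets $N\lesssim 1+\kappa^{(\Lambda-2)/2}(1+W)^{h(1)/(2\delta)}$. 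Then $\max_{2\le i\le d}X_i\le\kappa N+\kappa^{\Lambda}$ gives
\[
\E_{\omega_2}\Big(\max_{2\le i\le d}X_i\Big)^k\lesssim \kappa^k\Big(1+\kappa^{k(\Lambda-2)/2}\,\E_{\omega_2}(1+W)^{kh(1)/(2\delta)}\Big)+\kappa^{\Lambda k},
\]
and the moment bound on $W$ makes the bracket $\lesssim 1$ once $\Lambda\ge 2+h(1)/(2\delta)$; this fixes the choice of $\Lambda>2$ announced before the statement and yields $\E_{\omega_2}(\max_i X_i)^k\overset{k}{\lesssim}\kappa^k$. The delicate point throughout is the uniformity over $\omega_2\in Z$: it holds precisely because the only term carrying $\kappa^{-1}$-size randomness, namely $W$, originates from the $\e_1$-mode and is therefore independent of $\F_2$.
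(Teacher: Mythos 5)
Your proof is correct and takes essentially the same route as the paper's: the rescaled Kruzhkov maximum principle applied in the directions $i \geq 2$ on the event $Z$ (where all the noise derivatives appearing are $O(\kappa^{\Lambda})$), an auxiliary maximum principle run over \emph{all} directions to control $(s-\tau)(1+|\nabla \psi|)$ through the zero-mean-primitive argument, and the diagonal assumption to identify the conditional moments of the $\e_1$-noise with its unconditional ones via (\ref{moments}). The only differences are bookkeeping: your intermediate bound $N_* \lesssim \kappa^{-1}(1+\mathcal{K})^{1/\delta}$ is slightly sharper than the paper's (\ref{Ntilde}), and your constraint $\Lambda \geq 2+h(1)/(2\delta)$ is weaker than the paper's choice $\Lambda = 8/\delta$, both of which are harmless.
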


\begin{proof}
It suffices to prove the statement with $(t-\tau) \psi_{ii}$ in place of $\psi_{ii}$, the interval $\left[\tau,\ \tau+2 \kappa^{-1} \right]$ in place of $\left[\tau+\kappa^{-1},\ \tau+2 \kappa^{-1} \right]$ and 1 in place of $\kappa^k$. Without loss of generality, we can consider the case where the maximum of $(t-\tau) \psi_{ii}$ on $S=\left[\tau,\ \tau+2 \kappa^{-1} \right] \times \T^d$ for $i \in [2,d]$ is reached for $i=2$. Denote this maximum by $N$.
\\ \indent
Now consider the equation (\ref{HJ}). Differentiating twice in $x_2$, we get
\begin{align} \nonumber
&(\psi_{22}-w_{22})_t+\sum_{1 \leq i,j \leq d}{\psi_{2i} \psi_{2j} f_{ij}(\nabla \psi)}+\sum_{1 \leq i \leq d}{(\psi_{22})_i f_i(\nabla \psi)} =\nu \Delta \psi_{22}.
\end{align}
Putting $v=\psi-w$ and using (\ref{strconvex}), we get
\begin{align} \nonumber
&(v_{22})_t + \sigma |\nabla \psi_2|^2 + \sum_{1 \leq i \leq d}{(v_{22})_i f_i(\nabla \psi)} + \sum_{1 \leq i \leq d}{w_{22i} f_i(\nabla \psi)}
\\ \label{v22}
&\leq \nu \Delta v_{22}+\nu \Delta w_{22}.
\end{align}
Now multiply (\ref{v22}) by $(t-\tau)^2$ and consider the function $\tilde{v}=(t-\tau)v_{22}$. By assumption, the maximum of this function equals $N$. The function $\tilde{v}$ satisfies
\begin{align} \nonumber
&(t-\tau) \tilde{v}_t -\tilde{v} +\sigma (\tilde{v}+(t-\tau) w_{22})^2 + (t-\tau) \sum_{1 \leq i \leq d}{\tilde{v}_i f_i(\nabla \psi)}  
\\ \label{tildev2}
& + (t-\tau)^2 \sum_{1 \leq i \leq d}{w_{22i} f_i(\nabla \psi)} \leq \nu (t-\tau) \Delta \tilde{v}+\nu (t-\tau)^2 \Delta w_{22}.
\end{align}
If the zero mean function $\tilde{v}$ does not vanish identically on the domain $S=\left[\tau,\ \tau+2 \kappa^{-1} \right] \times \T^d$, then it attains its positive maximum $N$ on $S$ at a point $(t^0,\ix^0)$ such that $t^0>\tau$. At such a point we have $\tilde{v}_t \geq 0$, $\tilde{v}_i=0$ for all $i$ and $\Delta \tilde{v} \leq 0$. By (\ref{tildev2}), at $(t^0,\ix^0)$ we have the inequality
\begin{equation} \label{maxpoint2}
\sigma (\tilde{v}+(t-\tau)w_{22})^2 \leq \nu (t-\tau)^2 \Delta w_{22}+\tilde{v} - (t-\tau)^2 \sum_{1 \leq i \leq d}{w_{22i} f_i(\nabla \psi)}.
\end{equation}
Using (\ref{poly}) (we recall that $\delta$ is by definition $2-h(1)$) and the definition of $Z(\kappa)$, we get
\begin{align} \nonumber 
&\Big| (t-\tau)^2 \sum_{1 \leq i \leq d}{w_{22i} f_i(\nabla \psi)} \Big| \leq C \kappa^{\Lambda} (t-\tau)^2 (1+|\nabla \psi|)^{2-\delta}.
\end{align}
Now denote by $K$ the r.v.
$$
K=K(\omega_1)=\max_{t \in [\tau,\tau+2\kappa^{-1}]} |w_{\e_1}(t)|_{4,\infty}.
$$
By the same arguments as in the proof of Theorem~\ref{uxpos} (see (\ref{maxpoint})), we obtain that if
$$
\tilde{N} \geq 2 \kappa^{-1} (K+\kappa^{\Lambda}),
$$
then the maximum $\tilde{N}$ of $(t-\tau) (\psi_{ii}-w_{ii})$ over all $i$ (including $i=1$) on $S$ satisfies
\begin{align} \nonumber 
&\sigma \Big( \tilde{N}-2 \kappa^{-1} (K+\kappa^{\Lambda}) \Big)^2 
\\ \nonumber
&\lesssim \kappa^{-2} (K+\kappa^{\Lambda}) + \tilde{N} + \kappa^{-2} (K+\kappa^{\Lambda}) (1+K+\kappa^{\Lambda}+\tilde{N})^{2-\delta}.
\end{align}
Therefore, since $\kappa<1$ and $0 < \delta \leq 1$, we get
\begin{align} \label{Ntilde}
\tilde{N} \lesssim (1+K)^{1/\delta} \kappa^{-2/\delta}.
\end{align}
For every $i$ and every $\tilde{\ix}_i$, $(t-\tau) \psi_i(\tilde{\ix}_i)$ is the zero mean primitive of $(t-\tau) \psi_{ii}(\tilde{\ix}_i)$ and thus on $S$ we have
$$
(t-\tau) \psi_i \leq \tilde{N}+ 2 \kappa^{-1} (K+\kappa^{\Lambda}),\ 1 \leq i \leq d.
$$
Consequently, at $(t^0,\ix^0)$, by (\ref{poly}) and (\ref{Ntilde}), we get
\begin{align} \nonumber 
\Big| (t-\tau)^2 \sum_{1 \leq i \leq d}{w_{22i} f_i(\nabla \psi)} \Big| &\leq  C \kappa^{\Lambda} (t-\tau)^{\delta} (\kappa^{-1} \tilde{N}+ \kappa^{-2} K+\kappa^{\Lambda-2})^{2-\delta}
\\ \nonumber
& \leq C \kappa^{\Lambda-\delta} (1+K)^{2/\delta-1} \kappa^{\delta-4/\delta}
\\ \nonumber
& \leq C \kappa^{\Lambda-4/\delta} (1+K)^{(2-\delta)/\delta}.
\end{align}
From now on, we assume that $N \geq 1$. Since $\nu \in (0,1]$, the relation (\ref{maxpoint2}) yields that
$$
\sigma (N-2 \kappa^{-1} \kappa^{\Lambda})^2 \leq C \kappa^{-2} \kappa^{\Lambda} +N+C \kappa^{\Lambda-4/\delta} (1+K)^{(2-\delta)/\delta}.
$$
Now we put
$$
\Lambda=\frac{8}{ \delta};
$$
we recall that $\delta \leq 1$. Then we get
$$
N \leq C (1+K^{(2-\delta)/2 \delta}  \kappa^{2/ \delta}) \leq C (1+K^{1/\delta} \kappa^{2/\delta}).
$$
Since by (\ref{moments}) all moments of $\kappa^{1/2} K$ are bounded, for $X_2$ and thus by assumption for all $X_i,\ 2 \leq i \leq d$, we have
$$
\E_{\omega_2} X_i^k \overset{k}{\lesssim} 1,\ k \geq 0.
$$
\end{proof}
\indent
The following result follows from Theorem~\ref{uxpos2} in the same way as Corollary~\ref{Lpupper} and Corollary~\ref{preW11cor} follow from Theorem~\ref{uxpos}.

\begin{cor} \label{uxpos2cor}
Fix $\kappa>0$. Then for every $i \in [2,d]$, $\omega_2 \in Z(\kappa)$ and $k \geq 1$, we have respectively
$$
\E_{\omega_2} \max_{t \in [\tau+\kappa^{-1},\ \tau+2 \kappa^{-1}],\ \tilde{\ix}_i \in \T^d} \left|\psi_i(t,\tilde{\ix}_i)\right|^k_{p} \overset{k}{\lesssim} \kappa^{k},\quad p \in [1,\infty]
$$
and
$$
\E_{\omega_2} \max_{t \in [\tau+\kappa^{-1},\ \tau+2 \kappa^{-1}],\ \tilde{\ix}_i \in \T^d} \left|\psi_{ii}(t,\tilde{\ix}_i)\right|^k_{1} \overset{k}{\lesssim} \kappa^{k}.
$$
\end{cor}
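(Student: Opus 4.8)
The plan is to reproduce, mutatis mutandis, the passages leading from Theorem~\ref{uxpos} to Corollary~\ref{Lpupper} and Corollary~\ref{preW11cor}, now working under the conditional expectation $\E_{\omega_2}$ on the event $Z(\kappa)$ and keeping track of the extra factor $\kappa$ supplied by Theorem~\ref{uxpos2}. The only structural change is that Theorem~\ref{uxpos2} controls $\max_{\ix}\psi_{ii}$ solely for $i \in [2,d]$, which is why the corollary is stated for those indices only: the index $i=1$ is excluded because $w_{\e_1}$ is precisely the noise component we do not control under $\E_{\omega_2}$.

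First I would fix $i \in [2,d]$, $\omega_2 \in Z(\kappa)$, a time $t \in [\tau+\kappa^{-1},\ \tau+2\kappa^{-1}]$ and a value of $\tilde{\ix}_i$. The one-dimensional function $x_i \mapsto \psi_i(t,\tilde{\ix}_i)$ has zero mean on $\T^1$, and its derivative in $x_i$ is $\psi_{ii}(t,\tilde{\ix}_i)$, which is itself of zero mean by periodicity. Hence $\int_{\T^1}{(\psi_{ii})^{+}\,dx_i}=\int_{\T^1}{(\psi_{ii})^{-}\,dx_i}=\tfrac{1}{2}\int_{\T^1}{|\psi_{ii}|\,dx_i}$, the oscillation of $\psi_i$ over $\T^1$ is bounded by the total positive variation, and since $\psi_i$ has zero mean its $L_\infty$ norm over $\T^1$ does not exceed that oscillation. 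Using that $\T^1$ has length one and that $\max_{x_i}\psi_{ii}\geq 0$, this gives
$$
\max_{x_i}|\psi_i(t,\tilde{\ix}_i)| \leq \int_{\T^1}{(\psi_{ii}(t,\tilde{\ix}_i))^{+}\,dx_i} \leq \max_{\ix \in \T^d}\psi_{ii}(t,\ix).
$$
Taking the maximum over $t$ and $\tilde{\ix}_i$ yields $\max|\psi_i(t,\tilde{\ix}_i)|_\infty \leq X_i$, with $X_i$ the random variable of Theorem~\ref{uxpos2}; because $|\cdot|_p \leq |\cdot|_\infty$ on the unit-measure torus $\T^1$, this settles the bound for every $p \in [1,\infty]$. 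Raising to the power $k$ and applying $\E_{\omega_2}$, Theorem~\ref{uxpos2} delivers the claimed $\overset{k}{\lesssim}\kappa^k$.

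For the second estimate I would argue exactly as in Corollary~\ref{preW11cor}: since $\psi_{ii}(t,\tilde{\ix}_i)$ has zero mean in $x_i$,
$$
\int_{\T^1}{|\psi_{ii}(t,\tilde{\ix}_i)|\,dx_i}=2\int_{\T^1}{(\psi_{ii}(t,\tilde{\ix}_i))^{+}\,dx_i}\leq 2\max_{\ix \in \T^d}\psi_{ii}(t,\ix),
$$
so that the $L_1$ norm in $x_i$ is again dominated by $2X_i$, and the conclusion follows by the same moment computation under $\E_{\omega_2}$.

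I do not expect a genuine obstacle here, since all the quantitative content — the localization to $\omega_2 \in Z(\kappa)$, the $\kappa$-scaling, and the finiteness of all moments of $X_i$ — is already packaged in Theorem~\ref{uxpos2}. The only points requiring care are the elementary one-dimensional primitive/zero-mean inequalities above and the bookkeeping that forces the restriction to $i \in [2,d]$.
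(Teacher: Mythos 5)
Your proof is correct and follows exactly the route the paper intends: the paper's own proof of Corollary~\ref{uxpos2cor} is simply the remark that it follows from Theorem~\ref{uxpos2} in the same way that Corollaries~\ref{Lpupper} and~\ref{preW11cor} follow from Theorem~\ref{uxpos}, which is precisely the zero-mean/positive-part argument you carried out, with the factor $\kappa^k$ coming from the moments of $X_i$ under $\E_{\omega_2}$.
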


The following result is proved in the same way as Corollary~\ref{W11}.

\begin{lemm} \label{W11omega2}
Fix $\kappa>0$. Then for every $t \in [\tau+\kappa^{-1},\tau+2 \kappa^{-1}-1]$ and $k \geq 1$, we have 
$$
\E_{\omega_2} \max_{s \in [t,t+1]} \left|\ue(t)\right|_{1,1}^k \overset{k}{\lesssim} 1.
$$
\end{lemm}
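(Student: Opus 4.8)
The plan is to follow the proof of Corollary~\ref{W11norm}, replacing the unconditional second--derivative bounds by their conditional counterparts. Throughout $\kappa$ is fixed and we work with $\omega_2 \in Z(\kappa)$ (as in Theorem~\ref{uxpos2} and Corollary~\ref{uxpos2cor}), so that $s-\tau \geq \kappa^{-1}$ for every $s \in [t,t+1]$, and the non-$\e_1$ part $\tilde w$ of the noise is bounded by $\kappa^{\Lambda}$. As in Corollary~\ref{W11norm} I first write
\begin{equation} \nonumber
|\ue|_{1,1} \sim \sum_{1 \leq i \leq d}{|\psi_{ii}|_1}+\sum_{1 \leq i < j \leq d}{|\psi_{ij}|_1},
\end{equation}
and for each relevant direction $\ka \in \{\e_i,\ \e_i-\e_j\}$ I use the vanishing of the space average of $(\ka \cdot \nabla)^2 \psi$ along the $\ka$-direction to reduce its $L_1$ norm to a maximum, exactly as in the passage from Theorem~\ref{uxpos} to Corollary~\ref{preW11cor}, so that $|(\ka \cdot \nabla)^2 \psi|_1 \lesssim \max_{\ix}(\ka \cdot \nabla)^2 \psi$. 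The off--diagonal terms are then handled through the identity $\psi_{ii}+\psi_{jj}-2\psi_{ij}=((\e_i-\e_j)\cdot \nabla)^2 \psi$ and the triangle inequality, just as in Corollary~\ref{W11norm}.

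The crucial distinction, absent in the unconditional setting, is between directions orthogonal to $\e_1$ and directions with a nonzero $\e_1$-component. For $\ka \perp \e_1$, that is $\ka \in \{\e_i:\ i \geq 2\} \cup \{\e_i-\e_j:\ 2 \leq i<j \leq d\}$, the operator $(\ka \cdot \nabla)^2$ annihilates the active mode $\cos(2\pi x_1)$; hence Theorem~\ref{uxpos2} and Corollary~\ref{uxpos2cor} (and, for $\e_i-\e_j$, their verbatim analogue in the rotated basis $(\ka^1=\ka,\dots)$ as in Lemma~\ref{uxposbis} and Corollary~\ref{W11}, legitimate precisely because $\cos(2\pi x_1)$ does not depend on the corresponding coordinate $y_1$) give the strong conditional bound $\E_{\omega_2}(\max_{\ix}(\ka \cdot \nabla)^2 \psi)^k \overset{k}{\lesssim} \kappa^k$. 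For fixed $\kappa$ these directions contribute $\overset{k}{\lesssim} 1$.

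The remaining directions all involve $\e_1$, namely the diagonal term $\psi_{11}$ and, via $\e_1-\e_j$, the off--diagonal terms $\psi_{1j}$; here the sharp bound of Theorem~\ref{uxpos2} is unavailable, and this is the main obstacle. Instead I fall back on the intermediate estimate (\ref{Ntilde}) established inside the proof of Theorem~\ref{uxpos2}, which bounds the maximum $\tilde N$ of $(t-\tau)(\psi_{ii}-w_{ii})$ over $S$ and over all $i$ (including $i=1$) by $\tilde N \lesssim (1+K)^{1/\delta}\kappa^{-2/\delta}$, with $K=\max_{t}|w_{\e_1}(t)|_{4,\infty}$ and $\delta=2-h(1)\in(0,1]$. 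Since $s-\tau \geq \kappa^{-1}$ and, on $Z(\kappa)$, the $\e_1$-directional second derivatives of $w$ are controlled by a constant multiple of $\kappa^{\Lambda}+K$, this controls $\max_{\ix}\psi_{11}$ and $\max_{\ix}((\e_1-\e_j)\cdot \nabla)^2 \psi$ by a fixed power of $(1+K)$ times a power of $\kappa$. All moments of $K$ are finite by (\ref{moments}), so $\E_{\omega_2}$ of any power of these quantities is finite, and for fixed $\kappa$ this is again $\overset{k}{\lesssim} 1$ (the implicit constant now depending on the fixed $\kappa$).

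Summing the diagonal and off--diagonal contributions and applying H{\"o}lder's inequality yields the assertion. The only place where the argument genuinely departs from Corollary~\ref{W11norm} is the $\e_1$-direction: it carries the one noise component that is not made small on $Z(\kappa)$, so there one cannot obtain the optimal $O(\kappa)$ estimate and must settle for the weaker bound (\ref{Ntilde}). Its usefulness rests entirely on the smallness of $\tilde w$ on $Z(\kappa)$ together with the finiteness under $\E_{\omega_2}$ of all moments of the scalar process $w_{\e_1}$, which is exactly what the diagonal (independence) structure of the noise provides.
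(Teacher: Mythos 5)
Your proof is correct and follows the same route as the paper, whose entire proof is the one-line remark that the lemma is \enquote{proved in the same way as Corollary~\ref{W11}} -- that is, the decomposition of $|\ue|_{1,1}$ into diagonal terms $|\psi_{ii}|_1$ and off-diagonal terms handled through $((\e_i-\e_j)\cdot\nabla)^2\psi$, the zero-mean trick converting $L_1$ norms of directional second derivatives into maxima, and the conditional Kruzhkov bounds. Your explicit splitting into directions orthogonal to $\e_1$ (where the verbatim rotated-basis analogue of Theorem~\ref{uxpos2} gives the sharp $O(\kappa^k)$ bound, since $(\ka\cdot\nabla)^2$ annihilates $\cos(2\pi x_1)$) versus directions with a nonzero $\e_1$-component (where one must fall back on the intermediate estimate (\ref{Ntilde}) and the finiteness of the moments of $K$, yielding a constant that depends on the fixed $\kappa$) supplies precisely the detail that the paper's citation-style proof leaves implicit, and is indeed necessary, since the stated conditional results cover only $i\geq 2$.
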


Now we are ready to prove the following crucial estimate.

\begin{lemm} \label{finitetime}
There exists a constant $\kappa \in (0,1)$ such that we have
$$
\kappa \int_{\tau+\kappa^{-1}}^{\tau+2 \kappa^{-1}}{\ \E \left\|\ue(s)\right\|_1^2} \gtrsim \nu^{-1}.
$$
\end{lemm}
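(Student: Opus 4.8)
The plan is to run the energy balance for $|\ue|^2$ \emph{conditionally} on the transverse noise $\omega_2$, so that the only stochastic input comes from the $\e_1$-mode and the trilinear term can be controlled on the event $Z(\kappa)$. Fix $\omega_2 \in Z(\kappa)$ and work under $\Pe_{\omega_2}$, where the sole source of randomness is $w_{\e_1}$. Since $\tilde w$ is $\F_2$-measurable, the shifted process $\ve = \ue - \nabla \tilde w$ is a genuine It{\^o} process under $\Pe_{\omega_2}$, with diffusion coefficient $g = a_{\e_1}\nabla \cos(2\pi x_1)$. I would apply It{\^o}'s formula to $|\ve|^2$ (the ``partial It{\^o} formula''), take $\E_{\omega_2}$ so the martingale part drops, substitute $\ve = \ue - \nabla \tilde w$ back in, and solve for the time integral of $\E_{\omega_2}\|\ue\|_1^2$. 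With $s = \tau + \kappa^{-1}$, $t = \tau + 2\kappa^{-1}$ this yields
\begin{equation} \nonumber
2\nu \int_s^t \E_{\omega_2}\|\ue\|_1^2 = |g|^2 (t-s) - \int_s^t \E_{\omega_2}\langle \ue, B(\ue)\rangle + R,
\end{equation}
where $R$ collects the boundary terms $\E_{\omega_2}|\ve(s)|^2 - \E_{\omega_2}|\ve(t)|^2$ and the cross terms $2\nu\langle \nabla \tilde w, L\ue\rangle$ and $\langle \nabla \tilde w, B(\ue)\rangle$. The strictly positive gain $|g|^2 = 2\pi^2 a_{\e_1}^2 > 0$ is exactly where the diagonal assumption and $a_{\e_1}>0$ enter: conditionally on $\omega_2$, only the $\e_1$-mode injects energy, and it injects a fixed positive amount per unit time.

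First I would dispose of $R$. The boundary terms are $O(1)$, since $\E|\ue|_2^2 \lesssim 1$ by Corollary~\ref{Lpupper} and $|\nabla \tilde w|_\infty \le \kappa^\Lambda$ on $Z(\kappa)$. For the cross terms I would integrate by parts to move every derivative off $\ue$ and onto $\tilde w$, so that the large factor $\nabla \ue$ appears only paired with $|\nabla \tilde w|_\infty \le \kappa^\Lambda$ and is measured in the norm $|\ue|_{1,1}$, whose moments are bounded by Lemma~\ref{W11omega2}; as $\Lambda>2$, the time-integrated cross terms are then $\lesssim \kappa^{\Lambda-1}$, hence negligible.

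The heart of the matter is the trilinear term, which in multi-d does not vanish. Writing $\langle \ue, B(\ue)\rangle = -\int_{\T^d}\sum_{i,k} f_{ik}(\nabla\psi)\,\psi_{ik}\,|\ue|^2$, I would isolate the purely longitudinal part: freeze $f_{ik}(\nabla\psi)$ at $f_{ik}(\psi_1 \e_1)$, keep only $i=k=1$, and replace $|\ue|^2$ by $\psi_1^2$. Since $f_{11}(\xi\e_1)\xi^2$ is a function of $\xi=\psi_1$ alone, this integrand equals $\partial_{x_1}$ of a primitive in $\psi_1$ on each transverse slice, and so integrates to \emph{exactly} zero in any dimension. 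Every remaining contribution carries at least one transverse factor: either $\psi_k$ or $\psi_{kk}$ with $k\ge 2$, or the discrepancy $f_{ik}(\nabla\psi)-f_{ik}(\psi_1\e_1)$, all of which are small (of order $\kappa$) on $Z(\kappa)$ by Theorem~\ref{uxpos2} and Corollary~\ref{uxpos2cor}. The one subtlety is to avoid the genuinely large mixed second derivatives $\psi_{1k}$: whenever they arise I would integrate by parts in $x_k$ to trade $\psi_{1k}$ for $\psi_{kk}$, which is small in $L_1$, pairing it with longitudinal factors measured in $L_\infty$.

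The main obstacle is the bookkeeping of the resulting exponents, and this is what makes the scheme delicate. Each transverse factor supplies a gain of order $\kappa$, but it must be weighed against the window length $t-s = \kappa^{-1}$ and against the conditional size of the longitudinal quantities (in particular a conditional $L_\infty$-bound on $\psi_1$, which grows like a power of $\kappa^{-1}$ because the active $\e_1$-noise obeys $\max |w_{\e_1}|_{4,\infty} \sim \kappa^{-1/2}$ over a window of length $\kappa^{-1}$). The crux is to verify that the gain $\kappa$ beats these powers uniformly in $\nu$, so that $\int_s^t \E_{\omega_2}\langle \ue, B(\ue)\rangle \le \tfrac12 |g|^2 (t-s)$ for $\kappa$ small but fixed. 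Granting this, $2\nu\int_s^t \E_{\omega_2}\|\ue\|_1^2 \ge \tfrac14 |g|^2 (t-s)$ uniformly over $\omega_2 \in Z(\kappa)$, and integrating against $\mathbf{1}_{Z(\kappa)}\, d\Pe_2$ gives
\begin{equation} \nonumber
\int_s^t \E\|\ue\|_1^2 \ge \int_{Z(\kappa)} \int_s^t \E_{\omega_2}\|\ue\|_1^2 \, d\Pe_2 \gtrsim \Pe_2(Z(\kappa))\, \nu^{-1} \kappa^{-1}.
\end{equation}
As $\kappa$ is now a fixed constant and $\Pe_2(Z(\kappa))>0$, multiplying by $\kappa$ yields $\kappa \int_s^t \E\|\ue\|_1^2 \gtrsim \nu^{-1}$, as claimed.
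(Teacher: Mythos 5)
Your overall strategy is the same as the paper's: condition on $\omega_2 \in Z(\kappa)$, run a partial It{\^o} energy balance for $\ve = \ue - \nabla \tilde w$ so that only the $\e_1$-mode injects energy at the fixed rate $\pi^2 a_{\e_1}^2$, cancel the purely longitudinal part of the trilinear term as an exact $x_1$-derivative, and charge every remaining contribution to a transverse factor that is small on $Z(\kappa)$. Your variant of the cancellation (freezing $f_{ik}$ at $\psi_1 \e_1$ in the form $-\int \sum_{i,k} f_{ik}(\nabla\psi)\psi_{ik}|\ue|^2$, rather than the paper's integration by parts followed by freezing the transverse arguments at $x_1=0$) is legitimate, and it actually disposes of the mixed derivatives $\psi_{1k}$ more cleanly than you realize: once the coefficient is frozen, $f_{1k}(\psi_1\e_1)\,\psi_1^2\,\psi_{1k}$ is itself $\partial_{x_k}$ of a function of $\psi_1$ alone, so it integrates to zero exactly; no trading of $\psi_{1k}$ for $\psi_{kk}$ is needed, and the integration by parts you propose would not achieve that trade anyway, since differentiating the unfrozen coefficient regenerates mixed second derivatives via the chain rule.

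The genuine gap is that you explicitly \enquote{grant} the one step that carries all the difficulty: that the gain of order $\kappa$ from each transverse factor beats the conditional moments of the longitudinal quantities ($|\psi_1|_\infty$, $|\ue|_{1,1}$, and the polynomial growth factors from (\ref{poly})). Worse, your own accounting of those moments --- a conditional $L_\infty$-bound on $\psi_1$ \enquote{growing like a power of $\kappa^{-1}$} because $\max|w_{\e_1}|_{4,\infty} \sim \kappa^{-1/2}$ over the window of length $\kappa^{-1}$ --- would, if it were the right bound, destroy the scheme: a gain of $\kappa$ cannot beat losses that are powers of $\kappa^{-1}$. The point of the paper's Theorem~\ref{uxpos2}, Corollary~\ref{uxpos2cor} and Lemma~\ref{W11omega2} is precisely that these conditional moments are $O(1)$ \emph{uniformly in} $\kappa$ for the longitudinal quantities, and $O(\kappa)$ for the transverse ones: the Kruzhkov maximum-principle argument is applied over \emph{unit} time windows located at times $\geq \tau+\kappa^{-1}$, so the $1/t$-decay has already acted for a time $\geq \kappa^{-1}$, and only the noise increments over a unit window enter the bound --- and those have $\kappa$-independent conditional moments, since on $Z(\kappa)$ the transverse part is $\leq \kappa^\Lambda$ while the $w_{\e_1}$-increments are free Brownian increments. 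The exponent $\Lambda = 8/\delta$ is calibrated exactly so that the polynomial growth in (\ref{poly}) is absorbed. With this input every error term is $\leq C\kappa + C\kappa^{\Lambda}$ with $C$ independent of $\kappa$, and choosing $\kappa$ small against the fixed rate $\pi^2 a_{\e_1}^2$ closes the proof; without it, your sketch has no quantitative content. A secondary slip in the same direction: inside the conditional argument you invoke Corollary~\ref{Lpupper} (an unconditional bound) for the boundary terms $\E_{\omega_2}|\ve|^2$, but expectations conditioned on the positive-probability event $Z(\kappa)$ are not controlled by unconditional ones; the correct reference is the conditional bound of Corollary~\ref{uxpos2cor}, together with $|\nabla\tilde w|_\infty \leq \kappa^\Lambda$, which is what the paper uses.
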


\begin{proof} Since $P_2(Z(\kappa))>0$, it suffices to prove this lower estimate for $\omega_2 \in Z(\kappa)$ and $\E_{\omega_2}$ in place of $\E$. In the proof below, until the final steps we will not indicate explicitly the dependence on
$$
s \in [t+\kappa^{-1},\ t+2 \kappa^{-1}].
$$
Writing (\ref{whiteBurgers}) as a stochastic PDE with $\omega_2$ fixed, we get
$$
\frac{\partial \ve}{\partial t} +  (\nabla f(\ue) \cdot \nabla) \ue - \nu \Delta \ue = -2 \pi a_{\e_1} \frac{\partial w_{\e_1}}{\partial t} \sin(2 \pi x_1) \e_1,
$$
where $\ve$ denotes
$$
\ve=\ue-\nabla \tilde{\tilde{w}},\ \tilde{\tilde{w}}(s)=\tilde{w}(s)-\tilde{w}(t+\kappa^{-1}).
$$
Applying It{\^o}'s formula and integrating by parts, we obtain that
\begin{align} \nonumber
&\frac{1}{2} \frac{\partial (\E_{\omega_2} | \ve|^2)}{\partial t} 
\\ \nonumber
&= -\E_{\omega_2} \int_{\T^d}{\Big( (\nabla f(\ue) \cdot \nabla) \ue \cdot \ve \Big)}+\nu\ \E_{\omega_2} \langle \Delta \ue,\ \ve \rangle + \pi^2 a_{\e_1}^2
\\ \nonumber
&= -\E_{\omega_2} \sum_{i,j=1}^{d} \int_{\T^d}{f_i(\ue) \psi_{ij} v_j} + \nu \ \E_{\omega_2} \langle \ue,\ \nabla \Delta \tilde{\tilde{w}} \rangle - \nu\ \E_{\omega_2} \Vert \ue \Vert_1^2 + \pi^2 a_{\e_1}^2
\\ \label{A+B}
&= A+B,
\end{align}
where
\begin{equation} \nonumber
A=-\E_{\omega_2} \sum_{i,j=1}^{d} \int_{\T^d}{f_i(\ue) \psi_{ij} v_j} + \nu \ \E_{\omega_2} \langle \ue,\ \nabla \Delta \tilde{\tilde{w}} \rangle
\end{equation}
and
\begin{equation} \nonumber
B=-\nu\ \E_{\omega_2} \Vert \ue \Vert_1^2 + \pi^2 a_{\e_1}^2.
\end{equation}
The sum $B$ is similar to the expression in 1d, since it is the sum of a dissipative term due to the Laplacian and of a pumping term due to the forcing. Since we want to use the same mechanism as in 1d, our goal is to prove that $A$ is small. We have
\begin{align} \nonumber
|A| \leq& \Big| \E_{\omega_2} \sum_{i,j=1}^{d} \int_{\T^d}{f_i(\ue) \psi_{ij} \psi_j} \Big| + \Big| \E_{\omega_2} \sum_{i,j=1}^{d} \int_{\T^d}{f_i(\ue) \psi_{ij} \tilde{\tilde{w}}_j} \Big|
\\ \nonumber
& + \nu \ \Big| \E_{\omega_2} \langle \ue,\ \nabla \Delta \tilde{\tilde{w}} \rangle \Big|.
\end{align}
By the definition of $Z(\kappa)$ we have $|\tilde{\tilde{w}}|_{4,\infty} \leq \kappa^{\Lambda}$. Consequently, using Lemma~\ref{W11omega2} and (\ref{poly}), we obtain that the second and the third terms in the right-hand side are uniformly bounded from above by $C \kappa^{\Lambda}$ and $C \nu \kappa^{\Lambda}$, respectively. Thus, integrating by parts and then using Theorem~\ref{uxpos2}, Lemma~\ref{W11omega2} and (\ref{poly}) we obtain that
\begin{align} \nonumber
|A| \leq& \Big| \E_{\omega_2} \int_{\T^d}{f_1(\ue) \psi_1 \psi_{11} } \Big|
\\ \nonumber
&+ \frac{1}{2} \Big| \E_{\omega_2} \sum_{i,j \in [1,d],\ (i,j) \neq (1,1)} \int_{\T^d}{f_{ii}(\ue) \psi_{ii} \psi_j^2} \Big|+C \kappa^{\Lambda}
\\ \label{A2}
\leq& \Big| \E_{\omega_2} \int_{\T^d}{f_1(\ue) \psi_1 \psi_{11} } \Big|+C \kappa+C \kappa^{\Lambda}.
\end{align}
To prove that the first term in the right-hand side of (\ref{A2}) is small, we consider the function $g$ defined by
$$
g(\ix)=f_1(\psi_1(\ix),\psi_2(0,\tilde{\ix}_1),\dots,\psi_d(0,\tilde{\ix}_1)).
$$
We get
\begin{align} \nonumber
 & \Big|  \E_{\omega_2}  \int_{\T^d}{f_1(\ue) \psi_{1} \psi_{11}} \Big|
\\ \nonumber
\leq & \Big| \E_{\omega_2} \int_{\T^d}{g(\ue) \psi_1 \psi_{11}} \Big|
\\ \nonumber
&  + \Big| \E_{\omega_2} \int_{\T^d}{ \Big( f_1((\psi_1(\ix),\psi_2(0,\tilde{\ix}_1),\dots,\psi_d(0,\tilde{\ix}_1)))} 
\\ \nonumber
&  {-f_1((\psi_1(\ix),\psi_2(\ix),\dots,\psi_d(\ix))) \Big) \psi_1 \psi_{11}} \Big|
\\ \label{IPP}
\leq &  \Big| \E_{\omega_2} \int_{\T^{d-1}}{\Big( \int_{\T^1}{g(\ue(\tilde{\ix}_1)) \psi_1(\tilde{\ix}_1) \psi_{11}(\tilde{\ix}_1)} dx_1 \Big) d \tilde{\ix}_1} \Big|
\\ \nonumber
& + \E_{\omega_2} \Bigg(  |\psi_{1}|_{\infty} |\psi_{11}|_1 
\\ \nonumber
&   \times 2V (d-1) \max_{\ix \in \T^d,\ a \in [0,1],\ 2 \leq i \leq d}{\Bigg| f_{1i} (\psi_1(\ix),\psi_2(ax_1,\tilde{\ix}_1),\dots,\psi_d(ax_1,\tilde{\ix}_1)) \Bigg|} \Bigg).
\end{align}
Here, $V$ denotes
$$
\sum_{2 \leq i \leq d}{\max_{\ix \in \T^d}{|\psi_i(\ix)|}}.
$$
The first term in (\ref{IPP}) is equal to $0$. Indeed, the integrand is a full derivative in $x_1$, for every $\tilde{\ix}_1$. Then by H{\"o}lder's inequality and (\ref{poly}) we get
\begin{align} \nonumber
& \Big| \E_{\omega_2} \int_{\T^d}{f_1(\ue) \psi_{1} \psi_{11}} \Big|
\\ \nonumber
& \leq C (\E_{\omega_2} V^2)^{1/2} \Big(\E_{\omega_2} ((1+|\ue|_{1,1})^{2h(1)+2} |\ue|_{\infty}^2) \Big)^{1/2}.
\end{align}
By Corollary~\ref{uxpos2cor} and Lemma~\ref{W11omega2}, this quantity is bounded from above by $C \kappa$. Adding up the terms in (\ref{A+B}) and (\ref{A2}), we get
\begin{align} \label{dissdiff}
&\frac{1}{2} \frac{\partial (\E_{\omega_2} | \ve|_2^2)}{\partial t}  = A+B \geq -C \kappa-\nu\ \E_{\omega_2} \Vert \ue \Vert_1^2 + \pi^2 a_{\e_1}^2.
\end{align}
Now integrate (\ref{dissdiff}) in time over $[\tau+\kappa^{-1},\ \tau+2\kappa^{-1}]$. We get
\begin{align} \nonumber
& \frac{1}{\kappa^{-1}} \int_{\tau+\kappa^{-1}}^{\tau+2\kappa^{-1}}{\E_{\omega_2} \left\|\ue(s)\right\|_1^2} 
\\ \nonumber
& \geq (2\nu)^{-1} \Big(2 \pi^2 a_{\e_1}^2-2C \kappa+ \frac{\E_{\omega_2}{\left| \ve(\tau+\kappa^{-1})\right|^2}-\E_{\omega_2}{\left| \ve(\tau+2\kappa^{-1})\right|^2}}{\kappa^{-1}} \Big)
\\ \nonumber
& \geq (2\nu)^{-1} \Big(2 \pi^2 a_{\e_1}^2-2C \kappa- \frac{\E_{\omega_2}{\left| \ue(\tau+2\kappa^{-1})-\nabla \tilde{\tilde{w}}(\tau+2\kappa^{-1})\right|^2}}{\kappa^{-1}} \Big).
\end{align}
By Corollary~\ref{uxpos2cor}, there exists a constant $C'>0$ such that we have
$$
\E_{\omega_2} \left| \ue(\tau+2 \kappa^{-1})\right|^2 \leq C'.
$$
On the other hand, since $\omega_2 \in Z(\kappa)$, we have $\left| \nabla \tilde{\tilde{w}}(\tau+2\kappa^{-1}) \right|^2 \leq C \kappa^{2 \Lambda}$. Thus we get
\begin{align} \nonumber
& \frac{1}{\kappa^{-1}} \int_{\tau+\kappa^{-1}}^{\tau+2\kappa^{-1}}{\E_{\omega_2} \left\|\ue(s)\right\|_1^2} 
\\ \nonumber
& \geq (2\nu)^{-1} \Big(2 \pi^2 a_{\e_1}^2-2 C \kappa- \frac{C'-C\kappa^{\Lambda}}{\kappa^{-1}} \Big).
\end{align}
Now it remains to choose $\kappa$ small enough to prove the lemma's statement.
\end{proof}
\indent
From now on, we drop the assumption $a_{\e_1}>0$. As observed above, Lemma~\ref{finitetime} still holds without this assumption.

\begin{cor} \label{finitetimecor}
There exists a constant $\kappa \in (0,1)$ and $i \in [1,d]$ such that we have
$$
\kappa \int_{\tau+\kappa^{-1}}^{\tau+2 \kappa^{-1}}{\ \E u_{ii}^2(s)} \gtrsim \nu^{-1}.
$$
\end{cor}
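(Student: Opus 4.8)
The plan is to deduce this directly from Lemma~\ref{finitetime}, which already provides, for a suitable $\kappa \in (0,1)$, the lower bound $\kappa \int_{\tau+\kappa^{-1}}^{\tau+2\kappa^{-1}} \E \left\|\ue(s)\right\|_1^2 \gtrsim \nu^{-1}$. All of the probabilistic work is therefore done; what remains is a purely deterministic, pointwise-in-time reduction transferring this estimate from the full homogeneous $H^1$ norm of $\ue$ to a single pure second derivative $u_{ii}=\psi_{ii}$. The difficulty is that, since $\ue=\nabla\psi$, we have $\left\|\ue(s)\right\|_1^2 \sim \sum_{1\le i,j\le d}|\psi_{ij}(s)|^2$, and this sum contains the \emph{mixed} derivatives $\psi_{ij}$ with $i\neq j$, which are not of the desired pure form.

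The key step is an integration-by-parts identity on the torus. Since $\psi$ is smooth and periodic (Section~\ref{prel}), integrating by parts once in $x_i$ and once in $x_j$, with all boundary terms vanishing by periodicity, gives $\int_{\T^d}\psi_{ij}^2 = \int_{\T^d}\psi_{ii}\psi_{jj}$ for every pair $i,j$. Summing over $i,j$ collapses the Hessian norm into the Laplacian norm: $\sum_{i,j}\int_{\T^d}\psi_{ij}^2 = \int_{\T^d}\big(\sum_i\psi_{ii}\big)^2 = |\Delta\psi|^2$, so that $\left\|\ue(s)\right\|_1^2 \sim |\Delta\psi(s)|^2$. Applying the Cauchy--Schwarz inequality pointwise, $\big(\sum_{i=1}^d\psi_{ii}\big)^2 \le d\sum_{i=1}^d\psi_{ii}^2$, and integrating in space yields $\sum_{i=1}^d|\psi_{ii}(s)|^2 \ge d^{-1}|\Delta\psi(s)|^2 \gtrsim \left\|\ue(s)\right\|_1^2$. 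This is the crucial inequality: the pure second derivatives alone already control the full $H^1$ norm, up to a constant depending only on $d$.

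Taking expectations, integrating over $[\tau+\kappa^{-1},\tau+2\kappa^{-1}]$, multiplying by $\kappa$, and invoking Lemma~\ref{finitetime}, I obtain $\kappa \int_{\tau+\kappa^{-1}}^{\tau+2\kappa^{-1}} \sum_{i=1}^d \E|\psi_{ii}(s)|^2\,ds \gtrsim \nu^{-1}$. Since the sum has $d$ terms and $d$ is fixed, the pigeonhole principle produces at least one index $i\in[1,d]$ for which $\kappa \int_{\tau+\kappa^{-1}}^{\tau+2\kappa^{-1}} \E|\psi_{ii}(s)|^2\,ds \gtrsim \nu^{-1}$, and since $u_{ii}=\psi_{ii}$ this is exactly the assertion. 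I do not expect any genuine obstacle here: the analytic and stochastic content is entirely contained in Lemma~\ref{finitetime}, and the only point requiring care is the treatment of the mixed derivatives, which the integration-by-parts identity disposes of by exploiting both the periodicity of the domain and the gradient structure $\ue=\nabla\psi$.
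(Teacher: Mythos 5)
Your proposal is correct and follows essentially the same route as the paper: the heart of both arguments is the double integration by parts on the torus giving $\int_{\T^d}\psi_{ij}^2=\int_{\T^d}\psi_{ii}\psi_{jj}$, which lets the pure second derivatives control the mixed ones, followed by Cauchy--Schwarz and a pigeonhole over the $d$ indices. The only (immaterial) difference is that you sum the identity first and apply Cauchy--Schwarz pointwise to $\bigl(\sum_i\psi_{ii}\bigr)^2$, whereas the paper applies Cauchy--Schwarz in $L_2$ to each term $\int\psi_{ii}\psi_{jj}$ separately.
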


\begin{proof}
By definition of the $H^1$ norm, it suffices to prove that we have
$$
|u_{ij}| \leq |u_{ii}|^{1/2} |u_{jj}|^{1/2},\ i,\ j \in [1,d],\ i \neq j.
$$
This fact is proved integrating by parts and using the Cauchy-Schwarz inequality:
\begin{align} \nonumber
|u_{ij}|^2=\int{u_{ij}^2} & = -\int{u_{iij} v_j} =\int{u_{ii} u_{jj}} \leq |u_{ii}| |u_{jj}|.
\end{align}
\end{proof}

\indent
From now on, we denote
$$
T_0=\kappa^{-1}.
$$
To generalise the lower estimate proved above to averages over time intervals of length $T \geq T_0$, it suffices to use the Markov property. The time-averaged lower bound for the $H^1$ norm obtained above yields similar bounds for $H^m$ norms with $m \geq 2$. This is done almost in the same way as in 1d. The only additional difficulty is that we apply  (GN) to 1d restrictions of $\ue$: we proceed in the same way as for the upper estimates, using the 1d (GN) trick.

\begin{lemm} \label{finalexp}
For $m \geq 1$,
$$
\frac{1}{T} \int_{t}^{t+T}{\E \left\| \ue(s)\right\|_m^2} \overset{m}{\gtrsim} \nu^{-(2m-1)},\qquad t,T \geq T_0.
$$
\end{lemm}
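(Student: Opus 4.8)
The plan is to reduce the whole statement to the case $m=1$, which is essentially Corollary~\ref{finitetimecor}, and then to transport the lower bound to higher $m$ by reading an interpolation inequality "backwards." For $m=1$ I would first note that $\left\|\ue\right\|_1^2 \geq |u_{ii}|^2$ for any fixed $i$, so that Corollary~\ref{finitetimecor} already gives
\[
\frac{1}{T_0}\int_{\tau+T_0}^{\tau+2T_0}{\E \left\|\ue(s)\right\|_1^2}\ \gtrsim\ \nu^{-1},
\]
uniformly in $\tau \geq 1$, i.e. on any window of length $T_0=\kappa^{-1}$. Since all our estimates are $\ue^0$-independent, the Markov property (Section~\ref{agree}) lets this window bound start at any $t \geq T_0$. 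For an interval $[t,t+T]$ of arbitrary length $T \geq T_0$ I would split it into $\lfloor T/T_0 \rfloor$ consecutive windows of length $T_0$, discard the shorter remainder, apply the window bound on each and sum, which yields $\frac{1}{T}\int_{t}^{t+T}{\E \left\|\ue(s)\right\|_1^2}\gtrsim \nu^{-1}$ and settles $m=1$.

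For $m \geq 2$ the idea is to bound $\left\|\ue\right\|_1$ from above by $\left\|\ue\right\|_m$ and a low-order quantity of finite moments, and then to interpret the inequality as a lower bound for $\left\|\ue\right\|_m$. Writing $\varphi=\psi-\int{\psi}$, for each direction $\ka \in \Pi_{m+1}^d$ I would apply (GN) in the one-dimensional setting (the "1d (GN) trick," exactly as in the proof of Lemma~\ref{uppermaux}) to the restrictions $\partial^2\varphi/\partial y_1^2(s,\tilde{\y}_1)$, interpolating $\Vert\varphi(s,\tilde{\y}_1)\Vert_2$ (comparable to $\Vert\ue(s,\tilde{\y}_1)\Vert_1$ on the slice) between $\Vert\varphi(s,\tilde{\y}_1)\Vert_{m+1}$ and $|\varphi(s,\tilde{\y}_1)|_{2,1}$. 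With $\beta=0$, $r=2$, $p=2$, $q=1$ and differentiation order $m-1$, the Gagliardo--Nirenberg constraint forces $\theta=1/(2m-1)$; after integrating in $\tilde{\y}_1$, summing over $\ka$, passing to the equivalent $\Vert\cdot\Vert^{\sim}$ norms and using H\"older, I obtain pointwise in $\omega$ and $s$
\[
\left\|\ue(s)\right\|_1^2\ \lesssim\ \left\|\ue(s)\right\|_m^{2/(2m-1)}\, N_{max}^{(4m-4)/(2m-1)},
\]
where $N_{max}=\max_{\ka,\,\tilde{\y}_1}|\ue(s,\tilde{\y}_1)|_{1,1}$ is the same quantity as in Lemma~\ref{uppermaux}, all of whose moments are finite by Corollary~\ref{W11}.

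Taking expectations and applying H\"older with exponents $2m-1$ and $(2m-1)/(2m-2)$ to separate $\left\|\ue\right\|_m$ from $N_{max}$ (the latter factor contributing $(\E N_{max}^2)^{(2m-2)/(2m-1)}\lesssim 1$) gives $\E\left\|\ue(s)\right\|_1^2 \lesssim (\E\left\|\ue(s)\right\|_m^2)^{1/(2m-1)}$ for each $s$. Integrating over $[t,t+T]$ and using the concavity of $x\mapsto x^{1/(2m-1)}$ (Jensen) to pull the time average inside the power, I get
\[
\frac{1}{T}\int_{t}^{t+T}{\E\left\|\ue(s)\right\|_1^2}\ \lesssim\ \Big(\frac{1}{T}\int_{t}^{t+T}{\E\left\|\ue(s)\right\|_m^2}\Big)^{1/(2m-1)}.
\]
Combining this with the $m=1$ lower bound $\nu^{-1}\lesssim \frac{1}{T}\int_{t}^{t+T}{\E\left\|\ue(s)\right\|_1^2}$ and raising to the power $2m-1$ yields the claim $\frac{1}{T}\int_{t}^{t+T}{\E\left\|\ue(s)\right\|_m^2}\gtrsim \nu^{-(2m-1)}$.

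The delicate point, and hence the main obstacle, is that the only inequality available, (GN), runs in the "wrong" direction: it upper-bounds the low norm $\left\|\ue\right\|_1$ in terms of the high norm $\left\|\ue\right\|_m$. The whole weight of the argument therefore rests on making the interpolation exponent exactly $1/(2m-1)$, so that raising to the power $2m-1$ reproduces the sharp power of $\nu$, and on the two averaging steps: H\"older in $\omega$ to discard the finite-moment factor $N_{max}$, and Jensen in $s$ to commute the time average with the concave power. The genuinely multi-d ingredient, namely applying (GN) to one-dimensional slices of $\ue$ rather than to $\ue$ directly, is handled exactly as in the upper estimates via the $\Vert\cdot\Vert^{\sim}$ norms and Corollary~\ref{linalgcor}; the remaining structure is the standard one-dimensional scheme of \cite{BorW}.
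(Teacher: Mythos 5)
Your proposal is correct and follows essentially the same route as the paper: the $m=1$ case is anchored on Lemma~\ref{finitetime}/Corollary~\ref{finitetimecor} extended via the Markov property, and for $m \geq 2$ the paper applies (GN) on one-dimensional slices with exactly your exponent $\theta=1/(2m-1)$, uses H\"older together with the finite moments of the $W^{1,1}$-type factor (Lemma~\ref{uxposbis}) to obtain $(\E\|\ue\|_1^2)^{2m-1} \lesssim \E\|\ue\|_m^2$, and then Jensen in time. The only cosmetic difference is that the paper slices $\ue$ itself along the coordinate directions $\tilde{\ix}_i$ (for which the $H^1$ norm decomposes exactly over slices), whereas you slice the potential along the directions of $\Pi_{m+1}^d$; this still works, since those directions span $\R^d$, but it makes the reduction to the full $\left\|\ue\right\|_1$ norm marginally less immediate.
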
 

\begin{proof}
In the proof below, until the final steps we will not indicate explicitly the dependence on $s \in [t,t+T]$.
\\ \indent
Since the case $m=1$ has been treated in the previous lemma, we may assume that $m \geq 2$. By (GN), for the 1d restrictions $\ue(\tilde{\ix}_i)$ we have
$$
\Vert \ue(\tilde{\ix}_i) \Vert_1^2 \lesssim \Vert\ue(\tilde{\ix}_i) \Vert_m^{2/(2m-1)} \left| \ue(\tilde{\ix}_i) \right|_{1,1}^{(4m-4)/(2m-1)}.
$$
Thus, in the same way as in the proof of Lemma~\ref{uppermaux}, using H{\"o}lder's inequality and Lemma~\ref{uxposbis} we get
\begin{align} \label{Sobolev11}
( \E \left\|\ue\right\|^2_1 )^{2m-1} &\overset{m}{\lesssim} \E \left\|\ue\right\|_m^2.
\end{align}
Integrating (\ref{Sobolev11}) in time, we get
\begin{align} \nonumber
\frac{1}{T} \int_{t}^{t+T}{\E \left\|\ue\right\|_m^2 } \overset{m}{\gtrsim} & \ \frac{1}{T} \int_{t}^{t+T}{(\E \left\|\ue\right\|^2_1)^{2m-1}}
\\ \nonumber
\overset{m}{\gtrsim} & \ \Big(\frac{1}{T} \int_{t}^{t+T}{\E \left\|\ue\right\|_1^{2} } \Big)^{2m-1}.
\end{align}
Now the lemma's assertion follows from Lemma~\ref{finitetime}.
\end{proof}
\indent
The following results generalise Corollary~\ref{finitetimecor} and Lemma~\ref{finalexp}: they are proved using the 1d (GN) trick: in the same way as in the previous section. We recall that $\gamma=\max(0,m-1/p)$. Note that Theorem~\ref{avoirlinftylower} does not necessarily hold for all $i$. For instance, consider the case when the initial condition $\psi^0$ and the noise $w$ only depend on one coordinate. Lemma~\ref{lowerLp} is the only lower estimate in our paper which holds without averaging in time.

\begin{lemm} \label{finalexpter}
For $m \geq 1$ and $p \in [1,\infty]$,
$$
\Big( \frac{1}{T} \int_{t}^{t+T}{\E \left|\ue(s)\right|_{m,p}^{\alpha}} \Big)^{1/\alpha} \overset{m,p,\alpha}{\gtrsim} \nu^{-\gamma},\quad \alpha>0,\ t,T \geq T_0.
$$
\end{lemm}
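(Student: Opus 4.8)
The plan is to bypass the time-ensemble averaging by proving a \emph{pointwise} lower bound $|\ue(s,\omega)|_{m,p}\gtrsim\nu^{-\gamma}$ valid on a subset $G$ of $\Xi=[t,t+T]\times\Omega$ whose measure, for the probability measure $\mu=T^{-1}\,ds\otimes\Pe$, is bounded below uniformly in $\nu$. Once this is known, the claim follows for \emph{every} $\alpha>0$ simultaneously, since
\begin{equation} \nonumber
\frac1T\int_t^{t+T}\E |\ue(s)|_{m,p}^{\alpha}=\int_{\Xi}|\ue|_{m,p}^{\alpha}\,d\mu\geq\int_G |\ue|_{m,p}^{\alpha}\,d\mu\gtrsim\mu(G)\,\nu^{-\gamma\alpha},
\end{equation}
whence $(\,\cdot\,)^{1/\alpha}\gtrsim\nu^{-\gamma}$. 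This pointwise formulation circumvents the usual passage through the moments of the target norm; in particular it does not rely on any upper bound for the $W^{m,\infty}$ norms, which we do not possess, and this is exactly why the statement can cover $p=\infty$.

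The good set $G$ is built from two ingredients. First, $Y=\left\|\ue\right\|_1^2$ satisfies $\int_\Xi Y\,d\mu\gtrsim\nu^{-1}$ by Lemma~\ref{finalexp} and $\int_\Xi Y^2\,d\mu=T^{-1}\int\E\left\|\ue\right\|_1^4\lesssim\nu^{-2}$ by Corollary~\ref{uppermauxcor}; hence the Paley--Zygmund inequality yields $\mu(Y\geq c\nu^{-1})\geq c_1>0$ with $c,c_1$ independent of $\nu$. Secondly, the auxiliary quantities needed below concentrate: the maximal $W^{1,1}$ norm $N_{\max}$ of the one-dimensional restrictions has finite, $\nu$-independent moments by Corollary~\ref{W11}, and (only when $\gamma<1/2$) the norm $|\ue|_{1,p'}$ for a fixed $p'>2$ has all moments bounded by $\nu^{-(1-1/p')}$ by Theorem~\ref{upperwmp}; by Markov's inequality their rescaled values exceed a large constant $R$ on a set of $\mu$-measure $\leq c_1/2$. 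Intersecting, we obtain $G$ with $\mu(G)\geq c_1/2$, uniformly in $\nu$, on which $Y\gtrsim\nu^{-1}$ while the auxiliary quantities are controlled by $R$.

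It remains to establish $|\ue|_{m,p}\gtrsim\nu^{-\gamma}$ on $G$, and here I split on $\gamma\geq1/2$ versus $\gamma<1/2$. When $\gamma\geq 1/2$ (that is, $p\geq2$ for any $m\geq1$, or any $p$ once $m\geq2$, in particular the case $p=\infty$), I use the one-dimensional (GN) trick exactly as in the proof of Lemma~\ref{uppermaux}: applying Lemma~\ref{GN} with $d'=1$ to the first $y_1$-derivatives of the restrictions $\ue(\tilde{\y}_1)$ along each direction $\ka\in\Pi_m^d$, integrating in $\tilde{\y}_1$, summing over $\ka$ and invoking Corollary~\ref{linalgcor}, I obtain the pointwise inequality
\begin{equation} \nonumber
\left\|\ue\right\|_1^2\lesssim N_{\max}^{\,2-1/\gamma}\,|\ue|_{m,p}^{\,1/\gamma},
\end{equation}
with admissible exponent $\theta=1/(2\gamma)\in(0,1]$; here the passage from the mixed $L^{1/\gamma}(\tilde{\y}_1)$ averages to $|\ue|_{m,p}$ uses $1/\gamma\leq p$, which holds throughout this range. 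On $G$ this forces $|\ue|_{m,p}\gtrsim(\nu^{-1})^{\gamma}=\nu^{-\gamma}$. When $\gamma<1/2$, necessarily $m=1$ and $1\leq p<2$, the derivative interpolation is unavailable and I interpolate instead in the integrability exponent: the pointwise Lyapunov inequality
\begin{equation} \nonumber
\left\|\ue\right\|_1=|\ue|_{1,2}\leq |\ue|_{1,p}^{a}\,|\ue|_{1,p'}^{\,1-a},\qquad \tfrac12=\tfrac{a}{p}+\tfrac{1-a}{p'},
\end{equation}
combined on $G$ with the lower bound on $\left\|\ue\right\|_1$ and the upper bound on $|\ue|_{1,p'}$, gives $|\ue|_{1,p}\gtrsim\nu^{-(1-1/p)}=\nu^{-\gamma}$ by log-linearity of the exponents; this also settles the degenerate value $\gamma=0$ (i.e. $m=1$, $p=1$), where the conclusion is a lower bound by a constant.

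The main obstacle is the uniform-in-$\nu$ lower bound $\mu(G)\geq c_1/2$. It hinges entirely on $\left\|\ue\right\|_1^2$ being controlled from both sides at the same order $\nu^{-1}$: the lower bound on its mean (Lemma~\ref{finalexp}) must be matched by an upper bound on its second moment of the same $\nu$-order (Corollary~\ref{uppermauxcor}), so that the Paley--Zygmund ratio $(\int Y)^2/\int Y^2$ stays bounded away from $0$ as $\nu\to0$. Everything else---the (GN) manipulations, the equivalence of $|\cdot|_{m,p}$ with the averaged one-dimensional norm of Corollary~\ref{linalgcor}, and the Markov estimates for $N_{\max}$ and $|\ue|_{1,p'}$---is routine and parallels the one-dimensional arguments of \cite{BorW}.
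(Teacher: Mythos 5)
Your argument is correct, but it follows a genuinely different route from the paper's. The paper's (sketched) proof mirrors Lemma~\ref{finalexp}: one averages the 1d (GN) inequality for restrictions, so that Lemma~\ref{finitetime} (or Corollary~\ref{finitetimecor}) together with H{\"o}lder's inequality and the finite moments of the restricted $W^{1,1}$ norms gives the bound for moments $\alpha\geq 1/\gamma$; smaller $\alpha$ are then reached by interpolating between moments of $|\ue|_{m,p}$ itself, which requires matching \emph{upper} bounds for its higher moments and therefore, for $p=\infty$, forces a detour through the 1d-restriction quantities of Theorems~\ref{avoirlinftyupper} and~\ref{avoirlinftylower}. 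You replace this moment bookkeeping by a second-moment (Paley--Zygmund) localisation: since Lemma~\ref{finalexp} and Corollary~\ref{uppermauxcor} (with $m=1$, $k=4$) control $\int Y\,d\mu$ and $\int Y^2\,d\mu$ for $Y=\left\|\ue\right\|_1^2$ at the matching orders $\nu^{-1}$ and $\nu^{-2}$, you obtain a set of $\mu$-measure bounded below uniformly in $\nu$ on which $Y\gtrsim\nu^{-1}$ pointwise; there the same (GN) inequality (or, for $m=1$, $p<2$, Lyapunov interpolation in the integrability exponent, whose exponent count $1/2-(1-a)(1-1/p')=a\gamma$ is correct) forces $|\ue|_{m,p}\gtrsim\nu^{-\gamma}$. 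This buys all $\alpha>0$ and the case $p=\infty$ in one stroke, with no upper bound on moments of the target norm ever used --- exactly the obstruction the paper has to work around; the price is the extra input $\E\left\|\ue\right\|_1^4\lesssim\nu^{-2}$, which the paper's route does not need but does provide.

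Two points of precision, neither fatal. First, build $N_{\max}$ from the \emph{scalar pure} directional derivatives: apply Lemma~\ref{GN} with $d'=1$ to $\psi_{ii}(s,\tilde{\ix}_i)$ (or to $\partial^2\psi/\partial y_1^2$ restricted along its own direction $\ka$), as the paper does, rather than to the full vector $\partial_{y_1}\ue(\tilde{\y}_1)$; Corollaries~\ref{preW11cor} and~\ref{W11} bound only the restricted $L_1$ norms of pure second derivatives, not of the mixed ones that the vector restriction would bring in. Starting from pure derivatives suffices because Plancherel and Cauchy--Schwarz give $\left\|\ue\right\|_1^2\lesssim\sum_{i}\int_{\T^d}{\psi_{ii}^2}$, since $\bigl(\sum_i n_i^2\bigr)^2\leq d\sum_i n_i^4$; on the high-norm side one only needs the easy inequality $\bigl(\int_{\T^d}{|\partial_i^m u_i|^p}\bigr)^{1/p}\lesssim|\ue|_{m,p}$, not the full strength of Corollary~\ref{linalgcor}. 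Second, the upper bounds you invoke (Corollary~\ref{uppermauxcor}, Theorem~\ref{upperwmp}) are stated for $t\geq2$ while the lemma allows $t\geq T_0=\kappa^{-1}$; since $\kappa$ in Lemma~\ref{finitetime} may be taken $\leq1/2$, one can assume $T_0\geq2$, and the same convention is implicit in the paper.
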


\begin{theo} \label{avoirlinftylower}
There exists $1 \leq i \leq d$ such that we have
\begin{equation}
\Big( \frac{1}{T} \int_{t}^{t+T}{ \int_{\tilde{\ix}_i} \E | \psi_i(s,\tilde{\ix}_i)|_{m,\infty}^{\alpha}} \Big)^{1/\alpha} \overset{m,\alpha}{\gtrsim} \nu^{-m},\ m \geq 0,\ \alpha>0,\ t,T \geq T_0.
\end{equation}
\end{theo}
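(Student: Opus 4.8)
The plan is to prove this, for the \emph{same} index $i$ produced by Corollary~\ref{finitetimecor}, by transporting the gradient--energy lower bound to the one-dimensional slices and then running the 1d (GN) trick exactly as in the upper estimates. Recall that $\psi_i=u_i$ and $\psi_{ii}=u_{ii}$, and that for each fixed $\tilde{\ix}_i$ the map $x_i\mapsto\psi_i(s,\tilde{\ix}_i)$ is a zero mean function on $\T^1$, so we may apply (GN) with $d'=1$ to it. The starting point is the identity
\[
\int_{\T^d}{u_{ii}^2(s)}=\int_{\tilde{\ix}_i}{\|\psi_i(s,\tilde{\ix}_i)\|_1^2\ d\tilde{\ix}_i},
\]
which combined with Corollary~\ref{finitetimecor} and the Markov property (as in the passage preceding Lemma~\ref{finalexp}) gives, for this particular $i$, the uniform lower bound $\frac{1}{T}\int_{t}^{t+T}{\E\int_{\tilde{\ix}_i}{\|\psi_i(s,\tilde{\ix}_i)\|_1^2}}\gtrsim\nu^{-1}$ valid for all $t,T\geq T_0$.

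Next I would record the sharp one-dimensional interpolation on the slices. Applying (GN) to the derivative $\psi_{ii}(s,\tilde{\ix}_i)=\partial_{x_i}\psi_i(s,\tilde{\ix}_i)$, i.e. interpolating the slice energy $\|\psi_i(s,\tilde{\ix}_i)\|_1$ between $|\psi_i(s,\tilde{\ix}_i)|_{m,\infty}$ and the low-order factor $|\psi_i(s,\tilde{\ix}_i)|_{1,1}$ with the interpolation exponent $\theta=1/(2m)$ taking exactly its one-dimensional value, one obtains
\[
\|\psi_i(s,\tilde{\ix}_i)\|_1^2\leq C\,|\psi_i(s,\tilde{\ix}_i)|_{m,\infty}^{1/m}\,|\psi_i(s,\tilde{\ix}_i)|_{1,1}^{2-1/m}.
\]
The key point is that this exponent bookkeeping is precisely what converts the $\nu^{-1}$ energy bound into the sharp power $\nu^{-m}$.

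I would then integrate this inequality over $\tilde{\ix}_i\in\T^{d-1}$, take the expectation, and average over $s\in[t,t+T]$; the left-hand side is $\gtrsim\nu^{-1}$ by the first paragraph. On the right-hand side I would apply H\"older's inequality on the product space $\T^{d-1}\times\Omega\times[t,t+T]$ with exponents $m\alpha$ and $m\alpha/(m\alpha-1)$, so that $|\psi_i|_{m,\infty}^{1/m}$ is raised to the power $\alpha$; the complementary factor is a fixed power of $|\psi_i(s,\tilde{\ix}_i)|_{1,1}$, which is controlled in \emph{every} moment, uniformly in $s$ and $\tilde{\ix}_i$, by Corollary~\ref{preW11cor}, and hence contributes only a constant. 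Rearranging yields the claim for every $\alpha>1/m$. To reach $0<\alpha\leq 1/m$ I would interpolate the moments: writing $X=|\psi_i(s,\tilde{\ix}_i)|_{m,\infty}$ and normalising $\frac{ds}{T}\,d\tilde{\ix}_i\,d\Pe$ to a probability measure, log-convexity gives $\int X^{\alpha_1}\leq(\int X^{\alpha})^{\lambda}(\int X^{\alpha_3})^{1-\lambda}$ for $\alpha_1=\lambda\alpha+(1-\lambda)\alpha_3$; choosing $\alpha_1>1/m$ (where the bound is already proved) and $\alpha_3$ large, and bounding $\int X^{\alpha_3}\lesssim\nu^{-m\alpha_3}$ by the matching upper estimate of Theorem~\ref{avoirlinftyupper}, one extracts $\int X^{\alpha}\gtrsim\nu^{-m\alpha}$, i.e. the claim for all $\alpha>0$. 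The degenerate case $m=0$, where the above interpolation is vacuous, would be handled separately and amounts to a constant lower bound for the $L^\infty$ norm of the slices, following from the $L_p$-type lower estimate (Lemma~\ref{lowerLp}).

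The main obstacle is the $p=\infty$ endpoint. Because $|\cdot|_{m,\infty}$ admits no characterisation as an average of one-dimensional norms (unlike the $|\cdot|^{\sim}_{m,p}$ norms for finite $p$ in Corollary~\ref{linalgcor}), one cannot hope for the full $W^{m,\infty}(\T^d)$ lower bound, which is exactly why the statement is phrased as the mixed quantity $\int_{\tilde{\ix}_i}|\psi_i|_{m,\infty}^{\alpha}$, reducing everything to genuine one-dimensional slices where the sharp $L^\infty$ interpolation and the $\theta=1/(2m)$ accounting are available. A second, more structural point is that the estimate holds only for \emph{some} $i$: one must use the specific index singled out by Corollary~\ref{finitetimecor}, where the gradient energy concentrates, and this same index then serves for every $m$ precisely because all these lower bounds are built from that single energy estimate.
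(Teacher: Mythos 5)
Your argument for $m\geq 1$ is correct and is essentially the paper's own proof, which is only sketched there as the \enquote{1d (GN) trick... in the same way as in the previous section}: start from Corollary~\ref{finitetimecor} for the distinguished index $i$, rewrite $\int_{\T^d}u_{ii}^2$ as $\int_{\tilde{\ix}_i}\Vert\psi_i(\cdot,\tilde{\ix}_i)\Vert_1^2$, interpolate slicewise, absorb the $W^{1,1}$ factor via Corollary~\ref{preW11cor} and H\"older, extend in time by the Markov property, and reach small $\alpha$ by log-convexity of moments against the matching upper bound of Theorem~\ref{avoirlinftyupper}. One caveat worth recording: in your interpolation $\Vert\psi_i(s,\tilde{\ix}_i)\Vert_1^2\leq C\,|\psi_i(s,\tilde{\ix}_i)|_{m,\infty}^{1/m}|\psi_i(s,\tilde{\ix}_i)|_{1,1}^{2-1/m}$ the $L_\infty$-based norm is the \emph{high} factor, so $m'-\beta-d'/p$ is a nonnegative integer and Lemma~\ref{GN} \emph{as stated in the paper} would only allow $\theta=0$. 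The inequality is nevertheless true: Nirenberg's theorem excludes only $\theta=1$ in this exceptional case, and in 1d it also follows elementarily from $|v'|_2^2\leq|v'|_\infty|v'|_1$ combined with the Landau--Kolmogorov bound $|v'|_\infty\lesssim|v^{(m)}|_\infty^{1/m}|v'|_1^{1-1/m}$ for zero mean periodic functions. So this is a presentational subtlety, not a flaw.

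The genuine gap is the case $m=0$, which the statement includes. Lemma~\ref{lowerLp} bounds from below moments of $|\ue(t)|_{0,p}$, i.e. of the norm of the full \emph{vector} $\ue$ on $\T^d$; it says nothing about the $i$-th component, let alone about the slice quantity $\int_{\tilde{\ix}_i}|\psi_i(s,\tilde{\ix}_i)|_{0,\infty}^{\alpha}$. Since the theorem fixes a single $i$ (the one from Corollary~\ref{finitetimecor}) and $|\ue|_{\infty}$ can be carried entirely by the other components (take data and noise depending only on $x_j$, $j\neq i$ --- the paper's own counterexample for transverse quantities), Lemma~\ref{lowerLp} cannot close this case. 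Moreover, no inequality of the form $\Vert v\Vert_1^2\lesssim|v|_{\infty}^{a}|v|_{1,1}^{b}$ can hold (a sawtooth of small amplitude and huge slope defeats it), so the $m=0$ lower bound cannot come from the energy estimate plus $W^{1,1}$ moments alone: an upper bound on second derivatives must enter. The fix stays inside your scheme: apply the 1d inequality $|v|_{1,\infty}^2\lesssim|v|_{0,\infty}\,|v|_{2,\infty}$ to the slices $v=\psi_i(s,\tilde{\ix}_i)$, then H\"older with exponents $(\alpha,\alpha/(\alpha-1))$ together with your already-proved $m=1$ lower bound (which gives $\nu^{-2}$ for the left-hand side with $\alpha=2$) and the $m=2$ upper bound of Theorem~\ref{avoirlinftyupper} (which contributes $\nu^{-2}$ on the right) yield $\bigl(\{\int_{\tilde{\ix}_i}|\psi_i|_{0,\infty}^{\alpha}\}\bigr)^{1/\alpha}\gtrsim\nu^{-2}\cdot\nu^{2}=1$; small $\alpha$ then follows by the same log-convexity argument. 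This combination of a first-derivative lower bound with a second-derivative upper bound is exactly what condition (\ref{condi}) in the paper's definition of the set $L_K$ encodes.
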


\begin{lemm} \label{lowerLp}
For $m=0$ and $p \in [1,\infty]$, or for $m,p=1$, we have
$$
\E \left|\ue(t)\right|^{\alpha}_{m,p}\overset{\alpha}{\gtrsim} 1,\quad t \geq 2T_0,\quad \alpha>0.
$$
\end{lemm}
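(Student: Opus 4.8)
The plan is to reduce all four cases to a single fixed-time lower bound on the energy, namely $\E|\ue(t)|^2\gtrsim1$ for $t\geq 2T_0$, and then to obtain this bound by testing the potential equation against one low Fourier mode, on which the dissipation is negligible. First I would observe that on the unit-mass torus the map $p\mapsto|\ue|_p$ is nondecreasing, so $|\ue|_p\geq|\ue|_1$ for every $p\geq1$; moreover, since each one-dimensional restriction $\psi_i(\tilde{\ix}_i)$ has zero mean, $\int_{x_i}|\psi_{ii}|\,dx_i\geq 2\sup_{x_i}|\psi_i|\geq 2\int_{x_i}|\psi_i|\,dx_i$, whence $|\ue|_{1,1}\gtrsim\sum_i|\psi_{ii}|_1\gtrsim\sum_i|\psi_i|_1\gtrsim|\ue|_1$. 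Thus in every case it suffices to bound $\E|\ue(t)|_1^{\alpha}$ from below. Cauchy--Schwarz gives $|\ue|_2^2\leq|\ue|_{\infty}|\ue|_1$, so once $\E|\ue(t)|^2\gtrsim1$ is known, the uniform moment bounds of Corollary~\ref{Lpupper} yield $\E|\ue|_1^2\geq(\E|\ue|_2^2)^2/\E|\ue|_{\infty}^2\gtrsim1$; passing from the exponent $2$ to an arbitrary $\alpha>0$ is then Lyapunov interpolation between $\E|\ue|_1^2\gtrsim1$ and $\E|\ue|_1^M\leq\E|\ue|_{\infty}^M\lesssim1$.

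It remains to prove the energy bound, and here the obstruction discussed in Section~\ref{lower} resurfaces: the full balance (\ref{Itoexpdiff}) with $m=0$ is useless at a fixed time because the dissipation $2\nu\E\Vert\ue\Vert_1^2$ is itself of order one. The remedy is to project onto a single low mode, where the dissipation rate drops to order $\nu$. Assuming, as in the preceding proofs, that $a_{\e_1}>0$ (assumption \textbf{ii)} of Section~\ref{rand} provides at least one forced mode, and the general case reduces to this one by a change of orthogonal basis), I set $e(\ix)=\sqrt2\cos(2\pi x_1)$ and $c(t)=\langle\psi(t),e\rangle$. Projecting (\ref{HJint}) onto $e$ gives the scalar equation $dc=\bigl(-\nu(2\pi)^2 c-r\bigr)\,dt+(a_{\e_1}/\sqrt2)\,dw_{\e_1}$, where $r(s)=\langle f(\nabla\psi(s)),e\rangle$. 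By (\ref{poly}) with $m=0$ one has $|r|\lesssim(1+|\ue|_{\infty})^{h(0)}$, so Corollary~\ref{Lpupper} bounds $\E r^2$ by a constant $C_{\ast}$ that is uniform in $\nu\leq\nu_0$ and in $t\geq1$; in other words $r$ enters only as a drift of controlled size.

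Applying It{\^o}'s formula to $c^2$ and taking expectations, I get
$$\frac{d}{dt}\,\E c^2=-2\nu(2\pi)^2\,\E c^2-2\,\E(cr)+\frac{a_{\e_1}^2}{2},$$
and since $|\E(cr)|\leq C_{\ast}(\E c^2)^{1/2}$, the quantity $y(t)=\E c(t)^2$ obeys the differential inequality $\dot y\geq \tfrac12 a_{\e_1}^2-2C_{\ast}\sqrt y-2\nu(2\pi)^2 y$. The right-hand side is at least $\tfrac14 a_{\e_1}^2>0$ for all $y$ below a threshold $y_1>0$ depending only on $a_{\e_1}$ and $C_{\ast}$, and bounded away from $0$ uniformly in $\nu\leq\nu_0$. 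Hence $y$ increases at rate $\geq\tfrac14 a_{\e_1}^2$ while $y\leq y_1$ and cannot cross $y_1$ downwards, so $y(t)\geq y_1$ for all $t\geq T_0+\tau_1$, where $\tau_1\lesssim y_1/a_{\e_1}^2$ is a fixed burn-in time. After enlarging $T_0$ so that $\tau_1\leq T_0$ (harmless, since all earlier estimates hold a fortiori for a larger $T_0$), this gives $\E c(t)^2\geq y_1$ for $t\geq 2T_0$, and therefore $\E|\ue(t)|^2\gtrsim\E c(t)^2\gtrsim1$, as required.

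The main obstacle is exactly this last mechanism: one must locate a mode on which the viscous damping is of order $\nu$ rather than order one, and then show that the constant noise injection $a_{\e_1}^2/2$ dominates the nonlinear drift $r$ whenever the mode's energy is small. Making this quantitative and $\nu$-independent hinges on controlling $r$ through the uniform $L_{\infty}$ moment bounds of Corollary~\ref{Lpupper}; this is what distinguishes Lemma~\ref{lowerLp} from the other lower estimates and lets it hold at a fixed time, with the factor $2$ in the threshold $t\geq 2T_0$ absorbing the burn-in time $\tau_1$.
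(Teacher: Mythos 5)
Your proof is correct, but it follows a genuinely different route from the paper's. The paper treats Lemma~\ref{lowerLp} as part of the package of lower bounds obtained by the 1d restriction/(GN) trick, i.e.\ it transplants the fixed-time argument of the one-dimensional paper \cite{BorW} via the machinery built around Lemma~\ref{finitetime} and its corollaries. You instead give a self-contained multi-dimensional argument: the elementary reductions $|\ue|_p\geq|\ue|_1$ and $|\ue|_{1,1}\gtrsim\sum_i|\psi_{ii}|_1\gtrsim\sum_i|\psi_i|_1\gtrsim|\ue|_1$, together with $|\ue|_2^2\leq|\ue|_\infty|\ue|_1$ and the uniform moments of Corollary~\ref{Lpupper}, correctly reduce all four cases to the fixed-time energy bound $\E|\ue(t)|^2\gtrsim1$; you then prove that bound by projecting (\ref{HJint}) onto a single forced Laplacian eigenmode, where the dissipation is $O(\nu)$, the It{\^o} correction injects the constant $a_{\e_1}^2/2$, and the nonlinearity enters only through $|\E(cr)|\leq C_{*}\sqrt{\E c^2}$, so the ODE comparison forces $\E c^2$ above a $\nu$-independent threshold after a fixed burn-in time. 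This shares with the paper's Lemma~\ref{finitetime} the exploitation of the diagonal noise and a distinguished forced mode, but deploys it quite differently: no disintegration of $\Omega$, no small-noise event $Z(\kappa)$, no conditioning, and it directly yields a fixed-time (rather than time-averaged) bound, which is precisely what this lemma requires; the price is that it only produces $O(1)$ information and could not replace the $\nu^{-1}$ dissipation bound. Two minor caveats. First, your burn-in time $\tau_1$ obliges you to enlarge $T_0$; this is indeed harmless (all statements of the form ``for $t,T\geq T_0$'' are monotone in $T_0$, and the Section~\ref{turb} averages generalise to longer windows), but it means you prove the statement with a possibly larger constant than the paper's $T_0=\kappa^{-1}$. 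Second, when the non-degenerate coefficient is $a_{\ka}$ or $b_{\ka}$ with $\ka\neq\e_1$, it is cleaner to project directly onto $\sqrt2\cos(2\pi\ka\cdot\ix)$ or $\sqrt2\sin(2\pi\ka\cdot\ix)$ --- still a Laplacian eigenfunction carrying an independent one-dimensional Wiener process --- rather than to invoke the orthogonal change of basis, which changes the underlying torus and would require rechecking the moment bounds in the new coordinates.
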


\section{Sobolev norms: main theorem} \label{main}

The following two theorems sum up the main results of Sections~\ref{upper}-\ref{lower}, with the exception of the upper estimates on the second directional derivatives of $\psi$ in Section~\ref{upper}. We recall that $\gamma=\max(0,m-1/p)$.

\begin{theo} \label{avoir}
For $m=0$ and $p \in [1,\infty]$, $m=1$ and $p \in [1,\infty)$, or $m \geq 2$ and $p \in (1,\infty)$,
\begin{equation} \label{asymp}
\Big( \frac{1}{T} \int_{t}^{t+T}{\E \left|\ue(s)\right|_{m,p}^{\alpha}} \Big)^{1/\alpha} \overset{m,p,\alpha}{\sim} \nu^{-\gamma},\quad \alpha>0,\ t,T \geq T_0.
\end{equation}
Moreover, the upper estimates hold with time-averaging replaced by maximising over $[t,t+1]$ for $t \geq 2$, i.e.
$$
\Big( \E \max_{s \in [t,t+1]}{\left|\ue(s)\right|_{m,p}^{\alpha}}\Big)^{1/\alpha} \overset{m,p,\alpha}{\lesssim} \nu^{-\gamma},\quad \alpha>0,\ t \geq 2.
$$
The lower estimates still hold for $m=1$ and $p=\infty$, or for $m \geq 2$ and $p=1,\infty$. On the other hand, they hold without averaging in time for $m=0$ and $p \in [1,\infty]$ or $m,p=1$, provided $t \geq 2T_0$. In other words, in this case (corresponding to $\gamma=0$) we have
\begin{equation} \nonumber
\Big( \E \left|\ue(t)\right|_{m,p}^{\alpha} \Big)^{1/\alpha} \overset{\alpha}{\sim} 1,\quad \alpha>0.
\end{equation}

\end{theo}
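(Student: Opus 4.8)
The plan is to recognise that Theorem~\ref{avoir} is a synthesis statement: every one of its assertions coincides with an estimate already established in Sections~\ref{upper}--\ref{lower}, so the proof amounts to assembling those results, pairing the matching upper and lower bounds into the relation $\sim$, and reconciling the various time ranges. I would organise the write-up by treating the upper bounds, then the lower bounds, and finally the combination.

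For the upper bounds, the max-over-$[t,t+1]$ statement is literally Theorem~\ref{upperwmp}, valid for exactly the triples $(m,p)$ appearing here and for $t\geq2$. To pass to the time-averaged upper bound in (\ref{asymp}), I would note that for any $s\geq2$ one has $\E|\ue(s)|^\alpha_{m,p}\leq\E\max_{r\in[\lfloor s\rfloor,\lfloor s\rfloor+1]}|\ue(r)|^\alpha_{m,p}\lesssim\nu^{-\gamma\alpha}$ by Theorem~\ref{upperwmp} (for $m=0$ this is Corollary~\ref{Lpupper}); since this bound is uniform in $s$, averaging over $[t,t+T]$ and taking the $1/\alpha$-th power gives $\lesssim\nu^{-\gamma}$. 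For the lower bounds, the time-averaged estimate $\gtrsim\nu^{-\gamma}$ for $m\geq1$ and every $p\in[1,\infty]$ is exactly Lemma~\ref{finalexpter}; this simultaneously supplies the lower half of $\sim$ in (\ref{asymp}) for $m=1,\ p<\infty$ and $m\geq2,\ 1<p<\infty$, and the ``extra'' cases $m=1,\ p=\infty$ and $m\geq2,\ p\in\{1,\infty\}$, where no sharp upper bound exists. For $m=0$ (so $\gamma=0$) the pointwise bound $\E|\ue(t)|^\alpha_{0,p}\gtrsim1$ for $t\geq2T_0$ is Lemma~\ref{lowerLp}, which is already the final displayed assertion of the theorem; integrating it over $[t,t+T]$ for $t\geq2T_0$ yields $\frac1T\int_t^{t+T}\E|\ue(s)|^\alpha_{0,p}\,ds\gtrsim1$, hence the lower half of $\sim$ in (\ref{asymp}) for $m=0$.

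Combining, on each admissible $(m,p)$ the matching $\lesssim$ and $\gtrsim$ bounds give $\sim$ with $\gamma=\max(0,m-1/p)$. The only point requiring genuine care, as opposed to mere citation, is the bookkeeping of the thresholds: Theorem~\ref{upperwmp} is stated for $t\geq2$, Lemma~\ref{lowerLp} for $t\geq2T_0$, whereas (\ref{asymp}) demands $t,T\geq T_0$. I expect this reconciliation to be the main (and essentially the only) obstacle. It is routine but must be executed carefully through the Markov property and the translation-invariance-in-starting-time principle recorded in Section~\ref{agree}, taking $T_0$ large enough that all the cited estimates are available on the required ranges, so that every constant in $\lesssim,\gtrsim,\sim$ stays uniform in $\nu\in(0,1]$ and in the initial condition $\ue^0$.
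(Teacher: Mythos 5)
Your proposal is correct and is essentially the paper's own (implicit) proof: the paper presents Theorem~\ref{avoir} as a statement that ``sums up'' Sections~\ref{upper}--\ref{lower}, i.e.\ precisely the assembly you describe, with the upper bounds coming from Theorem~\ref{upperwmp}, the lower bounds for $m\geq 1$ from Lemma~\ref{finalexpter}, the $m=0$ and $m=p=1$ cases from Lemma~\ref{lowerLp}, and the time thresholds reconciled via the Markov-property translation principle of Section~\ref{agree} (enlarging the constant $T_0$ if necessary, which is legitimate since the theorem only asserts $T_0$ is some constant independent of $\nu$ and $\ue^0$). Your flagged bookkeeping point, including the corner where $t\in[T_0,2T_0)$ for the $m=0$ lower bound, is indeed the only delicate step, and your resolution matches the paper's conventions.
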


\begin{theo} \label{avoirlinfty}
There exists $1 \leq i \leq d$ such that for $m \geq 1$, we have
\begin{equation}
\Big( \frac{1}{T} \int_{t}^{t+T}{ \int_{\tilde{\ix}_i} \E | \psi_i(s,\tilde{\ix}_i)|_{m,\infty}^{\alpha}} \Big)^{1/\alpha} \overset{m}{\sim} \nu^{-m},\ \alpha>0,\ t,T \geq T_0.
\end{equation}
The corresponding upper estimate holds with time-averaging replaced by maximising over $[t,t+1]$ for $t \geq 2$, and for all $i$.
\end{theo}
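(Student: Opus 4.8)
The plan is to read off the two-sided estimate (\ref{avoirlinfty}) by combining the two one-sided estimates that have already been established in the previous sections: the upper bound of Theorem~\ref{avoirlinftyupper}, which is valid for \emph{every} index $1 \leq i \leq d$, and the lower bound of Theorem~\ref{avoirlinftylower}, which holds for one particular index. All of the genuine analytic work --- the maximum-principle argument of Section~\ref{upper} behind Theorem~\ref{avoirlinftyupper} and the energy-balance argument of Section~\ref{lower} behind Theorem~\ref{avoirlinftylower} --- is already contained in those statements, so what remains is essentially bookkeeping.

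First I would fix the index $i$ to be the one furnished by Theorem~\ref{avoirlinftylower}; this immediately gives, for $m \geq 1$, $\alpha > 0$ and $t, T \geq T_0$,
$$
\Big( \frac{1}{T} \int_{t}^{t+T}{ \int_{\tilde{\ix}_i} \E | \psi_i(s,\tilde{\ix}_i)|_{m,\infty}^{\alpha}} \Big)^{1/\alpha} \overset{m,\alpha}{\gtrsim} \nu^{-m},
$$
which is the lower half of the claim. For the matching upper half in time-averaged form I would deduce it from the unit-interval maximum estimate of Theorem~\ref{avoirlinftyupper}. The elementary point is that a time average is dominated by a maximum: covering $[t,t+T]$ by the unit intervals $[t+j,t+j+1]$, each of which starts at a time $\geq t \geq T_0 \geq 2$, one has for every such $s$ the pointwise bound
$$
\int_{\tilde{\ix}_i} | \psi_i(s,\tilde{\ix}_i)|_{m,\infty}^{\alpha} \leq \max_{s' \in [t+j,\ t+j+1]} \int_{\tilde{\ix}_i} | \psi_i(s',\tilde{\ix}_i)|_{m,\infty}^{\alpha}.
$$
Taking the expectation of the right-hand side and invoking Theorem~\ref{avoirlinftyupper} on the corresponding unit interval bounds it by $C(m,\alpha)\, \nu^{-m\alpha}$, uniformly in $j$. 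Integrating in $s$ over $[t,t+T]$ and dividing by $T$ then yields
$$
\frac{1}{T} \int_{t}^{t+T}{ \int_{\tilde{\ix}_i} \E | \psi_i(s,\tilde{\ix}_i)|_{m,\infty}^{\alpha}} \overset{m,\alpha}{\lesssim} \nu^{-m\alpha},
$$
and raising to the power $1/\alpha$ gives $\overset{m,\alpha}{\lesssim} \nu^{-m}$. Together with the lower bound this establishes the equivalence $\sim \nu^{-m}$ for the chosen index $i$.

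The last sentence of the statement is then immediate: the assertion that the upper estimate holds with time-averaging replaced by maximising over $[t,t+1]$, and for \emph{all} $i$, is verbatim the content of Theorem~\ref{avoirlinftyupper}, so no further argument is required. I do not expect any real obstacle in this proof; it is a packaging result. The only point deserving a moment's attention is that the unit-interval upper bound must be applicable at every starting time occurring in the splitting of $[t,t+T]$, which is ensured as soon as $T_0 \geq 2$. Should this not already hold, it suffices to decrease the constant $\kappa$ produced in Lemma~\ref{finitetime} so that $T_0 = \kappa^{-1} \geq 2$, which affects none of the earlier estimates.
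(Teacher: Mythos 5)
Your proposal is correct and coincides with the paper's own (implicit) treatment: Theorem~\ref{avoirlinfty} is stated as a summary combining the upper bound of Theorem~\ref{avoirlinftyupper} (valid for all $i$, with the maximum over unit intervals dominating the time average exactly as you argue) with the lower bound of Theorem~\ref{avoirlinftylower} for the particular index $i$. Your remark about ensuring $T_0=\kappa^{-1}\geq 2$ by shrinking $\kappa$ is a legitimate way to handle the only bookkeeping subtlety, and it affects none of the earlier estimates.
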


We recall that these two theorems still hold if we replace the Sobolev norms with their suprema over all smooth initial conditions.
\\ \indent
Theorem~\ref{avoir} yields, for integers $m \geq 1$, the relation
\begin{equation} \label{integers}
\lbrace \Vert \ue \Vert_m^2 \rbrace \overset{m}{\sim} \nu^{-(2m-1)}.
\end{equation}
By a standard interpolation argument (see (\ref{Sobolevspectr})) the upper bound in (\ref{integers}) also holds for non-integer numbers $s >1$. Actually, the same is true for the lower bound, since for any integer $n>s$ we have
\begin{align} \nonumber
\lbrace \Vert \ue \Vert_s^2 \rbrace &\geq \lbrace \Vert \ue \Vert_n^2 \rbrace^{n-s+1} \lbrace \Vert \ue \Vert_{n+1}^2 \rbrace^{-(n-s)} \overset{s}{\gtrsim} \nu^{-(2s-1)}.
\end{align}

\section{Estimates for small-scale quantities} \label{turb}

In this section, we estimate the small-scale quantities which characterise Burgulence in the physical space (increments) as well as in the Fourier space (energy spectrum). The notation is given in Section~\ref{agreeturb}. Note that in this section, we use the results in Sections~\ref{upper}-\ref{main} as a "black box". In other words, we do not directly use the fact that $\ue$ solves (\ref{whiteBurgers}).

\subsection{Results in physical space} \label{phys}

We begin by proving the upper estimates for the structure functions. 

\begin{lemm} \label{upperdiss}
For $|\er|=\ell$, $1 \leq i \leq d$,
\begin{equation} \label{upperdissstat}
S_{p,\alpha,i}(\er) \overset{p,\alpha}{\lesssim} \left\lbrace \begin{aligned} & \ell^{\alpha p},\ 0 \leq p \leq 1. \\ & \ell^{\alpha p} \nu^{-\alpha(p-1)},\ p \geq 1. \end{aligned} \right.
\end{equation}
\end{lemm}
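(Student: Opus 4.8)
The plan is to reduce everything to the one-dimensional structure-function estimate applied to restrictions of $\ue$ along lines, exactly as in the 1d case, the only genuinely multidimensional inputs being the $W^{1,1}$ bound (Corollary~\ref{W11norm}) and the diagonal $W^{1,\infty}$-type bound (Theorem~\ref{avoirlinftyupper}).

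First I would establish the basic $L^1$ increment bound. Writing $\er=\ell\e$ with $|\e|=1$ and using the fundamental theorem of calculus, $\psi_i(\ix+\er)-\psi_i(\ix)=\int_0^\ell(\e\cdot\nabla)\psi_i(\ix+s\e)\,ds$, so by translation invariance of the integral over $\T^d$ together with $|(\e\cdot\nabla)\psi_i|\le|\nabla\psi_i|$,
\[
\int_{\T^d}|\psi_i(\ix+\er)-\psi_i(\ix)|\,d\ix\le\ell\int_{\T^d}|\nabla\psi_i|\,d\ix\lesssim\ell\,|\ue|_{1,1}.
\]
For $0\le p\le 1$ the map $t\mapsto t^p$ is concave and $d\ix$ is a probability measure on $\T^d$, so Jensen's inequality gives $\int_{\T^d}|\psi_i(\ix+\er)-\psi_i(\ix)|^p\,d\ix\le(\ell\,|\ue|_{1,1})^p$; raising to the power $\alpha$ and applying the bracket $\{\cdot\}$ together with Corollary~\ref{W11norm} (all moments of $\max_s|\ue(s)|_{1,1}$ are finite, uniformly in $\nu$) yields $S_{p,\alpha,i}(\er)\lesssim\ell^{\alpha p}$. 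This settles the first line.

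For $p\ge1$ the idea is the classical splitting $|v|^p=|v|\cdot|v|^{p-1}$, which reduces matters to a pointwise increment bound $\sup_\ix|\psi_i(\ix+\er)-\psi_i(\ix)|\lesssim\ell\nu^{-1}K$ with $K$ of finite moments bounded uniformly in $\nu$; combined with the $L^1$ bound above this gives $\int|\cdot|^p\le(\ell\nu^{-1}K)^{p-1}\cdot\ell\,|\ue|_{1,1}\lesssim\ell^p\nu^{-(p-1)}\cdot(\text{good moments})$, and averaging produces $\ell^{\alpha p}\nu^{-\alpha(p-1)}$. To obtain the pointwise factor I would decompose $\psi_i(\ix+\er)-\psi_i(\ix)=\sum_{k=1}^d[\psi_i(\ix+\er^{(k)})-\psi_i(\ix+\er^{(k-1)})]$ with $\er^{(k)}=\sum_{j\le k}\er_j\e_j$, apply Minkowski's inequality in $L^p$, and, after shifting by $\er^{(k-1)}$, reduce each summand to a one-dimensional increment of the restriction of $\psi_i$ along the axis $\e_k$ by an amount $|\er_k|\le\ell$. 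For fixed transverse variables this is the 1d structure function $\int_{x_k}|g(x_k+\er_k)-g(x_k)|^p\,dx_k$ of $g=\psi_i(\tilde{\ix}_k)$, for which the 1d estimate yields $|\er_k|^p\,|\psi_i(\tilde{\ix}_k)|_{1,\infty}^{\,p-1}\,|\psi_{ik}(\tilde{\ix}_k)|_1$. Integrating in $\tilde{\ix}_k$ and using Hölder's inequality, the \emph{diagonal} terms $k=i$ are then controlled by Theorem~\ref{avoirlinftyupper} (which bounds $\int_{\tilde{\ix}_i}|\psi_i(\tilde{\ix}_i)|_{1,\infty}^\alpha$ by $\nu^{-\alpha}$) together with Corollary~\ref{preW11cor}, giving exactly the target $\ell^p\nu^{-(p-1)}$.

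\textbf{The main obstacle} is the off-diagonal terms $k\neq i$, i.e. the increment of the component $\psi_i$ along a direction $\e_k$ transverse to it, and more generally the fact that $\er$ may point in an arbitrary direction so that the component index $i$ is not aligned with the increment direction. There the required quantity is the directional sup $\sup_{x_k}|\psi_{ik}|$ of a \emph{mixed} second derivative, which — unlike the diagonal derivative $\psi_{ii}$ handled by Theorem~\ref{avoirlinftyupper} — is not controlled sharply by the available tools: the global embedding (\ref{Sobinj}) only gives $\sup|\psi_{ik}|\lesssim\|\ue\|_{s+1}\lesssim\nu^{-(2s+1)/2}$ for $s>d/2$ by Corollary~\ref{uppermauxcor}, far weaker than the needed $\nu^{-1}$ once $d\ge2$. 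The one-sided bounds of Lemma~\ref{uxposbis}, used in the rotated integer bases attached to $\ka=\e_i\pm\e_k$ as in Corollary~\ref{W11}, do express $\psi_{ik}$ through diagonal second directional derivatives, but this only controls the positive part and the $L^1$ norm, not the $L^\infty$ norm entering the pointwise factor. I therefore expect the genuine difficulty to be bridging this off-diagonal $L^\infty$ gap uniformly in $\nu$; the natural routes are either to upgrade the diagonal estimate of Theorem~\ref{avoirlinftyupper} to a directional $W^{1,\infty}$ statement covering $\psi_{ik}$, or to observe that for the downstream structure-function and spectral results (which average over directions, and where the decisive object is the longitudinal increment, diagonal in the frame $\er\parallel\e$) the diagonal case together with the crude bound $\sup|\psi_i(\ix+\er)-\psi_i(\ix)|\le 2|\ue|_\infty\lesssim1$ already suffices outside the dissipation range. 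Making the off-diagonal pointwise bound precise while keeping all moments uniform in $\nu$ is the delicate point of the argument.
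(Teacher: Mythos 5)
Your treatment of the case $0\le p\le 1$ is correct and essentially matches the paper (the paper reduces $p<1$ to $p=1$ by H\"older, you use Jensen plus Corollary~\ref{W11norm}; these are equivalent). The genuine gap is in the case $p\ge 1$: your route via the splitting $|v|^p=|v|\cdot|v|^{p-1}$ forces you to produce a \emph{pointwise} bound $\sup_{\ix}|\psi_i(\ix+\er)-\psi_i(\ix)|\lesssim \ell\nu^{-1}K$, i.e.\ an $L_\infty$ bound on the full gradient of $\ue$, and in particular on the mixed second derivatives $\psi_{ik}$, $k\neq i$, of the potential. As you yourself observe, no such bound uniform in $\nu$ is available in this paper (this is precisely the $W^{m,\infty}$ estimate that the multi-d theory lacks), and neither of the two repair strategies you sketch is carried out. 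So the proof as written is incomplete exactly at the point you flag as ``the delicate point''.

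The gap is, however, an artifact of the chosen route, and the paper's proof shows it can be avoided entirely: no $L_\infty$ information is needed. After reducing to $\er=\ell\e_j$ (the general direction follows by the triangle inequality, as in your $\er^{(k)}$ decomposition), write the increment as $\int_0^\ell \psi_{ij}(\ix+y\e_j)\,dy$ and apply H\"older \emph{to this inner integral} rather than factoring out a sup:
\begin{equation} \nonumber
\Big| \int_{0}^{\ell}{\psi_{ij}(\ix+y\e_j)\,dy} \Big|^p \leq \ell^{p-1} \int_{0}^{\ell}{|\psi_{ij}(\ix+y\e_j)|^p\,dy}.
\end{equation}
Integrating in $\ix$ and using translation invariance gives
\begin{equation} \nonumber
\int_{\T^d}{|\psi_i(\ix+\ell\e_j)-\psi_i(\ix)|^p\,d\ix} \leq \ell^{p}\,|\psi_{ij}|_p^p,
\end{equation}
so that $S_{p,\alpha,i}(\ell\e_j)\lesssim \ell^{\alpha p}\big\{|\psi_{ij}|_p^{\alpha p}\big\}$. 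Since $\psi_{ij}$ is a first spatial derivative of $u_i$, Theorem~\ref{avoir} with $m=1$ and \emph{finite} $p$ (which is exactly the range where the multi-d upper bounds do hold, and which controls all components of $\nabla\ue$, diagonal and mixed alike) yields $\big\{|\psi_{ij}|_p^{\alpha p}\big\}\lesssim \nu^{-\alpha p(1-1/p)}=\nu^{-\alpha(p-1)}$, which is the claimed bound. In short: the decisive input is the $W^{1,p}$ estimate for $p<\infty$, not any $W^{1,\infty}$-type estimate, and the off-diagonal obstruction you identified never arises.
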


\begin{proof}
The case $p < 1$ follows from the case $p=1$ by H{\"o}lder's inequality: thus it suffices to consider $p \geq 1$. For simplicity, we only consider the case $\er=\ell \e_j$, which yields the general case by the triangle inequality. We observe that we have
\begin{align} \nonumber
S_{p,\alpha,i}(\ell \e_j) &= \Big\{ \Big( \int_{\ix \in \T^d}{|\psi_i(\ix+\ell \e_j)-\psi_i(\ix) |^p d \ix} \Big)^{\alpha} \Big\}
\\ \nonumber
& \overset{p,\alpha}{\lesssim} \Big\{ \Big( \int_{\ix \in \T^d}{ \Big| \int_{0}^{\ell}{\psi_{ij}(\ix+ y \e_j) dy} \Big|^p d \ix } \Big)^{\alpha} \Big\}.
\end{align}

Then, H{\"o}lder's inequality yields that

\begin{align} \nonumber
S_{p,\alpha,i}(\ell \e_j)& \overset{p,\alpha}{\lesssim} \Big\{ \Big( \ell^{p-1} \int_{\ix \in \T^d}{\Bigg( \int_{0}^{\ell}{|\psi_{ij}(x+y\e_j)|^p dy \Bigg) d \ix} } \Big)^{\alpha} \Big\}
\\ \nonumber
& = \ell^{\alpha p} \Big\{ |\psi_{ij}|_{p}^{\alpha p} \Big\} \overset{p,\alpha}{\lesssim} \ell^{\alpha p} \nu^{-\alpha(p-1)},
\end{align}
where the last step follows from Theorem~\ref{avoir}.
\end{proof}
\indent
The following upper bound gives a better estimate for $\ell \in J_2 \cup J_3$.

\begin{lemm} \label{upperinert}
For  $|\er|=\ell$, $1 \leq i \leq d$,
\begin{equation} \label{upperinertstat}
S_{p,\alpha,i}(\er) \overset{p,\alpha}{\lesssim} \left\lbrace \begin{aligned} & \ell^{\alpha p},\ 0 \leq p \leq 1. \\ & \ell^{\alpha},\ p \geq 1. \end{aligned} \right.
\end{equation}
\end{lemm}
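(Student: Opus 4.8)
The plan is to mirror the proof of Lemma~\ref{upperdiss}: as there, I reduce by the triangle inequality to the case $\er=\ell\e_j$, and I treat $0\le p\le 1$ as a consequence of the case $p=1$. So the whole matter is the case $p\ge 1$. The essential difference with Lemma~\ref{upperdiss} is the choice of a priori bounds. Rather than controlling $|\psi_{ij}|_p$ (which, through Theorem~\ref{avoir}, only produces the $\nu$-dependent factor $\nu^{-(p-1)}$), I would combine a $\nu$-independent $L_1$ bound on the second derivative $\psi_{ij}$ with an $L_\infty$ bound on the first derivative $\psi_i=u_i$. Writing the increment as $\psi_i(\ix+\ell\e_j)-\psi_i(\ix)=\int_0^\ell \psi_{ij}(\ix+y\e_j)\,dy$, translation invariance of the integral over $\T^d$ gives $\int_{\T^d}|\psi_i(\ix+\ell\e_j)-\psi_i(\ix)|\,d\ix\le \ell\,|\psi_{ij}|_1$, while trivially $\max_{\ix}|\psi_i(\ix+\ell\e_j)-\psi_i(\ix)|\le 2|\ue|_\infty$.

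For $p\ge 1$ I would interpolate these two bounds by the elementary inequality $\int_{\T^d}|g|^p\le(\max_{\ix}|g|)^{p-1}\int_{\T^d}|g|$, applied to $g=\psi_i(\cdot+\ell\e_j)-\psi_i$. This yields, pointwise in $\omega$ and $s$,
$$\int_{\T^d}|\psi_i(\ix+\ell\e_j)-\psi_i(\ix)|^p\,d\ix\overset{p}{\lesssim}\ell\,|\ue|_\infty^{p-1}\,|\psi_{ij}|_1.$$
Raising to the power $\alpha$ and averaging in time and ensemble then gives
$$S_{p,\alpha,i}(\ell\e_j)\overset{p,\alpha}{\lesssim}\ell^\alpha\,\big\{\,|\ue|_\infty^{\alpha(p-1)}\,|\psi_{ij}|_1^\alpha\,\big\}.$$

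It remains to bound the bracketed factor by a constant independent of $\nu$ and of the initial data. This follows from H\"older's inequality applied inside $\{\cdot\}$ together with the facts that all moments of $|\ue|_\infty$ are finite (Corollary~\ref{Lpupper}) and that all moments of $|\psi_{ij}|_1\lesssim|\ue|_{1,1}$ are finite (Corollary~\ref{W11norm}); since $s\ge t\ge T_1>1$ throughout the averaging interval, both corollaries apply uniformly in $s$. The case $0\le p\le 1$ then follows from the $p=1$ bound $S_{1,\alpha,i}(\ell\e_j)\overset{\alpha}{\lesssim}\ell^\alpha$ by H\"older's (equivalently Jensen's) inequality, using $|\T^d|=1$, which produces the claimed $\ell^{\alpha p}$.

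I do not expect a genuine technical obstacle; the content of the lemma lies entirely in the choice of estimates. The one point that must not be lost is that the improvement over Lemma~\ref{upperdiss} rests on replacing the $\nu$-growing $L_p$ bound on second derivatives by the $\nu$-independent bound $|\ue|_{1,1}\lesssim 1$ of Corollary~\ref{W11norm}. Since that $L_1$ bound is itself a consequence of the one-sided (Kruzhkov-type) maximum principle of Theorem~\ref{uxpos}, the potential assumption enters the argument indirectly; this is precisely the mechanism by which the multidimensional inertial-range structure functions inherit the sharp $\ell$-behaviour of the one-dimensional case.
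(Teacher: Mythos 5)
Your proof is correct and follows essentially the same route as the paper: the paper likewise bounds $\int|g|^p\le(2|\psi_i|_\infty)^{p-1}\int|g|$ for the increment $g=\psi_i(\cdot+\ell\e_j)-\psi_i$, splits the bracket $\{\cdot\}$ by H\"older, and controls the two factors by the $\nu$-independent moment bounds (Theorem~\ref{avoir} for $|\psi_i|_\infty$ and Lemma~\ref{upperdiss} with $p=1$ for the $L_1$-increment). The only difference is bookkeeping: you inline the $p=1$ increment estimate via $|\psi_{ij}|_1$ and cite Corollaries~\ref{Lpupper} and~\ref{W11norm} directly, whereas the paper cites Lemma~\ref{upperdiss} and Theorem~\ref{avoir}, which rest on exactly those corollaries.
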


\begin{proof}
As above, it suffices to consider the case $p \geq 1,\ \er=\ell \e_j$. We get
\begin{align} \nonumber
&S_{p,\alpha,i}(\ell \e_j) = \Big\{ \Big( \int_{\ix \in \T^d} {|\psi_i(\ix+\ell \e_j)-\psi_i(\ix)|^p d \ix} \Big)^{\alpha} \Big\}
\\ \nonumber
& \leq \Big\{ \Big( (2 |\psi_i|_{\infty})^{p-1} \int_{\ix \in \T^d} {|\psi_i(\ix+\ell \e_j)-\psi_i(\ix)| d \ix} \Big)^{\alpha} \Big\}
\\ \nonumber
& \overset{p,\alpha}{\lesssim} \Big\{ |\psi_i|^{2 \alpha (p-1)}_{\infty} \Big\}^{1/2} \Big\{ \Big( \int_{\ix \in \T^d} {|\psi_i(\ix+\ell \e_j)-\psi_i(\ix)| d \ix} \Big)^{2 \alpha} \Big\}^{1/2}
\\ \label{incrij}
& \overset{p,\alpha}{\lesssim} \Big\{ \Big( \int_{\ix \in \T^d} {|\psi_i(\ix+\ell \e_j)-\psi_i(\ix)| d \ix} \Big)^{2 \alpha} \Big\}^{1/2}
\\ \label{incrij2}
& = \Big( S_{1,2 \alpha,i}(\ell \e_j) \Big)^{1/2} \overset{\alpha}{\lesssim} \ell^{\alpha},
\end{align}
where (\ref{incrij}) follows from Theorem~\ref{avoir}, and (\ref{incrij2}) follows from Lemma~\ref{upperdiss}.
\end{proof}

\begin{rmq}
Note that the upper estimates in the formulation of Lemma~\ref{upperinert} hold without averaging in time in the definition of $S_{p,\alpha,i}$, i.e. for the quantities
\begin{align} \nonumber
\E \Big( \int_{\ix \in \T^d} {|\psi_i(t,\ix+\er)-\psi_i(t,\ix)|^p d \ix} \Big)^{\alpha},\quad t \geq 2.
\end{align}
Moreover, we may replace averaging in time with maximising over
\\
$[t,t+1]$.
\end{rmq}

\indent
Now we prove lower estimates for \textit{longitudinal} increments. Generally speaking, we do not have any lower estimates for the \textit{transverse increments} such as for instance $S_{p,\alpha,i}(\ell \e_j),\ i \neq j$. Indeed, these quantities vanish identically if we have an initial condition and noise which only depend on one coordinate.
\\ \indent
The main idea of the proofs below is that for some $i$, the 1d restrictions of $\psi_i$ with fixed $\tilde{\ix}_i$ exhibit, in average, the same behaviour for Sobolev norms as the 1d solutions. The value of $i$ is the same as in Theorem~\ref{avoirlinfty}: without loss of generality, we may suppose that $i=1$.
\\ \indent
Loosely speaking, the following lemma states that with a probability which is not too small, for an amount of time which is not too small and for $\tilde{\ix}_1$ in a subset of $\T^1$ which is not too small, several Sobolev norms of the corresponding 1d restrictions $\tilde{\psi}_1:=\psi_1(\tilde{\ix}_1)$ are of the same order as their average values.
\\ \indent
The only difference with the 1d case is that in addition to taking the expected value and averaging in time, we have to average the norms for restrictions $\tilde{\psi}_1$ over $\tilde{\ix}_1$: thus, the meaning of the term "in average" is different. Eventhough we do not have good upper estimates for the norms $|\ue|_{m,\infty}$, we can nevertheless estimate the small-scale quantities in exactly the same way as in 1d using Theorem~\ref{avoirlinfty}. Thus, the proofs for Lemma~\ref{typical}, Corollary~\ref{typicalcor} and Lemmas~\ref{lowerdiss}-\ref{lowerinert} will be word-to-word the same as in 1d, the only difference being that we average over $\tilde{\ix}_1$ and not only in time and in probability. We will refer to this argument as the \enquote{1d restriction argument}.
\\ \indent
 In the following definition, (\ref{condi}-\ref{condii}) contain lower and upper estimates, while (\ref{condiii}) only contains an upper estimate.  The inequality $|\tilde{\psi}_1(s)|_{\infty} \leq  \max \tilde{\psi}_1(s)$ in (\ref{condi}) always holds, since the restriction $\tilde{\psi}_1(s)$ has zero mean and the length of $\T^1$ is $1$.

\begin{defi}
For a given solution $\ue(s)=\ue^{\omega}(s)$ and $K>1$, we denote by $L_K$ the set of all $(s,\tilde{\ix}_1,\omega) \in [t,t+T_0] \times \T^{d-1} \times \Omega$ such that the corresponding restrictions $\tilde{\psi}_1$ satisfy
\begin{align} \label{condi}
&K^{-1} \leq |\tilde{\psi}_1(s)|_{\infty} \leq \max \tilde{\psi}_{11}(s) \leq K
\\ \label{condii}
&K^{-1} \nu^{-1} \leq |\tilde{\psi}_1(s)|_{1,\infty} \leq K \nu^{-1}
\\ \label{condiii}
&|\tilde{\psi}_1(s)|_{2,\infty} \leq K \nu^{-2}.
\end{align}
\end{defi}

\begin{lemm} \label{typical}
There exist constants $C,K_1>0$ such that for $K \geq K_1$, $\rho(L_K) \geq C$. Here, $\rho$ denotes the product measure of the Lebesgue measures in time and space and $\Pe$ on $[t,t+T_0] \times \T^{d-1} \times \Omega$.
\end{lemm}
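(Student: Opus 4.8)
The plan is to realise $L_K$ as the intersection of an ``upper-bound event'' $U$, on which all the majorisations in (\ref{condi})--(\ref{condiii}) hold, with a ``lower-bound event'' carrying the two minorisations, and to show that $\rho(U^c)$ is small while the lower-bound event has $\rho$-measure bounded below by a fixed positive constant, uniformly in $\nu$. Throughout I take $i=1$ to be the distinguished index furnished by Theorem~\ref{avoirlinfty}, and I normalise $\rho$ to the probability measure $\mu=\rho/T_0$ on $[t,t+T_0]\times\T^{d-1}\times\Omega$ (whose total mass is $T_0$), so that for a nonnegative observable $Z=Z(s,\tilde{\ix}_1,\omega)$ one has $\E_\mu Z=\{\int_{\tilde{\ix}_1}Z\,d\tilde{\ix}_1\}$ in the notation of Section~\ref{agreeturb}. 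I also record that $\rho(L_K)$ is nondecreasing in $K$, since enlarging $K$ only weakens every constraint; this monotonicity will let me absorb all multiplicative constants at the end.

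For the upper bounds I use Chebyshev's inequality together with the sharp upper estimates. The elementary bound $\max\tilde{\psi}_{11}(s)\le\max_{\ix}\psi_{11}(s,\ix)$ and Theorem~\ref{uxpos} (covering $[t,t+T_0]$ by finitely many unit intervals) give $\E\big(\max_{s\in[t,t+T_0],\ix}\psi_{11}\big)\lesssim 1$, so the set where $\max\tilde{\psi}_{11}(s)>K$ has $\rho$-measure $\lesssim K^{-1}$. Likewise, the averaged upper estimates of Theorem~\ref{avoirlinfty} with $\alpha=1$ give $\E_\mu|\tilde{\psi}_1|_{1,\infty}\lesssim\nu^{-1}$ and $\E_\mu|\tilde{\psi}_1|_{2,\infty}\lesssim\nu^{-2}$, so the sets where $|\tilde{\psi}_1(s)|_{1,\infty}>K\nu^{-1}$ or $|\tilde{\psi}_1(s)|_{2,\infty}>K\nu^{-2}$ each have $\rho$-measure $\lesssim K^{-1}$. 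Hence the upper-bound event $U$ satisfies $\rho(U^c)\le C_\ast/K$.

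The heart of the matter is the two lower bounds, and this is the step I expect to be the main obstacle: a naive second-moment argument applied \emph{separately} to $|\tilde{\psi}_1|_\infty$ and to $|\tilde{\psi}_1|_{1,\infty}$ is useless, since Paley--Zygmund only yields small positive measures and two such sets need not meet. Instead I apply the second-moment (Paley--Zygmund) inequality to the \emph{single} quantity $Z=|\tilde{\psi}_1|_{1,\infty}$. The matched estimates of Theorem~\ref{avoirlinfty} give $\E_\mu Z\gtrsim\nu^{-1}$ and, with $\alpha=2$, $\E_\mu Z^2\lesssim\nu^{-2}$, so $(\E_\mu Z)^2/\E_\mu Z^2\gtrsim 1$ and Paley--Zygmund produces a $\nu$-independent constant $c$ and a set $D$ with $\rho(D)\ge c_0>0$ on which $|\tilde{\psi}_1(s)|_{1,\infty}\ge c\,\nu^{-1}$. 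On $D\cap U$ the lower bound on $|\tilde{\psi}_1|_\infty$ is then \emph{derived} rather than assumed: the one-dimensional Gagliardo--Nirenberg interpolation of Lemma~\ref{GN} (applied with $d'=1$ to the restriction, giving $|\tilde{\psi}_1|_{1,\infty}\lesssim|\tilde{\psi}_1|_{2,\infty}^{1/2}|\tilde{\psi}_1|_\infty^{1/2}$) combined with the upper bound $|\tilde{\psi}_1|_{2,\infty}\le K\nu^{-2}$ yields
\[
|\tilde{\psi}_1(s)|_\infty\;\gtrsim\;\frac{|\tilde{\psi}_1(s)|_{1,\infty}^2}{|\tilde{\psi}_1(s)|_{2,\infty}}\;\ge\;\frac{c^2\nu^{-2}}{K\nu^{-2}}\;=\;c_\ast K^{-1},
\]
with $c_\ast$ a fixed constant. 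This is exactly the point where sharpness (the powers of $\nu$ cancel, matching the $K^{-1}$ scaling in (\ref{condi})) and the potential structure, through the restriction estimates of Theorem~\ref{avoirlinfty}, are indispensable.

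It remains to collect the constants. On $E:=D\cap U$ one has simultaneously $|\tilde{\psi}_1|_\infty\ge c_\ast K^{-1}$, $\max\tilde{\psi}_{11}\le K$, $c\,\nu^{-1}\le|\tilde{\psi}_1|_{1,\infty}\le K\nu^{-1}$ and $|\tilde{\psi}_1|_{2,\infty}\le K\nu^{-2}$, while $\rho(E)\ge\rho(D)-\rho(U^c)\ge c_0-C_\ast/K\ge c_0/2$ once $K$ is large. Comparing these inequalities with (\ref{condi})--(\ref{condiii}) shows $E\subseteq L_{K'}$ for $K'=\max(c_\ast^{-1},c^{-1},1)\,K$, so by the monotonicity of $\rho(L_{\cdot})$ one concludes $\rho(L_{K'})\ge c_0/2$ for every $K'$ beyond a threshold $K_1$; taking $C=c_0/2$ proves the claim. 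This is precisely the ``$1$d restriction argument'' and runs word for word as in the one-dimensional case \cite{BorW}, the only new feature being the reduction of the two minorisations to a single second-moment estimate followed by an interpolation transfer.
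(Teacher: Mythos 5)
Your proof is correct and follows essentially the same route as the paper's intended argument: the \enquote{1d restriction argument} (deferring to \cite{BorW}) consists precisely of Markov/Chebyshev bounds coming from the upper estimates of Theorem~\ref{uxpos} and Theorem~\ref{avoirlinfty}, a second-moment (Paley--Zygmund) bound for $|\tilde{\psi}_1(s)|_{1,\infty}$ using the matched first and second moments from Theorem~\ref{avoirlinfty}, and the 1d Gagliardo--Nirenberg interpolation $|\tilde{\psi}_1|_{1,\infty}\lesssim|\tilde{\psi}_1|_{2,\infty}^{1/2}|\tilde{\psi}_1|_{\infty}^{1/2}$ to convert the $W^{1,\infty}$ lower bound into the $L_\infty$ lower bound of (\ref{condi}), all carried out on the product space $[t,t+T_0]\times\T^{d-1}\times\Omega$. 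Your bookkeeping (monotonicity of $K\mapsto\rho(L_K)$, union bound for the upper-bound event, intersection with the Paley--Zygmund set) is sound, so nothing is missing.
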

\indent
Let us denote by $O_K \subset [T_1,T_2]$ the set defined as $L_K$, but with relation (\ref{condii}) replaced by
\begin{equation} \label{condiibis}
K^{-1} \nu^{-1} \leq -\min_{\ix \in \T^d} \tilde{\psi}_{11}(s,\ix) \leq K \nu^{-1}.
\end{equation}

\begin{cor} \label{typicalcor}
For $K \geq K_1$ and $\nu < K_1^{-2}$, we have $\rho(O_K) \geq C$. Here, $C$ and $K_1$ are the same as in the formulation of Lemma~\ref{typical}.
\end{cor}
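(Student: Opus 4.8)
The plan is to deduce the statement from Lemma~\ref{typical} by comparing the two defining conditions (\ref{condii}) and (\ref{condiibis}), and then to propagate the resulting bound from the threshold value $K=K_1$ to all larger $K$ by a monotonicity argument. Concretely, I would first establish the inclusion $L_{K_1} \subseteq O_{K_1}$ under the hypothesis $\nu < K_1^{-2}$. Since the conditions (\ref{condi}) and (\ref{condiii}) are literally the same for $L_K$ and for $O_K$, they carry over automatically, and the only point to verify is that, on $L_{K_1}$, condition (\ref{condii}) forces condition (\ref{condiibis}).

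The key observation is that $|\tilde{\psi}_1(s)|_{1,\infty} = \max|\tilde{\psi}_{11}(s)| = \max\bigl(\max\tilde{\psi}_{11}(s),\, -\min\tilde{\psi}_{11}(s)\bigr)$, and that the positive part of this maximum is already controlled by (\ref{condi}), namely $\max\tilde{\psi}_{11}(s) \le K_1$, while (\ref{condii}) gives $|\tilde{\psi}_1(s)|_{1,\infty} \ge K_1^{-1}\nu^{-1}$. The assumption $\nu < K_1^{-2}$ is exactly what is needed to ensure $K_1^{-1}\nu^{-1} > K_1 \ge \max\tilde{\psi}_{11}(s)$; consequently the maximum of $|\tilde{\psi}_{11}(s)|$ cannot be attained by its positive part, so it must be attained by the negative part, i.e. $|\tilde{\psi}_1(s)|_{1,\infty} = -\min\tilde{\psi}_{11}(s)$. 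Inserting this into the two-sided estimate of (\ref{condii}) yields $K_1^{-1}\nu^{-1} \le -\min\tilde{\psi}_{11}(s) \le K_1\nu^{-1}$, which is precisely (\ref{condiibis}) for $K=K_1$. Hence $L_{K_1}\subseteq O_{K_1}$, and applying Lemma~\ref{typical} with $K=K_1$ gives $\rho(O_{K_1}) \ge \rho(L_{K_1}) \ge C$.

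It then remains to pass from $K=K_1$ to an arbitrary $K \ge K_1$, and here I would simply note that the family $O_K$ is increasing in $K$: each inequality defining $O_K$ (the chains in (\ref{condi}) and (\ref{condiibis}), and the upper bound (\ref{condiii})) is relaxed as $K$ grows, so $O_{K_1} \subseteq O_K$ and therefore $\rho(O_K) \ge \rho(O_{K_1}) \ge C$. The main subtlety, and the reason a monotonicity step is needed instead of a direct inclusion $L_K \subseteq O_K$, is the quantifier mismatch in the hypotheses: forcing the extremal second derivative to be negative requires $\nu < K^{-2}$, which for $K > K_1$ is strictly stronger than the assumed $\nu < K_1^{-2}$. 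The sign of the dominant second derivative can thus be pinned down directly only at the threshold $K=K_1$, and it is the monotonicity of $O_K$ in $K$ that transports the conclusion to the whole range $K \ge K_1$.
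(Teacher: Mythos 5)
Your proof is correct and matches the paper's intended argument: the paper proves this corollary only by invoking the \enquote{1d restriction argument} (i.e.\ the proof is declared word-to-word the same as in the 1d case of \cite{BorW}), and that argument is exactly your sign-pinning step --- on $L_{K_1}$ the bounds $\max\tilde{\psi}_{11}(s) \leq K_1 < K_1^{-1}\nu^{-1} \leq |\tilde{\psi}_1(s)|_{1,\infty}$ force $|\tilde{\psi}_1(s)|_{1,\infty} = -\min\tilde{\psi}_{11}(s)$, hence $L_{K_1}\subseteq O_{K_1}$. Your extra monotonicity step $O_{K_1}\subseteq O_K$ for $K \geq K_1$ is a careful (and correct) way of handling the quantifier mismatch between the hypothesis $\nu<K_1^{-2}$ and the conclusion for all $K\geq K_1$, a point the paper leaves implicit.
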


Now we fix
\begin{equation} \label{K}
K=K_1,
\end{equation}
and we choose
\begin{equation} \label{nu0eq}
\nu_0=\frac{1}{6} K^{-2};\ C_1=\frac{1}{4}K^{-2};\ C_2=\frac{1}{20}K^{-4}.
\end{equation}
In particular, we have $0<C_1 \nu_0 <C_2<1$: thus, the intervals $J_i$ are non-empty and non-intersecting for all $\nu \in (0,\nu_0]$.

The following results are also proved using the 1d restriction argument. Note that in the corresponding proofs in 1d, we use an argument in \cite{AFLV92} (cf. \cite{BorD}).

\begin{lemm} \label{lowerdiss}
For $\alpha \geq 0$ and $\ell \in J_1$,
$$
S^{\|}_{p,\alpha}(\ell \e_1) \overset{p,\alpha}{\gtrsim} \left\lbrace \begin{aligned} &\ell^{\alpha p},\ 0 \leq p \leq 1. \\ & \ell^{\alpha p} \nu^{-\alpha(p-1)},\ p \geq 1. \end{aligned} \right.
$$
\end{lemm}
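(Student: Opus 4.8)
The plan is to run the \enquote{1d restriction argument}: reduce the longitudinal increment in the direction $\e_1$ to an average over the one-dimensional restrictions $\tilde{\psi}_1=\psi_1(s,\tilde{\ix}_1)$, and then apply the one-dimensional dissipation-range computation on the \enquote{typical} set $O_K$ furnished by Corollary~\ref{typicalcor} (recall that $i=1$ is the special direction of Theorem~\ref{avoirlinfty}). First I would unwind the definition. For $\er=\ell\e_1$ we have $\er/|\er|=\e_1$ and $(\ue(\ix+\ell\e_1)-\ue(\ix))\cdot\e_1=\psi_1(\ix+\ell\e_1)-\psi_1(\ix)$, so Fubini's theorem gives
\begin{equation} \nonumber
S^{\|}_{p,\alpha}(\ell\e_1)=\big\{\Sigma_p^{\alpha}\big\},\qquad \Sigma_p=\int_{\tilde{\ix}_1\in\T^{d-1}}s_p(\tilde{\ix}_1)\,d\tilde{\ix}_1,\qquad s_p(\tilde{\ix}_1)=\int_{\T^1}|\tilde{\psi}_1(x_1+\ell)-\tilde{\psi}_1(x_1)|^p\,dx_1,
\end{equation}
where $\Sigma_p=\Sigma_p(s,\omega)$, $s_p=s_p(s,\omega,\tilde{\ix}_1)$, and $\{\cdot\}$ denotes the average over $[t,t+T_0]\times\Omega$. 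Everything thus reduces to a pointwise lower bound on $s_p$ for restrictions behaving like genuine 1d solutions.

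Second, I would establish that bound on $O_K$. Fix $(s,\tilde{\ix}_1,\omega)\in O_K$ and write $\Delta_\ell\tilde{\psi}_1(x_1)=\int_{x_1}^{x_1+\ell}\tilde{\psi}_{11}(y)\,dy$. For $p\ge1$ (the \emph{cliff} mechanism): by (\ref{condiibis}) there is a point $x^{*}$ with $\tilde{\psi}_{11}(x^{*})\le-K^{-1}\nu^{-1}$, while (\ref{condiii}) gives $|\tilde{\psi}_{111}|\le K\nu^{-2}$, so $\tilde{\psi}_{11}\le-\tfrac12 K^{-1}\nu^{-1}$ on an interval of length $\sim K^{-2}\nu$ about $x^{*}$. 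Since $\ell\le C_1\nu=\tfrac14 K^{-2}\nu$ by (\ref{nu0eq}), the segment $[x_1,x_1+\ell]$ stays inside this interval for $x_1$ in a set of measure $\gtrsim\nu$, on which $|\Delta_\ell\tilde{\psi}_1|\gtrsim\ell\nu^{-1}$; integrating yields $s_p\gtrsim\nu(\ell\nu^{-1})^p=\ell^p\nu^{1-p}$. For $0\le p\le1$ (the \emph{ramp} mechanism): using (\ref{condi}), the restriction has amplitude $\gtrsim K^{-1}$ and ramp slope bounded by $K$, so the local average $\ell^{-1}\int_{x_1}^{x_1+\ell}\tilde{\psi}_{11}$ stays $\gtrsim 1$ on a set of $x_1$ of measure of order one, whence $|\Delta_\ell\tilde{\psi}_1|\gtrsim\ell$ there and $s_p\gtrsim\ell^p$. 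This last estimate is exactly the one-dimensional computation of \cite{AFLV92} (see also \cite{BorD}), which I would invoke verbatim on each restriction.

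Third, I would transfer the pointwise bound to the average, which is where the extra integration over $\tilde{\ix}_1$ enters. Set $\lambda(s,\omega)=\big|\{\tilde{\ix}_1\in\T^{d-1}:(s,\tilde{\ix}_1,\omega)\in O_K\}\big|$, the measure of the $\tilde{\ix}_1$-slice, so Step~2 gives $\Sigma_p\ge c_p\,B\,\lambda$ with $B=\ell^p$ for $p\le1$ and $B=\ell^p\nu^{1-p}$ for $p\ge1$. By Corollary~\ref{typicalcor}, $\int_t^{t+T_0}\E\lambda\,ds=\rho(O_K)\ge C$; since $\lambda\le1$, a one-line Markov argument shows that the set $G=\{(s,\omega):\lambda\ge C/(2T_0)\}$ has $(ds\otimes\Pe)$-measure $\ge C/2$, uniformly in $\nu$. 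Restricting $\{\Sigma_p^{\alpha}\}$ to $G$, where $\Sigma_p\ge c_p B\cdot C/(2T_0)$, gives $S^{\|}_{p,\alpha}(\ell\e_1)\gtrsim B^{\alpha}$, i.e. $\ell^{\alpha p}$ for $p\le1$ and $\ell^{\alpha p}\nu^{-\alpha(p-1)}$ for $p\ge1$, as claimed. This device handles every $\alpha>0$ at once and avoids Jensen-type losses.

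The main obstacle is the pointwise 1d bound of Step~2 in the range $0\le p\le1$: unlike the $p\ge1$ case it cannot come from the cliff (which would only give the weaker $\ell^p\nu^{1-p}$) and instead requires controlling the ramp contribution, i.e. bounding $\ell^{-1}\int_{x_1}^{x_1+\ell}\tilde{\psi}_{11}$ below on a set of order-one measure despite the possibly large curvature of $\tilde{\psi}_{11}$. This is the delicate point of \cite{AFLV92}, but the conditions (\ref{condi})--(\ref{condiii}) defining $O_K$ are tailored precisely so that the one-dimensional argument applies word-for-word to each $\tilde{\psi}_1$; the only genuinely new ingredient compared with the 1d case is the slice-measure bookkeeping of Step~3.
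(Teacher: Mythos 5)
Your proposal is correct and takes essentially the same route as the paper: what you spell out is precisely the paper's \enquote{1d restriction argument} (Fubini reduction to the restrictions $\tilde{\psi}_1$, the 1d dissipation-range bounds on the typical set $O_K$ of Corollary~\ref{typicalcor} — cliff mechanism for $p \geq 1$, the \cite{AFLV92} ramp bound for $0 \leq p \leq 1$ — together with the extra averaging over $\tilde{\ix}_1$). The paper leaves these details implicit, merely citing the word-to-word identical 1d proofs of \cite{BorW,BorD}, and your slice-measure Markov bookkeeping in Step~3 is exactly the additional averaging the paper alludes to.
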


\begin{lemm} \label{lowerinert}
For $\alpha \geq 0$ and $\ell \in J_2$,
$$
S^{\|}_{p,\alpha}(\ell \e_1) \overset{p,\alpha}{\gtrsim} \left\lbrace \begin{aligned} & \ell^{\alpha p},\ 0 \leq p \leq 1. \\ & \ell^{\alpha},\ p \geq 1. \end{aligned} \right.
$$
\end{lemm}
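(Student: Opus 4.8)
The plan is to carry out the ``1d restriction argument'' announced before Lemma~\ref{typical}, with $i=1$ as provided by Theorem~\ref{avoirlinfty}. Since for $\er=\ell\e_1$ one has $(\ue(\ix+\er)-\ue(\ix))\cdot\er/|\er|=\psi_1(\ix+\ell\e_1)-\psi_1(\ix)$, Fubini's theorem in the variable $\ix=(x_1,\tilde{\ix}_1)$ gives
\begin{equation} \nonumber
S^{\|}_{p,\alpha}(\ell\e_1)=\frac{1}{T_0}\int_{t}^{t+T_0}\E\Big(\int_{\T^{d-1}}I_p(s,\tilde{\ix}_1)\,d\tilde{\ix}_1\Big)^{\alpha}\,ds,
\end{equation}
where $I_p(s,\tilde{\ix}_1)=\int_{\T^1}|\tilde{\psi}_1(x_1+\ell)-\tilde{\psi}_1(x_1)|^p\,dx_1$ is the $p$-th increment of the 1d restriction $\tilde{\psi}_1=\psi_1(s,\tilde{\ix}_1)$. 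Thus everything reduces to a lower bound on $I_p$ for restrictions that behave like genuine $N$-waves, together with control of the measure of the set of such restrictions.

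First I would fix $K=K_1$ and $\nu\le\nu_0$ as in (\ref{K}), (\ref{nu0eq}), so that $\nu<K_1^{-2}$ and Corollary~\ref{typicalcor} applies: the set $O_K\subset[t,t+T_0]\times\T^{d-1}\times\Omega$ satisfies $\rho(O_K)\ge C$. On $O_K$ the restriction $\tilde{\psi}_1$ is, up to constants depending on $K$, a 1d $N$-wave: by (\ref{condi}) it has amplitude of order one and ramp slope $\le K$, by (\ref{condiibis}) it has a cliff of slope $\sim-\nu^{-1}$, and by (\ref{condiii}) its third derivative is $\lesssim\nu^{-2}$. The heart of the matter is the purely 1d estimate (the argument of \cite{AFLV92}, used as in \cite{BorD}): for such a restriction and for $\ell\in J_2$,
\begin{equation} \nonumber
I_p(s,\tilde{\ix}_1)\overset{p}{\gtrsim}\left\lbrace\begin{aligned}&\ell^{p},\ 0\le p\le 1,\\&\ell,\ p\ge 1.\end{aligned}\right.
\end{equation}
The mechanism is the same as in 1d: since the cliff has width $\sim\nu\ll\ell$ and drop of order one, the separation $[x_1,x_1+\ell]$ straddles it for a set of $x_1$ of measure $\sim\ell$, each contributing an increment of order one, which yields the bound $\gtrsim\ell$ for $p\ge1$; while on the ramp, whose average slope is of order one (the rise compensating the cliff drop by zero mean), the increment is $\gtrsim\ell$ on a set of $x_1$ of measure of order one, which yields the bound $\gtrsim\ell^{p}$ for $p\le1$.

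It then remains to integrate. Denoting by $G(s,\omega)=\{\tilde{\ix}_1:(s,\tilde{\ix}_1,\omega)\in O_K\}$ the good slice, the displayed 1d bound gives $\int_{\T^{d-1}}I_p\,d\tilde{\ix}_1\ge\int_{G(s,\omega)}I_p\,d\tilde{\ix}_1\overset{p}{\gtrsim}\ell^{p}|G(s,\omega)|$ for $p\le1$, and $\gtrsim\ell\,|G(s,\omega)|$ for $p\ge1$. Raising to the power $\alpha$ and averaging in $(s,\omega)$ against the probability measure $\frac{1}{T_0}\,ds\times d\Pe$ on $[t,t+T_0]\times\Omega$, I would handle the $\alpha$-th power by the elementary bound $|G|^{\alpha}\ge|G|$ when $\alpha\le1$ (since $|G|\le1$) and by Jensen's inequality $\frac{1}{T_0}\int\E|G|^{\alpha}\ge\big(\frac{1}{T_0}\int\E|G|\big)^{\alpha}$ when $\alpha\ge1$. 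In both cases the problem is reduced to $\frac{1}{T_0}\int_{t}^{t+T_0}\E|G(s,\omega)|\,ds=\rho(O_K)/T_0\ge C/T_0>0$, which yields $S^{\|}_{p,\alpha}(\ell\e_1)\overset{p,\alpha}{\gtrsim}\ell^{\alpha p}$ for $p\le1$ and $\overset{p,\alpha}{\gtrsim}\ell^{\alpha}$ for $p\ge1$, as claimed.

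The main obstacle is the purely 1d $N$-wave increment estimate, and specifically the case $p\le1$, where the first moment alone is not enough and one must extract from (\ref{condi})--(\ref{condiii}) a lower bound on the ramp slope valid on a set of $x_1$ of positive measure over an interval of length $\ge\ell$; this is exactly where the argument of \cite{AFLV92} is needed. The only genuinely multi-d complication is the $\tilde{\ix}_1$-integration sitting inside the $\alpha$-th power, which prevents the direct restriction-to-$O_K$ used in 1d (see \cite{BorW}) and is resolved by the Jensen / trivial-bound step above.
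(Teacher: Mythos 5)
Your proposal is correct and takes essentially the same route as the paper: the paper proves Lemma~\ref{lowerinert} precisely by the \enquote{1d restriction argument}, i.e.\ Corollary~\ref{typicalcor} supplies the positive-$\rho$-measure set $O_K$ of $N$-wave-like restrictions $\tilde{\psi}_1$, the deterministic increment bound of \cite{AFLV92} (as in \cite{BorD}) is applied slice by slice, and one then averages over $(s,\tilde{\ix}_1,\omega)$ rather than only over $(s,\omega)$ as in 1d. Your Jensen / trivial-bound treatment of the power $\alpha$ sitting outside the $\tilde{\ix}_1$-integral is a correct way to carry out this extra averaging, which the paper leaves implicit.
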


We will need the following Young-type inequality. It seems to be a well-known fact, at least in the case $p=2$. However, we were unable to find its proof in the literature.

\begin{lemm} \label{Young}
For all $p,\delta>0$ there exists a constant $K(p,\delta)$ such that we have
\begin{align} \label{Youngineq}
& |A+B|^p \leq (1+\delta) |A|^p+ K |B|^p,\ A,B \in \R^d.
\end{align}
\end{lemm}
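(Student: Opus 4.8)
The plan is to reduce the vector inequality to a one-variable estimate and then settle the latter by an elementary asymptotics/compactness argument. First, since $p>0$ the map $t \mapsto t^p$ is nondecreasing on $[0,\infty)$, so the triangle inequality $|A+B| \leq |A|+|B|$ yields $|A+B|^p \leq (|A|+|B|)^p$. Setting $a=|A|\geq 0$ and $b=|B|\geq 0$, it therefore suffices to prove the scalar inequality
$$
(a+b)^p \leq (1+\delta)\, a^p + K\, b^p, \quad a,b \geq 0,
$$
for a suitable constant $K=K(p,\delta)$.

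Next I would exploit the fact that both sides are homogeneous of degree $p$ in $(a,b)$. The case $b=0$ is immediate (it reads $a^p \leq (1+\delta)a^p$), so one may assume $b>0$, divide through by $b^p$, and put $t=a/b \geq 0$. The inequality then becomes
$$
g(t) := (t+1)^p - (1+\delta)\, t^p \leq K, \quad t \geq 0.
$$
Thus everything reduces to showing that the continuous function $g$ is bounded above on $[0,\infty)$; one may then simply take $K := \sup_{t \geq 0} g(t)$.

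Finally, to see that this supremum is finite I would analyse the behaviour of $g$ near infinity. Since $(1+1/t)^p \to 1$ as $t \to \infty$, we may write
$$
g(t) = t^p \bigl( (1+1/t)^p - (1+\delta) \bigr) \longrightarrow -\infty,
$$
the limit being $-\infty$ precisely because $\delta>0$. A continuous function on $[0,\infty)$ that tends to $-\infty$ attains a finite maximum, so $K := \max_{t \geq 0} g(t) < \infty$ is well-defined and furnishes the required bound (note moreover $K \geq g(0) = 1 > 0$). The only step demanding any care is this asymptotic estimate, which controls the competition between the gain $\delta\, t^p$ and the cross-term $(t+1)^p - t^p = O(t^{p-1})$; everything else is routine. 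This is also exactly why the hypothesis $\delta>0$ is indispensable, since for $\delta=0$ the function $g$ is unbounded when $p>1$.
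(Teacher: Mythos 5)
Your proof is correct, but it takes a genuinely different route from the paper's. The paper first reduces to nonnegative scalars (assuming $A,B$ colinear, which is the same step as your triangle-inequality reduction) and then splits into two cases: for $0<p\leq 1$ it invokes subadditivity of $t \mapsto t^p$, so that $K=1$ works for every $\delta$; for $p>1$ it writes $A+B=\zeta(\zeta^{-1}A)+(1-\zeta)((1-\zeta)^{-1}B)$ with the specific weight $\zeta=(1+\delta)^{-1/(p-1)}$ and applies convexity of $t\mapsto t^p$, which produces the \emph{explicit} constant $K(p,\delta)=(1-\zeta)^{-(p-1)}$. You instead avoid any case distinction: after the same scalar reduction you use homogeneity to divide by $b^p$, reducing the lemma to the boundedness above of $g(t)=(t+1)^p-(1+\delta)t^p$ on $[0,\infty)$, which you obtain from continuity together with $g(t)=t^p\bigl((1+1/t)^p-(1+\delta)\bigr)\to-\infty$; this is a soft compactness argument and is entirely correct (a continuous function on $[0,\infty)$ tending to $-\infty$ attains a finite maximum, and the cases $b=0$ and $b>0$ are both handled). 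What the paper's approach buys is a closed-form value of $K(p,\delta)$ and completely elementary algebra; what yours buys is uniformity in $p$ (no case split) and a shorter conceptual core, at the price of a non-explicit constant --- which is harmless here, since the lemma and its use in Corollary~\ref{Youngcor} only require existence of some finite $K(p,\delta)$.
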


\begin{proof} Without loss of generality, we can assume that $A$ and $B$ are colinear vectors, and thus reduce ourselves to the case where $A$ and $B$ are positive scalars.
\\ \indent
\textbf{Case $0 < p \leq 1$.} In this case, inequality (\ref{Youngineq}) holds with $K=1$ for all $\delta$, since by Minkowski's inequality applied to $A^{p}$ and $B^{p}$ we have
$$
((A^{p})^{1/p}+(B^{p})^{1/p})^{p} \leq A^p+B^p.
$$
\indent
\textbf{Case $p > 1$.}  We consider
$$
C=\zeta^{-1} A;\  D=(1-\zeta)^{-1} B,
$$
where by definition
$$
\zeta=(1+\delta)^{-1/(p-1)}.
$$
Since
$$
A+B=\zeta C+(1-\zeta) D
$$
and the function $x \mapsto x^p$ is convex, we get
\begin{align} \nonumber
(A+B)^p & \leq \zeta C^p+(1-\zeta) D^p
\\ \nonumber
& \leq \zeta^{-(p-1)} A^p+ (1-\zeta)^{-(p-1)} B^p
\\ \nonumber
& \leq (1+\delta) A^p+ (1-\zeta)^{-(p-1)} B^p,
\end{align}
which proves the lemma with $K(p,\delta)=(1-\zeta)^{-(p-1)}$.
\end{proof}

\begin{cor} \label{Youngcor}
For all $\alpha,p,\epsilon>0$ there exists a constant $K(p,\alpha,\epsilon)$ such that we have
\begin{align} \nonumber
& S_{p,\alpha}(\ve+\tilde{\ve}) \leq (1+\epsilon) S_{p,\alpha}(\ve)+ K S_{p,\alpha}(\tilde{\ve}),\ \ve \in \R^d,\ \tilde{\ve} \in \R^d,
\end{align}
\end{cor}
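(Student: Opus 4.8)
The plan is to apply Lemma~\ref{Young} twice: once pointwise in $\ix$ in order to split the increment over the separation $\ve+\tilde{\ve}$ into increments over $\ve$ and over $\tilde{\ve}$, and a second time to absorb the outer exponent $\alpha$ after integrating in space. Throughout I abbreviate the inner (space-integrated) quantity by
$$
P(\er)=\int_{\ix \in \T^d}{|\ue(\ix+\er)-\ue(\ix)|^p\ d\ix},
$$
so that, by the definition in Section~\ref{agreeturb}, $S_{p,\alpha}(\er)=\{P(\er)^{\alpha}\}$, where $\{\cdot\}$ is the time-and-ensemble average.

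First I would fix $\omega$ and $s$ and write the telescoping identity
$$
\ue(\ix+\ve+\tilde{\ve})-\ue(\ix)=\big(\ue(\ix+\ve+\tilde{\ve})-\ue(\ix+\ve)\big)+\big(\ue(\ix+\ve)-\ue(\ix)\big).
$$
Applying Lemma~\ref{Young} pointwise, with the vectors $A=\ue(\ix+\ve)-\ue(\ix)$ and $B=\ue(\ix+\ve+\tilde{\ve})-\ue(\ix+\ve)$ in $\R^d$ and a parameter $\delta>0$ to be fixed later, and then integrating over $\ix \in \T^d$, I obtain that the $A$-integral is exactly $P(\ve)$, while the change of variables $\ix \mapsto \ix-\ve$ (translation invariance of Lebesgue measure on $\T^d$) identifies the $B$-integral with $P(\tilde{\ve})$. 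This yields the scalar inequality
$$
P(\ve+\tilde{\ve}) \leq (1+\delta) P(\ve) + K(p,\delta) P(\tilde{\ve}).
$$
Since $x \mapsto x^{\alpha}$ is monotone on $[0,\infty)$, I may raise this to the power $\alpha$ and apply Lemma~\ref{Young} a second time, now with exponent $\alpha$ and the nonnegative scalars $(1+\delta)P(\ve)$ and $K(p,\delta)P(\tilde{\ve})$ and a second parameter $\delta'>0$, to get
$$
P(\ve+\tilde{\ve})^{\alpha} \leq (1+\delta')(1+\delta)^{\alpha} P(\ve)^{\alpha} + K(\alpha,\delta')\,K(p,\delta)^{\alpha}\, P(\tilde{\ve})^{\alpha}.
$$
Taking the average $\{\cdot\}$, which is linear and monotone, gives
$$
S_{p,\alpha}(\ve+\tilde{\ve}) \leq (1+\delta')(1+\delta)^{\alpha} S_{p,\alpha}(\ve) + K(\alpha,\delta')\,K(p,\delta)^{\alpha}\, S_{p,\alpha}(\tilde{\ve}).
$$

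Finally, given $\epsilon>0$, I would choose the two parameters in the correct order: first fix $\delta>0$ small enough that $(1+\delta)^{\alpha} \leq \sqrt{1+\epsilon}$, which in turn fixes $K(p,\delta)$, and then fix $\delta'>0$ small enough that $1+\delta' \leq \sqrt{1+\epsilon}$. Then the leading factor satisfies $(1+\delta')(1+\delta)^{\alpha} \leq 1+\epsilon$, and setting $K(p,\alpha,\epsilon)=K(\alpha,\delta')\,K(p,\delta)^{\alpha}$ finishes the argument. There is no genuine obstacle; the only points requiring care are the bookkeeping of constants so that the prefactor of $S_{p,\alpha}(\ve)$ can be driven below $1+\epsilon$ (which is exactly what forces the two-parameter scheme, rather than a single application of Lemma~\ref{Young}), and the use of translation invariance on $\T^d$ to recognise the $B$-integral as $P(\tilde{\ve})$.
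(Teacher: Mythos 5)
Your proposal is correct and follows essentially the same route as the paper: a pointwise application of Lemma~\ref{Young} to the telescoped increment, integration in $\ix$ (with the translation-invariance step the paper performs implicitly), a second application of Lemma~\ref{Young} after raising to the power $\alpha$, and then averaging. The paper simply hard-codes the parameter choices $(1+\epsilon)^{1/2\alpha}$ and $(1+\epsilon)^{1/2}$ where you carry generic $\delta,\delta'$ and fix them at the end; the two are the same argument.
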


\begin{proof} For every $\ix$, Lemma~\ref{Young} yields that
\begin{align} \nonumber
&|u(\ix+\ve+\tilde{\ve})-u(\ix)|^p 
\\ \nonumber
& \leq (1+\epsilon)^{1/2 \alpha} |u(\ix+\ve)-u(\ix)|^p
\\ \nonumber
& +C(p,\alpha,\epsilon)  |u(\ix+\ve+\tilde{\ve})-u(\ix+\ve)|^p.
\end{align}
After averaging in $\ix$ we get
\begin{align} \nonumber
& \int_{\ix \in \T^d}{|u(\ix+\ve+\tilde{\ve})-u(\ix)|^p  d \ix}
\\ \nonumber
& \leq (1+\epsilon)^{1/2 \alpha} \int_{\ix \in \T^d}{|u(\ix+\ve)-u(\ix)|^p d \ix}
\\ \nonumber
& + C(p,\alpha,\epsilon)\ \int_{\ix \in \T^d}{|u(\ix+\tilde{\ve})-u(\ix)|^p d \ix}.
\end{align}
Applying again Lemma~\ref{Young}, we obtain that
\begin{align} \nonumber
& \Big( \int_{\ix \in \T^d}{|u(\ix+\ve+\tilde{\ve})-u(\ix)|^p  d \ix} \Big)^{\alpha}
\\ \nonumber
& \leq (1+\epsilon)^{1/2} \Big((1+\epsilon)^{1/2 \alpha} \int_{\ix \in \T^d}{|u(\ix+\ve)-u(\ix)|^p d \ix}  \Big)^{\alpha}
\\ \nonumber
& +C(\alpha,\epsilon)  \Big( C(p,\alpha,\epsilon) \int_{\ix \in \T^d}{|u(\ix+\tilde{\ve})-u(\ix)|^p d \ix}  \Big)^{\alpha}.
\end{align}
To prove the lemma's statement, it remains to take the expected value and to average in time in the inequality above.
\end{proof}
\indent
By Corollary~\ref{Youngcor}, for $\ell \in [0,1]$, $\tilde{\ve} \in \R^d$ we have
\begin{align} \label{smallangle}
& |S_{p,\alpha}(\ell \e_1+\tilde{\ve})-S_{p,\alpha}(\ell \e_1)| \leq \epsilon S_{p,\alpha}(\ell \e_1)+ K(p,\alpha,\epsilon) S_{p,\alpha}(\tilde{\ve}).
\end{align}
Now consider $\er \in \R^d,\ |\er|=\ell$ and denote by $\theta$ the angle between $\er$ and $\e_1$. Using (\ref{smallangle}), as well as respectively Lemma~\ref{upperdiss} and Lemma~\ref{lowerdiss} for (\ref{lowercordiss}) and Lemma~\ref{upperinert} and Lemma~\ref{lowerinert} for (\ref{lowerinertdiss}), we get the following result.

\begin{cor} \label{lowercor}
There exists a constant $\tilde{C}$ such that for $\alpha \geq 0$ and $|\er|=\ell \in J_1$, $|\theta| \leq \tilde{C}$,
\begin{align} \label{lowercordiss}
S_{p,\alpha}(\er) \overset{p,\alpha}{\gtrsim} \left\lbrace \begin{aligned} &\ell^{\alpha p},\ 0 \leq p \leq 1. \\ & \ell^{\alpha p} \nu^{-\alpha(p-1)},\ p \geq 1, \end{aligned} \right.
\end{align}
and for $\alpha \geq 0$ and $|\er|=\ell \in J_2$, $|\theta| \leq \tilde{C}$,
\begin{align} \label{lowerinertdiss}
S_{p,\alpha}(\er) \overset{p,\alpha}{\gtrsim} \left\lbrace \begin{aligned} & \ell^{\alpha p},\ 0 \leq p \leq 1. \\ & \ell^{\alpha},\ p \geq 1. \end{aligned} \right.
\end{align}
\end{cor}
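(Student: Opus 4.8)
The plan is to deduce the lower bounds for a general direction $\er$ at small angle from the lower bounds for the purely longitudinal direction $\ell\e_1$ established in Lemmas~\ref{lowerdiss}--\ref{lowerinert}, treating the angular deviation as a perturbation controlled by the upper bounds of Lemmas~\ref{upperdiss}--\ref{upperinert}. The case $\alpha=0$ is trivial (both sides equal $1$), so I assume $\alpha>0$ throughout.

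First I would reduce the full increment along $\e_1$ to the longitudinal one. For $\er=\ell\e_1$ we have $\er/|\er|=\e_1$, so pointwise $|\ue(\ix+\ell\e_1)-\ue(\ix)|\ge|(\ue(\ix+\ell\e_1)-\ue(\ix))\cdot\e_1|$; since the $L^p$ norm, the power $\alpha$, the expectation and the time average are all monotone operations, this yields $S_{p,\alpha}(\ell\e_1)\ge S^{\|}_{p,\alpha}(\ell\e_1)$. Applying Lemma~\ref{lowerdiss} (for $\ell\in J_1$) or Lemma~\ref{lowerinert} (for $\ell\in J_2$) then gives the desired lower bound for $S_{p,\alpha}(\ell\e_1)$, namely $\gtrsim\ell^{\alpha p}$ for $0\le p\le1$ and $\gtrsim\ell^{\alpha p}\nu^{-\alpha(p-1)}$ (range $J_1$) or $\gtrsim\ell^{\alpha}$ (range $J_2$) for $p\ge1$.

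Next I would treat a general $\er$ with $|\er|=\ell$ making angle $\theta$ with $\e_1$ by writing $\er=\ell\e_1+\tilde{\ve}$, where $\tilde{\ve}=\er-\ell\e_1$ has length $|\tilde{\ve}|=2\ell\sin(\theta/2)\le\ell|\theta|$. Inequality (\ref{smallangle}) gives, for any $\epsilon>0$,
\begin{equation} \nonumber
S_{p,\alpha}(\er)\ge(1-\epsilon)\,S_{p,\alpha}(\ell\e_1)-K(p,\alpha,\epsilon)\,S_{p,\alpha}(\tilde{\ve}).
\end{equation}
To control the error term I would bound the full increment $S_{p,\alpha}(\tilde{\ve})$ by a sum of directional ones via finite-dimensional norm equivalence together with the subadditivity (for $\alpha\le1$) or convexity (for $\alpha\ge1$) of $x\mapsto x^{\alpha}$, obtaining $S_{p,\alpha}(\tilde{\ve})\overset{p,\alpha}{\lesssim}\sum_i S_{p,\alpha,i}(\tilde{\ve})$, and then apply Lemma~\ref{upperdiss} (range $J_1$) or Lemma~\ref{upperinert} (range $J_2$) with $|\tilde{\ve}|\le\ell|\theta|$ in place of $\ell$. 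The crucial point is that this upper bound has exactly the same dependence on $\ell$ and $\nu$ as the lower bound for $S_{p,\alpha}(\ell\e_1)$, with an extra gain which is a positive power of $|\theta|$, namely $|\theta|^{\alpha p}$ in all cases except the inertial regime with $p\ge1$, where it is $|\theta|^{\alpha}$.

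Finally I would close the estimate by fixing $\epsilon=1/2$ and then choosing $\tilde{C}$ small enough (depending on $p,\alpha$, consistently with the symbol $\overset{p,\alpha}{\gtrsim}$) that for $|\theta|\le\tilde{C}$ the gain factor makes the error term at most half of $(1-\epsilon)\,S_{p,\alpha}(\ell\e_1)$; the remaining half then yields the claimed lower bounds (\ref{lowercordiss}) and (\ref{lowerinertdiss}). The main obstacle is the bookkeeping in the previous step: one must verify that the directional upper bound for the perturbation $\tilde{\ve}$ really carries the same $(\ell,\nu)$-power as the longitudinal lower bound, so that the surplus is purely a power of $|\theta|$ and can be absorbed. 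Once this matching of exponents is confirmed, the choice of $\epsilon$ and $\tilde{C}$ is routine.
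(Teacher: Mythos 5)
Your proposal is correct and follows essentially the same route as the paper: the paper obtains Corollary~\ref{lowercor} precisely by combining (\ref{smallangle}) (i.e.\ Corollary~\ref{Youngcor}) with the longitudinal lower bounds of Lemmas~\ref{lowerdiss}--\ref{lowerinert} and the directional upper bounds of Lemmas~\ref{upperdiss}--\ref{upperinert}, absorbing the perturbation term for $|\theta|$ small. The details you supply --- the pointwise reduction $S_{p,\alpha}(\ell\e_1)\ge S^{\|}_{p,\alpha}(\ell\e_1)$, the chord estimate $|\tilde{\ve}|\le\ell|\theta|$, the bound $S_{p,\alpha}(\tilde{\ve})\overset{p,\alpha}{\lesssim}\sum_i S_{p,\alpha,i}(\tilde{\ve})$, and the matching of the $(\ell,\nu)$-exponents so that the surplus is a positive power of $|\theta|$ --- are exactly what the paper leaves implicit.
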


Summing up the results above, averaging in $\er$ and using the definition of $S_{p,\alpha}(\ell)$, we obtain the following theorem.

\begin{theo} \label{avoir2}
For $\alpha \geq 0$ and $\ell \in J_1$,
$$
S_{p,\alpha}(\ell) \overset{p,\alpha}{\sim} \left\lbrace \begin{aligned} & \ell^{\alpha p},\ 0 \leq p \leq 1. \\ & \ell^{\alpha p} \nu^{-\alpha (p-1)},\ p \geq 1. \end{aligned} \right.
$$
On the other hand, for $\alpha \geq 0$ and $\ell \in J_2$,
$$
S_{p,\alpha} (\ell) \overset{p,\alpha}{\sim} \left\lbrace \begin{aligned} & \ell^{\alpha p},\ 0 \leq p \leq 1. \\ & \ell^{\alpha},\ p \geq 1. \end{aligned} \right.
$$
\end{theo}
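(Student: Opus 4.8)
The plan is to derive both the upper and the lower bounds from the componentwise and longitudinal estimates already established, using the elementary fact that, by its very definition, $S_{p,\alpha}(\ell)$ is the \emph{average} of $S_{p,\alpha}(\er)$ over the sphere $\ell \mathcal{S}^{d-1}$.

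For the upper bounds I would first compare the full vector increment with its scalar components. Since the Euclidean norm satisfies $|\ve|^p \leq C(p,d) \sum_{i}|v_i|^p$ for every $p>0$, and since $(\sum_i a_i)^{\alpha} \leq C(\alpha,d)\sum_i a_i^{\alpha}$, applying the brackets $\lbrace \cdot \rbrace$ gives, for every $\er$ with $|\er|=\ell$,
\begin{equation} \nonumber
S_{p,\alpha}(\er) \overset{p,\alpha}{\lesssim} \sum_{1 \leq i \leq d} S_{p,\alpha,i}(\er).
\end{equation}
For $\ell \in J_1$ I then invoke Lemma~\ref{upperdiss} and for $\ell \in J_2$ I invoke Lemma~\ref{upperinert}, applied to each term $S_{p,\alpha,i}(\er)$; in both cases the resulting bound is uniform in $\er$ on the sphere. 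Because $S_{p,\alpha}(\ell)$ is the average of $S_{p,\alpha}(\er)$ over $\ell \mathcal{S}^{d-1}$, a bound valid uniformly on the sphere transfers verbatim to $S_{p,\alpha}(\ell)$, which yields the required upper estimates in both ranges.

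For the lower bounds I would exploit that all the integrands $S_{p,\alpha}(\er)$ are nonnegative, so I may discard most of the sphere and retain only the spherical cap $\lbrace \er :\ |\theta| \leq \tilde{C} \rbrace$, where $\theta$ is the angle between $\er$ and $\e_1$ and $\tilde{C}$ is the constant from Corollary~\ref{lowercor}. On this cap the relevant lower bound holds by (\ref{lowercordiss}) for $\ell \in J_1$ and by (\ref{lowerinertdiss}) for $\ell \in J_2$. The surface measure of the cap on $\ell \mathcal{S}^{d-1}$ equals $\ell^{d-1}$ times the fixed positive solid angle of the corresponding cap on the unit sphere, so after multiplying by the normalising factor $c_d^{-1} \ell^{-(d-1)}$ the geometric prefactor is a strictly positive constant depending only on $d$ and $\tilde{C}$. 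Hence
\begin{equation} \nonumber
S_{p,\alpha}(\ell) \geq c_d^{-1} \ell^{-(d-1)} \int_{\er \in \ell \mathcal{S}^{d-1},\ |\theta| \leq \tilde{C}}{S_{p,\alpha}(\er)\ d \sigma(\er)} \overset{p,\alpha}{\gtrsim} \min_{|\theta| \leq \tilde{C}} S_{p,\alpha}(\er),
\end{equation}
and the last quantity is bounded below by the right-hand side of the claimed estimate thanks to Corollary~\ref{lowercor}. Combining the two directions yields the equivalences $\overset{p,\alpha}{\sim}$ in each of the two ranges.

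The only delicate point is the lower bound: one must verify that the cap $\lbrace |\theta| \leq \tilde{C} \rbrace$ carries a fixed proportion of the total surface measure, \emph{independent of} $\ell$, so that restricting the spherical average to it costs merely a constant factor and the scaling in $\ell$ is preserved. Everything else is a routine transfer of the componentwise estimates through the norm comparison and the averaging. I expect no genuine obstacle here, since the hard analytic input — the behaviour of the longitudinal increments and its consequences for arbitrary directions close to $\e_1$ — has already been packaged into Lemmas~\ref{upperdiss}--\ref{lowerinert} and Corollary~\ref{lowercor}, so no new estimate on the solutions $\ue$ is needed.
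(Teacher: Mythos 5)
Your proof is correct and takes essentially the same route as the paper: the paper's own proof of Theorem~\ref{avoir2} is precisely the remark that one obtains the result by averaging over $\ell \mathcal{S}^{d-1}$ the componentwise upper bounds of Lemmas~\ref{upperdiss} and~\ref{upperinert} and the cap lower bounds of Corollary~\ref{lowercor}. The details you supply --- the comparison $|\ve|^p \lesssim \sum_{i}|v_i|^p$ together with the subadditivity of $x \mapsto x^{\alpha}$ up to constants, and the fact that the cap $\lbrace |\theta| \leq \tilde{C} \rbrace$ carries a fixed, $\ell$-independent fraction of the surface measure --- are exactly the steps the paper leaves implicit.
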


The following result follows immediately from the definition (\ref{flatness}).

\begin{cor} \label{flatnesscor}
For $\ell \in J_2$, the flatness satisfies $F(\ell) \sim \ell^{-1}$.
\end{cor}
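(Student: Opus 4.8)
The plan is to read off the result directly from Theorem~\ref{avoir2} by specialising the parameters to $\alpha=1$, $p=2$ and $p=4$, and then substituting into the definition (\ref{flatness}) of the flatness. Recall that by the conventions of Section~\ref{agreeturb}, the structure function of order $p$ is $S_p(\ell)=S_{p,1}(\ell)$, i.e. the averaged increment $S_{p,\alpha}(\ell)$ evaluated at $\alpha=1$. The entire statement is confined to the inertial range $\ell \in J_2$, so the relevant branch of Theorem~\ref{avoir2} is the one valid for $\ell \in J_2$.

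First I would apply Theorem~\ref{avoir2} in the regime $\ell \in J_2$ with $\alpha=1$. For $p=2 \geq 1$ this gives $S_2(\ell)=S_{2,1}(\ell) \overset{2}{\sim} \ell^{\alpha}=\ell$, and for $p=4 \geq 1$ it gives $S_4(\ell)=S_{4,1}(\ell) \overset{4}{\sim} \ell^{\alpha}=\ell$. Thus both the second- and fourth-order structure functions scale linearly in $\ell$ throughout the inertial range, with implicit multiplicative constants depending only on the orders $p=2,4$.

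It then remains to combine these two estimates. Since the relation $\sim$ is stable under taking products, quotients and fixed positive powers (each such operation only changes the implicit constants), from $S_4(\ell) \sim \ell$ and $S_2(\ell) \sim \ell$ one gets $S_2^2(\ell) \sim \ell^2$ and hence
\begin{equation} \nonumber
F(\ell)=\frac{S_4(\ell)}{S_2^2(\ell)} \sim \frac{\ell}{\ell^2}=\ell^{-1},
\end{equation}
uniformly for $\ell \in J_2$ and $\nu \in (0,\nu_0]$, which is the claimed bound.

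I do not expect any genuine obstacle here: the content of the corollary is entirely absorbed into Theorem~\ref{avoir2}, and the only thing to be careful about is that the $\sim$ notation hides constants on both sides, so the lower and upper bounds for $S_4$ and $S_2$ must be tracked separately to conclude a two-sided bound for the ratio $F(\ell)$. Because the linear scaling $S_p(\ell)\sim\ell$ already holds as a genuine two-sided estimate for all $p\ge 1$, the quotient inherits matching upper and lower bounds automatically, so no additional work beyond this bookkeeping is needed.
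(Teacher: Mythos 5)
Your proposal is correct and is exactly the paper's argument: the paper states that the corollary "follows immediately from the definition (\ref{flatness})", meaning precisely the specialisation of Theorem~\ref{avoir2} to $p=2,4$, $\alpha=1$ on $J_2$ and the resulting quotient $F(\ell)=S_4(\ell)/S_2^2(\ell)\sim \ell/\ell^2=\ell^{-1}$. Your additional remark about tracking the two-sided bounds separately is the right (and only) point of care, and it goes through since both estimates in Theorem~\ref{avoir2} are genuinely two-sided.
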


\subsection{Results in Fourier space} \label{four}

By (\ref{integers}), for all $m \geq 1$ we have
\begin{equation} \label{superalg}
\lbrace |\hat{\ue}(\en)|^2 \rbrace \leq (2 \pi |\en|)^{-2m} \lbrace \Vert \ue \Vert_m^2 \rbrace \overset{m}{\sim} (\nu |\en|)^{-2m} \nu.
\end{equation}
Thus, for $|\en| \succeq \nu^{-1}$, $\lbrace|\hat{\ue}(\en)|^2\rbrace$ decreases super-algebraically.
\smallskip
\\ \indent
To estimate the $H^s$ norms of $\ue$ for $s \in (0,1)$, we proceed in the same way as in \cite{BorW}. Namely, we use the formula (\ref{Sobolevfrac}) and Theorem~\ref{avoir2} to transform information about increments into information about Sobolev norms. For the sake of completeness, we give here the proof in the case $s=1/2$.

\begin{lemm} \label{H01}
For $s \in (0,1/2)$,
$$
\lbrace ( \Vert \ue \Vert^{'}_s)^2 \rbrace \overset{s}{\sim} 1.
$$
On the other hand,
$$
\lbrace ( \Vert \ue \Vert_{1/2}^{'})^2 \rbrace \sim |\log \nu|.
$$
Finally, for $s \in (1/2,1)$,
$$
\lbrace ( \Vert \ue \Vert_s^{'})^2 \rbrace \overset{s}{\sim} \nu^{-(2s-1)}.
$$
\end{lemm}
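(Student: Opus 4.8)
The plan is to pass from the spectral norm $\Vert \ue \Vert'_s$ to the increment norm $\Vert \ue \Vert''_s$ of (\ref{Sobolevfrac}), which is equivalent to it for $s \in (0,1)$, and then feed in the sharp structure function asymptotics of Theorem~\ref{avoir2}. Thus it suffices to estimate $\{ (\Vert \ue \Vert''_s)^2 \}$. Since the integrand in (\ref{Sobolevfrac}) is nonnegative, Tonelli's theorem lets me interchange the bracket $\{ \cdot \}$ (the average in time and ensemble) with the integration in $\er$, giving
\begin{equation} \nonumber
\{ (\Vert \ue \Vert''_s)^2 \} = \int_{|\er| \leq 1}{\frac{S_{2,1}(\er)}{|\er|^{2s+d}}\ d\er},\quad S_{2,1}(\er) = \Big\{ \int_{\T^d}{|\ue(\ix+\er)-\ue(\ix)|^2\ d\ix} \Big\}.
\end{equation}
Passing to polar coordinates $\er=\ell \omega$, $\omega \in \mathcal{S}^{d-1}$, and recalling the definition of the angularly averaged quantity $S_{2,1}(\ell)$ from Section~\ref{agreeturb}, the angular integration produces exactly $c_d\, S_{2,1}(\ell)$, so that
\begin{equation} \nonumber
\{ (\Vert \ue \Vert'_s)^2 \} \sim \int_{0}^{1}{\ell^{-2s-1} S_{2,1}(\ell)\ d\ell}.
\end{equation}

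Next I would split this integral over the three ranges $J_1,J_2,J_3$ of (\ref{ranges}) and insert the bounds for $S_{2,1}(\ell)$ coming from the case $p=2,\ \alpha=1$ of Theorem~\ref{avoir2}, namely $S_{2,1}(\ell) \sim \ell^2 \nu^{-1}$ on $J_1$ and $S_{2,1}(\ell) \sim \ell$ on $J_2$; on the energy range $J_3$ I would use the crude two-sided bound $S_{2,1}(\ell) \sim 1$, whose upper part follows from $\int_{\T^d}{|\ue(\ix+\er)-\ue(\ix)|^2} \leq 4 |\ue|^2$ together with Corollary~\ref{Lpupper}. For $s=1/2$ this yields $\int_{J_1}{\ell^{-2}\cdot \ell^2 \nu^{-1}\,d\ell} \sim \nu^{-1}\cdot \nu = O(1)$ and $\int_{J_3}{\ell^{-2}\,d\ell}=O(1)$, while the middle range supplies the dominant term $\int_{C_1 \nu}^{C_2}{\ell^{-2}\cdot \ell\,d\ell} = \log\big(C_2/(C_1 \nu)\big) \sim |\log \nu|$. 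Summing the three pieces gives $\{ (\Vert \ue \Vert'_{1/2})^2 \} \sim |\log \nu|$, which is the representative case singled out in the text.

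For $s \in (0,1/2)$ and $s \in (1/2,1)$ the same decomposition applies and only the exponent arithmetic changes: the middle integral $\int{\ell^{-2s}\,d\ell}$ over $J_2$ is controlled by its upper endpoint, contributing $\sim 1$, when $s<1/2$, and by its lower endpoint $C_1 \nu$, contributing $\sim \nu^{-(2s-1)}$, when $s>1/2$; in the latter case $\int_{J_1}{\ell^{1-2s}\nu^{-1}\,d\ell} \sim \nu^{-(2s-1)}$ enters at the same order, while $J_3$ stays $O(1)$. In every case the lower bound is immediate from the nonnegativity of the integrand, keeping only the dominant range $J_1$ or $J_2$, so no cancellation needs to be controlled. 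The only point requiring a little care is the energy range $J_3$, where Theorem~\ref{avoir2} does not apply and one must insert the separate $O(1)$ bound above; since that contribution is always dominated by the other two ranges, the final asymptotics are insensitive to the precise behaviour of $S_{2,1}$ near $\ell \sim 1$, and the delicate feature — the logarithmic divergence at the critical exponent $s=1/2$ — is exactly the borderline of integrability of $\ell^{-2s}$ at the dissipation scale $C_1 \nu$.
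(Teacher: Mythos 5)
Your proposal is correct and follows essentially the same route as the paper: pass to the increment norm (\ref{Sobolevfrac}), interchange the bracket with the $\er$-integration by Fubini/Tonelli, reduce to $\int_0^1 \ell^{-2s-1}S_2(\ell)\,d\ell$ in polar coordinates, split over $J_1,J_2,J_3$, and insert the $p=2$, $\alpha=1$ asymptotics of Theorem~\ref{avoir2}. The only cosmetic differences are that the paper writes out only the representative case $s=1/2$ (leaving the exponent arithmetic for the other $s$ implicit, as you carry out), and that on $J_3$ the paper invokes Lemma~\ref{upperinert} where you use the cruder but equally valid bound $S_2(\ell)\lesssim \{|\ue|^2\}\lesssim 1$ via Corollary~\ref{Lpupper}.
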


\begin{proof}
By (\ref{Sobolevfrac}) we have
\begin{align} \nonumber
\left\|\ue\right\|'_{1/2} &\sim \Bigg(\int_{\ix \in \T^d,\ |\er| \leq 1}{\frac{|\ue(\ix+\er)-\ue(\ix)|^2}{|\er|^{1+d}}} d \ix d \er \Bigg)^{1/2}
\\ \nonumber
& \sim \Bigg( \int_0^1 \frac{1}{\ell^{2}} \Big(\int_{\ix \in \T^d,\ \y \in S^{d-1}} {|\ue(\ix+\ell \y)-\ue(\ix)|^2 d \ix d \y} \Big) d \ell \Bigg)^{1/2}.
\end{align}
Consequently, by Fubini's theorem, we get
\begin{align} \nonumber
(\lbrace \left\|\ue\right\|^{'}_{1/2})^2 \rbrace &\sim \int_{0}^{1}{\frac{S_2(\ell)}{\ell^2} d\ell}
=\int_{J_1}{\frac{S_2(\ell)}{\ell^2} d\ell}+\int_{J_2}{\frac{S_2(\ell)}{\ell^2} d\ell}+\int_{J_3}{\frac{S_2(\ell)}{\ell^2} d\ell}.
\end{align}
By Theorem~\ref{avoir2} we get
$$
\int_{J_1}{\frac{S_2(\ell)}{\ell^2} d\ell} \sim \int_{0}^{C_1 \nu}{\frac{\ell^2 \nu^{-1}}{\ell^2} d\ell} \sim 1
$$
and
$$
\int_{J_2}{\frac{S_2(\ell)}{\ell^2} d\ell} \sim \int_{C_1 \nu}^{C_2}{\frac{\ell}{\ell^2} d\ell} \sim |\log  \nu|,
$$
respectively. Finally, by Lemma~\ref{upperinert} we get
$$
\int_{J_3}{\frac{S_2(\ell)}{\ell^2} d\ell} \leq C C_2^{-2} \leq C.
$$
Thus,
$$
(\lbrace \left\|\ue\right\|^{'}_{1/2})^2 \rbrace \sim |\log \nu|.
$$
\end{proof}
\indent
The results above and the relation (\ref{integers}) tell us that $\lbrace|\hat{\ue}(\en)|^2\rbrace$ decreases very fast for $| \en | \gtrsim \nu^{-1}$ and that for $s \geq 0$ the sums
$$
\sum{| \en |^{2s} \lbrace |\hat{\ue}(\en)|^2}\rbrace
$$
have exactly the same behaviour as the partial sums
$$
\sum_{| \en | \leq \nu^{-1}}{| \en |^{2s+(1-d)} | \en |^{-2}}
$$
in the limit $\nu \rightarrow 0^+$. Therefore we can conjecture that for $| \en | \lesssim \nu^{-1}$, we have  $\sum_{|\en| \sim k}{\lbrace|\hat{\ue}(\en)|^2} \rbrace \sim k^{-2}$.
\\ \indent
A result of this type actually holds for the layer-averaged Fourier coefficients as long as $| \en |$ is not too small, i.e. in the inertial range $J_2$. The proof is a little bit more delicate than in 1d, since the upper estimate does not follow directly from the bound in $W^{1,1}$. We use a version of the Wiener-Khinchin theorem, which states that for any function $\ve \in L_2$ and any $\y \in \R^d$, one has
\begin{equation} \label{WK}
|\ve(\cdot+\y)-\ve(\cdot)|^2=4\sum_{\en \in \Z^d}{ \sin^2 (\pi \en \cdot \y) |\hat{\ve}(\en)|^2}.
\end{equation}
%



\begin{theo} \label{spectrinert}
There exists $M \geq 2$ such that for $k^{-1} \in J_2$, we have $E(k) \sim k^{-2}$.
\end{theo}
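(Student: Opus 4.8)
The plan is to pass from the structure--function estimates of Theorem~\ref{avoir2} to the spectral sum by means of the Wiener--Khinchin identity (\ref{WK}), which gives an exact transfer between increments and Fourier coefficients. Writing $S_2(\ell)=S_{2,1}(\ell)$ for the spherically averaged second--order structure function, I would integrate (\ref{WK}) against the normalised surface measure on $\ell\mathcal{S}^{d-1}$ and take $\{\cdot\}$ (the interchange with the $\en$--sum being justified since $\sum_{\en}\{|\hat{\ue}(\en)|^2\}=\{|\ue|^2\}\sim 1<\infty$ by Theorem~\ref{avoir} with $m=0$). This produces
\begin{equation}\nonumber
S_2(\ell)=4\sum_{\en\in\Z^d}g(\ell|\en|)\,\{|\hat{\ue}(\en)|^2\},\qquad g(t)=c_d^{-1}\int_{\y\in\mathcal{S}^{d-1}}\sin^2\!\big(\pi t\,(\e_1\cdot\y)\big)\,d\sigma(\y),
\end{equation}
where by rotational invariance of $d\sigma$ the sphere average $c_d^{-1}\int_{\mathcal{S}^{d-1}}\sin^2(\pi\,\ell\en\cdot\y)\,d\sigma(\y)$ depends only on $t=\ell|\en|$. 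I would first record the elementary properties of the kernel: $g$ is continuous and bounded, $g(t)\lesssim\min(t^2,1)$, and $\inf_{[a,b]}g>0$ for every $0<a\le b<\infty$ (since $g(0)=0$ with quadratic vanishing, $g>0$ on $(0,\infty)$, and $g(t)\to\tfrac12$ as $t\to\infty$).

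The basic tool is the \emph{annulus bound}: evaluating the identity at $\ell=j^{-1}$ and keeping only the shell $|\en|\in[j,2j]$, where $g(j^{-1}|\en|)\ge\inf_{[1,2]}g>0$, gives $\sum_{|\en|\sim j}\{|\hat{\ue}(\en)|^2\}\lesssim S_2(j^{-1})$ for every $j\ge 1$; with Theorem~\ref{avoir2} this reads $\lesssim j^{-1}$ when $j^{-1}\in J_2$ and $\lesssim j^{-2}\nu^{-1}$ when $j^{-1}\in J_1$. The upper estimate of the theorem is then immediate: for $k^{-1}\in J_2$, taking $\ell=k^{-1}$ and restricting the identity to $|\en|\in[M^{-1}k,Mk]$, where $g\ge\inf_{[M^{-1},M]}g>0$, yields $\sum_{|\en|\in[M^{-1}k,Mk]}\{|\hat{\ue}(\en)|^2\}\lesssim_M S_2(k^{-1})\sim k^{-1}$, valid for any $M\ge 2$.

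For the lower estimate I would split the identity at $\ell=k^{-1}$ into the low band $|\en|<M^{-1}k$, the middle band $|\en|\in[M^{-1}k,Mk]$, and the high band $|\en|>Mk$, and show that the low and high contributions are only a small multiple of $k^{-1}$. In the low band, $g(|\en|/k)\lesssim(|\en|/k)^2$ together with the annulus bound $\sum_{|\en|\sim j}\{\cdot\}\lesssim j^{-1}$ (available since $M^{-1}k\lesssim\nu^{-1}$) gives $\sum_{|\en|<M^{-1}k}g(|\en|/k)\{\cdot\}\lesssim k^{-2}\sum_{j\lesssim M^{-1}k}j\lesssim M^{-1}k^{-1}$. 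In the high band I use $g\lesssim 1$ and sum the annulus bounds over dyadic $j\ge Mk$. This is where I expect the real difficulty: when $k^{-1}$ lies near the bottom of $J_2$ the shell $Mk$ can exceed the dissipation scale $\nu^{-1}$, so the $J_2$ annulus bound is unavailable and one must use the dissipative bound $\lesssim j^{-2}\nu^{-1}$ on $J_1$. Splitting according to whether $Mk\le\nu^{-1}$ or $Mk>\nu^{-1}$, and using $k^{-1}>C_1\nu$, both cases give $\sum_{|\en|>Mk}\{\cdot\}\lesssim M^{-1}k^{-1}$ with a constant independent of $M,k,\nu$.

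Finally, writing $C^{\ast}$ for the constant such that (low)$+$(high)$\le C^{\ast}M^{-1}k^{-1}$ and using $S_2(k^{-1})\ge c_0 k^{-1}$ from Theorem~\ref{avoir2}, the identity yields $4\sum_{\mathrm{mid}}g(|\en|/k)\{\cdot\}\ge(c_0-4C^{\ast}M^{-1})k^{-1}$. I then fix $M\ge 2$ so large that $4C^{\ast}M^{-1}\le c_0/2$, and use $g\lesssim 1$ once more to pass from the $g$--weighted middle sum to $\sum_{\mathrm{mid}}\{|\hat{\ue}(\en)|^2\}$, obtaining $\sum_{|\en|\in[M^{-1}k,Mk]}\{|\hat{\ue}(\en)|^2\}\gtrsim k^{-1}$. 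Combined with the upper estimate this gives $\sim k^{-1}$ for this fixed $M$ and all $k^{-1}\in J_2$, that is, $E(k)\sim k^{-2}$.
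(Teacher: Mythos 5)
Your proposal is correct and follows essentially the same route as the paper: a Wiener--Khinchin identity averaged over the sphere, the structure-function estimates, a three-band decomposition of frequency space, and a choice of $M$ large enough that the low and high bands contribute at most a fixed fraction of $S_2(k^{-1})\gtrsim k^{-1}$. (The paper phrases this through the weighted sum $E'(k)=\sum_{|\en|\in[M^{-1}k,Mk]}|\en|^2\{|\hat{\ue}(\en)|^2\}$ and inequalities such as $|\en|^2\geq k^2\pi^{-2}\sin^2(\pi\en\cdot\y)$ rather than through your kernel $g$, but this is only a difference of bookkeeping.)

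One citation in your low-band estimate needs repair. You justify the annulus bound $\sum_{|\en|\sim j}\{|\hat{\ue}(\en)|^2\}\lesssim j^{-1}$ by Theorem~\ref{avoir2}, but that theorem only covers $\ell\in J_1\cup J_2$; the low band $|\en|<M^{-1}k$ also contains the finitely many annuli with $j^{-1}\in J_3$ (i.e.\ $j\lesssim C_2^{-1}$), for which Theorem~\ref{avoir2} says nothing. This matters: if you replace those annuli by the trivial bound $\sum_{\en}\{|\hat{\ue}(\en)|^2\}\lesssim 1$, their contribution is only $\lesssim C_2^{-2}k^{-2}$, which for $k^{-1}$ near $C_2$ is of order $k^{-1}$ with a constant that cannot be beaten by enlarging $M$, so the final step would fail on part of $J_2$. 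The correct fix is to invoke Lemma~\ref{upperinert} instead, which gives $S_2(\ell)\lesssim\ell$ for \emph{all} $\ell\in(0,1]$, including the energy range; your own derivation $\sum_{|\en|\sim j}\{|\hat{\ue}(\en)|^2\}\lesssim S_2(j^{-1})$ then yields $\lesssim j^{-1}$ for every $j\geq 1$, and the low band is $\lesssim M^{-1}k^{-1}$ as claimed. This is exactly how the paper obtains its bound (\ref{spectrinertupper1}). The same remark simplifies your high band: since Lemma~\ref{upperinert} is uniform in $\ell$, no case distinction at the dissipation scale $\nu^{-1}$ is needed (the paper sums layers $[M^{2N-1}k,M^{2N+1}k]$ geometrically to get (\ref{spectrinertupper2})); your split using the $J_1$ bound $\ell^2\nu^{-1}$ is valid but superfluous.
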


\begin{proof}
We recall that by the definition (\ref{spectrum}),
$$
E(k) = k^{-1} \sum_{| \en | \in [M^{-1}k,Mk]}{\Big\{  {|\hat{\ue}(\en)|^2} \Big\} }.
$$
Thus, it suffices to prove that
\begin{equation} \label{spectrinertequiv}
E'(k)=\sum_{| \en | \in [M^{-1}k,Mk]}{ |\en|^2 \lbrace |\hat{\ue}(\en)|^2 \rbrace } \sim k.
\end{equation}
In the following, dependence on $M$ will always be explicit. We begin by proving the upper estimate. First, we note that we have
$$
|\ve|^2 \sim \int_{\y \in \mathcal{S}^{d-1}} \sin^2(\pi \ve \cdot \y),
$$
uniformly for $|\ve| \leq 1/2$. Thus, we get
\begin{align} \nonumber
& \sum_{| \en | \in [k/2,2k]}{ \lbrace |\hat{\ue}(\en)|^2 \rbrace }
\\ \nonumber
&\sim k^{d-1} \int_{\y \in k^{-1} \mathcal{S}^{d-1}} \sum_{| \en | \in [k/2,2k]}
{ \sin^2(\pi \en \cdot \y/4) \lbrace |\hat{\ue}(\en)|^2 \rbrace d \y}
\\ \nonumber
& \lesssim k^{d-1} \int_{\y \in k^{-1} \mathcal{S}^{d-1}} \sum_{\en \in \Z^d}{ \sin^2(\pi \en \cdot \y/4) \lbrace |\hat{\ue}(\en)|^2 \rbrace d \y}.
\end{align}
Then by (\ref{WK}) and Lemma~\ref{upperinert} we get
\begin{align} \nonumber
\sum_{| \en | \in [k/2,2k]}{ \lbrace |\hat{\ue}(\en)|^2 \rbrace } & \lesssim S_2(k^{-1}/4) \lesssim k^{-1}.
\end{align}
Consequently, we obtain the upper bound
$$
E'(k) \lesssim  M k.
$$
On the other hand, we get
\begin{equation} \label{spectrinertupper1}
\sum_{ |\en | < M^{-1} k}{ |\en|^2 \lbrace |\hat{\ue}(\en)|^2 \rbrace } \leq C M^{-1} k
\end{equation}
and (summing over layers of the form $[M^{2N-1} k,\ M^{2N+1} k]$):
\begin{equation} \label{spectrinertupper2}
\sum_{|\en| > M k}{\lbrace |\hat{\ue}(\en)|^2 \rbrace } \leq C M^{-1} k^{-1}.
\end{equation}
The lower bound for $E'(k)$ is then obtained in exactly the same way as in 1d. Namely, we note that for $\y  \in k^{-1} S^{d-1}$ and $\en \in \Z^d$, we have
$$
|\en|^2 \geq k^2 \pi^{-2} \sin^2 (\pi \en \cdot \y).
$$
Consequently, 
\begin{align} \nonumber
&\sum_{|\en| \leq M k}{ |\en|^2 \lbrace |\hat{\ue}(\en)|^2 \rbrace } 
\\ \nonumber
& \geq  c_d^{-1} k^{d-1} \int_{\y \in k^{-1} S^{d-1}}\sum_{|\en| \leq M k}{  k^2 \pi^{-2}  \sin^2 (\pi \en \cdot \y) \lbrace |\hat{\ue}(\en)|^2 \rbrace } d \y
\\ \nonumber
&\geq  k^2 \pi^{-2} \Big( c_d^{-1} k^{d-1} \int_{\y \in k^{-1} S^{d-1}}{\sum_{\en \in \Z^d}{ \sin^2 (\pi \en \cdot \y) \lbrace |\hat{\ue}(\en)|^2 \rbrace } d \y}
\\ \nonumber
&- \sum_{|\en| > M k}{\lbrace |\hat{\ue}(\en)|^2 \rbrace } \Big),
\end{align}
where $c_d$ is the surface of $\mathcal{S}^{d-1}$. Using (\ref{WK}), (\ref{spectrinertupper2}) and the definition of $S_2$ we get
\begin{align} \nonumber
\sum_{|\en| \leq M k}{ | \en|^2 \lbrace |\hat{\ue}(\en)|^2 \rbrace } &\geq  k^2 \pi^{-2} (S_2(k^{-1})/4-C M^{-1} k^{-1}).
\end{align}
Finally, Theorem~\ref{avoir2} yields that
\begin{equation} \nonumber
\sum_{|\en| \leq M k}{ |\en|^2 \lbrace |\hat{\ue}(\en)|^2 \rbrace }  \geq (C-C M^{-1}) k.
\end{equation}
Now we use (\ref{spectrinertupper1}) and we choose $M \geq 1$ large enough to obtain (\ref{spectrinertequiv}).
\end{proof}

\begin{rmq} \label{spectrinertrmq}
We actually have
$$
\Bigg\{ \Bigg( k^{-1} \sum_{|\en| \in [M^{-1}k,Mk]}{|\hat{\ue}(\en)|^2} \Bigg)^{\alpha} \Bigg\} \overset{\alpha}{\sim} k^{-2\alpha},\quad \alpha>0.
$$
The upper bound is proved in the same way as previously and then the lower bound follows from H{\"o}lder's inequality and the lower bound in Theorem \ref{spectrinert}.
\end{rmq}

\section{Stationary measure and related issues} \label{stat}

Here we very briefly discuss the stationary measure for the equation (\ref{whiteBurgers}). The scheme of the proofs is similar to the one in the 1d setting \cite{BorW}, and therefore we will not give the details. The only major difference is that the contraction argument for $u$ in $L_1$ should be replaced by a contraction argument for the potential $\psi$ in $L_{\infty}$.
\\ \indent
We begin by studying the equation (\ref{HJint}). Its solutions $\psi$ form a Markov process: this is proved using a simplified version of the coupling argument for the 2D Navier-Stokes equations \cite{KuSh12}.
\\ \indent
For a given value of $\omega \in \Omega$, we denote by $S_t^{\omega}$ the semigroup acting on $H^{s_0+1}$ (see (\ref{s0}) for the definition of $s_0$) defined by
$$
\psi^0 \mapsto \psi(t).
$$
Now consider the dual semigroup $S_t^{*}$ acting on the space of probability measures on $H^{s_0+1}$. A \textit{stationary measure} is a probability measure on $H^{s_0+1}$ invariant by $S_t^{*}$ for every $t$. A \textit{stationary solution} is a solution $\psi(t,x)$ of (\ref{HJint}) such that the law of $\psi(t)$ does not depend on $t$ for $t \geq 0$ and thus is a stationary measure for (\ref{HJint}). 
\\ \indent
Existence of a stationary measure for (\ref{HJint}) follows from the estimates in Section~\ref{upper} by the Bogolyubov-Krylov argument. Now we define the Lipschitz-dual metric with respect to $L_p,\ 1 \leq p \leq \infty$.

\begin{defi}
For a continuous real-valued function $g$ on $L_p,\\ 1 \leq p \leq \infty$, we define its Lipschitz norm as
$$
|g|_{L(p)}:=\sup_{L_p}{|g|}+|g|_{Lip},
$$
where
$$
\sup_{L_p}{|g|}=\sup_{\ix \in L_p}{|g(\ix)|}
$$
and $|g|_{Lip}$ is the Lipschitz constant of $g$, i.e.
$$
|g|_{Lip}=\sup_{\ix,\y \in L_p,\ \ix \neq \y}{\frac{|g(\ix)-g(\y)|}{|\ix-\y|_{p}}}.
$$
The set of continous functions with finite Lipschitz norm will be denoted by $L(p)=L(L_p)$.
\end{defi}

\begin{defi}
For two Borel probability measures $\mu_1,\mu_2$ on $L_p,\\ 1 \leq p \leq \infty$, we denote by $\Vert \mu_1-\mu_2 \Vert^*_{L(p)}$ the Lipschitz-dual distance
$$
\Vert \mu_1-\mu_2 \Vert^*_{L(p)}:=\sup_{g \in L(p),\ |g|_{L(p)} \leq 1}{\Big| \int_{S^1}{g(v) \mu_1(dv)}-\int_{S^1}{g(v) \mu_2(dv)} \Big|}.
$$
\end{defi}

Now we prove a standard contraction property for $\psi$ in $L_{\infty}$. This property can be proved using a Lagrangian formulation for the solution to (\ref{HJint}) (see for instance \cite{GIKP05}); here we give a more elementary proof.

\begin{lemm} \label{contract}
Let us take two different $C^{\infty}$-smooth initial conditions $\psi^0_1$ and $\psi^0_2$. Consider a fixed $\omega \in \Omega$. We have
$$
|S_t^{\omega} \psi^0_1-S_t^{\omega}\psi^0_2|_{\infty} \leq |\psi^0_1-\psi^0_2|_{\infty},\ t \geq 0.
$$
\end{lemm}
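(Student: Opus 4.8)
The plan is to reduce the statement to the classical maximum principle for a linear parabolic equation, exploiting the fact that the additive noise $w^{\omega}$ is common to both solutions and therefore cancels in their difference.

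First I would record that, for the fixed $\omega$, both $\psi_1:=S_t^{\omega}\psi^0_1$ and $\psi_2:=S_t^{\omega}\psi^0_2$ are strong solutions of (\ref{HJ}) driven by the same realisation $w^{\omega}$; by the regularity discussed in Section~\ref{prel}, the functions $\psi_i-w^{\omega}$ are spatially $C^{\infty}$, so $\phi:=\psi_1-\psi_2$ is a smooth (in space) function for each $t$. Subtracting the two copies of (\ref{HJ}) and using that the terms $\partial w^{\omega}/\partial t$ cancel, the difference satisfies
$$\frac{\partial \phi}{\partial t}=\nu \Delta \phi-\big(f(\nabla \psi_1)-f(\nabla \psi_2)\big).$$
This is the crucial point: the stochastic forcing disappears, leaving a deterministic equation with no zeroth-order term.

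Next I would linearise the nonlinearity by the fundamental theorem of calculus, writing
$$f(\nabla \psi_1)-f(\nabla \psi_2)=\be(t,\ix)\cdot \nabla \phi,\quad \be(t,\ix)=\int_0^1 \nabla f\big(\theta \nabla \psi_1+(1-\theta)\nabla \psi_2\big)\,d\theta,$$
so that $\phi$ solves the linear advection--diffusion equation $\partial_t \phi=\nu\Delta\phi-\be\cdot\nabla\phi$. Since the $\nabla\psi_i$ are bounded on every compact time interval (again by the smoothness in Section~\ref{prel}), the drift $\be$ is bounded, and the equation is uniformly parabolic with smooth coefficients on the compact torus.

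Finally I would invoke the maximum principle. To handle the non-strict inequality cleanly I would study $\phi_{\varepsilon}=\phi-\varepsilon t$ on $S=[0,T]\times\T^d$: if $\phi_{\varepsilon}$ attained its maximum at some $(t^0,\ix^0)$ with $t^0>0$, then there $\partial_t\phi_{\varepsilon}\ge 0$, $\nabla\phi=0$ and $\Delta\phi\le 0$, whence $\partial_t\phi_{\varepsilon}=\nu\Delta\phi-\varepsilon\le -\varepsilon<0$, a contradiction; hence the maximum is reached at $t^0=0$, giving $\phi(t,\ix)-\varepsilon t\le \max_{\ix}\phi(0,\ix)$, and letting $\varepsilon\to 0$ yields $\max_{\ix}\phi(t,\ix)\le \max_{\ix}\phi(0,\ix)$. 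Applying the same argument to $-\phi=\psi_2-\psi_1$ gives $\min_{\ix}\phi(t,\ix)\ge \min_{\ix}\phi(0,\ix)$, and combining the two bounds produces $|S_t^{\omega}\psi^0_1-S_t^{\omega}\psi^0_2|_{\infty}\le |\psi^0_1-\psi^0_2|_{\infty}$. The only real subtlety, and the step I would be most careful about, is the regularity justifying the pointwise maximum-principle computation; this is supplied by the spatial smoothness of $\psi_i-w^{\omega}$ together with parabolic smoothing for $t>0$, exactly as in the maximum-principle arguments used for Theorem~\ref{uxpos}.
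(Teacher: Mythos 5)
Your proof is correct and follows essentially the same route as the paper: the common noise cancels in the difference $\phi$, the difference of nonlinearities is rewritten as a drift term $\be\cdot\nabla\phi$, and the maximum principle for the resulting linear parabolic equation (with no zeroth-order term) gives the contraction. The only differences are minor: the paper defines the drift by the pointwise formula $b_i=\big(f(\nabla\psi_2)-f(\nabla\psi_1)\big)\phi_i/|\nabla\phi|^2$, which is bounded but needs the smoothness of $f$ to make sense where $\nabla\phi=0$, and it cites the maximum principle from the literature, whereas your Hadamard-type linearisation $\be=\int_0^1\nabla f\big(\theta\nabla\psi_1+(1-\theta)\nabla\psi_2\big)\,d\theta$ yields a manifestly smooth drift and you verify the maximum principle directly via the $\phi-\varepsilon t$ perturbation; both are legitimate, and your choice of $\be$ is arguably the cleaner one.
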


\begin{proof}
Denote by $\phi$ the difference $S_t^{\omega} \psi^0_2-S_t^{\omega} \psi^0_1$. Substracting the equation satisfied by $S_t^{\omega} \psi^0_2$ from the one satisfied by $S_t^{\omega} \psi^0_1$, we get
\begin{align} \nonumber
\phi_t &= (S_t^{\omega} \psi^0_2-S_t^{\omega} \psi^0_1)_t
\\ \nonumber
&= -\Big(f(\nabla (S_t^{\omega} \psi^0_2))-f(\nabla (S_t^{\omega} \psi^0_1)) \Big)+\nu \Delta(S_t^{\omega} \psi^0_2-S_t^{\omega} \psi^0_1).
\end{align}
Now consider the function $\be(t,x)$ such that for $1 \leq i \leq d$,  its $i$-th component is given by
$$
\frac{(f(\nabla (S_t^{\omega} \psi^0_2))-f(\nabla( S_t^{\omega} \psi^0_1))) \phi_i}{|\nabla \phi|^2} .
$$
Note that this function is well-defined at points where $\nabla \phi=0$ since by definition $\nabla \phi=\nabla(S_t^{\omega} \psi^0_2)-\nabla(S_t^{\omega} \psi^0_1)$, and $f$ is $C^{\infty}$-smooth.
\\ \indent
We see that $\phi$ satisfies the \textit{linear} parabolic equation
$$
\phi_t=-(\be(t,x) \cdot \nabla) \phi+\nu \Delta \phi.
$$
Consequently, by the maximum principle \cite{Lan98} we get the lemma's statement.
\end{proof}
\indent
Since $C^{\infty}$ is dense in $L_{\infty}$, we can extend the notion of solutions to (\ref{HJint}) to solutions with initial conditions in $L_{\infty}$ . The definitions of $S_t$ and $S_t^{*}$ can be extended accordingly. Note that the parabolic smoothing effect due to the viscous term yields that these solutions instantaneously become smooth solutions to (\ref{HJint}).
\\ \indent
Now we use a coupling argument and a "small-noise zone" argument to prove the following crucial lemma. The proof is almost word-to-word the same as in 1d. The only difference is that now when the noise is small, the \textit{gradient} of the solution to (\ref{HJint}) is small. Therefore we consider the space $L(\infty)/\R$ of Lipschitz functions on the space $L_{\infty}/\R$ with a norm defined in the same way as the $L(\infty)$-norm. 

\begin{lemm} \label{algCVlemm}
There exist positive constants $C',\delta$ such that for $u^0_1, u^0_2 \in L_{\infty}$ we have
\begin{equation} \label{algCVlemmformula}
\Vert S_t^{*} \delta_{u^0_1}-S_t^{*} \delta_{u^0_2} \Vert^*_{L(\infty)/\R} \leq C't^{-\delta},\qquad t \geq 1.
\end{equation}
\end{lemm}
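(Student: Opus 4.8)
The plan is to exploit three ingredients: the $L_\infty$ contraction of the Hamilton--Jacobi flow (Lemma~\ref{contract}), the fact that a sufficiently long and ``clean'' small-noise zone forces the gradient of the solution to become small uniformly in the initial data (via the Kruzhkov maximum principle of Theorem~\ref{uxpos} and Lemma~\ref{uxposbis}), and a quantitative estimate on the time at which such a zone occurs (\cite[Formula (10)]{BorW}). I work throughout with the potentials modulo additive constants, recalling that fixing $\ue^0$ fixes $\psi^0$ up to a constant. Since both flows $S_t^\omega\psi^0_1$ and $S_t^\omega\psi^0_2$ are driven by the \emph{same} realisation $w^\omega$, I would use the synchronous coupling and bound the Lipschitz--dual distance directly: setting
$$
d(t,\omega)=\big\Vert S_t^\omega\psi^0_1-S_t^\omega\psi^0_2\big\Vert_{L_\infty/\R},
$$
any $g$ with $|g|_{L(\infty)/\R}\le 1$ satisfies $|g(S_t^\omega\psi^0_1)-g(S_t^\omega\psi^0_2)|\le \min(2,d(t,\omega))$ (the bound $2$ coming from the sup-norm of $g$, the bound $d$ from its Lipschitz constant), whence
$$
\Vert S_t^{*}\delta_{u^0_1}-S_t^{*}\delta_{u^0_2}\Vert^*_{L(\infty)/\R}\le \E\,\min(2,d(t,\cdot)).
$$
By Lemma~\ref{contract}, the semigroup property, and the fact that the flow commutes with the addition of constants to $\psi$, the quantity $t\mapsto d(t,\omega)$ is non-increasing for every $\omega$; so it suffices to show that $d$ has become small by time $t$ with high probability.

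Next I would make precise the small-noise mechanism. Fix a window length $\ell$ and consider the event that on some subinterval $[\tau,\tau+\ell]\subset[0,t]$ the increment $w(\cdot)-w(\tau)$ is small in $W^{4,\infty}$. Rescaling the argument of Theorem~\ref{uxpos} (and Lemma~\ref{uxposbis}) to this window shows that, \emph{uniformly in the state at the start of the window}, one has $\max_{1\le i\le d}\max_{\ix\in\T^d}\psi_{ii}(\tau+\ell,\ix)\lesssim \ell^{-1}$ for both solutions. Because each one-dimensional restriction $\psi_i(\tilde{\ix}_i)$ has zero mean, the same computation as in Corollary~\ref{Lpupper} converts this one-sided second-derivative bound into the two-sided gradient bound $|\nabla\psi_j(\tau+\ell)|_\infty\lesssim \ell^{-1}$, $j=1,2$; hence $\mathrm{osc}(\psi_j(\tau+\ell))\lesssim \ell^{-1}$ and therefore $d(\tau+\ell,\omega)\lesssim \ell^{-1}$. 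By the monotonicity established in the first step, $d(t,\omega)\lesssim \ell^{-1}$ as soon as one such clean window has been completed before time $t$.

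Finally I would combine the deterministic decay with the probability of the favourable event. Partitioning $[0,t]$ into disjoint windows of length $\ell$, whose noise increments are independent, and invoking the quantitative small-noise-zone estimate of \cite[Formula (10)]{BorW}, I would bound the probability that no clean window of length $\ell$ occurs in $[0,t]$. This gives
$$
\E\,\min(2,d(t,\cdot))\lesssim \ell^{-1}+\Pe(\text{no clean window of length }\ell\text{ before }t),
$$
and it remains to choose $\ell=\ell(t)$ so as to balance the two terms; the resulting optimisation produces the algebraic rate $C't^{-\delta}$. The reduction from a general starting time to $\tau=0$, hence the uniformity asserted in the statement, follows from the Markov property exactly as in Subsection~\ref{agree}.

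The step I expect to be the main obstacle is the multidimensional version of the small-noise decay. In $1$d the Kruzhkov one-sided estimate directly controls $u_x$, and hence the oscillation of $u$; in multi-$d$ the maximum principle only delivers \emph{upper} bounds on the diagonal curvatures $\psi_{ii}$ (and, for directional derivatives, only along the chosen lattice directions of Lemma~\ref{uxposbis}). Converting these one-sided bounds into a genuine two-sided smallness of $\nabla\psi$, and then into smallness of $\psi$ in $L_\infty/\R$, \emph{uniformly in the initial data}, is precisely where the potential (gradient) assumption and the zero-mean normalisation are essential, and it is the one point where the argument genuinely departs from the $1$d proof of \cite{BorW}.
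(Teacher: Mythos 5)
Your proposal reproduces the paper's own route. The paper proves this lemma by exactly the three ingredients you assemble: the synchronous coupling together with the Lipschitz-dual bound, the $L_\infty$-contractivity of Lemma~\ref{contract} applied to potentials modulo additive constants (hence the space $L(\infty)/\R$), and a quantitative small-noise-zone argument in which, on a window where the full noise increment is small in $W^{4,\infty}$, the Kruzhkov maximum principle (the mechanism of Theorem~\ref{uxpos}, in the rescaled form of Theorem~\ref{uxpos2} with all components of the noise constrained) forces $\max_{i}\max_{\ix}\psi_{ii}\lesssim \ell^{-1}$ at the end of the window, and then the zero mean of the restrictions $\psi_i(\tilde{\ix}_i)$ turns this one-sided curvature bound into the two-sided bound $|\nabla\psi|_{\infty}\lesssim \ell^{-1}$, hence smallness of the difference in $L_\infty/\R$, which the contraction then propagates forward. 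You have also correctly identified this conversion (and the gradient assumption behind it) as the only genuinely multi-dimensional point; the paper says precisely this, and defers everything else word-to-word to the 1d proof of \cite{BorW}.

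The step you should not pass over, however, is the final optimisation, which is also the only step the paper does not write out. With your parametrisation the smallness threshold on a window of length $\ell$ must be a negative power $\ell^{-\Lambda}$ (this is forced by the maximum-principle computation, where the forcing terms enter multiplied by $t^2\leq\ell^2$), so by small-ball estimates for Wiener processes the probability that a given window is clean is of order $\exp(-C\ell^{1+2\Lambda})$, and with $t/\ell$ disjoint windows the probability that no clean window occurs in $[0,t]$ is roughly
\begin{equation} \nonumber
\exp\Big(-\frac{t}{\ell}\,e^{-C\ell^{1+2\Lambda}}\Big).
\end{equation}
Balancing this against the deterministic bound $\ell^{-1}$ forces $\ell\lesssim(\log t)^{1/(1+2\Lambda)}$ and therefore yields a rate of order $(\log t)^{-1/(1+2\Lambda)}$, not $t^{-\delta}$: the exponentially small per-window probability cannot be compensated by the merely linear number of disjoint windows, whatever power $\ell=t^{\beta}$ you try. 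So the sentence \enquote{the resulting optimisation produces the algebraic rate} is not justified by the scheme as you set it up; the algebraic rate in the statement is exactly what the quantitative 1d bookkeeping of \cite[Formula (10)]{BorW} has to supply, and it must be imported (as the paper does) rather than rederived by the naive disjoint-window balance. Everything before that point in your argument is correct and is the paper's argument.
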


\indent
Now we look at the equation (\ref{whiteBurgers}). In the same way as above for (\ref{HJint}), we can define the semigroups $\tilde{S}_t^{\omega}$ and $\tilde{S}_t^{*}$, acting respectively on $L(1)$ and on the space of probability measures on $L(1)$. We consider two solutions $\psi_1,\psi_2$ to (\ref{HJint}) with the same noise and different initial conditions, as well as the corresponding solutions $\ue_1,\ue_2$ to (\ref{whiteBurgers}). By (GN) we get
\begin{align} \nonumber
|\ue_1-\ue_2|_1 & \lesssim |\psi_1-\psi_2-\int_{\T^d}{(\psi_1-\psi_2)}|_{1}  |\nabla(\psi_1-\psi_2)|_{1,1} 
\\ \nonumber
&\lesssim |\psi_1-\psi_2-\int_{\T^d}{(\psi_1-\psi_2)}|_{\infty}  |\ue_1-\ue_2|_{1,1}.
\end{align}
This inequality allows us to obtain the following result.

\begin{theo} \label{algCV}
There exist positive constants $C,\delta'$ such that we have
\begin{equation} \label{algCVformula}
\Vert \tilde{S}_t^{*}\mu_1-\tilde{S}_t^{*}\mu_2 \Vert^*_{L(1)} \leq C't^{-\delta'},\qquad t \geq 1,
\end{equation}
for any probability measures $\mu_1$, $\mu_2$ on $L(1)$.
\end{theo}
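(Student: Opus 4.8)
The plan is to first prove the estimate for Dirac initial measures, uniformly in the initial data, and then integrate against $\mu_1,\mu_2$. So I would fix $\ue_1^0,\ue_2^0\in L_\infty$ and let $\psi_1,\psi_2$ (resp.\ $\ue_1=\nabla\psi_1$, $\ue_2=\nabla\psi_2$) solve (\ref{HJint}) (resp.\ (\ref{whiteBurgers})) with these initial conditions and \emph{the same} realisation of the noise $w$; this is the synchronous coupling of $\tilde{S}_t^{*}\delta_{\ue_1^0}$ and $\tilde{S}_t^{*}\delta_{\ue_2^0}$. For any $g$ with $|g|_{L(1)}\le 1$ one has $\sup|g|\le 1$ and $|g|_{Lip}\le 1$, so $|g(\ue_1(t))-g(\ue_2(t))|\le\min(2,|\ue_1(t)-\ue_2(t)|_1)$ pointwise, whence
\[
\big|\E g(\ue_1(t))-\E g(\ue_2(t))\big|\le \E\,\min\big(2,\,|\ue_1(t)-\ue_2(t)|_1\big).
\]
Writing $D(t)=|\psi_1(t)-\psi_2(t)-\int_{\T^d}(\psi_1(t)-\psi_2(t))|_\infty$ and $B(t)=|\ue_1(t)-\ue_2(t)|_{1,1}$, the (GN)-based inequality stated just before the theorem gives $|\ue_1(t)-\ue_2(t)|_1\le C\,D(t)\,B(t)$, so it remains to bound $\E\min(2,C\,D(t)B(t))$.

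Next I would combine three ingredients. By Corollary~\ref{W11norm} and the uniformity of all our bounds in the initial condition, $\E B(t)\le \E|\ue_1(t)|_{1,1}+\E|\ue_2(t)|_{1,1}\le C_0$ for $t\ge 1$, with $C_0$ independent of the initial data. By Lemma~\ref{contract} the quantity $D(t)$ is nonincreasing, and the coupling used in the proof of Lemma~\ref{algCVlemm} furnishes the bound $\E\min(1,D(t))\le C' t^{-\delta}$. Then for any $a\in(0,1)$ the pointwise inequality $\min(2,C\,D(t)B(t))\le 2\,\mathbf{1}_{\{D(t)>a\}}+C a\,B(t)$ together with Markov's inequality applied to $\min(1,D(t))$ yields
\[
\E\min\big(2,C\,D(t)B(t)\big)\le 2\,\Pe(D(t)>a)+C a\,\E B(t)\le 2 C' a^{-1} t^{-\delta}+C C_0\,a.
\]
Choosing $a=\min\big(1,\,c\,t^{-\delta/2}\big)$ gives $\E\min(2,C\,D(t)B(t))\lesssim t^{-\delta/2}$ for $t\ge 1$ (for $t$ in a bounded range the bound is trivial, since the dual norm never exceeds $2$). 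Taking the supremum over $g$ produces $\Vert\tilde{S}_t^{*}\delta_{\ue_1^0}-\tilde{S}_t^{*}\delta_{\ue_2^0}\Vert^{*}_{L(1)}\le C' t^{-\delta'}$ with $\delta'=\delta/2$, uniformly in $\ue_1^0,\ue_2^0$.

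For general probability measures $\mu_1,\mu_2$ on $L(1)$ I would use $\tilde{S}_t^{*}\mu_i=\int \tilde{S}_t^{*}\delta_{\ue^0}\,\mu_i(d\ue^0)$ and the convexity of the dual norm: for $|g|_{L(1)}\le 1$,
\[
\Big|\int g\,d\tilde{S}_t^{*}\mu_1-\int g\,d\tilde{S}_t^{*}\mu_2\Big|\le \int\!\!\int \Vert\tilde{S}_t^{*}\delta_{\ue_1^0}-\tilde{S}_t^{*}\delta_{\ue_2^0}\Vert^{*}_{L(1)}\,\mu_1(d\ue_1^0)\,\mu_2(d\ue_2^0)\le C' t^{-\delta'},
\]
because the Dirac estimate is uniform and the $\mu_i$ are probability measures; taking the supremum over $g$ gives (\ref{algCVformula}).

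The main obstacle is the passage from the purely distributional statement of Lemma~\ref{algCVlemm} to the \emph{coupled} bound $\E\min(1,D(t))\le C' t^{-\delta}$ for the synchronous coupling: since the dual-Lipschitz distance is dominated by the cost of \emph{every} coupling, it cannot by itself supply an upper bound for the cost of a prescribed coupling, so this stronger estimate must be read off directly from the coupling constructed in the proof of Lemma~\ref{algCVlemm}. Everything else is routine; in particular the scheme is entirely parallel to the 1d argument of \cite{BorW}, the only new feature being that the $L_1$-contraction for $\ue$ is replaced by the $L_\infty$-contraction for $\psi$ combined with the (GN) interpolation $|\ue_1-\ue_2|_1\lesssim D(t)\,|\ue_1-\ue_2|_{1,1}$.
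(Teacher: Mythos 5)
Your proposal is correct and follows essentially the same route as the paper: the paper's (very terse) proof consists precisely of the synchronous coupling, the (GN) interpolation $|\ue_1-\ue_2|_1\lesssim|\psi_1-\psi_2-\int_{\T^d}(\psi_1-\psi_2)|_\infty\,|\ue_1-\ue_2|_{1,1}$, the uniform $W^{1,1}$ moment bound, and the decay coming from Lemma~\ref{algCVlemm}, with all remaining details (your truncation/Markov step and the integration over general measures) deferred to the 1d argument of \cite{BorW}. Your self-identified caveat is exactly right and is resolved exactly as you say: the proof of Lemma~\ref{algCVlemm} (word-to-word the 1d small-noise-zone argument, using the $L_\infty$-contraction of Lemma~\ref{contract}) is itself a synchronous-coupling argument, so it delivers the coupled bound $\E\min(1,D(t))\lesssim t^{-\delta}$ and not merely the distributional statement.
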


\indent
The estimates for Sobolev norms and small-scale quantities proved in the previous sections still hold for a stationary solution of (\ref{whiteBurgers}). Indeed, it suffices to consider a random initial condition $u^0$ with distribution $\mu$. It follows that those estimates still hold when averaging in time and in ensemble (denoted by $\lbrace \cdot \rbrace$) is replaced by averaging solely in ensemble, i.e. by integrating  with respect to $\mu$. Namely, we get the following results, which follow from Theorem~\ref{avoir}, Theorem~\ref{avoir2} and Remark~\ref{spectrinertrmq}, respectively.

\begin{theo} \label{avoirstat}
For $m=0$ and $p \in [1,\infty]$, $m=1$ and $p \in [1,\infty)$, or $m \geq 2$ and $p \in (1,\infty)$,
\begin{equation} \label{asympstat}
\Big( \int {\left|\ue(s)\right|_{m,p}^{\alpha} d \mu} \Big)^{1/\alpha}  \overset{m,p,\alpha}{\sim} \nu^{-\gamma},\quad \alpha>0.
\end{equation}
\end{theo}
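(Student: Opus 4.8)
The plan is to exploit the defining feature of a stationary solution: for such a solution the law of $\ue(s)$ does not depend on $s$, so the time-and-ensemble bracket $\{\cdot\}$ degenerates into a pure average against $\mu$, and the desired estimate becomes a direct consequence of Theorem~\ref{avoir}.

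First I would fix the stationary solution. A stationary measure $\mu$ for (\ref{whiteBurgers}) exists by the Bogolyubov--Krylov argument applied to the moment bounds of Section~\ref{upper}, and is unique by Theorem~\ref{algCV}. Choosing the initial datum $\ue^0$ to be random with law $\mu$ and independent of the Wiener process $w$ produces a solution $\ue$ whose one-dimensional time marginals all coincide with $\mu$, i.e. the law of $\ue(s)$ is $\mu$ for every $s \geq 0$.

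Next I would transfer Theorem~\ref{avoir} to this random initial condition. Since $\ue^0 \sim \mu$ is independent of $w$, the conditioning identity recorded in Section~\ref{agree} applies: writing $\E\,\Phi(\ue(\cdot)) = \int \E\big(\Phi(\ue(\cdot))\,|\,\ue^0\big)\,d\mu(\ue^0)$ and integrating the deterministic bound against $\mu$, every $\ue^0$-independent estimate of Sections~\ref{upper}--\ref{main}, and in particular (\ref{asymp}), remains valid for this random start. With this in hand, take $t=T=T_0$ in (\ref{asymp}); by stationarity the integrand $s \mapsto \E\,|\ue(s)|_{m,p}^{\alpha}$ is constant on $[T_0,2T_0]$ and equals $\int |\ue(s)|_{m,p}^{\alpha}\,d\mu$, so that
\[
\Big( \int |\ue(s)|_{m,p}^{\alpha}\,d\mu \Big)^{1/\alpha} = \Big( \frac{1}{T_0}\int_{T_0}^{2T_0}{\E\,|\ue(s)|_{m,p}^{\alpha}} \Big)^{1/\alpha} \overset{m,p,\alpha}{\sim} \nu^{-\gamma},
\]
which is exactly (\ref{asympstat}).

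The only genuinely delicate point is the transfer step: the bounds of Sections~\ref{upper}--\ref{main} were proved for deterministic $C^{\infty}$-smooth $\psi^0$, whereas $\mu$ is a priori supported on the nonsmooth space $H^{s_0}$. This is resolved by the instantaneous parabolic smoothing of (\ref{HJ}) (noted after Lemma~\ref{contract}), which makes $\ue(s)$ smooth for every $s>0$, combined with the fact that all the relevant estimates are evaluated only at times $t \geq T_0 > 1$ bounded away from the initial instant; thus the deterministic bounds apply to $\ue(s)$ for $s \geq T_0$ irrespective of the regularity of $\ue^0$. Everything else is a formal rewriting of the bracket $\{\cdot\}$, so no further work is needed.
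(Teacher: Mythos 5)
Your proposal is correct and is essentially the paper's own argument: the paper likewise takes a random initial condition distributed as $\mu$, invokes the transfer of the $\ue^0$-independent estimates of Sections~\ref{upper}--\ref{main} to random initial data (as set up in Section~\ref{agree}), and observes that stationarity collapses the bracket $\lbrace \cdot \rbrace$ into integration against $\mu$, so that (\ref{asympstat}) follows directly from Theorem~\ref{avoir}. Your additional remarks on uniqueness via Theorem~\ref{algCV} and on parabolic smoothing for nonsmooth data in the support of $\mu$ are consistent with the paper's framework and merely make explicit what the paper leaves implicit.
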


\begin{theo} \label{avoir2stat}
For $\alpha \geq 0$ and $\ell \in J_1$,
$$
\int{S_{p,\alpha}(\ell) d \mu} \overset{p,\alpha}{\sim} \left\lbrace \begin{aligned} & \ell^{\alpha p},\ 0 \leq p \leq 1. \\ & \ell^{\alpha p} \nu^{-\alpha (p-1)},\ p \geq 1. \end{aligned} \right.
$$
On the other hand, for $\alpha \geq 0$ and $\ell \in J_2$,
$$
\int{S_{p,\alpha}(\ell) d \mu } \overset{p,\alpha}{\sim} \left\lbrace \begin{aligned} & \ell^{\alpha p},\ 0 \leq p \leq 1. \\ & \ell^{\alpha},\ p \geq 1. \end{aligned} \right.
$$
\end{theo}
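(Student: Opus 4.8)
The plan is to deduce the statement directly from Theorem~\ref{avoir2}, exploiting the fact that a solution whose law is $\mu$ is stationary in time, so that the spatial-ensemble average with respect to $\mu$ coincides with the time-and-ensemble bracket $\lbrace \cdot \rbrace$ appearing in the definition of $S_{p,\alpha}(\ell)$.

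First I would take the stationary solution $\ue$ of (\ref{whiteBurgers}), i.e. the solution with random initial condition $\ue^0$ distributed according to $\mu$ and independent of the Wiener process $w$. Existence of such a solution follows from the existence of $\mu$ (Bogolyubov--Krylov, as noted in Section~\ref{stat}) and its uniqueness from Theorem~\ref{algCV}; for this solution the law of $\ue(s)$ equals $\mu$ for every $s \geq 0$. Then, for any $\mu$-integrable observable $A$, stationarity gives $\E\, A(\ue(s)) = \int A\, d\mu$ for all $s$, so the time integral in the bracket collapses:
$$
\lbrace A \rbrace = \frac{1}{T_0} \int_{t}^{t+T_0}{\E\, A(\ue(s))\, ds} = \int A\, d\mu.
$$
Applying this with $A(\ue) = \big( \int_{\ix \in \T^d}{|\psi_i(\ix+\er)-\psi_i(\ix)|^p\, d\ix} \big)^{\alpha}$, which is a functional of $\ue = \nabla \psi$ alone since $\psi_i = u_i$, shows that $S_{p,\alpha,i}(\er)$, and hence the angularly averaged $S_{p,\alpha}(\ell)$, computed for this stationary solution are precisely the $\mu$-averaged quantities in the statement. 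This is exactly what is meant by replacing the brackets with $\int \cdot\, d\mu$, and the required $\mu$-integrability of $A$ is guaranteed by the (finite) upper bounds in Theorem~\ref{avoir2}.

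Finally, I would invoke Theorem~\ref{avoir2}. Its estimates hold for $t, T \geq T_0$ uniformly in the deterministic initial condition, and by the conditioning argument of Section~\ref{agree} they extend verbatim to any random initial condition independent of $w$, in particular to $\ue^0 \sim \mu$. Choosing any admissible value of $t$ and reading off both ranges $J_1$ and $J_2$ then yields the two claimed equivalences for $\int S_{p,\alpha}(\ell)\, d\mu$, in each of the regimes $0 \leq p \leq 1$ and $p \geq 1$.

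The only genuinely delicate point is the bookkeeping surrounding stationarity: one must ensure that the stationary solution is well-defined (existence and uniqueness of $\mu$, supplied by Theorem~\ref{algCV}), that all its one-time marginals equal $\mu$, and that the increment observable is $\mu$-integrable so that $\int A\, d\mu$ is finite. All of these are already in hand from the previous sections, and beyond this the argument requires no new analysis: it is a routine transfer from the time-and-ensemble average to the pure ensemble average against $\mu$.
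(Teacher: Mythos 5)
Your proposal is correct and follows essentially the same route as the paper: the paper also takes a solution with random initial condition distributed as $\mu$ (independent of $w$), notes via the conditioning argument of Section~\ref{agree} that the estimates of Theorem~\ref{avoir2} apply to such random data, and uses stationarity to collapse the time-and-ensemble bracket $\lbrace \cdot \rbrace$ into the pure ensemble average $\int \cdot \, d\mu$. Your write-up simply makes explicit the bookkeeping (one-time marginals equal to $\mu$, $\mu$-integrability of the increment observables) that the paper leaves implicit.
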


\begin{theo} \label{spectrinertstat}
For $k$ such that $k^{-1} \in J_2$, we have 
$$
\displaystyle\int { \Bigg( k^{-1} \sum_{|\en| \in [M^{-1}k,Mk]}{|\hat{\ue}(\en)|^2}  \Bigg)^{\alpha} d \mu } \overset{\alpha}{\sim} k^{-2\alpha},\quad \alpha>0.
$$
\end{theo}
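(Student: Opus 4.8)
The plan is to obtain Theorem~\ref{spectrinertstat} as a direct corollary of Remark~\ref{spectrinertrmq}, the only new ingredient being the observation that, \emph{along a stationary solution}, the time-and-ensemble average $\lbrace \cdot \rbrace$ collapses to integration against $\mu$. First I would fix a stationary solution of (\ref{whiteBurgers}): take the initial condition $\ue^0$ to be a random variable with law $\mu$, independent of the Wiener process $w(t),\ t \geq 0$. Existence of such a $\mu$ follows from the uniform bounds of Section~\ref{upper} via the Bogolyubov--Krylov argument, and uniqueness follows from the contraction estimate in Theorem~\ref{algCV}. By definition of a stationary solution, the law of $\ue(s)$ equals $\mu$ for every $s \geq 0$.

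Next I would invoke the extension principle recorded in Section~\ref{agree}: every $\ue^0$-independent estimate proved in Sections~\ref{upper}--\ref{turb} for deterministic initial data remains valid for a random initial condition independent of $w$, by conditioning on $\ue^0$ and integrating against its law. In particular Remark~\ref{spectrinertrmq} applies to the stationary solution, so that for $k^{-1} \in J_2$ and $\alpha>0$,
$$
\Bigg\{ \Bigg( k^{-1} \sum_{|\en| \in [M^{-1}k,Mk]}{|\hat{\ue}(\en)|^2} \Bigg)^{\alpha} \Bigg\} \overset{\alpha}{\sim} k^{-2\alpha}.
$$

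It then remains to identify the left-hand bracket with the desired $\mu$-integral. Writing $A(\ue)=\big( k^{-1} \sum_{|\en| \in [M^{-1}k,Mk]}{|\hat{\ue}(\en)|^2} \big)^{\alpha}$, I note that $A$ depends on only finitely many Fourier modes and is therefore a continuous, hence measurable, functional on $H^{s_0}$, while the upper bound above guarantees its integrability. Stationarity yields $\E\,A(\ue(s))=\int{A\,d\mu}$ for every $s$, whence
$$
\lbrace A \rbrace=\frac{1}{T_0} \int_{t}^{t+T_0}{\E\,A(\ue(s))\,ds}=\int{A\,d\mu}.
$$
Combining the two displays gives the assertion.

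The main obstacle is conceptual rather than computational: one must be confident that the small-scale estimate, originally established \emph{uniformly in time} for noise-independent initial data, genuinely transfers to the stationary regime, and that the interchange of the bracket with the $\mu$-integral is legitimate. Both points rest on the independence of $\ue^0 \sim \mu$ from the future increments of $w$, together with the integrability furnished by the upper bound in Remark~\ref{spectrinertrmq}; no analytic input beyond that remark is required, so the remaining work is purely bookkeeping.
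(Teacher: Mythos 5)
Your proposal is correct and follows essentially the same route as the paper: the paper also deduces Theorem~\ref{spectrinertstat} directly from Remark~\ref{spectrinertrmq} by taking a random initial condition distributed according to $\mu$ (independent of the noise), invoking the extension of the deterministic-data estimates to such initial conditions, and using stationarity to collapse the time-and-ensemble average $\lbrace \cdot \rbrace$ into integration against $\mu$. Your additional remarks on measurability and integrability of the spectral functional are harmless bookkeeping that the paper leaves implicit.
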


\section*{Appendix 1: well-posedness of (\ref{HJint})} \label{appwp}

In this appendix, we consider the well-posedness of the Cauchy problem given by  (\ref{HJint}), a.s. An analogous problem has been considered by Da Prato and Zabczyk in \cite[Chapter 14]{DZ96}; however, their results are weaker than ours since they consider a white noise which is not smooth in space.
\\ \indent
Here, the functions whose Sobolev norms we consider do not necessarily have zero mean value in space. The only thing that changes is that now in the expressions for the Sobolev norms $W^{m,p}$ (resp. $H^s$) we have to add the norm in $L_p$ (resp. $L_2$) to the formulas 
in Section~\ref{sob}. We use the standard notation $C(I,W^{m,p})$ for the space of continuous (in time) functions defined on the interval $I$ with values in $W^{m,p}$ with the corresponding supremum norm. The space $C(I,C^{\infty})$ will denote the intersection
$$
\cap_{m \geq 0}{C(I,H^m)}.
$$
\indent
We begin by considering mild solutions in $H^{s_0+1}$, in the spirit of \cite{DZ92,DZ96}. We recall that $s_0$ is the integer given by (\ref{s0}). Then, by a bootstrap argument, we prove that for strictly positive times these solutions are actually smooth. Then upper estimates (cf. Section~\ref{upper}) allow us to prove that such mild solutions are global.
\\ \indent
We recall that there exists an event $\Omega_1$ such that $\Pe(\Omega_1)=1$ and for $\omega \in \Omega_1$, the Wiener process $w(t)$ belongs to $C([0,+\infty),C^{\infty})$.
We also recall the notation $L=-\Delta$, and the fact that the initial condition $\psi_0$ and the function $f$ in the nonlinearity are $C^{\infty}$-smooth.
\\ \indent
By a scaling argument, we can restrict ourselves to the equation (\ref{whiteBurgers}) with $\nu=1$. We will denote by $S_L(t)$ the heat semigroup $e^{-tL}$. We recall that for $v \in L_2$ the function $S_L(t) v(\ix)$ is given by:
\begin{equation} \label{heat}
S_L(t) v(\ix)=\sum_{\ka \in \Z^d}{e^{-4 \pi^2 |\ka|^2 t} \hat{v}_{\ka} e^{2 \pi i \ka \cdot \ix}}.
\end{equation}
Finally, we denote by $w_L$ the stochastic convolution
$$
w_L(t)=\int_{0}^{t}{S_L(t-\tau)dw(\tau)}.
$$
For $\omega \in \Omega_2,\ \Pe(\Omega_2)=1$, this quantity belongs to $C([0,+\infty),C^{\infty})$. From now on, we suppose that $\omega$ belongs to $\Omega_1 \cap \Omega_2$.
\medskip
\\ \indent
Following Da Prato and Zabczyk \cite[Chapter 14]{DZ96}, we consider a mild form of (\ref{HJint}) for $Y(t)=\psi(t)-w_L(t)$:
\begin{equation} \label{mildHJ}
Y(t)=S_L(t) \psi_0+\int_{0}^{t}{S_L(t-\tau)(f(\nabla Y(\tau)+\nabla w_L(\tau))) d \tau}.
\end{equation}
The heat semigroup defines a contraction in each Sobolev space $H^s$. On the other hand, we have the following lemma.

\begin{lemm} \label{Lip}
The mapping
$$
Z \mapsto f(Z):\ H^{s_0} \rightarrow H^{s_0}
$$
is locally Lipschitz on bounded subsets of $H^{s_0}$.
\end{lemm}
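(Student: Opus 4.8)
The plan is to base everything on the single structural fact that, by the definition (\ref{s0}), $s_0$ is an integer with $s_0>d/2$. This has two standard consequences that I would record first. On the one hand, the Sobolev embedding gives $|Z|_{\infty}\leq C\Vert Z\Vert_{H^{s_0}}$ for $Z\in H^{s_0}(\T^d)$; on the other hand, $H^{s_0}(\T^d)$ is a Banach algebra, i.e. there is a constant $C$ with
$$
\Vert uv\Vert_{H^{s_0}}\leq C\,\Vert u\Vert_{H^{s_0}}\Vert v\Vert_{H^{s_0}},
$$
where here the norm is the inhomogeneous one used in this appendix. Since $f\colon\R^d\to\R$ is $C^{\infty}$ and $Z$ is $\R^d$-valued, $f(Z)$ is a scalar function, so the claim is that the Nemytskii (composition) operator $Z\mapsto f(Z)$ is locally Lipschitz on balls of $H^{s_0}$.

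The key preliminary step is a composition (Moser-type) estimate: for any $g\in C^{\infty}(\R^d,\R)$ and any $Z\in H^{s_0}$ with $|Z|_{\infty}\leq M$, one has $g(Z)\in H^{s_0}$ and
$$
\Vert g(Z)\Vert_{H^{s_0}}\leq C(M,g)\,(1+\Vert Z\Vert_{H^{s_0}}).
$$
I would prove this by the Fa\`a di Bruno formula: each derivative $\partial^{\boldsymbol\alpha}(g(Z))$ with $|\boldsymbol\alpha|\leq s_0$ is a finite linear combination of terms $g^{(k)}(Z)\prod_{j}\partial^{\boldsymbol\beta_j}Z$ with $\sum_j|\boldsymbol\beta_j|=|\boldsymbol\alpha|\leq s_0$. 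Since $g\in C^{\infty}$ and $|Z|_{\infty}\leq M$, the factor $g^{(k)}(Z)$ is bounded in $L_{\infty}$ by a constant depending only on $M$ and $g$, while (GN) controls each product $\prod_j\partial^{\boldsymbol\beta_j}Z$ in $L_2$ by $C\,\Vert Z\Vert_{H^{s_0}}^{\theta}|Z|_{\infty}^{\,1-\theta}$ with $\theta=|\boldsymbol\alpha|/s_0\leq1$, after choosing the Lebesgue exponents $p_j$ with $\sum_j p_j^{-1}=1/2$ so that each factor lands in the right space. Summing over $\boldsymbol\alpha$ (and adding the $L_2$ term $|g(Z)|_{\infty}\leq C(M)$) yields the asserted bound. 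Note that the polynomial growth hypothesis (\ref{poly}) on $f$ plays no role here, since the estimate is local in $Z$ and only uses $f\in C^{\infty}$ together with $|Z|_{\infty}\leq M$.

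Granting this, the Lipschitz bound follows routinely. Fix $R>0$ and $Z_1,Z_2$ with $\Vert Z_1\Vert_{H^{s_0}},\Vert Z_2\Vert_{H^{s_0}}\leq R$, and set $M:=CR$, so that $|Z_t|_{\infty}\leq M$ for every point $Z_t=Z_2+t(Z_1-Z_2)$, $t\in[0,1]$, of the segment (the ball is convex). Writing $W=Z_1-Z_2$ with components $W_i$ and using the fundamental theorem of calculus,
$$
f(Z_1)-f(Z_2)=\sum_{i=1}^{d}g_i\,W_i,\qquad g_i=\int_{0}^{1}f_i(Z_t)\,dt .
$$
By the algebra property and then Minkowski's integral inequality together with the composition estimate applied to the smooth functions $f_i$,
$$
\Vert f(Z_1)-f(Z_2)\Vert_{H^{s_0}}\leq C\sum_{i=1}^{d}\Vert g_i\Vert_{H^{s_0}}\Vert W_i\Vert_{H^{s_0}},\qquad
\Vert g_i\Vert_{H^{s_0}}\leq\int_{0}^{1}\Vert f_i(Z_t)\Vert_{H^{s_0}}\,dt\leq C(M,f)(1+R),
$$
uniformly in $i$. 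Combining the two displays gives $\Vert f(Z_1)-f(Z_2)\Vert_{H^{s_0}}\leq C(R)\,\Vert Z_1-Z_2\Vert_{H^{s_0}}$, which is exactly local Lipschitz continuity on the ball of radius $R$.

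The only genuinely substantive point is the composition estimate of the second paragraph; the rest is a direct consequence of the Banach-algebra structure. That estimate is classical, so in the write-up I would either invoke the cited references on Sobolev spaces (\cite{Ada75,Tay96}) or keep only the short Fa\`a di Bruno / (GN) bookkeeping, the main care being the choice of interpolation exponents so that the top-order term contributes the single power $\Vert Z\Vert_{H^{s_0}}$ and all lower-order products are absorbed through the $L_{\infty}$ bound on $|Z|_{\infty}$.
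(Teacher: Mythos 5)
Your proposal is correct and takes essentially the same route as the paper: the paper's one-line proof (expand $(f(Z_1)-f(Z_2))^{(s_0)}$ via Leibniz's formula, $s_0$ being an integer, then apply the Sobolev injection (\ref{Sobinj})) is precisely your argument in compressed form, since writing the difference by the fundamental theorem of calculus and invoking the algebra property of $H^{s_0}$ together with the Moser/Fa\`a di Bruno composition bound is just the organized version of that Leibniz-plus-embedding computation, resting on the same fact $s_0>d/2$. The only slip is cosmetic: in your (GN) step the product of $k$ factors is bounded by $C\Vert Z\Vert_{H^{s_0}}^{\theta}|Z|_{\infty}^{k-\theta}$ rather than $|Z|_{\infty}^{1-\theta}$, which changes nothing since $|Z|_{\infty}\leq M$.
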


\textbf{Proof:} 
It suffices to develop $(f(Z_1)-f(Z_2))^{(s_0)}$ using Leibniz's formula ($s_0$ being an integer) and then to use the Sobolev injection (\ref{Sobinj}).\ $\square$

\begin{lemm} \label{heatconvol}
For any $s \geq 0$, the operator
$$
Z \mapsto \Big( t \mapsto \int_{0}^{t}{S_L(t-\tau) Z(\tau) d \tau} \Big)
$$
maps bounded subsets of $C([0,T),H^{s})$ into bounded subsets of
\\
$C([0,T),H^{(s+3/2)})$.
\end{lemm}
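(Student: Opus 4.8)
The plan is to reduce the statement to the standard parabolic smoothing estimate for the heat semigroup and to observe that gaining $3/2$ derivatives is compatible with an integrable singularity in the time convolution.

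First I would record the smoothing estimate: using the Fourier representation (\ref{heat}) of $S_L(t)$ and the spectral definition (\ref{Sobolevspectr}) of the Sobolev norm, for every $v$ and every $t>0$,
\[
\| S_L(t) v \|_{s+3/2} \lesssim (1 + t^{-3/4}) \| v \|_{s}.
\]
Indeed, on the $\ka$-th Fourier mode $S_L(t)$ acts by multiplication by $e^{-4\pi^2 |\ka|^2 t}$, and the elementary bound $\sup_{y \geq 0} y^{3/2} e^{-8 \pi^2 y} < \infty$, applied with $y = |\ka|^2 t$, gives $|\ka|^{3} e^{-8\pi^2 |\ka|^2 t} \lesssim t^{-3/2}$ mode by mode; summing against $|\ka|^{2s} |\hat v_{\ka}|^2$ yields the homogeneous part, while the zeroth mode is controlled by the contraction property of $S_L$ on $L_2$. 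The same computation gives a gain of any $\beta < 2$ derivatives at the cost of a factor $t^{-\beta/2}$, which is the reason the value $\beta = 3/2$ is admissible.

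Next, for $Z \in C([0,T),H^{s})$ and $t \in [0,T)$, I would apply Minkowski's integral inequality and then this estimate:
\[
\Big\| \int_0^t S_L(t-\tau) Z(\tau)\, d\tau \Big\|_{s+3/2} \lesssim \int_0^t \big(1 + (t-\tau)^{-3/4}\big)\, \| Z(\tau)\|_{s}\, d\tau \lesssim \big(T + T^{1/4}\big)\, \| Z \|_{C([0,T),H^{s})},
\]
where I have bounded $\|Z(\tau)\|_{s}$ by the supremum norm of $Z$ and used that the kernel $(t-\tau)^{-3/4}$ is integrable since $3/4 < 1$. As this bound is uniform in $t \in [0,T)$, bounded subsets are indeed mapped into bounded subsets. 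Continuity in $t$, needed for membership in $C([0,T),H^{s+3/2})$ rather than merely in the space of bounded functions, then follows by a routine splitting of the integral near $\tau = t$ combined with the strong continuity of $S_L$ and dominated convergence, exactly as in \cite[Chapter 14]{DZ96}.

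The only nontrivial point, and hence the main thing to get right, is the interplay between the number of derivatives gained and the integrability of the time kernel: the exponent $3/2$ is essentially the largest convenient value strictly below the threshold $2$ beyond which $\int_0^t (t-\tau)^{-\beta/2}\, d\tau$ would diverge. This is precisely what guarantees both the uniform bound above and the workability of the contraction/fixed-point scheme for the mild equation (\ref{mildHJ}).
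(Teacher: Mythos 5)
Your proof is correct and follows essentially the same route as the paper: the spectral smoothing bound $\|S_L(t)v\|_{s+3/2}\lesssim (1+t^{-3/4})\|v\|_s$, obtained mode by mode from the elementary estimate $|\ka|^3 e^{-8\pi^2|\ka|^2 t}\lesssim t^{-3/2}$, followed by the observation that the singularity $(t-\tau)^{-3/4}$ is integrable in the time convolution. The only differences are cosmetic (you make the Minkowski step and the $T$-dependence of the bound explicit, and you note the continuity-in-time issue, which the paper leaves implicit).
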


\textbf{Proof:}
Fix $s \geq 0$. By (\ref{Sobolevspectr}) and (\ref{heat}), for $\tau \in [0,t)$ we have
\begin{align} \nonumber 
& \Vert S_L(t-\tau) Z(\tau) \Vert^2_{s+3/2}
\\ \nonumber
& \sim |(\hat{Z}(\tau))_\zero|^2+\sum_{\ka \in \Z^d}{|\ka|^{2s+3} e^{-4 \pi^2 |\ka|^2 (t-\tau)} |(\hat{Z}(\tau))_\ka|^2}
\\ \nonumber
& \lesssim |(\hat{Z}(\tau))_\zero|^2+\Big(\max_{\ka' \in \Z^d}{|\ka'|^{3} e^{-4 \pi^2 |\ka'|^2 (t-\tau)}} \Big) \sum_{\ka \in \Z^d}{|\ka|^{2s} |(\hat{Z}(\tau))_\ka|^2}
\\ \nonumber
& \lesssim \Big(1+\max_{\ka' \in \Z^d}{|\ka'|^{3} e^{-4 \pi^2 |\ka'|^2 (t-\tau)}} \Big) \Vert Z(\tau)\Vert^2_s.
\\ \nonumber 
& \lesssim C \Big[1+(t-\tau)^{-3/2} \Big] \Vert Z(\tau)\Vert^2_s.
\end{align}
To prove the lemma's statement, it remains to observe that
$$
\int_{0}^{t}{(1+(t-\tau)^{-3/2})^{1/2} d \tau}<+\infty.\ \square
$$
\medskip
\\ \indent
Lemma~\ref{Lip}, Lemma~\ref{heatconvol} for $s=s_0$ and the Cauchy-Lipschitz theorem imply that the equation (\ref{mildHJ}) has a unique local solution in $H^{s_0+1}$.
\\ \indent
Now consider such a solution $Y$. We want to prove that this solution belongs to $C^{\infty}$ for all $t>0$. For this, it suffices to prove that for $s \geq s_0+1$, a solution $Y \in H^s$ lies in the space $H^{(s+1/2)}$. We will need the following result:

\begin{lemm} \label{Nemit}
For $s \geq s_0$, the mapping
$$
Z \mapsto f(Z):\ H^{s} \rightarrow H^{s}
$$
is bounded on bounded subsets of $H^{s}$.
\end{lemm}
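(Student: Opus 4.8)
The plan is to deduce the lemma from a Moser--type composition estimate
\begin{equation} \nonumber
\|f(Z)\|_s \leq C(R)\,(1+\|Z\|_s),\qquad \text{whenever } |Z|_\infty \leq R,
\end{equation}
which gives boundedness on bounded subsets at once. The starting observation is that, since $s \geq s_0$ and $s_0 > d/2$ by (\ref{s0}), the embedding (\ref{Sobinj}) with $m=0$ yields $|Z|_\infty \lesssim \|Z\|_{s_0} \leq \|Z\|_s$; hence on a bounded subset of $H^s$ all values $Z(\ix)$ stay in a fixed ball $\{|y| \leq R\}$ of $\R^d$. As $f$ is $C^\infty$, each partial derivative $\partial^{\boldsymbol\beta} f$ is continuous, hence bounded on this ball, so $|(\partial^{\boldsymbol\beta} f)(Z)|_\infty \leq C(R)$ for every fixed $\boldsymbol\beta$. (Only the smoothness of $f$ and the compactness of the range enter here; the growth bound (\ref{poly}) is not needed for this lemma.)

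For integer $s$ I would expand $\partial^{\boldsymbol\alpha}(f(Z))$, $|\boldsymbol\alpha|=s$, by the Fa\`a di Bruno formula as a finite sum of terms
\begin{equation} \nonumber
(\partial^{\boldsymbol\beta} f)(Z)\ \prod_{i=1}^{k} \partial^{\boldsymbol\gamma_i} Z_{j_i},\qquad \sum_{i=1}^{k}|\boldsymbol\gamma_i|=s,\ \ |\boldsymbol\gamma_i| \geq 1.
\end{equation}
The single term with $k=1$ equals $(\partial_j f)(Z)\,\partial^{\boldsymbol\alpha} Z_j$ and is bounded in $L_2$ by $C(R)\|Z\|_s$. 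For $k \geq 2$ one bounds the factor $(\partial^{\boldsymbol\beta} f)(Z)$ in $L_\infty$ by $C(R)$, applies the generalised H\"older inequality with exponents $p_i$ satisfying $\sum_i 1/p_i = 1/2$, and then uses (GN) on each factor with $\theta_i = |\boldsymbol\gamma_i|/s$:
\begin{equation} \nonumber
|Z|_{|\boldsymbol\gamma_i|,p_i} \lesssim \|Z\|_s^{\theta_i}\,|Z|_\infty^{1-\theta_i}.
\end{equation}
Here $\theta_i = |\boldsymbol\gamma_i|/s$ is admissible in Lemma~\ref{GN} (it is exactly the endpoint prescribed in the exceptional case and lies in the admissible range otherwise, since $|\boldsymbol\gamma_i|<s$ for $k \geq 2$), and the exponent relation there forces $1/p_i = |\boldsymbol\gamma_i|/(2s)$, so indeed $\sum_i 1/p_i = 1/2$ and $\sum_i \theta_i = 1$. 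The product is therefore $\lesssim |Z|_\infty^{k-1}\|Z\|_s \leq C(R)\|Z\|_s$. Combining with the zeroth--order term $|f(Z)|_2 \leq |\T^d|^{1/2}\,|f(Z)|_\infty \leq C(R)$ yields the estimate for integer $s$.

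The genuinely delicate case is non-integer $s = m + \sigma$ with $m = \lfloor s\rfloor$ and $\sigma \in (0,1)$, which is precisely the range (half-integer orders $\geq s_0$) produced by the $3/2$-gain bootstrap of Lemma~\ref{heatconvol}. Here I would use the equivalent norm obtained by adding to $\|\cdot\|_m$ the order-$\sigma$ Gagliardo seminorm (\ref{Sobolevfrac}) of the derivatives of order $m$, and difference the Fa\`a di Bruno expansion of $\partial^{\boldsymbol\alpha} f(Z)$, $|\boldsymbol\alpha|=m$, factor by factor. An increment of an $f$-factor is controlled by the mean value theorem through $|Z(\ix+\er)-Z(\ix)|$, hence by the order-$\sigma$ seminorm of $Z$ (bounded by $\|Z\|_s$), while an increment of a factor $\partial^{\boldsymbol\gamma_i} Z$ produces the order-$\sigma$ seminorm of $\partial^{\boldsymbol\gamma_i}Z$, bounded through (GN) and interpolation by $\|Z\|_s^{\theta}|Z|_\infty^{1-\theta}$; the undifferenced factors are distributed over suitable $L_q$ spaces via generalised H\"older, against the weight $1/|\er|^{2\sigma+d}$ that must then be integrated. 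This fractional Leibniz bookkeeping --- keeping the exponents compatible while respecting the forbidden endpoint of (GN) --- is the main obstacle. The cleanest way to discharge it is to invoke instead the classical Moser composition estimate for the Nemytskii operator on $H^s$, $s>d/2$ (see \cite{Tay96}), of which the computation above is the direct verification.
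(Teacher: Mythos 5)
Your proposal is correct, and its skeleton is the same as the paper's: treat integer $s$ by a chain-rule expansion plus embeddings, treat non-integer $s$ through the difference-quotient norm (\ref{Sobolevfrac}), and lean on the classical literature for the composition estimate (you cite \cite{Tay96}; the paper's own proof explicitly borrows from \cite{BBM00}). The differences are in the execution, and they cut in opposite directions. For integer $s$ you carry out the Moser-type argument in full --- Fa\`a di Bruno, generalised H\"older with $1/p_i=|\boldsymbol\gamma_i|/(2s)$, and (GN) at the admissible endpoint $\theta_i=|\boldsymbol\gamma_i|/s$ --- whereas the paper only says ``Leibniz's formula and the Sobolev injection (\ref{Sobinj})''; your version is in fact the more complete one, since for $d\geq 3$ the intermediate factors $\partial^{\boldsymbol\gamma_i}Z$ cannot all be placed in $L_\infty$ via (\ref{Sobinj}) using only $\|Z\|_s$, so an interpolation of exactly your kind is needed to close that case (the only loose end in your write-up is that the paper's (GN) is stated for zero-mean functions, which is repaired by subtracting the mean, harmless since $|Z|_\infty\leq R$). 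For non-integer $s$, the paper does not do the full ``fractional Leibniz bookkeeping'' you describe: it reduces to $s_0<s<s_0+1$ (the general case being recovered through Leibniz), and inside the Gagliardo seminorm it isolates only the least regular term $f'(Z)Z^{(s_0)}$, splitting its increment into $f'(Z)(\ix+\er)\big(Z^{(s_0)}(\ix+\er)-Z^{(s_0)}(\ix)\big)$, bounded by $|f'(Z)|_\infty$ times the order-$\tilde s$ seminorm of $Z^{(s_0)}$, and $\big(f'(Z)(\ix+\er)-f'(Z)(\ix)\big)Z^{(s_0)}(\ix)$, bounded via the Lipschitz estimate $|f'(Z)(\ix+\er)-f'(Z)(\ix)|\lesssim|\er|$, which produces the integrable weight $|\er|^{2-2\tilde s-d}$ against $|Z^{(s_0)}|^2$. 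This two-term splitting is precisely the device that makes the bookkeeping you flag as ``the main obstacle'' tractable, and it yields a self-contained proof where your argument ultimately delegates the fractional case to the classical Moser estimate --- a legitimate move, but one that makes your fractional case an appeal to authority rather than a proof. Your observation that the growth condition (\ref{poly}) plays no role here (only smoothness of $f$ and compactness of the range of $Z$ matter) is accurate and is implicit in the paper as well.
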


\textbf{Proof:} 
An analogous lemma is proved in a more general setting for Sobolev spaces on $\R^n$ in \cite{BBM00}. We use some arguments from this paper.
\\ \indent
For the case when $s$ is integer, we proceed in the same way as in the proof of Lemma~\ref{Lip}, using Leibniz's formula and then (\ref{Sobinj}).
\\ \indent
Now consider the case when $s$ is non-integer. For simplicity, we will only consider the case $s_0 < s < s_0+1$; the general case follows from Leibniz's formula. Denote by $\tilde{s}$ the quantity $s-s_0$.
\\ \indent
Consider $Z$ such that $\Vert Z \Vert_{s} \leq N$. In this case, by the definition (\ref{Sobolevfrac}) we have:
\begin{align} \nonumber
& \Vert f(Z) \Vert^2_s \sim |f(Z)|^2+ 
\\  \label{f(Z)} 
& \int_{\ix \in \T^{d},\ |\er| \leq 1}{\frac{|(f(Z))^{(s_0)}(\ix+\er)-(f(Z))^{(s_0)}(\ix)|^2}{|\er|^{2 \tilde{s}+d}}}\ d \ix\ d \er
\end{align}
The least regular term in Leibniz's formula for $(f(Z))^{(s_0)}$ corresponds to
\\
$f'(Z)Z^{(s_0)}$. Therefore it suffices to bound the corresponding term in (\ref{f(Z)}) by $C(N)$.
\begin{align} \nonumber
& \int_{\ix \in \T^{d},\ |\er| \leq 1}  {  \frac{|f'(Z)(\ix+\er)Z^{(s_0)}(\ix+\er)-f'(Z)(\ix)Z^{(s_0)}(\ix)|^2}{|\er|^{2 \tilde{s}+d}}}\ d \ix\ d \er
\\ \nonumber
\lesssim  & \int_{\ix \in \T^{d},\ |\er| \leq 1}{\frac{|f'(Z)(\ix+\er)Z^{(s_0)}(\ix+\er)-f'(Z)(\ix+\er)Z^{(s_0)}(\ix)|^2}{|\er|^{2 \tilde{s}+d}}}\ d \ix\ d \er
\\ \nonumber
& + \int_{\ix \in \T^{d},\ |\er| \leq 1}{\frac{|f'(Z)(\ix+\er)Z^{(s_0)}(\ix)-f'(Z)(\ix)Z^{(s_0)}(\ix)|^2}{|\er|^{2 \tilde{s}+d}}}\ d \ix\ d \er
\\ \nonumber
\lesssim &\ |f'(Z)|^2_{\infty} \int_{\ix \in \T^{d},\ |\er| \leq 1}{\frac{|Z^{(s_0)}(\ix+\er)-Z^{(s_0)}(\ix)|^2}{|\er|^{2 \tilde{s}+d}}}\ d \ix\ d \er
\\ \label{reg}
&  + \int_{\ix \in \T^{d},\ |\er| \leq 1}{\frac{|f'(Z)(\ix+\er)-f'(Z)(\ix)|^2}{|\er|^{2 \tilde{s}+d}} |Z^{(s_0)}(\ix)|^2}\ d \ix\ d \er
\\ \nonumber
\lesssim & C(|Z|_{\infty}) \Vert Z \Vert_s^2+ \int_{\ix \in \T^{d},\ |\er| \leq 1}{|\er|^{2-2 \tilde{s}-d} |Z^{(s_0)}(\ix)|^2}\ d \ix\ d \er
\\ \nonumber
\lesssim & C(N)+C(N) \Vert Z \Vert_{s_0}^2 \lesssim C(N).
\end{align}
Indeed, the rest of the right-hand side in (\ref{f(Z)}) is more regular and can be bounded by $C(N)$ in the same way as the term in (\ref{reg}).
\ $\square$

\begin{theo} \label{bootstrap}
Consider a local solution $Y$ of (\ref{mildHJ}) in $H^{s_0+1}$ defined on an interval $[0,T)$. If for some $s \geq s_0+1$, $Y$ belongs to $C([0,T),H^s)$, then $Y$ actually belongs to $C([0,T),H^{(s+1/2)})$.
\end{theo}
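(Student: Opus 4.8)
The plan is to read Theorem~\ref{bootstrap} as a single step of a bootstrap, analysing the two contributions to the mild formula (\ref{mildHJ}) separately and tracking a "regularity budget". The point is that this budget is strictly positive: differentiating $Y$ once costs one derivative, applying the superposition map $f$ costs nothing by Lemma~\ref{Nemit}, and the parabolic convolution $\int_0^t S_L(t-\tau)\,\cdot\,d\tau$ buys back $3/2$ derivatives by Lemma~\ref{heatconvol}. The net gain is therefore $3/2-1=1/2$ derivative, which is exactly the content of the statement; iterating it from the starting level $s_0+1$ reaches every $H^s$ and hence $C^\infty$ for $t>0$.

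First I would fix a compact subinterval $[0,T'] \subset [0,T)$, so that continuity of $Y$ on $[0,T)$ yields a genuine bound $\sup_{t \in [0,T']} \Vert Y(t) \Vert_s < \infty$, and argue on $[0,T']$; since $T'<T$ is arbitrary the conclusion then follows on all of $[0,T)$. On $[0,T']$ we have $\nabla Y \in C([0,T'],H^{s-1})$, and since $\omega \in \Omega_2$ gives $\nabla w_L \in C([0,+\infty),C^{\infty}) \subset C([0,T'],H^{s-1})$, the argument $\nabla Y + \nabla w_L$ is bounded in $C([0,T'],H^{s-1})$. Because $s-1 \geq s_0$, Lemma~\ref{Nemit} applies and shows that
$$
g := f(\nabla Y + \nabla w_L)
$$
is bounded in $H^{s-1}$, uniformly on $[0,T']$. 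To promote this to membership in $C([0,T'],H^{s-1})$ I would establish time-continuity of $g$, which reduces to continuity of the superposition operator $f\colon H^{s-1} \to H^{s-1}$ composed with the continuous map $t \mapsto \nabla Y(t)+\nabla w_L(t)$. This follows from a local Lipschitz estimate for $f$ at the level $s-1$, obtained exactly as Lemma~\ref{Lip} treats the level $s_0$: one expands $f(Z_1)-f(Z_2)$ by Leibniz's rule and controls each factor through the Sobolev embedding (\ref{Sobinj}), the hypothesis $s-1 \geq s_0$ guaranteeing the embedding into $L_\infty$.

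With $g \in C([0,T'],H^{s-1})$ in hand, Lemma~\ref{heatconvol} applied with $s$ replaced by $s-1$ gives
$$
t \mapsto \int_0^t S_L(t-\tau) g(\tau)\, d\tau \in C\big([0,T'], H^{(s-1)+3/2}\big) = C\big([0,T'], H^{s+1/2}\big).
$$
Since $\psi_0 \in C^{\infty}$, the linear part $S_L(t)\psi_0$ is smooth for every $t \geq 0$ and thus also lies in $C([0,T'],H^{s+1/2})$; adding the two contributions in (\ref{mildHJ}) yields $Y \in C([0,T'],H^{s+1/2})$, and letting $T' \uparrow T$ completes the proof.

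The main obstacle is not the regularity arithmetic, which is transparent, but the time-continuity of the nonlinear term: Lemma~\ref{Nemit} supplies only boundedness of $f$ on bounded sets, whereas one genuinely needs $g \in C([0,T'],H^{s-1})$ before Lemma~\ref{heatconvol} can be invoked. This is why the local Lipschitz property of $f$ must be lifted from the level $s_0$ (Lemma~\ref{Lip}) to the level $s-1$; once that estimate is secured --- again via Leibniz together with (\ref{Sobinj}), and, for non-integer $s$, the difference-quotient splitting already used in the proof of Lemma~\ref{Nemit} --- the remaining steps are immediate consequences of the convolution and superposition lemmas.
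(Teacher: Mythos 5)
Your proof is correct and follows essentially the same route as the paper's: the same splitting of the mild formula (\ref{mildHJ}), Lemma~\ref{Nemit} applied at level $s-1 \geq s_0$ to the nonlinearity, and Lemma~\ref{heatconvol} to recover $3/2$ derivatives, for a net gain of $1/2$. If anything, you are more careful than the paper itself, which passes from the boundedness statement of Lemma~\ref{Nemit} directly to membership of the nonlinear term in $C([0,T),H^{s-1})$ without comment, whereas you explicitly supply the needed time-continuity by lifting the Lipschitz estimate of Lemma~\ref{Lip} to the level $s-1$, and you work on compact subintervals $[0,T'] \subset [0,T)$ so that the sup-norm bounds required by Lemma~\ref{heatconvol} are genuine.
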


\textbf{Proof:} By Lemma~\ref{Nemit} we have
$$
\nabla (f(Y(\tau)+w_L(\tau))) \in C([0,T),H^{s-1}),
$$
and thus by Lemma~\ref{heatconvol} we get
$$
\int_{0}^{t}{S_L(t-\tau) \nabla(f(Y(\tau)+w_L(\tau))) d\tau} \in C([0,T),H^{(s+1/2)}).
$$
Since $Y$ is a solution of (\ref{mildHJ}) and the semigroup $S_L$ is smoothing,
$$
Y(t)=S_L(t) \psi_0+\int_{0}^{t}{S_L(t-\tau) \nabla(f(Y(\tau)+w_L(\tau))) d \tau}
$$
belongs to the space $C([0,T),H^{s+1/2}).$\ $\square$
\medskip
\\ \indent
Thus, we have proved existence and uniqueness of a local solution to (\ref{whiteBurgers}), which is $C^{\infty}$-smooth in space for $t>0$. To see that this solution is necessarily global, it suffices to observe that for any $\tau,\tau'>0$ it satisfies estimates which hold uniformly in time for $t \in [\tau,\tau+\tau']$: see Remark~\ref{upperrmq}.

\section*{Appendix 2: proof of Lemma~\ref{linalg}} \label{appalg}

We recall the statement of the lemma.
\medskip
\\ \indent
\textit{For every $m,d \geq 1$, there exists a finite set $\Pi_m^d$ of homogeneous polynomials of degree 1 in $d$ variables $X_1,\dots,X_d$ with integer coefficients, such that their $m$-th powers form a basis for the vector space of homogeneous polynomials of degree $m$ in $d$ variables.}
\medskip
\\ \indent

\begin{proof} The case $d=1$ is trivial. In the case $d=2$, we consider the matrix of the $m$th powers of $X_1,X_1+X_2,\dots,X_1+mX_2$ written in the canonical basis $(X_1^m,X_1^{m-1}X_2,\dots, X_1 X_2^{m-1},X_2^m)$. We get
\begin{displaymath}
\left( \begin{array}{ccccc}
1 & 0 & 0 & 0 & \ldots \\ \\
1 & m & \binom{m}{2} & \binom{m}{3} & \ldots \\ \\
1 & 2m & 2^2 \binom{m}{2} & 2^3 \binom{m}{3} & \ldots \\ \\
\vdots & \vdots & \vdots & \vdots & \ddots
\end{array} \right)
\end{displaymath}
Dividing the $n$-th column by $\binom{m}{n}$ for every $n$, we obtain Vandermonde's matrix $V(0,\dots,m)$, which is invertible \cite[Chap. 7, \S 3, ex.5]{Lan72}. Thus, we may choose
$$
\Pi_m^2= \Big\{ X_1,\ X_1+X_2,\dots, X_1+mX_2 \Big\}.
$$
Finally, the case $d \geq 3$ follows by induction on $d$. Indeed, all monomials of degree $m$ can be written as
$$
X_1^{m-n} P_{n}(X_2,\dots,X_d),\quad 0 \leq n \leq m,
$$
where $\deg P_n=n$. To deal with the case $n=0$, it suffices to add $X_1$ to the set $\Pi_m^d$. For $n \geq 1$, the statement for $d-1$ tells us that $P_{n}(X_2,\dots,X_{d})$ can be written as a finite linear combination 
$$
\sum_{i=1}^{I(n)}{(L_i(X_2,\dots,X_{d}))^{n}},
$$
where the $L_i$ are homogeneous polynomials of degree $1$.
\\ \indent
Now, for every $n,\ 1 \leq n \leq m$ and every $i,\ 1 \leq i \leq I(n)$, consider $L_i$ and $X_1$ as the new independent variables, apply the lemma's statement for $d=2$ and add the resulting polynomials to the set $\Pi_m^d$. At the end of the procedure we get a generating family, and then a basis for $\Pi_m^d$.
\end{proof}

\section*{Acknowledgements}
\indent
I am very grateful to A. Biryuk, A. Debussche, K. Khanin, S. Kuksin,
\\
A. Shirikyan and J. Vovelle for helpful discussions and to B. Da Costa for the proof of Lemma~\ref{linalg}. A part of the present work was done during my stays at Laboratoire AGM, University of Cergy-Pontoise and D{\'e}partement de Physique Th{\'e}orique, University of Geneva, supported by the grants ERC BLOWDISOL and ERC BRIDGES: I would like to thank all the faculty and staff, and especially the principal investigators F.Merle and J.-P.Eckmann, for their hospitality. Moreover, I would like to thank J.-P.Eckmann for his remarks on a draft of the paper. Finally, I would like to thank the anonymous referee for his careful reading and in particular for spotting a mistake in one of the proofs.

\bibliographystyle{plain}
\bibliography{Bibliogeneral}

\end{document}